\documentclass[11pt]{amsart}
\usepackage{amsmath,amsfonts,amsthm, amssymb}

\usepackage[margin=1.1in]{geometry}

\usepackage{hyperref}

\usepackage{todonotes}
\usepackage{comment}

\newtheorem{theorem}{Theorem}
\newtheorem*{theorem*}{Theorem}
\newtheorem{lemma}[theorem]{Lemma}

\newtheorem{proposition}[theorem]{Proposition}

\newtheorem{corollary}[theorem]{Corollary}
\newtheorem{assumption}[theorem]{Assumption}

\newtheorem{definition}[theorem]{Definition}

\theoremstyle{remark}
\newtheorem{remark}[theorem]{Remark}

\theoremstyle{remark}
\newtheorem{example}[theorem]{Example}

\numberwithin{theorem}{section}

\newcommand{\E}{\mathbb{E}}
\renewcommand{\P}{\mathbb{P}}

\newcommand{\Ai}{\mathrm{Ai}}

\newcommand{\Cov}{\mathop{\mathrm{Cov}}}
\newcommand{\Var}{\mathop{\mathrm{Var}}}

\newcommand{\vxi}{\vec{\xi}}
\newcommand{\veta}{\vec{\eta}}

\newcommand{\mF}{\mathcal{F}}

\newcommand{\X}{\mathcal X}

\newcommand{\eps}{\varepsilon}
\newcommand{\ii}{\mathbf i}

\renewcommand{\Im}{\mathrm{Im}\,}

\renewcommand{\a}{\mathfrak a}

\newcommand{\1}{\mathbf 1}

\renewcommand{\d}{\mathrm d}

\numberwithin{equation}{section}

\title{Universal objects of the infinite beta random matrix theory}
\author{V. Gorin}

\address{University of Wisconsin -- Madison and Institute for Information Transmission Problems of Russian Academy of Sciences}
\email{vadicgor@gmail.com}

\author{V. Kleptsyn}
\address{Univ Rennes, CNRS, IRMAR - UMR 6625, F-35000 Rennes, France}
\email{victor.kleptsyn@univ-rennes1.fr}

\begin{document}
\maketitle
\begin{abstract}
 We develop a theory of multilevel distributions of eigenvalues which complements the Dyson's threefold $\beta=1,2,4$ approach corresponding to real/complex/quaternion matrices  by $\beta=\infty$ point. Our central objects are the G$\infty$E ensemble, which is a counterpart of the classical Gaussian Orthogonal/Unitary/Symplectic ensembles, and the Airy$_{\infty}$ line ensemble, which is a collection of continuous curves serving as a scaling limit for largest eigenvalues at $\beta=\infty$. We develop two points of views on these objects. The probabilistic one treats them as partition functions of certain additive polymers collecting white noise. The integrable point of view expresses their distributions through the so-called associated Hermite polynomials and integrals of the Airy function. We also outline universal appearances of our ensembles as scaling limits.
\end{abstract}

\tableofcontents

\section{Introduction}

\sloppy

\subsection{Motivations}
Traditionally, the random matrix theory\footnote{See, e.g., textbooks  \cite{AGZ,Mehta,ABF,For} for general reviews.}  deals with real, complex, and quaternion matrices, their eigenvalues and eigenvectors. Following the work of Wigner, Dyson, Mehta, and others in 1950-60s, a central role is played by Gaussian ensembles, which are defined as follows: let $X$ be an infinite $\mathbb Z_{>0}\times \mathbb Z_{>0}$ matrix with i.i.d.\ standard normal real/complex/quaternion matrix elements, normalized so that their real parts have variance $\tfrac{2}{\beta}$ with $\beta=1/2/4$, respectively. The $N\times N$ principal submatrix $M_N$ of $\tfrac{X+X^*}{2}$ is then called the Gaussian Orthogonal/Unitrary/Simplectic ensemble of rank $N$. The matrix $M_N$ is Hermitian, it has $N$ real eigenvalues $\chi_1\le \chi_2\le \dots \le \chi_N$ and their distribution is explicit. The joint density is proportional to
\begin{equation} \label{eq_GbetaE}
 \prod_{1\le i < j \le N} (\chi_j-\chi_i)^{\beta} \prod_{i=1}^N \exp\bigl( -\tfrac{\beta}{4} (\chi_i)^2 \bigr).
\end{equation}
Although originally in \eqref{eq_GbetaE} only $\beta=1,2,4$ come out, the formula suggests the possibility of taking arbitrary positive real values for $\beta$. In the terminology of statistical mechanics, such $\beta$ can be interpreted as the inverse temperature. More recently the distribution \eqref{eq_GbetaE} was found in \cite{DE_tridiag} to govern for any $\beta>0$ the eigenvalues of tridiagonal real symmetric random matrices. Multiple other reasons to be interested in the Gaussian $\beta$ ensembles \eqref{eq_GbetaE} with arbitrary $\beta>0$ are reviewed in \cite[Chapter 20 ``Beta Ensembles'']{ABF}, they include connections to the theory of Jack and Macdonald symmetric polynomials, to Coulomb log-gases, and to the Calogero--Sutherland quantum many--body system. One can go further and replace $\exp\bigl( -\tfrac{\beta}{4} (\chi_i)^2 \bigr)$ in \eqref{eq_GbetaE} by any potential $V(\chi_i)$ leading to a class of distributions known under the name $\beta$--ensembles.

Beyond $\beta=1,2,4$, there are two other special values of $\beta$ for $\beta$--ensembles. First, at $\beta=0$ the interactions between particles disappear and we link to the classical probability theory dealing with sequences of independent random variables. We are not going to consider this value here. Instead, we concentrate on $\beta=\infty$, following \cite{DE_large_beta,AKM,EPS,GM2018,VW_functional_CLT}. The point of view of \cite{DE_large_beta,EPS} is that many characteristics of the distribution \eqref{eq_GbetaE} (such as mean and variance of individual eigenvalues $x_i$ for finite $N$ and as $N\to\infty$) are well-approximated by Taylor expansions near $\beta=\infty$. In particular, their numeric simulations show a good match between the first two non-trivial asymptotic terms and exact expressions even at $\beta=1$, which seems very far from $\beta=\infty$. Our own simulations for the Gaussian ensembles of $3\times 3$ matrices are shown in Figure \ref{Fig_infinity_predictions}. We see an astonishing match between exact probability densities and their approximations from $\beta=\infty$.

 \begin{figure}[t]
\begin{center}
\includegraphics[width=0.32\linewidth]{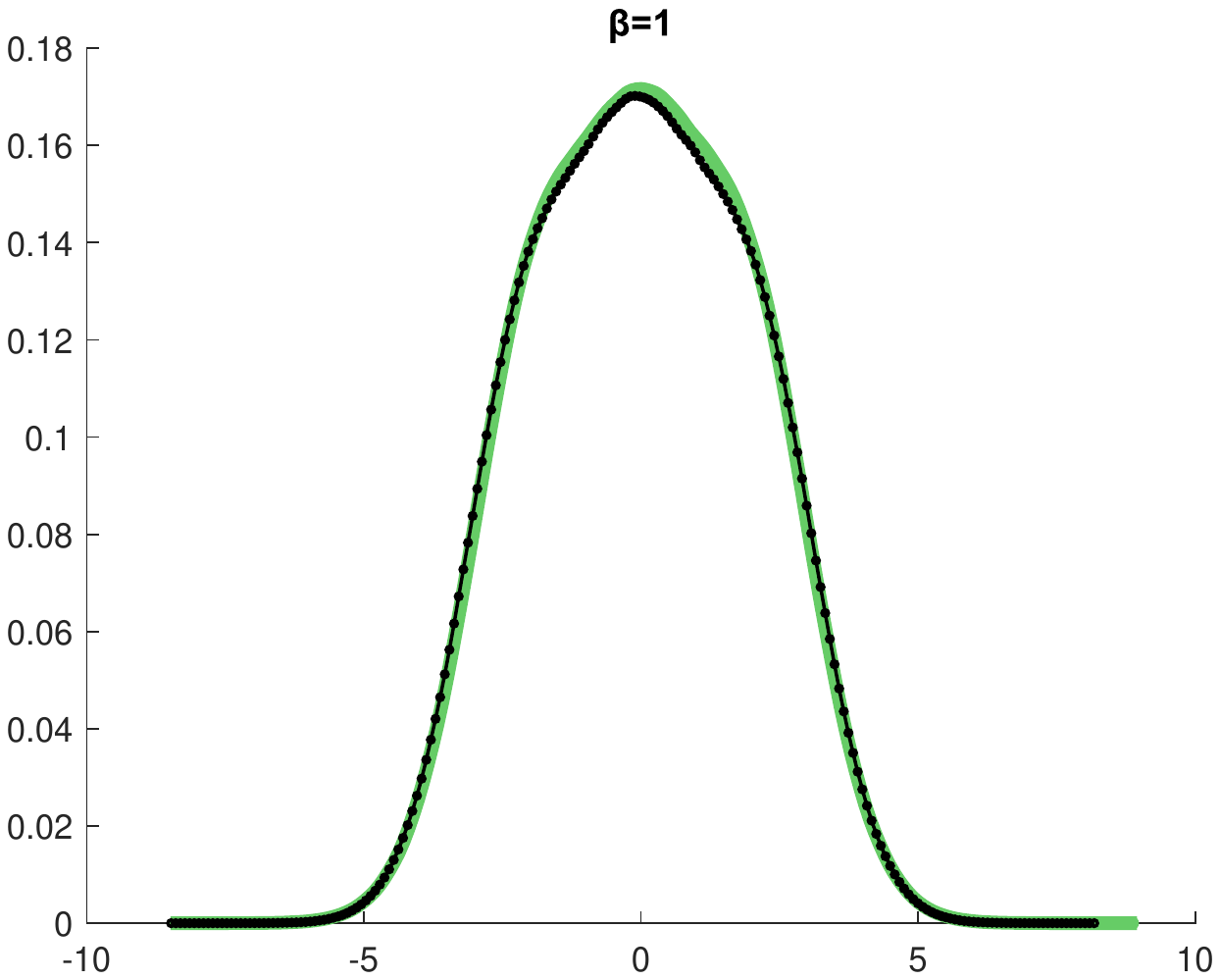}
\includegraphics[width=0.32\linewidth]{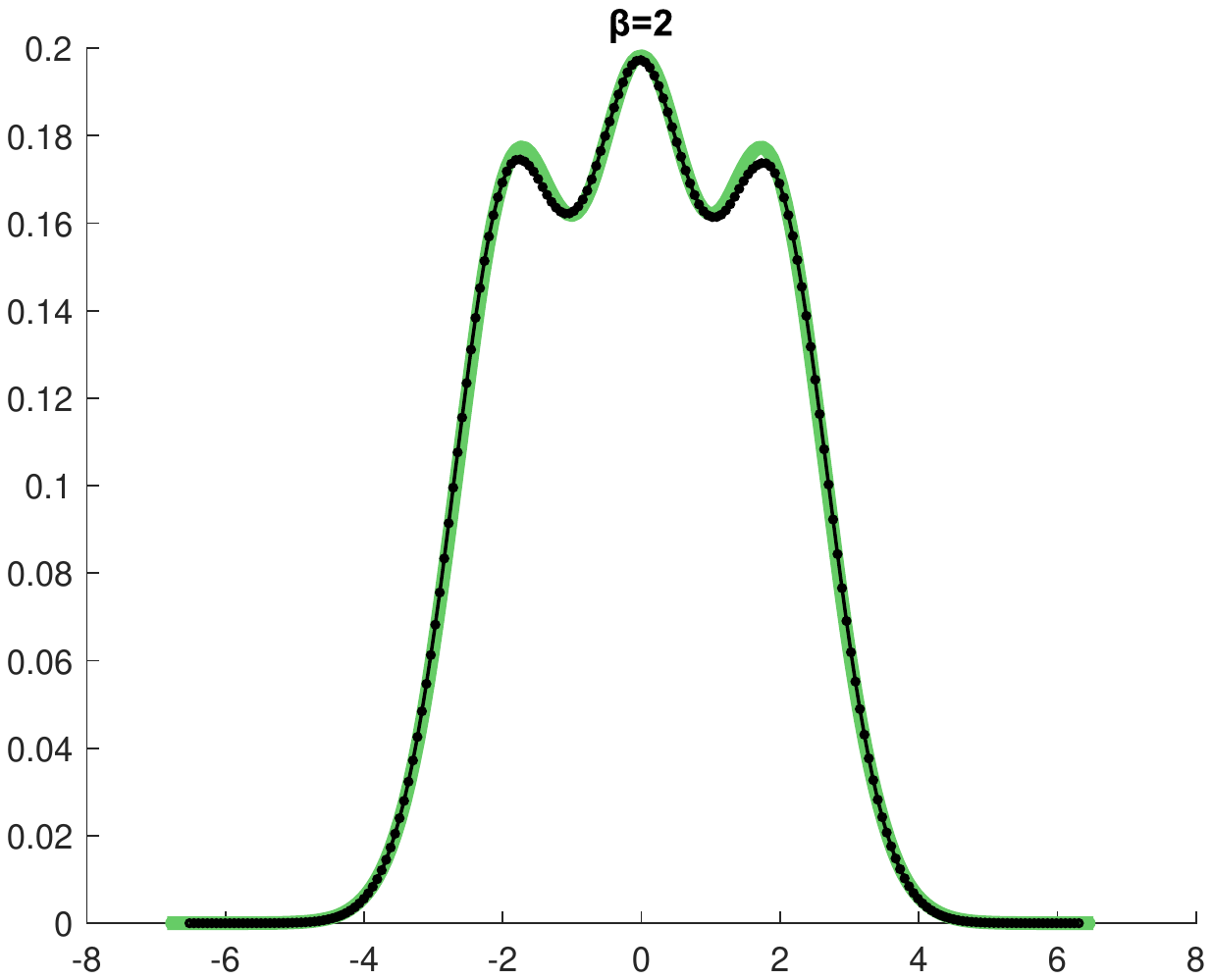}
\includegraphics[width=0.32\linewidth]{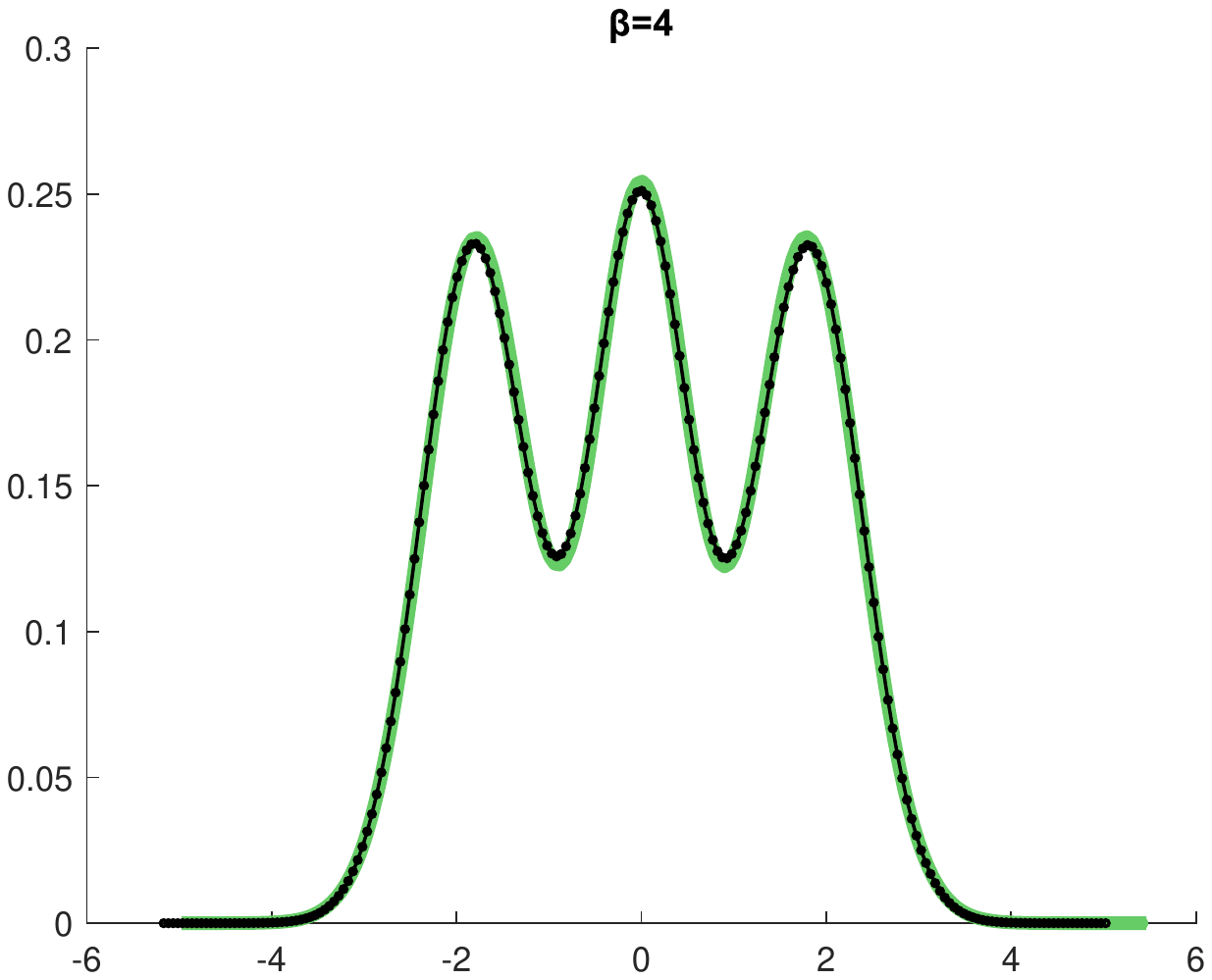}
 \caption{The figures show arithmetic mean of the probability densities (MATLAB simulation using $5\times 10^6$ samples) of $3$ eigenvalues of $3\times 3$ matrices. Light green solid lines correspond to eigenvalues sampled from G$\beta$E ensembles \eqref{eq_GbetaE} at $\beta=1/2/4$, $N=3$. Black dash-dotted lines correspond to the result of the 3--term approximation of eigenvalues of the form $\chi_i=h_i+\frac{1}{\sqrt{\beta}} \xi_i + \frac{1}{\beta} r_i$, $i=1,2,3$, where $(h_1,h_2,h_3)=(-\sqrt{3},0,\sqrt{3})$ are roots of the degree $3$ Hermite polynomial, $(\xi_1,\xi_2,\xi_3)$ is a Gaussian vector, whose study is one of our  topics, and $(r_1,r_2,r_3)$ is a deterministic vector not discussed in this text. \label{Fig_infinity_predictions}}
\end{center}
\end{figure}

The $\beta=\infty$ ensembles or, equivalently, the behavior of $\beta$--ensembles at large values of $\beta$ is the central theme of this article. As we explain in Section \ref{Section_inf_ensembles}, a $\beta=\infty$ ensemble consists of two pieces of data: The first one is a deterministic particle configuration, which is a $\beta\to\infty$ limit of $\beta$-ensembles, such as \eqref{eq_GbetaE}; the second piece is a Gaussian vector describing asymptotic fluctuations around this limit. We would like to combine large $\beta$ with large $N$. In other words, we deal with asymptotic questions about large-dimensional ensembles of $\beta=\infty$ random matrices.

We discover that the $\beta=\infty$ point possesses a lot of integrability and the asymptotic questions can be understood in precise details, going far beyond what is known for general values of $\beta>0$. This is our main message: $\beta=\infty$ is accessible to the same extent as the most well-studied point $\beta=2$.

\subsection{Second dimension and asymptotics}

\label{Section_intro_as_results}

For our asymptotic results an important role is played by an extension of $\beta$--ensembles to two-dimensional systems. In fact, there are two distinct extensions, which are both very natural. The first one originates in \cite{Dyson}, where Dyson suggested in 1960s to identify \eqref{eq_GbetaE} with a fixed time distribution of the Dyson Brownian Motion. The latter is an $N$--dimensional stochastic evolution $(X_1(t)\le \dots \le X_N(t))$, solving the SDE:
\begin{equation}
\label{eq_DBM} \d X_i(t) = \sum_{j\ne i} \frac{\d t}{X_i(t)-X_j(t)} +\sqrt{\frac{2}{\beta}}\,  \d W_i(t), \quad i=1,2,\dots,N,
\end{equation}
where $W_i(t)$ are independent standard Brownian motions. One shows that at $t=1$ the law of the solution of \eqref{eq_DBM} with zero initial condition  $X_1(0)=\dots=X_N(0)=0$ is given by \eqref{eq_GbetaE}. \cite{Dyson} constructed the evolution \eqref{eq_DBM} at $\beta=1,2,4$ as a projection onto the eigenvalues of a dynamics on Hermitian matrices in which each matrix element evolves as a Brownian motion. Yet, \eqref{eq_DBM} makes sense for any\footnote{For $\beta<1$ additional care is required, since the particles start to collide with each other, see \cite{CL}.} $\beta>0$. The Dyson Brownian Motion is a key ingredient in proofs of many recent limit theorems for random matrices and $\beta$--ensembles, see, e.g., \cite{AGZ}, \cite{EY_book}.

Another $2d$ extension is constructed by considering the joint distribution of eigenvalues of all principal top-left $N\times N$ corners of the infinite Hermitian matrix $\tfrac{X+X^*}{2}$ simultaneously for $N=1,2,3,\dots$. In this way one arrives at an array of numbers $\{\chi^{k}_i\}_{1\le i \le k}$, where $\chi^k_1\le \chi^j_2\le \dots \le \chi^j_j$ are the eigenvalues of $k\times k$ corner. The eigenvalues satisfy deterministic inequalities $\chi_i^{k+1}\le \chi_{i}^k\le \chi_{i+1}^{k+1}$ and the law of the subarray $\{\chi_i^k\}_{1\le i \le k \le N}$ has density proportional to
\begin{equation}\label{eq:beta-Gauss_corner}
  \prod_{k=1}^{N-1} \left[\prod_{1\le i<j\le k} (\chi_j^k-\chi_i^k)^{2-\beta}\right] \cdot \left[\prod_{a=1}^k \prod_{b=1}^{k+1}
 |\chi^k_a-\chi^{k+1}_b|^{\beta/2-1}\right] \cdot \prod_{i=1}^N \exp\bigl( -\tfrac{\beta}{4} (\chi_i^N)^2 \bigr).
\end{equation}
We call this distribution the \emph{Gaussian $\beta$ corners process}. Modern computations leading to \eqref{eq:beta-Gauss_corner} for $\beta=1,2,4$ can be found in \cite{Baryshnikov} and \cite{Neretin}, while the underlying ideas arose in representation theory back in 1950s, see \cite[Section 9.3]{GelfandNaimark}. The consistency between \eqref{eq:beta-Gauss_corner} and \eqref{eq_GbetaE} is automatic from the construction at $\beta=1,2,4$, but needs an additional argument at general $\beta>0$, which can be obtained either using a 100-year old integration identity from \cite{Dixon} (see also \cite{Anderson}) or as a limiting case of the branching rules for Jack and Macdonald symmetric polynomials, see \cite[Appendix]{BG_Jacobi}, \cite{GS_DBM}.

Beyond intrinsic interest, the multilevel distributions \eqref{eq:beta-Gauss_corner} were used recently to prove asymptotic theorems leading to the one-level distribution \eqref{eq_GbetaE}. The central idea here is that the multilevel distribution can be uniquely identified by some of its simple features, which \eqref{eq_GbetaE} is lacking, such as conditional uniformity at $\beta=2$ (notice that most of the factors in \eqref{eq:beta-Gauss_corner} disappear at $\beta=2$), see \cite{Gorin_ASM,Dimitrov_6v}. In wider contexts, the usefulness of similar multilevel distributions and their characteristic Gibbs properties was demonstrated, e.g., in \cite{CH,CD,CGH}.

In this text we focus our attention on the largest eigenvalues in $\beta$--ensembles and their 2d extensions. Let us state two of our main results.
 We use the notation $\Ai(x)$ for the Airy function and we let $\a_1>\a_2>\a_3>\dots$ to be its zeros.

\begin{theorem} \label{Theorem_Gcorners_limit_intro} Suppose that an infinite random array $\{\chi_i^k\}_{1\le i \le k}$ is distributed so that for each $N$ its projection onto indices $1\le i \le k \le N$ has the law \eqref{eq:beta-Gauss_corner}. In addition, for each $k=1,2,\dots$, let $x_1^k<x_2^k<\dots<x_k^k$ be the roots of the degree $k$ Hermite polynomial\footnote{Here and below we use the monic ``probabilists'' Hermite
polynomials with
weight function $e^{-x^2/2}$.}
and set
$
\kappa(t)=N+ \lfloor 2 t N^{2/3}\rfloor.
$
 Then we have the following limit in the sense of convergence of finite-dimensional distributions of the two-dimensional stochastic process:
 $$
  \lim_{N\to\infty} \lim_{\beta\to\infty} N^{1/6} \sqrt{\beta} \biggl(\chi^{\kappa(t)}_{\kappa(t)+1-i}-x^{\kappa(t)}_{\kappa(t)+1-i}\biggr)=\mathfrak Z(i,t), \quad i\in\mathbb Z_{>0}, t\in \mathbb R,
 $$
 where $\mathfrak Z(i,t)$ is a mean--zero Gaussian process with covariance
 \begin{equation}
 \label{eq_Edge_limit_covariance}
 \E \mathfrak Z(i,t) \mathfrak Z(j,s)= \frac{2}{\Ai'(\a_i) \Ai'(\a_j)} \int_0^{\infty} \Ai(\a_i+y) \Ai(\a_j+y) \exp\left(-|t-s| y\right) \frac{\d y}{y}.
 \end{equation}
\end{theorem}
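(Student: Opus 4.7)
The plan is to execute the double limit in two stages, first controlling the $\beta\to\infty$ limit at fixed $N$ and then performing the edge scaling $N\to\infty$. For the inner limit I would write the log-density in \eqref{eq:beta-Gauss_corner} as $\beta \cdot F(\chi) + O(1)$, where $F$ is the $\beta\to\infty$ leading term (the corrections come from the $\log$ factors not carrying an explicit $\beta$). As is standard and presumably established earlier in the paper, $-F$ has a unique critical point on the interlacing array, and it is precisely the Hermite array $\{x_i^k\}$: this follows from the classical characterization of Hermite roots as minimizers of the discrete log-gas energy with quadratic confinement, together with the fact that on the Hermite array consecutive levels interlace in the way that kills the first derivatives of the cross-level terms. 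A Laplace expansion then shows that the rescaled fluctuations $\xi_i^k := \sqrt{\beta}(\chi_i^k - x_i^k)$ converge to a centered Gaussian corners process whose inverse covariance is $-\nabla^2 F$ at the Hermite array. Using $H_k'(x)=k H_{k-1}(x)$ and the three-term recursion, one rewrites the block-tridiagonal Hessian entries entirely in terms of $H_{k-1}/H_k$-ratios at Hermite roots, yielding the additive-polymer representation advertised in the abstract: $\xi_i^{k+1}$ is the weighted sum of $\xi_{\cdot}^{k}$ plus an independent Gaussian innovation, with transition weights read off from Hermite polynomials.

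For the outer edge limit I would substitute the Plancherel--Rotach asymptotics into this polymer. Near the soft edge, $x_{k+1-i}^k = 2\sqrt{k} + \a_i\, k^{-1/6} + o(k^{-1/6})$, and the rescaled polynomial $H_k(2\sqrt{k} + y k^{-1/6})$ converges (with the appropriate normalization) to $\Ai(y)$. Under $\kappa(t) = N + \lfloor 2tN^{2/3}\rfloor$, moving from level $\kappa(s)$ to $\kappa(t)$ advances the polymer by $\approx 2|t-s|N^{2/3}$ one-step transitions, and each transition, when diagonalized in the continuous spectral variable $y$ indexing the shifted Airy functions $\Ai(\a_\cdot + y)$, acts as multiplication by $1 - y/N^{2/3} + o(N^{-2/3})$. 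Composing these contractions gives the semigroup $e^{-|t-s| y}$ in the limit. The innovation variances produce a geometric sum of these contractions along the polymer path; that sum, after the $N^{1/6}$ rescaling, converges to the integral $\int_0^\infty \cdots \frac{dy}{y}$ of \eqref{eq_Edge_limit_covariance} (the $1/y$ is precisely the limit of the resolvent $\sum_m (1-y/N^{2/3})^m / N^{2/3}$). The prefactor $2/(\Ai'(\a_i)\Ai'(\a_j))$ comes from the Jacobian of passing from the discrete Hermite basis at the edge indices to the continuous Airy basis, the factor of $2$ reflecting the slope in $\kappa(t)=N+\lfloor 2tN^{2/3}\rfloor$. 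Since the limiting finite-dimensional joint laws are Gaussian and centered, convergence of means (to $0$) and covariances suffices.

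The main obstacle will be making these Plancherel--Rotach expansions quantitatively uniform over $k \in [N - CN^{2/3}, N + CN^{2/3}]$ and jointly over the finitely many edge indices $i,j$ involved, with error terms small enough to survive the $N^{1/6}\sqrt{\beta}$ magnification; the semi-classical asymptotics of Hermite polynomials near the turning point are well-developed in the literature, but the uniformity in a level-shift of order $N^{2/3}$ requires some care. A secondary point is to truncate the polymer sum over spectral modes to a finite window: here the superexponential decay of $\Ai$ on $\R_+$ bounds the tail contribution, while near $y=0$ the double vanishing $\Ai(\a_i+y)\Ai(\a_j+y)\sim \Ai'(\a_i)\Ai'(\a_j)\,y^2$ controls the $1/y$ singularity. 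Once those uniform estimates are in place, the Gaussian limit of finite-dimensional distributions reduces to a routine convergence-of-covariance computation.
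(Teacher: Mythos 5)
Your proposal captures the correct overall architecture of the paper's proof: the $\beta\to\infty$ Laplace expansion around the Hermite array yields a Gaussian corners process (Section \ref{Section_GinftyE}); this process has an additive-polymer representation with independent innovations (Theorems \ref{Theorem_inftyCorners_RW}--\ref{Theorem_GinftyE_RW}); the one-step transitions are diagonalized in a polynomial basis with eigenvalues $1-\tfrac{m+1}{k+1}$ (Theorem \ref{Theorem_diagonalization_of_transition}); composing them along $\sim N^{2/3}$ levels gives $e^{-y|t-s|}$; summing the innovation variances over levels produces the $1/y$ resolvent factor; and the covariance becomes a Riemann sum converging to \eqref{eq_Edge_limit_covariance}. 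All of this is essentially the paper's route.

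However, there is a genuine gap at the technical heart of the argument, precisely where you invoke ``Plancherel--Rotach asymptotics.'' The eigenvectors of the transition operator are \emph{not} Hermite polynomials: they are the polynomials $Q^{(k)}_m$ of Definition \ref{Definition_new_ortho}, which are the discrete orthogonal polynomials for the uniform weight on the roots of $H_k$, equivalently the \emph{associated} Hermite polynomials $H_m^{(k-m)}$ of \cite{AW}. The statement you need is that $Q^{(k)}_m(x_{k+1-i}^k)$, evaluated at the $i$-th largest root of $H_k$, after normalization converges to $\Ai(\a_i + m/k^{1/3})/\Ai'(\a_i)$ as $k\to\infty$, uniformly in $m\lesssim k^{1/3}$, with tail bounds for larger $m$. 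That is Theorem \ref{Theorem_Q_to_Airy}, and it does not follow from classical Plancherel--Rotach asymptotics for $H_k$: $Q^{(k)}_m$ is an orthogonal polynomial supported on a discrete set whose edge points accumulate like Airy zeros (i.e.\ not like a lattice), which, as the paper points out, puts it outside the scope of the standard discrete Riemann--Hilbert literature. Either one shows that the three-term recurrence \eqref{eq_tridiag_dual_Hermite} for $Q^{(k)}_m$ converges to the Airy differential equation together with quantitative control on both solutions, or one applies steepest descent to the Askey--Wimp generating-function contour integral \eqref{eq_Hermite_gen}; the paper does both. This is the step you must supply — it is not a citation to known turning-point asymptotics of $H_k$.

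Two smaller inaccuracies: first, the Markov decomposition in the polymer runs in the opposite direction from what you wrote — the conditional law of $\vxi_k$ given $\vxi_{k+1}$ (lower given higher) is the one with independent Gaussian innovations, so $\vxi_k = A_k\vxi_{k+1}+\veta_k$, and the random walk runs from level $k$ upward. Second, the global factor $2$ in \eqref{eq_Edge_limit_covariance} originates in the innovation variance $\Var\eta^\ell_b=2/(\ell+1)$, not in the slope $2$ of $\kappa(t)$; that slope actually cancels between the level-to-$\lambda$ Jacobian and the $\int e^{-2y\lambda}d\lambda=1/(2y)$ computation.
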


Notably, for the Dyson Brownian Motion the limit turns out to be the same.  More specifically, while the $t$ parameter in Theorem \ref{Theorem_Gcorners_limit_intro} refers to the difference in the size of a submatrix, in Theorem \ref{Theorem_DBM_limit_intro} below the size of the matrix is fixed and $t$ is time in the stochastic evolution. And still we are getting the same limit behavior.\footnote{We conjecture that the same is true for each $\beta>0$: if we remove $\lim_{\beta\to\infty}$ from Theorems \ref{Theorem_Gcorners_limit_intro} and \ref{Theorem_DBM_limit_intro}, then the $N\to\infty$ limits should still coincide. Heuristically, one reason is that transition probabilities for the dynamics in both theorems can be obtained by specializations and limits from (skew) Jack polynomials, see \cite{GS_DBM}.}

\begin{theorem} \label{Theorem_DBM_limit_intro} Suppose that the $N$--dimensional dynamics $\bigl(X_i(t)\bigr)_{i=1}^N$ solves \eqref{eq_DBM} with $X_1(0)=\dots=X_N(0)=0$.
In addition, for each $k=1,2,\dots$, let $x_1^k<x_2^k<\dots<x_k^k$ be the roots of the degree $k$ Hermite polynomial and set
$
\tau(t)=1+  2t N^{-1/3}.
$
 Then we have the following limit in the sense of convergence of finite-dimensional distributions of the two-dimensional stochastic process:
 $$
  \lim_{N\to\infty} \lim_{\beta\to\infty} N^{1/6} \sqrt{\beta} \biggl(X_{N+1-i}(t)-\left(\tau(t)\tfrac{\beta}{2}\right)^{1/2} x^{N}_{N+1-i}\biggr)=\mathfrak Z(i,t), \quad i\in\mathbb Z_{>0}, t\in \mathbb R.
 $$
\end{theorem}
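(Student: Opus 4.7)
My plan is to take the two limits in order: $\beta\to\infty$ at fixed $N$, then $N\to\infty$. For the inner limit I would write $X_i(t)=h_i(t)+\beta^{-1/2}\zeta_i(t)+o(\beta^{-1/2})$, where $h_i(t)$ is the deterministic $\beta=\infty$ trajectory. The $\beta=\infty$ ODE $\dot h_i=\sum_{j\ne i}(h_i-h_j)^{-1}$ started from $0$ admits the self-similar solution $h_i(t)=\sqrt{t}\,x^N_i$, consistent with the identity $x^N_i/2=\sum_{j\ne i}(x^N_i-x^N_j)^{-1}$ that follows from the Hermite ODE $H_N''-xH_N'+NH_N=0$. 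Substituting the expansion into \eqref{eq_DBM} and collecting the $\beta^{-1/2}$ order yields a closed linear Gaussian SDE
\begin{equation*}
\d\zeta_i = -\frac{1}{t}\sum_{j\ne i}\frac{\zeta_i-\zeta_j}{(x^N_i-x^N_j)^2}\,\d t + \sqrt{2}\,\d W_i, \qquad \zeta_i(0)=0,
\end{equation*}
or in matrix form $\d\zeta = -(A/t)\zeta\,\d t + \sqrt{2}\,\d W$, with $A_{ii}=\sum_{j\ne i}(x^N_i-x^N_j)^{-2}$ and $A_{ij}=-(x^N_i-x^N_j)^{-2}$ for $i\ne j$. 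The convergence $\sqrt{\beta}\,(X_i(t)-h_i(t))\to \zeta_i(t)$ on compact time intervals can be obtained by a Gronwall/stability argument for SDEs, in the spirit of \cite{DE_large_beta,GM2018}.

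Next I would solve the linear SDE explicitly by variation of parameters: $\zeta(t)=\sqrt{2}\int_0^t (s/t)^A\,\d W(s)$, giving the centred Gaussian covariance
\begin{equation*}
\Cov(\zeta(s),\zeta(t)) \;=\; 2\min(s,t)\,(2A+I)^{-1}\,\bigl(\min(s,t)/\max(s,t)\bigr)^{A}.
\end{equation*}
Substituting $s=\tau(t_1)$, $t=\tau(t_2)$ and using $\log(\tau(t_1)/\tau(t_2))=-2|t_1-t_2|N^{-1/3}+O(N^{-2/3})$, this simplifies to
\begin{equation*}
\Cov\bigl(\zeta(\tau(t_1)),\zeta(\tau(t_2))\bigr)\;=\;2\,(2A+I)^{-1}\exp\bigl(-2|t_1-t_2|\,N^{-1/3}\,A\bigr)\,(1+o(1)).
\end{equation*}

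The remaining and technically most delicate step is the $N\to\infty$ edge spectral analysis of $A$. Decomposing $A=\sum_k \lambda_k v_k v_k^{\top}$ (the full spectrum is $\lambda_k=k/2$ for $k=0,1,\dots,N-1$, as can be verified by direct computation in small cases using Hermite identities), only eigenmodes with $\lambda_k\sim N^{1/3}$ contribute to matrix elements at edge indices $N+1-i$ and $N+1-j$. The key input is that, setting the continuous spectral variable $y=2N^{-1/3}\lambda_k$, the normalized value $v_k(N+1-i)$---expressible through the ``associated Hermite polynomials'' central to this paper---admits a Plancherel--Rotach-type edge expansion $v_k(N+1-i)\approx N^{-1/6}\,\Ai(\a_i+y)/\Ai'(\a_i)$, using the edge asymptotic $x^N_{N+1-i}\approx 2\sqrt{N}+c\,\a_i\,N^{-1/6}$ of Hermite zeros. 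Turning the spectral sum into a Riemann integral over $y\in(0,\infty)$ with spacing $\Delta y=N^{-1/3}$, the factor $(2\lambda_k+1)^{-1}$ contributes $1/y$ while the exponential contributes $e^{-|t_1-t_2|y}$, producing exactly the kernel of \eqref{eq_Edge_limit_covariance} after multiplication by $N^{1/3}$. Since $\zeta$ is Gaussian, this pointwise covariance convergence upgrades to convergence of finite-dimensional distributions.

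The hard part will be this last step: obtaining sufficiently sharp and uniform edge asymptotics for the eigenvectors of the Calogero--Sutherland Hessian $A$. The required Airy/associated-Hermite spectral calculus is essentially the same as that underlying the Airy$_\infty$ line ensemble introduced earlier in the paper and used for Theorem~\ref{Theorem_Gcorners_limit_intro}; I would therefore either reuse that machinery directly or, alternatively, bootstrap from Theorem~\ref{Theorem_Gcorners_limit_intro} by constructing a joint coupling of the two natural 2d extensions of the G$\beta$E at $\beta=\infty$.
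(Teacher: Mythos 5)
Your proposal follows essentially the same route as the paper: after the $\beta\to\infty$ reduction to the linear SDE \eqref{eq_DBM_infinity}, the paper solves it by variation of parameters exactly as you suggest (Theorem \ref{Theorem_DBM_infty_solution}), establishes your claim that the drift matrix $A$ has spectrum $\{m/2\}_{m=0}^{N-1}$ with the associated Hermite polynomials $Q^{(m)}_N$ as eigenvectors (Theorem \ref{Theorem_digonalization_DBM}), writes the covariance in your spectral form (Lemma \ref{Lemma_DBM_infty_covariance}), and passes to the $N\to\infty$ limit via exactly the edge Plancherel--Rotach asymptotics $Q^{(m)}_N(x^N_{N+1-i})/\|Q^{(m)}_N\| \sim N^{1/3}\,\Ai(\a_i+m/N^{1/3})/\Ai'(\a_i)$ that you identify as the ``hard part'' (Theorem \ref{Theorem_Q_to_Airy}). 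The two steps you leave to reference or direct computation are precisely the new technical content the paper supplies, so your plan is a correct outline of the paper's own argument rather than an alternative one.
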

\begin{remark}
In both Theorems \ref{Theorem_Gcorners_limit_intro} and \ref{Theorem_DBM_limit_intro} we deal with an iterative limit, i.e.\ we first send $\beta\to\infty$ and then $N\to\infty$. One could expect  that the joint limit $N,\beta\to\infty$ is the same, yet we do not prove such results in this text.
\end{remark}

The limiting process $\mathfrak Z(i,t)$ can be defined in such a way that for each fixed $i=1,2,\dots$, it becomes an almost surely continuous function of $t$, see Section \ref{Section_continuity} for a proof and Figure \ref{Fig_limit_object} for a simulation. While we are not going to provide details in this direction, we expect that convergence in Theorems \ref{Theorem_Gcorners_limit_intro} and \ref{Theorem_DBM_limit_intro} can be upgraded to convergence in law in an appropriate space of continuous functions.

 \begin{figure}[t]
\begin{center}
\includegraphics[width=0.44\linewidth]{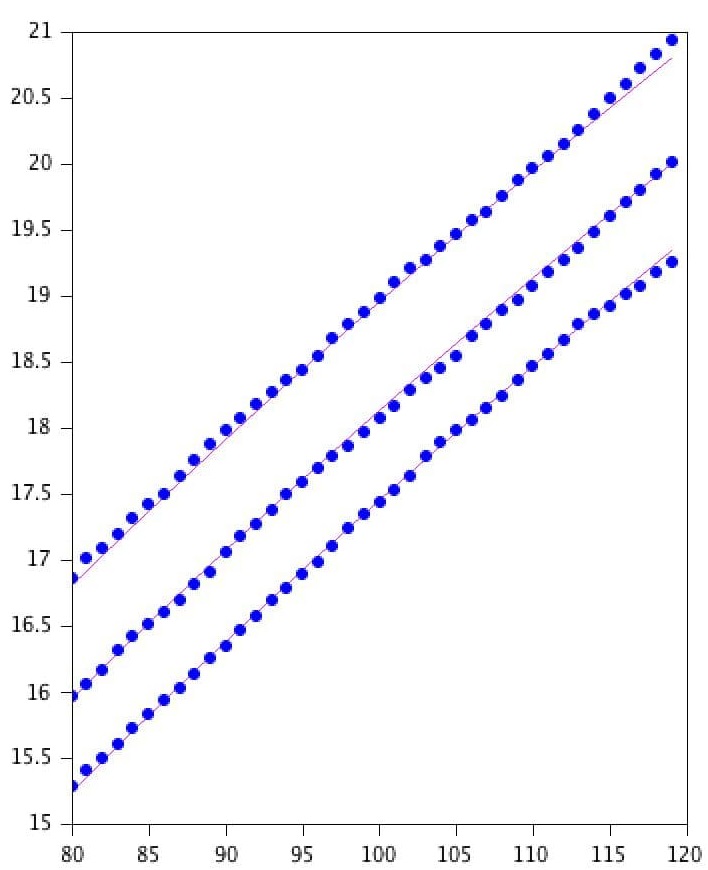}\qquad\qquad
\includegraphics[width=0.45\linewidth]{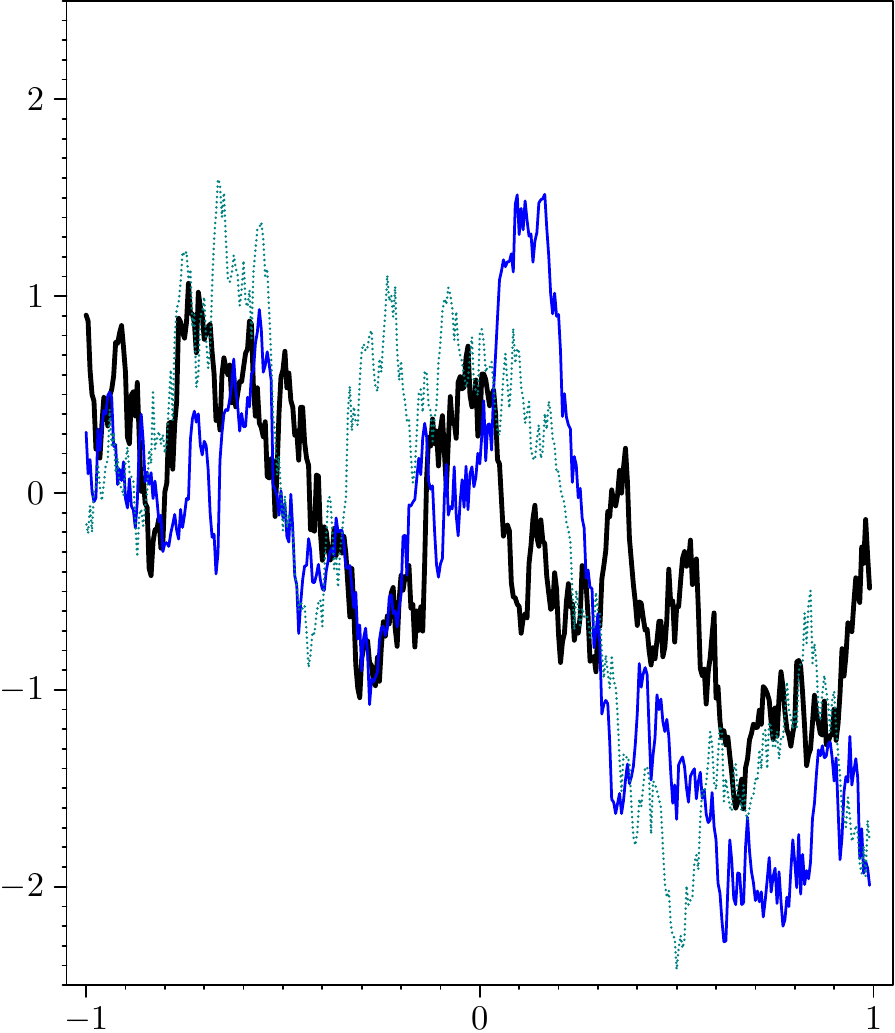}
 \caption{Left panel: Bullets show random sample of the three largest eigenvalues in the Gaussian $\beta$-corners process for corners of size $k=80,\dots,119$ and with $\beta=50$. Thin lines are corresponding roots of the Hermite polynomials. Right panel: A random sample of the limiting process $\mathfrak Z(i,t)$ for $-1\le t \le 1$; black think line for $i=1$, blue solid line for $i=2$, cyan dotted line for $i=3$. \label{Fig_limit_object}}
\end{center}
\end{figure}

In addition to the explicit formula for the covariances \eqref{eq_Edge_limit_covariance} we develop an equivalent stochastic point of view on the limiting process  $\mathfrak Z(i,t)$, $i\in\mathbb Z_{>0}$, $t\in \mathbb R$, appearing in Theorems \ref{Theorem_Gcorners_limit_intro} and \ref{Theorem_DBM_limit_intro}.
For that we consider a continuous time homogeneous Markov chain $\mathcal X_{(x_0)}(t)$, $t\ge 0$, taking values in state space $\mathbb Z_{>0}$. The initial value is $x_0\in\mathbb Z_{>0}$, i.e.\ $\mathcal X_{(x_0)}(0)=x_0$. For $i, j\in \mathbb Z_{>0}$ we define the intensity of the jump from $i$ to $j$ to be:
$$
 Q(i\to j)=\frac{2}{(\a_i-\a_j)^2}.
$$
 The transitional probabilities $P_t(i\to j)$ for this Markov chain can be expressed through integrals of the Airy function, as we explain in Section \ref{Section_random_walk_representation}.

 Next, we take a countable collection of Brownian motions $W^{(i)}(t)$, $i\in\mathbb Z_{>0}$. For each $i=1,2,\dots$ and $t\in\mathbb R$ we can identify $\mathfrak Z(i,t)$ with the following random variable:
 \begin{equation}
 \label{eq_Z_intro}
   \mathfrak Z(i,t)=2 \, \E_{\mathcal X^{(i)}(r),\, r\ge 0} \int_{r=0}^{\infty} \d W^{(\mathcal X^{(i)}(r))}(t+r).
 \end{equation}
 In words, we start the Markov chain $\mathcal X$ from $i$ at time $t$, follow its trajectory, and collect the white noises $\dot{W}^{(j)}$ along it. $\mathfrak Z(i,t)$ is the expectation over the randomness coming from $\mathcal X$; it is still a random variable with randomness coming from the Brownian motions. Alternatively, we can view $\mathfrak Z(i,t)$ as the partition function of a directed polymer in additive Gaussian noise. The form of the expression \eqref{eq_Z_intro} is a bit vague, since it is unclear how to compute the $r$-integral, as it seems to be infinite. A more mathematically precise (but, perhaps, less elegant) form is obtained by swapping the integration and expectation signs, resulting in the following expression (see Theorem \ref{Theorem_Z_as_polymer}):
 \begin{equation}
 \label{eq_Z_intro_2}
   \mathfrak Z(i,t)=2 \sum_{j=1}^{\infty} \int_{r=t}^{\infty} P_{r-t}(i\to j) \d W^{(j)}(r).
 \end{equation}
The decay of $P_{r-t}(i\to j)$ as either $r\to\infty$ or $j\to\infty$ implies that \eqref{eq_Z_intro_2} is well-defined.

Note that the representation \eqref{eq_Z_intro_2} implies that the correlations between $\mathfrak Z(i,t)$ and $\mathfrak Z(j,s)$ are always positive. This agrees with our simulation in the right panel of Figure \ref{Fig_limit_object}, which gives a feeling of attraction between the trajectories of the particles.  In contrast, for finite $\beta$ the drift of the Dyson Brownian Motion \eqref{eq_DBM} is leading to a repulsion rather than an attraction.

We call the process $\mathfrak Z(i,t)$, $i=1,2,\dots$, $t\in\mathbb R$, the \emph{Airy$_{\infty}$ line ensemble} and we treat its definition and appearance in Theorems \ref{Theorem_Gcorners_limit_intro}, \ref{Theorem_DBM_limit_intro} as the central results of our text.

\subsection{Comparison to previous results}

Most results about the asymptotic behavior of $\beta$--ensembles are available for single level ensembles as in \eqref{eq_GbetaE}. At $\beta=1,2,4$ the detailed understanding can be achieved through the theory of determinantal/Pfaffian point processes, which encode the probabilistic information in a function of two variables called a correlation kernel. This kernel is expressed through orthogonal polynomials, which makes its asymptotics accessible. In particular, the scaling limit for the largest eigenvalues of the Gaussian Orthogonal/Unitary/Symplectic ensembles, their connections to the Airy functions and Painleve equations were developed in \cite{TW-U,TW-O,For_edge}.

 At general values of $\beta>0$ the available approach is very different. It starts from the realization of the ensemble as an eigenvalue distribution of certain tridiagonal matrices, analyzes asymptotics of these matrices, and in this way identifies the scaling limits of the largest eigenvalues with (highly non-linear) functionals of Brownian motion, see \cite{DE_tridiag,ES,RRV,GS_moments} for different faces of this approach. We refer to $\beta=1,2,4$ approach as \emph{integrable} and general $\beta>0$ one as \emph{probabilistic}. To a large extent they are disjoint and many results are hard to translate from one language into another: for instance, the match between expected Laplace transform of largest eigenvalues computed in two ways in \cite{GS_moments} gave rise to a brand new distributional identity for integrated local times of the Brownian excursion. From this perspective, our $\beta=\infty$ results are an exception, since we are able to match explicit covariance \eqref{eq_Edge_limit_covariance} of $\mathfrak Z(i,t)$ with its stochastic representation \eqref{eq_Z_intro}, \eqref{eq_Z_intro_2}.

In principle, tridiagonal matrices can be used to study certain marginals of $\mathfrak Z(i,t)$. In particular, by using this approach  \cite{DE_large_beta,EPS} produced a formula for the variance of the individual components of $\mathfrak Z(i,t)$. In other words, they present\footnote{While the article formulates the statement for all $i>0$, the supporting argument is given only for $i=1,2$. On the other hand, they also analyze the limit in different order $\lim_{\beta\to\infty} \lim_{N\to\infty}$.} a one-point version of Theorems \ref{Theorem_Gcorners_limit_intro}, \ref{Theorem_DBM_limit_intro}. Interestingly, while their formula also involves an integral of Airy function, but it is of different form than $i=j$, $t=s$ specialization of \eqref{eq_Edge_limit_covariance} --- yet, numerically both formulas output the same numbers.

When it comes to the 2d extensions of $\beta$-ensembles, many results are again available at $\beta=2$. The $N\to\infty$ limiting object for the largest eigenvalues is called the Airy Line Ensemble --- it is a determinantal point process with correlation kernel expressed through the Airy functions, and it also enjoys a Brownian Gibbs resampling property, see \cite{FN,Macedo,FNH,CH}, and \cite[Section 4.4]{Ferrari}  for the analogues of our Theorems \ref{Theorem_Gcorners_limit_intro}, \ref{Theorem_DBM_limit_intro} at $\beta=2$. For $\beta=1$ the $N\to\infty$ limit of the largest eigenvalues for a common 3d extension of the corners process \eqref{eq:beta-Gauss_corner} and the Dyson Brownian Motion \eqref{eq_DBM} was computed in \cite{Sodin}.

Outside $\beta=1,2$, the available information about joint distributions of the $N\to\infty$ limit of either corners process or the Dyson Brownian Motion is very limited. Developing proper understanding of these objects remains a major open problem\footnote{On the technical side the problem stems from the fact that tridiagonal matrices (which were instrumental in understanding limits of $\beta$-ensembles) are not compatible with 2d extensions.}. One possible approach is to give a proper mathematical meaning to $N\to\infty$ limit of the Dyson Brownian Motion SDE \eqref{eq_DBM} and to the notion of its solution, see \cite{OT} and references therein. There are still technical difficulties when analyzing largest eigenvalues through this approach outside $\beta=1,2,4$.  For the bulk limits (i.e.\ for the eigenvalues in the middle of the spectrum) such an SDE point of view was put on rigorous grounds in \cite{Tsai}\footnote{One can similarly restate the corners process \eqref{eq:beta-Gauss_corner} as a Markov chain with time coordinate given by $k$. For this process the bulk limit is also available, see \cite{NV} and \cite{Huang}.}. Yet, even after we manage to convince ourselves that SDE \eqref{eq_DBM} has a proper large $N$ limit, it would still remain unclear how to solve the limiting equations. From this point of view, Theorems \ref{Theorem_Gcorners_limit_intro}, \ref{Theorem_DBM_limit_intro} are the first results computing the precise probabilistic characteristics of $N\to\infty$ limit of joint distributions of largest eigenvalues at several times or levels outside $\beta=1,2,4$.

\smallskip

One conceptual feature which unites our $\beta=\infty$ study with classical $\beta=1,2,4$ cases is that the infinitely-dimensional limiting process gets identified through a function of finitely many variables (two variables if we speak about one-level distributions as in \eqref{eq_GbetaE} or four variables if we deal with $2d$ extensions as in \eqref{eq:beta-Gauss_corner}). However, the role of this function becomes different: at $\beta=1,2,4$ the description proceeds in terms of the correlation kernels of determinantal or Pfaffian point processes, while at $\beta=\infty$ we deal with Gaussian processes uniquely fixed by their covariances.  Still, in all the situations the limiting behavior of largest eigenvalues gets expressed through the Airy functions. A vague theoretical physics analogy suggests to call $\beta=1,2,4$ results fermionic, while our $\beta=\infty$ theorems being a bosonic counterpart.

\subsection{Universality}

\label{Section_univ}

We expect that the Airy$_{\infty}$ line ensemble appears in $\beta,N\to\infty$ regime in many other problems going well beyond Theorems \ref{Theorem_Gcorners_limit_intro}, \ref{Theorem_DBM_limit_intro}. We are not going to pursue this universality direction here, let us only mention possible setups, where the appearance of the Airy$_{\infty}$ line ensemble seems plausible:

\begin{enumerate}
  \item The corners process \eqref{eq:beta-Gauss_corner} and the Dyson Brownian Motion \eqref{eq_DBM} have a common 3d extension, which is a stochastic evolution on arrays of interlacing eigenvalues constructed in \cite{GS_DBM}. We expect that the scaling limit of the largest eigenvalues in a 2d section of such evolution along a space-like path (i.e.\ along a sequence of times and corner sizes $(t_i,k_i)$, satisfying $t_1\le t_2\le t_3\le \dots$, $k_1\ge k_2\ge k_3\ge\dots$) should converge to $\mathfrak Z(i,t)$ as $\beta,N\to\infty$. Results of this type for $\beta=1,2$ were proven in \cite{Ferrari,Sodin}.
  \item One can replace $\exp\bigl( -\tfrac{\beta}{4} (\chi_i)^2 \bigr)$ in \eqref{eq_GbetaE} by a more general potential $V(\chi_i)$ and the resulting formula would give the stationary distribution for a version of the Dyson Brownian Motion with an additional drift term (see, e.g., \cite{LLX, Adhikari_Huang} and references therein for more details on the Dyson Brownian Motion with a potential). In a slightly different direction, one can also start the Dyson Brownian Motion from more complicated initial conditions than $X_1(0)=\dots=X_N(0)=0$ which we consider. One could hope that an analogue of Theorem \ref{Theorem_DBM_limit_intro} holds in such settings under mild restrictions on $V(\chi)$ and on initial conditions.
  \item One can modify the definition of the corners process \eqref{eq:beta-Gauss_corner} by replacing {$\exp\bigl( -\tfrac{\beta}{4} (\chi_i^N)^2 \bigr)$}. The most extreme case is obtained if we remove this factor altogether and instead impose deterministic equalities $\chi_i^N=y_i$, $i=1,2,\dots,N$. At $\beta=1,2,4$ this corresponds to taking an $N\times N$ Hermitian matrix with deterministic eigenvalues and uniformly random orthonormal eigenvectors and considering the law of eigenvalues of its principal corners. In contrast to \eqref{eq:beta-Gauss_corner} the definition is not going to be consistent over varying $N$ (if we replace $N$ by $N+1$, then $\chi_i^N$ become random and can no longer be deterministic), yet we can assume that $(y_1,\dots,y_N)$ changes with $N$ in a regular way as $N\to\infty$ and then analyze the behavior of the largest eigenvalues of corners of size $\approx N\alpha$ for some $0<\alpha<1$. We expect an analogue of Theorem \ref{Theorem_Gcorners_limit_intro} to hold in such setting and present a partial result in this direction in Theorem \ref{Theorem_edge_Airy}.
\end{enumerate}

There is also a universality of a different kind, namely, the Gaussian $\beta$ corners process \eqref{eq:beta-Gauss_corner} and its $\beta=\infty$ counterpart appear as scaling limits in various setups. Let us explain this by starting from the real $\beta=1$ example. Consider a uniformly random point $(v_1,\dots,v_N)$ on the unit sphere $\mathbb S^{N-1}$ in $\mathbb R^N$. A direct computation shows that each individual squared coordinate $v_i^2$ is distributed as Beta random variable $B(\tfrac{1}{2}, \frac{N-1}{2})$, which can be then used to show that $\E(v_i)^2=\tfrac{1}{N}$, $\E(v_i)^4=\tfrac{3}{N(N+2)}$, $\E (v_i)^2 (v_j)^2=\tfrac{1}{N(N+2)}$.
 Now take an $N\times N$ Hermitian matrix $\Lambda$ with deterministic eigenvalues $\lambda_1,\dots,\lambda_N$ and uniformly random eigenvectors. The top-left matrix element $\Lambda_{11}$ can be written as
$$
 \lambda_1 (v_1)^2+ \lambda_2 (v_2)^2+\dots+\lambda_N (v_N)^2,\qquad (v_1,\dots,v_N) - \text{ uniformly random vector on } S^{N-1}.
$$
Computing the mean and variance of $\Lambda_{11}$ using the above moments of $(v_i)^2$ and using additional arguments to show the asymptotic Gaussianity, one proves the distributional convergence
$$
 \Lambda_{11}-\frac{\lambda_1+\dots+\lambda_N}{N}\approx \sqrt{\frac{1}{N+2}\left( \frac{\sum_{i=1}^N (\lambda_i)^2}{N}- \frac{\left(\sum_{i=1}^N \lambda_i\right)^2}{N^2}\right)} \cdot \mathcal N(0,2) ,\qquad N\to\infty.
$$
This result should be treated as convergence of recentered and rescaled $1\times 1$ corner of the matrix to the $1\times 1$ Gaussian Orthogonal Ensemble, whose eigenvalues are given by \eqref{eq:beta-Gauss_corner} with $\beta=1$. The procedure can be generalized in two directions: instead of $1\times 1$ we can consider arbitrary $n\times n$ corners and instead of $\beta=1$ we can consider arbitrary $\beta>0$. The result remains the same: the scaling limit is always given by the Gaussian $\beta$ corners process \eqref{eq:beta-Gauss_corner}, see \cite{Cuenca} and \cite{MM}.

Section \ref{Section_corners_limits} contains a $\beta=\infty$ version of such results. It starts from the observation of \cite{GM2018} that the process formed by eigenvalues of corners of a $N\times N$ Hermitian matrix with fixed spectrum and uniformly random eigenvectors admits a non-degenerate $\beta\to\infty$ scaling limit. This limit is an interesting $N(N-1)/2$--dimensional Gaussian process, whose components are attached to the lattice of all zeros of all derivatives of a degree $N$ real--valued polynomial. The next step is to send $N\to\infty$ and Theorem \ref{Theorem_Gaussian_limit} shows that under very mild restrictions the limit (which is a counterpart of the eigenvalue process for fixed size corners of a large matrix from the previous paragraph) is universally given by the $\beta=\infty$ version of Gaussian $\beta$ corners process \eqref{eq:beta-Gauss_corner}.

\subsection{Our methods} For the proofs we start from the computation of $\beta\to\infty$ fixed $N$ limit in \eqref{eq:beta-Gauss_corner}, following \cite{GM2018}. In the first order, individual eigenvalues at level $k$ converge to the roots of the degree $k$ Hermite polynomial, $\lim_{\beta\to\infty} \chi^k_i=x^k_i$, and we are led to study the fluctuations around these roots:
$$
 \zeta^k_i=\lim_{\beta\to\infty} \sqrt{\beta}(\chi^k_i-x^k_i).
$$
While the $N(N-1)/2$ dimensional process $\{\zeta^k_i\}_{1\le i \le k \le N}$ is Gaussian and has an explicit density (see Section \ref{Section_GinftyE}), computing its $N\to\infty$ limit is very far from being obvious: each coordinate of this process interacts with many others in a non-trivial way.

An important ingredient underlying all our results is identification of $\zeta^k_i$ with a partition function of a directed additive polymer obtained by running a random walk on roots of the Hermite polynomials and collecting white noises along the trajectories. This is a discrete version of the representation \eqref{eq_Z_intro} for $\mathfrak Z(i,t)$. Thus, our asymptotic problems are now reduced to the study of this random walk. In one time step the walker jumps from a root of the degree $k$ Hermite polynomial to a root of the degree $k+1$ Hermite polynomial with probability of a jump from $x$ to $y$ being equal to $\frac{1}{(k+1)(x-y)^2}$.

Our next step is to diagonalize the transition semigroup of the random walk. It turns out that for each $j\le k$ the transition probabilities preserve the space of polynomials of degree $\le j$ and, moreover, are explicitly diagonalized in the basis of certain polynomials $Q^{(k)}_m(z)$, $0\le m <k$. We further give two descriptions of polynomials $Q^{(k)}_m(z)$. On one hand, for fixed $k$, these are the $k$ first monic orthogonal polynomial with respect to the discrete uniform weight on the roots of the degree $k$ Hermite polynomial $H_k(z)$. On the other hand, these are the \emph{associated Hermite polynomials} first studied in \cite{AW}. The three-term recurrence (in $m$) satisfied by these polynomials is the same as the recurrence of the Hermite polynomials, but read in the opposite order.\footnote{In terminology of \cite{Boor-Saff} and \cite{Vinet-Z} $Q^{(k)}_m(z)$ are dual polynomials to $H_k(z)$.}

The formula \eqref{eq_Edge_limit_covariance} eventually arises as a limit of the expression for the covariance of $\zeta^k_i$ through polynomials $Q^{(k)}_m(z)$. In order to compute this limit, we need to compute the asymptotic of polynomials $Q^{(k)}_m(z)$ at the locations of the largest roots of the Hermite polynomials $H_k(z)$. We remark that while the asymptotic behavior of orthogonal polynomials supported on discrete sets has been studied in great detail, one typically assumes that the support of the weight function locally looks like a lattice, see, e.g., \cite{BKMM} for such results. However, in our case the largest roots of the Hermite polynomials approximate zeros of the Airy function, which are very far from forming a lattice. Hence, the type of the asymptotic of $Q^{(k)}_m(z)$ that we develop seems to be new, see Theorem \ref{Theorem_Q_to_Airy} for the exact statement and proof.

For the Dyson Brownian Motion of Theorem \ref{Theorem_DBM_limit_intro} the story is similar: again the polynomials $Q^{(k)}_m(z)$ and their asymptotic behavior play a crucial role.

\smallskip

Let us outline the directions in which our approach might generalize. The representation of the $\beta\to\infty$ limit of the corners process through a random walk collecting noises exists not only for the Gaussian ensemble \eqref{eq:beta-Gauss_corner}, but also for the process formed by the $\beta$ version of the operation of cutting corners from a Hermitian matrix with fixed spectrum and uniformly random eigenvectors discussed in the previous section. However, the general situation is complicated by two features. First, the variance of the noise becomes inhomogeneous. Second, we do not know any reasonable identification for the polynomials diagonalizing the random walk transition matrix, in particular, it is unclear, whether they are orthogonal with respect to some natural weight. On the other hand, since we already know the answers from Theorems \ref{Theorem_Gcorners_limit_intro}, \ref{Theorem_DBM_limit_intro}, it might be possible to show that they remain valid in the such a more general setting by arguing directly and probabilistically in terms of the random walk --- this would be a step toward the universality of the previous section. Simultaneously, we also expect that our representation through the random walk should be helpful in studying other joint limits as $\beta,N\to\infty$, such bulk local limits or global fluctuations of the spectra.

\bigskip

Finally, let us mention two other texts which appeared almost simultaneously with our paper\footnote{The three groups of authors were working independently and without knowing about each other's projects.}. Both texts deal with the Dyson Brownian Motion \eqref{eq_DBM}. \cite{L} proves an existence theorem for the edge limit at finite values of $\beta>1$ (as in Theorem \ref{Theorem_DBM_limit_intro}, but with $\beta$ staying finite) and shows that the limit can be thought of as a solution to an $N=\infty$ version of \eqref{eq_DBM}. The approach of \cite{L} does not give explicit formulas  for the edge limit and it is unclear whether our $\mathfrak Z(i,t)$ can be identified directly by sending $\beta\to\infty$ in the results of \cite{L} --- this is an interesting open question. \cite{AHV} computes the \emph{fixed} time edge limit of the $\beta=\infty$ Dyson Brownian Motion providing a different approach to the asymptotic results of \cite{DE_large_beta, EPS}; in other words, \cite{AHV} covers the intersection of Theorems \ref{Theorem_Gcorners_limit_intro} and \ref{Theorem_DBM_limit_intro} corresponding to the  $t=0$ marginal. The associated Hermite polynomials also appear in \cite{AHV}, but in a different way: in our work they diagonalize transition matrices, while in \cite{AHV} they are eigenfunctions of fixed time covariance matrices. We also remark that \cite[Section 6]{AHV} makes a step in the universality direction of Section \ref{Section_univ} by analyzing the $N,\beta\to\infty$ limits of the Laguerre ensemble which can be obtained from \eqref{eq_GbetaE} by replacing  $\exp\bigl( -\tfrac{\beta}{4} (\chi_i)^2 \bigr)$ with another weight function.

\subsection*{Acknowledgements} We are grateful to Alan Edelman for fruitful discussions about $\beta$--ensembles and to Alexei Zhedanov for bringing \cite{Vinet-Z} to our attention. We thank two anonymous referees for helpful comments.
  The work of V.G.\ was partially supported by NSF Grants DMS-1664619, DMS-1949820,  by BSF grant 2018248, and by the Office of the Vice Chancellor for Research and Graduate Education at the University of Wisconsin--Madison with funding from the Wisconsin Alumni Research Foundation.

The work of V.K. was partially supported by ANR Gromeov (ANR-19-CE40-0007), by Centre Henri Lebesgue (ANR-11-LABX-0020-01), as well as by the Laboratory of Dynamical Systems and Applications NRU HSE, of the Ministry of science and higher education of the RF grant ag.\ No.\ 075-15-2019-1931.

\section{$\beta=\infty$ multilevel ensembles}

\label{Section_inf_ensembles}

The goal of this section is to define the $\beta\to\infty$ fixed $N$ limits of the multidimensional objects of general $\beta$ random matrix theory: $\beta$--corners processes and the Dyson Brownian Motion.

\subsection{$\infty$-corners process} \label{Section_infty_corners}

Take an $N\times N$ random Hermitian matrix with fixed spectrum $x_1^N,\dots,x_N^N$ and uniformly
random eigenvectors\footnote{Equivalently, we deal with the uniform measure on all Hermitian matrices with fixed spectrum $x_1^N,\dots,x_N^N$.}. Let $x_i^k$, $i\le k \le N-1$, be the $i$th eigenvalue of the $k\times k$
top--left corner of this matrix. This procedure can be done for real, complex, or quaternion matrix
elements (corresponding to $\beta=1,2,4$, respectively, see \cite{Neretin} for the modern proof), resulting in the joint laws for the array
$\{\chi_i^k\}_{1\le i\le k\le N-1}$ given by the density with respect to the  Lebesgue measure
\begin{equation}\label{eq:beta-corner}
\frac{1}{Z_{N,\beta}}
  \prod_{k=1}^{N-1} \left[\prod_{1\le i<j\le k} (\chi_j^k-\chi_i^k)^{2-\beta}\right] \cdot \left[\prod_{a=1}^k \prod_{b=1}^{k+1}
 |\chi^k_a-\chi^{k+1}_b|^{\beta/2-1}\right],
\end{equation}
where $Z_{N,\beta}$ is the normalizing constant, and the eigenvalues $\chi_i^k$ satisfy the
deterministic inequalities $\chi_i^{k+1}\le \chi_{i}^k\le \chi_{i+1}^{k+1}$ for all $1\le i \le k \le N-1$.

While our ultimate interest is in $N\to\infty$ asymptotics of \eqref{eq:beta-corner}, it was
noticed in \cite{GM2018} that a simpler object can be obtained if we first send $\beta\to\infty$
while keeping $N$ fixed. Namely, as $\beta\to\infty$, the values $\{\chi_i^k\}$ become deterministic
(``crystallize''), tending to an array $\{x_i^k\}$. The latter can be computed recursively using the
relation $P_{k-1}(x)= \frac{1}{k} P_k'(x)$, where $P_k(x)=\prod_{j=1}^k (x-x_j^k) $ is the
characteristic polynomial for the \emph{limiting} level~$k$ eigenvalues\footnote{Thus, the polynomials $P_k(x)$ form an Appell sequence.}.  Recentering around these
limiting values and renormalizing by $\sqrt{\beta}$ we arrive at the $\infty$--corners
process. This is a Gaussian process
$$\{\xi_i^k\}_{1\le i \le k \le N}=\lim_{\beta\to\infty} \Bigl\{ \sqrt{\beta} (\chi_j^k-x_j^k)\Bigr\}_{1\le i \le k \le N},$$ where
$\xi_1^N=\xi_2^N=\dots=\xi^N_N=0$, and the other coordinates (see~\cite[Eq.~(11)]{GM2018}) have the
common density proportional to
\begin{equation}\label{eq:xi}
 \exp\left(\sum_{k=1}^{N-1} \left[ \sum_{1\le i <j \le k} \frac{ (\xi_i^k-\xi_j^k)^2}{2 (x_i^k-x_j^k)^2}
 - \sum_{a=1}^k \sum_{b=1}^{k+1} \frac{ (\xi_a^k-\xi_b^{k+1})^2 }{4 (x_a^k-x_b^{k+1})^2}
 \right]\right).
\end{equation}

\subsection{Gaussian $\infty$--corners process} \label{Section_GinftyE}

A special role in our exposition is played by the Gaussian $\infty$--corners process\footnote{Note the double meaning of the word Gaussian here. The process is a Gaussian vector and it also arises as a limit of eigenvalues of Gaussian matrices.}, in which
the polynomials $P_k(x)=H_k(x)$ are the Hermite polynomials and the top row
$\xi_1^N,\xi_2^N,\dots,\xi^N_N$ is also random rather than deterministically vanishing. This object
can be obtained as $\beta\to\infty$ limit of the corners process constructed from the Gaussian
$\beta$--ensemble, which is a distribution on arrays $\{\chi_i^k\}_{1\le i\le k\le N}$, obtained from \eqref{eq:beta-corner} by making the top row random and distributed according to the Gaussian $\beta$--ensemble \eqref{eq_GbetaE}. The distribution of the full array $\{\chi_i^k\}_{1\le i\le k\le N}$ was given in \eqref{eq:beta-Gauss_corner}. Recentering $\chi_i^k$ around the zeros of the Hermite polynomials, multiplying by $\sqrt{\beta}$ and sending $\beta\to\infty$ we get the Gaussian $\infty$--corners process. For one level the link to the zeros of the Hermite polynomials is classical, see \cite[Section 6.7]{Szego}, \cite{K-LLN}, while the second order Gaussianity was investigated in \cite{DE_large_beta}. The multilevel result is obtained through a straightforward Taylor expansion of \eqref{eq:beta-Gauss_corner} near its maximum given by the roots of the Hermite polynomials, cf.\  \cite[Theorem 1.6]{GM2018}.

Recasting the result of $\beta\to\infty$ limit transition, we deal with an infinite-dimensional centered Gaussian vector $\zeta_i^j$, $1\le i\le j$,
such that for each fixed $N=1,2,\dots,$ the $N(N+1)$--dimensional marginal $\{\zeta_i^j\}_{1\le i
\le j \le N}$ has density proportional to:

\begin{equation}\label{eq:zeta}
 \exp\left(-\sum_{i=1}^N \frac{(\zeta_i^N)^2}{4}+\sum_{k=1}^{N-1} \left[ \sum_{1\le i <j \le k} \frac{ (\zeta_i^k-\zeta_j^k)^2}{2 (x_i^k-x_j^k)^2}
 - \sum_{a=1}^k \sum_{b=1}^{k+1} \frac{ (\zeta_a^k-\zeta_b^{k+1})^2 }{4 (x_a^k-x_b^{k+1})^2}
 \right]\right),
\end{equation}
where $x_i^k$ is the $i$--th root ($i=1$ means the smallest) of the degree $k$ Hermite polynomial
$H_k$.

\begin{proposition} \label{Proposition_GInftyE}
 The definition \eqref{eq:zeta} is consistent: restricting $\{\zeta_i^j\}_{1\le i \le j \le N}$ to $k(k+1)/2$ coordinates $\{\zeta_i^j\}_{1\le i \le j \le k}$ gives the object
 of the same type. Further, restriction of $\{\zeta_i^j\}_{1\le i \le j \le N}$ onto $N$ particles
 $\zeta_1^N,\zeta_2^N,\dots,\zeta_N^N$ has the density proportional to
 \begin{equation}
\label{eq:zeta_projection}
 \exp\left(-\sum_{i=1}^N \frac{(\zeta_i^N)^2}{4} -\sum_{1\le i <j \le N} \frac{ (\zeta_i^N-\zeta_j^N)^2}{2 (x_i^N-x_j^N)^2}
 \right).
 \end{equation}
\end{proposition}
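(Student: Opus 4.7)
The plan is to deduce both claims from the finite-$\beta$ consistency of \eqref{eq:beta-Gauss_corner} and pass to the $\beta\to\infty$ limit. The introduction recalls two facts about \eqref{eq:beta-Gauss_corner} at finite $\beta>0$ (following Dixon's identity or Jack--polynomial branching): first, the marginal on $\{\chi_i^j\}_{1\le i\le j\le k}$ coincides with the $k$--level version of the same formula; second, the marginal on the top row $\{\chi_i^N\}_{1\le i\le N}$ coincides with the Gaussian $\beta$--ensemble \eqref{eq_GbetaE}. Applying the bijective rescaling $\chi_i^j=x_i^j+\zeta_i^j/\sqrt{\beta}$ and sending $\beta\to\infty$, the resulting laws converge weakly to the Gaussian law \eqref{eq:zeta}, as recorded in Section~\ref{Section_GinftyE}. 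Since marginalization commutes with this linear rescaling and with weak limits, both claims of the proposition reduce to the identification of the $\beta\to\infty$ limit of \eqref{eq_GbetaE} under centering and rescaling.

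For claim~1, apply the finite-$\beta$ consistency to the sub-array $\{\chi_i^j\}_{1\le i\le j\le k}$ and then take $\beta\to\infty$: the marginal of \eqref{eq:zeta} onto the first $k$ levels equals the limit of the marginal of \eqref{eq:beta-Gauss_corner}, which equals the limit of the $k$--level version of \eqref{eq:beta-Gauss_corner}, which in turn is the $k$--level version of \eqref{eq:zeta}. No further calculation is needed.

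For claim~2, substitute $\chi_i=x_i^N+\zeta_i^N/\sqrt{\beta}$ in \eqref{eq_GbetaE} and Taylor--expand:
\[
 \beta\sum_{i<j}\log(\chi_j-\chi_i)=\beta\sum_{i<j}\log(x_j^N-x_i^N)+\sqrt{\beta}\sum_{i<j}\frac{\zeta_j^N-\zeta_i^N}{x_j^N-x_i^N}-\sum_{i<j}\frac{(\zeta_j^N-\zeta_i^N)^2}{2(x_j^N-x_i^N)^2}+o(1),
\]
\[
 -\tfrac{\beta}{4}\sum_i\chi_i^2=-\tfrac{\beta}{4}\sum_i(x_i^N)^2-\tfrac{\sqrt{\beta}}{2}\sum_i x_i^N\,\zeta_i^N-\tfrac{1}{4}\sum_i(\zeta_i^N)^2.
\]
The coefficient of $\sqrt{\beta}$ regroups as $\sum_i\zeta_i^N\bigl(\sum_{j\ne i}\tfrac{1}{x_i^N-x_j^N}-\tfrac{1}{2}x_i^N\bigr)$, and vanishes by the identity $\sum_{j\ne i}\tfrac{1}{x_i^N-x_j^N}=H_N''(x_i^N)/(2H_N'(x_i^N))=x_i^N/2$, the last equality being the Hermite ODE $H_N''-xH_N'+NH_N=0$ evaluated at a root. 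What remains in the $\zeta$--dependent exponent is exactly the quadratic form in \eqref{eq:zeta_projection}.

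The only substantive step is the cancellation at order $\sqrt{\beta}$: it expresses the fact that the Hermite roots are the unique stationary configuration of the log-potential $\sum_{i<j}\log|x_i-x_j|-\tfrac{1}{4}\sum_i x_i^2$, which is simultaneously the reason that $\chi_i^N\to x_i^N$ deterministically as $\beta\to\infty$ and that the centered and $\sqrt{\beta}$--rescaled eigenvalues admit a nondegenerate Gaussian limit. Everything else is routine Gaussian bookkeeping. A more self-contained alternative would be to integrate $\zeta^{N-1},\ldots,\zeta^1$ out of \eqref{eq:zeta} directly by Gaussian elimination; this would sidestep the detour through finite $\beta$ but would require a substantially longer and less illuminating algebraic computation.
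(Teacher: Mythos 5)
Your proof follows essentially the same route as the paper: both claims are deduced by sending $\beta\to\infty$ in the finite-$\beta$ consistency of \eqref{eq:beta-Gauss_corner} (in the level index $N$) and in the projection of \eqref{eq:beta-Gauss_corner} onto \eqref{eq_GbetaE}, under the substitution $\chi_i^j=x_i^j+\zeta_i^j/\sqrt{\beta}$. The only difference is that you spell out the Taylor expansion and the order-$\sqrt{\beta}$ cancellation via the Stieltjes identity for Hermite roots, whereas the paper simply delegates these computations to \cite{GM2018} and \cite{GS_DBM}.
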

\begin{proof}
 Following \cite{GM2018}, the formula \eqref{eq:zeta} is obtained as $\beta\to \infty$ limit of the density of the Gaussian $\beta$ corners process of  \cite[Definition 1.1]{GS_DBM}
 at  $t=\tfrac{2}{\beta}$  and the consistency becomes the corollary of the consistency of the latter Definition
 1.1. Similarly, \eqref{eq:zeta_projection} is $\beta\to\infty$ limit of the density of the Gaussian
 $\beta$ ensemble; it is a projection of \eqref{eq:zeta} as $\beta\to\infty$ limit of the fact that the
 Gaussian $\beta$ corners process projects to the Gaussian $\beta$ ensemble, which can be found in \cite[Corollary
 5.4]{GS_DBM}.
\end{proof}

\subsection{Dyson Brownian Motion at $\beta=\infty$} \label{Section_DBM_infty} Recall that the Dyson Brownian Motion (see, e.g., \cite[Chapter 9]{Mehta}, \cite[Section 4.3]{AGZ}) is an $N$--dimensional stochastic process with coordinates
$X_1(t)\le X_2(t)\le \dots \le X_N(t)$, $t\ge 0$, defined as a solution to the system of SDEs
\begin{equation}
\label{eq_DBM_main} \d X_i(t) = \sum_{j\ne i} \frac{\d t}{X_i(t)-X_j(t)} + \sqrt{\frac{2}{\beta}}\, \d W_i(t), \quad i=1,2,\dots,N,\quad t\ge 0,
\end{equation}
where $W_1(t),\dots, W_N(t)$ is a collection of independent standard Brownian motions. The evolution \eqref{eq_DBM_main} should be supplied with initial conditions and in this text we are going to only consider the case $X_1(0)=X_2(0)=\dots=X_N(0)=0$. In this situation the distribution of the solution to \eqref{eq_DBM} at a fixed time $t$ is (a rescaled version of) the Gaussian $\beta$ ensemble of density
\begin{equation} \label{eq_GbetaE_t}
 \prod_{1\le i < j \le N} (\chi_j-\chi_i)^{\beta} \prod_{i=1}^N \exp\bigl( -\tfrac{\beta }{4t} (\chi_i)^2 \bigr).
\end{equation}
Since we are ultimately interested in $\beta\to\infty$ limit, we can assume $\beta\ge 1$; in this situation  \eqref{eq_DBM_main} has a unique strong solution, see \cite[Section 4.3]{AGZ}. Hence, we deal with a pair of $N$--dimensional stochastic processes $( X_i(t); W_i(t))_{i=1}^N$, $t\ge 0$, such that $(W_i(t))_{i=1}^N$ is the standard Brownian motion, for each $t>0$ the law of $(X_i(t))_{i=1}^N$ is given by \eqref{eq_GbetaE_t} (in particular $X_i(0)=0$), and $(X_i(t))_{i=1}^N$ is the unique strong solution to \eqref{eq_DBM} on $t\in [0,+\infty)$ time interval.

\begin{theorem} \label{Theorem_DBM_beta_limit}
 Fix $N$ and let $X_1(t)\le X_2(t)\le \dots \le X_N(t)$ be the solution to \eqref{eq_DBM_main} with $X_1(0)=X_2(0)=\dots=X_N(0)=0$ and let $x_1^N< x_2^N<\dots<x_N^N$ be the roots of the degree $N$ Hermite polynomial. Define
 \begin{equation}
 \label{eq_DBM_large_beta_limit}
  \zeta^N_i(t)=\lim_{\beta\to\infty} \sqrt{\beta}\left(X_i(t)-\sqrt{t}\, x_i^N\right).
 \end{equation}
 Then the $N$--dimensional (Gaussian) vector $(\zeta^N_1(t), \dots, \zeta_N^N(t))$ solves a linear SDE
 \begin{equation}
 \label{eq_DBM_infinity}
  \d \zeta^N_i(t) = - \sum_{j \ne i} \frac{ \zeta^N_i(t)- \zeta^N_j(t)}{t(x_i^N-x_j^N)^2} \d t + \sqrt{2}\, \d W_i(t), \quad t\ge 0,
 \end{equation}
 with initial condition $\zeta^N_1(0)=\dots=\zeta^N_N(0)=0$. The convergence in \eqref{eq_DBM_large_beta_limit} is in law in the space of $N$--dimensional continuous functions on each interval $t\in [t_1,t_2]$ with $0<t_1<t_2$, and joint with the law of $W_i(t)$, $t \ge 0$, $1\le i \le N$ (the latter does not depend on $\beta$).
\end{theorem}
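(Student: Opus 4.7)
The plan is to substitute the ansatz $X_i(t)=\sqrt{t}\,x_i^N+\beta^{-1/2}\zeta^{N,\beta}_i(t)$ directly into the Dyson SDE \eqref{eq_DBM_main} and expand in powers of $\beta^{-1/2}$, matching orders. The key algebraic input is the Hermite root identity
\begin{equation*}
\frac{x_i^N}{2}=\sum_{j\ne i}\frac{1}{x_i^N-x_j^N},
\end{equation*}
which follows from the ODE $H_N''(x)=xH_N'(x)-NH_N(x)$ evaluated at $x=x_i^N$ combined with the Taylor expansion of $H_N'/H_N$ near the simple root $x_i^N$. It makes the leading $O(\sqrt{\beta})$ drift $\sqrt{\beta}\sum_{j\ne i}\frac{1}{\sqrt{t}(x_i^N-x_j^N)}$ of \eqref{eq_DBM_main} cancel exactly against $\sqrt{\beta}\,\frac{x_i^N}{2\sqrt{t}}=\sqrt{\beta}\,\frac{d}{dt}(\sqrt{t}\,x_i^N)$.

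Continuing the expansion of $\frac{1}{X_i-X_j}$ to next order in $\beta^{-1/2}$ (valid as long as the coordinates of $\zeta^{N,\beta}$ stay bounded, since the spacings $|x_i^N-x_j^N|$ are deterministic and nonzero), the SDE satisfied by $\zeta^{N,\beta}_i(t):=\sqrt{\beta}(X_i(t)-\sqrt{t}\,x_i^N)$ becomes
\begin{equation*}
d\zeta^{N,\beta}_i=-\frac{1}{t}\sum_{j\ne i}\frac{\zeta^{N,\beta}_i-\zeta^{N,\beta}_j}{(x_i^N-x_j^N)^2}\,dt+\sqrt{2}\,dW_i+R^\beta_i(t)\,dt,
\end{equation*}
where $R^\beta_i(t)=O(\beta^{-1/2})$ uniformly on events of bounded $\zeta^{N,\beta}$, and the noise coefficient $\sqrt{2/\beta}\cdot\sqrt{\beta}=\sqrt{2}$ matches \eqref{eq_DBM_infinity} exactly.

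To conclude, I would observe that \eqref{eq_DBM_infinity} has a unique strong solution on any $[t_1,t_2]\subset(0,\infty)$, since its drift is linear in $\zeta$ with continuous coefficients bounded on $[t_1,t_2]$. Localizing with $\tau^\beta_M=\inf\{t\ge t_1:\max_k|\zeta^{N,\beta}_k(t)|>M\}$, the displayed SDE can be compared to the limiting one driven by the same Brownian motions $W_i$ via Gronwall, giving uniform convergence on $[t_1,t_2\wedge\tau^\beta_M]$. The initial condition at $t_1$ converges by the fixed-time large-$\beta$ result of \cite{DE_large_beta} recalled in Section \ref{Section_GinftyE}: since $X(t)\stackrel{d}{=}\sqrt{t}\,\chi$ with $\chi$ distributed as \eqref{eq_GbetaE}, we have $\zeta^{N,\beta}(t_1)\Rightarrow\sqrt{t_1}\,\zeta^N$ with $\zeta^N$ of density \eqref{eq:zeta_projection}. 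The hard part will be verifying $\sup_\beta \P(\tau^\beta_M\le t_2)\to 0$ as $M\to\infty$: the one-time tightness at each $t\in[t_1,t_2]$ is immediate from \eqref{eq_GbetaE_t}, but upgrading to a tight sup over the interval requires either a self-consistent moment bound derived from the stopped expanded SDE above or a covering of $[t_1,t_2]$ by a dense time grid combined with the Markov property of $X$. Once that uniform bound is in place, letting $M\to\infty$ after $\beta\to\infty$ removes the localization and delivers convergence in law in $C([t_1,t_2];\mathbb R^N)$ jointly with $W$.
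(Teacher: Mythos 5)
Your approach matches the paper's: same ansatz $X_i(t)=\sqrt{t}\,x_i^N+\beta^{-1/2}\zeta_i$, same Hermite root identity \eqref{eq_Hermite_var_eq} to cancel the $O(\sqrt{\beta})$ drift, same Taylor expansion of the interaction term, and the same combination of SDE convergence on $[\eps,\infty)$ with the fixed-time Gaussian limit at $t=\eps$ as initial data plus pathwise uniqueness for the limiting Lipschitz SDE. The uniform-in-$\beta$ localization step you flag as the hard part is precisely what the paper delegates to \cite[Proof of Theorem 2.2]{VW_functional_CLT}; either of your two proposed closures (a moment bound from the stopped SDE via Gronwall, or a dense time-grid argument using the Markov property and the explicit fixed-time law \eqref{eq_GbetaE_t}) is a standard way to fill it in.
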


Before coming to the proof of Theorem \ref{Theorem_DBM_beta_limit} let us look at the limiting SDE \eqref{eq_DBM_infinity}.

\begin{lemma} \label{Lemma_zeta_SDE}
 Let $\bigl(W_i(t)\bigr)_{i=1}^N$, $t\ge 0$ be a standard Brownian motion. There exists a unique stochastic process $\bigl(\zeta^N_i(t)\bigr)_{i=1}^N$, $t\ge 0$, such that for each $\eps>0$, $\bigl(\zeta^N_i(t)\bigr)_{i=1}^N$ is a strong solution to \eqref{eq_DBM_infinity} on the interval $t\in[\eps,+\infty)$ and
 $$
  \lim_{t\to 0} \zeta^N_i(t)=0, \quad \text{ in probability for each }i=1,2,\dots,N.
 $$
\end{lemma}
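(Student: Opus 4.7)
The SDE \eqref{eq_DBM_infinity} is linear, so the plan is to treat it in vector form. Write it as
$$\d\zeta(t)=-\tfrac{1}{t}A\,\zeta(t)\,\d t+\sqrt{2}\,\d W(t),$$
where $A=(A_{ij})_{i,j=1}^N$ is the symmetric matrix with $A_{ij}=-(x_i^N-x_j^N)^{-2}$ for $i\neq j$ and $A_{ii}=\sum_{j\neq i}(x_i^N-x_j^N)^{-2}$. The identity $v^{T}Av=\sum_{i<j}(v_i-v_j)^2/(x_i^N-x_j^N)^2$ shows that $A$ is positive semi-definite with one-dimensional kernel spanned by $(1,\dots,1)$; let $0=\lambda_0<\lambda_1\leq\dots\leq\lambda_{N-1}$ be its eigenvalues.

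For existence, I would use the integrating factor $t^A$ (defined via the functional calculus for the symmetric matrix $A$). Itô's formula gives
$$\d\bigl(t^A\zeta(t)\bigr)=A\,t^{A-1}\zeta(t)\,\d t+t^A\,\d\zeta(t)=\sqrt{2}\,t^A\,\d W(t),$$
so the natural candidate is
$$\zeta(t):=\sqrt{2}\,t^{-A}\int_{0}^{t}s^A\,\d W(s).$$
In an orthonormal eigenbasis of $A$ this splits into independent scalar Itô integrals of the form $\int_0^t s^{\lambda_i}\,\d B_i(s)$, which are well defined because $\int_0^t s^{2\lambda_i}\,\d s=t^{2\lambda_i+1}/(2\lambda_i+1)<\infty$. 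The resulting process is adapted to the filtration of $W$, is continuous on $(0,\infty)$, and the calculation above (reversed) confirms it is a strong solution of \eqref{eq_DBM_infinity} on each $[\eps,\infty)$. Its $i$-th eigenbasis component has variance $2t/(2\lambda_i+1)$, so $\zeta(t)\to 0$ in $L^2$, hence in probability, as $t\to 0$.

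For uniqueness, if $\widetilde{\zeta}$ is another process satisfying the conditions of the lemma and driven by the same Brownian motion $W$, then the difference $\delta:=\zeta-\widetilde{\zeta}$ has vanishing martingale part and therefore satisfies, path by path, the deterministic linear ODE $\delta'(t)=-\tfrac{A}{t}\delta(t)$ on $(0,\infty)$. Its general solution is $\delta(t)=t^{-A}c$ for a random vector $c$; in the eigenbasis of $A$ the $i$-th coordinate equals $c_i\,t^{-\lambda_i}$. For $\lambda_i>0$ this stays bounded (in probability) as $t\to 0$ only if $c_i=0$, while for $\lambda_0=0$ the coordinate is the constant $c_0$, which must also vanish because $\delta(t)\to 0$ in probability. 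Hence $\delta\equiv 0$.

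The only genuinely delicate point is the zero eigenvalue of $A$, corresponding to the centre-of-mass mode $\zeta^N_1+\dots+\zeta^N_N$: along this mode the SDE is undamped and reduces to $\sqrt{2}$ times a Brownian motion, so any constant could be added without spoiling the equation. The boundary condition $\lim_{t\to 0}\zeta^N_i(t)=0$ in probability is precisely what pins this constant down, and it is also the reason the lower limit $0$ can be taken in the Itô integral defining the candidate solution. Everything else is standard linear-SDE theory once the integrating factor $t^A$ is introduced.
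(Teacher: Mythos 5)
Your proof is correct, and it takes a genuinely different (more structural, less explicit) route than the paper. You diagonalize the drift matrix $A$ only abstractly: you observe it is symmetric positive semidefinite with a one-dimensional kernel, introduce the integrating factor $t^A$ via functional calculus, and read off existence, the $t\to0$ boundary behavior, and uniqueness from the sign pattern of the spectrum. The paper instead proves Theorem~\ref{Theorem_digonalization_DBM}, which identifies the eigenvectors of $A$ \emph{explicitly} as the values $\bigl(Q_N^{(m)}(x_i^N)\bigr)_{i=1}^N$ of the associated Hermite polynomials, with eigenvalues $\lambda_m=m/2$, and then writes the solution in closed form \eqref{eq_DBM_infinity_solution}; Lemma~\ref{Lemma_zeta_SDE} is a corollary of that explicit formula plus the observation that no nonzero solution of the homogeneous ODE stays bounded at $t=0$. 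The two arguments agree on the key structural facts (your $v^TAv=\sum_{i<j}(v_i-v_j)^2/(x_i^N-x_j^N)^2$ computation gives the PSD property and the constant-vector kernel; your eigenbasis variance $2t/(2\lambda_m+1)=2t/(m+1)$ matches the paper's Corollary~\ref{Corollary_two_forms_of_covariance}). What your approach buys is brevity and generality: it needs nothing about Hermite polynomials and would work for any symmetric PSD drift with trivial-enough kernel. What the paper's explicit diagonalization buys is precisely what the rest of the paper needs: the covariance in terms of $Q_N^{(m)}$, which is the starting point for the edge asymptotics via Theorem~\ref{Theorem_Q_to_Airy}. For the statement of the lemma alone, your proof is complete and the delicate points (the zero mode, the integrability $\int_0^t s^{2\lambda}\,\d s<\infty$, and the uniqueness argument by separating the SDE into deterministic plus martingale parts) are all handled correctly.
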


We prove Lemma \ref{Lemma_zeta_SDE} in Section \ref{Section_Inf_DBM_sol}; the solution is expressed there as a sum involving Ito integrals and orthogonal polynomials. This solution is the limiting process in Theorem \ref{Theorem_DBM_beta_limit}.

\smallskip

We expect that convergence in Theorem \ref{Theorem_DBM_beta_limit} can be upgraded to almost sure uniform convergence on each interval $t\in[0,T]$, $T>0$. Such an upgrade would need careful analysis at $t=0$, where both \eqref{eq_DBM_main} and \eqref{eq_DBM_infinity} are singular. Because eventually our interest is in large $t$ (as in Theorem \ref{Theorem_DBM_limit_intro}), we decided not to pursue this analysis here and to phrase Theorem \ref{Theorem_DBM_beta_limit} in the way avoiding $t=0$. A variant of Theorem \ref{Theorem_DBM_beta_limit} for a different initial condition can be found in \cite{VW_functional_CLT}. We also give a proof here in order to be self-contained.


\begin{proof}[Proof of Theorem \ref{Theorem_DBM_beta_limit}]
 We start by computing the first order limit $y_i(t):=\lim_{\beta\to\infty} X_i(t)$. There are several ways to do it. First, looking at \eqref{eq_GbetaE_t} we conclude that $y_1(t)<\dots<y_N(t)$ should solve the variational problem
 \begin{equation}
 \label{eq_Hermite_variational}
   \prod_{1\le i <j \le N} (y_j-y_i) \prod_{i=1}^N \exp\left(-\frac{1}{4t} (y_i)^2 \right) \to \max.
 \end{equation}
 The latter is known to be solved by rescaled zeros of the Hermite polynomials: $y_i(t)=\sqrt{t} x_i^N$. Such a variational characterization of roots
 dates back to the work of T.~Stieltjes, cf.\ \cite[Section 6.7]{Szego}, \cite{K-LLN}. We can also send $\beta\to\infty$ directly in \eqref{eq_DBM_main} concluding that $y_i(t)$ should solve
\begin{equation}
\label{eq_DBM_Hermite} \d y_i(t) = \sum_{j\ne i} \frac{\d t}{y_i(t)-y_j(t)}, \quad i=1,2,\dots,N,\quad t\ge 0; \qquad y_1(0)=\dots=y_N(0)=0.
\end{equation}
 The fact that  $y_i(t)=\sqrt{t} x_i^N$ solve \eqref{eq_DBM_Hermite} would follow once we show that
 \begin{equation}
 \label{eq_Hermite_var_eq}
  \frac{1}{2} x^N_i=\sum_{j\ne i} \frac{1}{x^N_i-x^N_j}, \quad i=1,2,\dots,N.
 \end{equation}
 The latter identity is equivalent to the vanishing of the logarithmic derivatives in each $y_i$ of \eqref{eq_Hermite_variational} at $t=1$ for the maximizing configuration $y_i=x_i^N$.

 \smallskip

 Next, let us compute the centered fixed $t$ limit of $X_i(t)$ as $\beta\to\infty$. For that we Taylor expand the (logarithm of the) density \eqref{eq_GbetaE_t} around the $N$--tuple $(\sqrt{t} x_i^N)_{i=1}^N$.
 In the same way as in Proposition \ref{Proposition_GInftyE}, this results in a limiting relation involving a rescaled version of \eqref{eq:zeta_projection}:
 \begin{equation}
 \label{eq_DBM_one_point_limit}
  \lim_{\beta\to\infty}\sqrt{\beta} \biggl(  X_i(t)- \sqrt{t} x_i^N\biggr)_{i=1}^N \stackrel{d}{=} \bigl(\sqrt{t} u_i\bigr)_{i=1}^N,
 \end{equation}
 where $(u_1,\dots,u_N)$ is a Gaussian vector with density proportional to
 $$
   \exp\left[- \sum_{1\le i<j\le N} \tfrac{1}{2 (x^N_i-x^N_j)^2} \bigl(u_i-u_j\bigr)^2-\sum_{i=1}^N \tfrac{1}{4} \bigl(u_i\bigr)^2    \right].
 $$
 Let us emphasize that \eqref{eq_DBM_one_point_limit} is a distributional limit at a fixed time $t$. In order to deduce the multi-time limit, we further write
 $$
  X_i(t)=\sqrt{t} x_i^N + \frac{1}{\sqrt{\beta}} \eta_i(t)
 $$
 and plug this into \eqref{eq_DBM_main} getting
 \begin{equation}
\label{eq_DBM_main_large_beta} \frac{1}{2\sqrt t} x_i^N \d t+  \frac{1}{\sqrt{\beta}} \d \eta_i(t)  = \sum_{j\ne i} \frac{\d t}{\sqrt{t} x_i^N -\sqrt{t} x_j^N  + \frac{1}{\sqrt{\beta}} \eta_i(t)- \frac{1}{\sqrt{\beta}} \eta_j(t) } + \sqrt{\frac{2}{\beta}}\, \d W_i(t).
\end{equation}
Further, Taylor expanding the $\d t$ term in the right-hand side in small parameter $\frac{1}{\sqrt{\beta}}$  we get
 \begin{multline*}
\label{eq_DBM_main_large_beta_2} \frac{1}{2\sqrt t} x_i^N \d t+  \frac{1}{\sqrt{\beta}} \d \eta_i(t)  \\= \sum_{j\ne i} \frac{\d t}{\sqrt{t} x_i^N -\sqrt{t} x_i^N } + \frac{1}{\sqrt{\beta}} \sum_{j\ne i} \frac{\d t(\eta_j(t)-\eta_i(t))}{t( x_i^N -x_j^N)^2}  + \sqrt{\frac{2}{\beta}}\, \d W_i(t) + O\left(\frac{1}{\beta}\right).
\end{multline*}
Using \eqref{eq_Hermite_var_eq} to cancel the first terms in the right-hand and left-hand sides, multiplying by $\sqrt{\beta}$, and sending $\beta\to\infty$ we get \eqref{eq_DBM_infinity}.

Now choose $\eps>0$. For $t\ge \eps$, the $\beta\to\infty$ convergence of the SDE that $\eta_i(t)$ satisfies towards \eqref{eq_DBM_infinity}, together with \eqref{eq_DBM_one_point_limit}, implies that $\bigl(\zeta^N_i(t)\bigr)_{i=1}^N=\lim_{\beta\to\infty} \bigl(\eta_i(t)\bigr)_{i=1}^N$,  is the solution of \eqref{eq_DBM_infinity} on time interval $t\in [\eps,+\infty)$ with initial condition given by $(\sqrt{\eps} u_i)_{i=1}^N$, cf.\ \cite[Proof of Theorem 2.2]{VW_functional_CLT} for some details. Note that such solution is unique by general theorems on SDEs with Lipshitz coefficients (see, e.g., \cite[Theorem 21.3]{Kal}).

Clearly, the initial condition $\zeta^N_i(\eps)\stackrel{d}{=}\sqrt{\eps} u_i$ for each $i$ converges to $0$ as $\eps\to 0$ in distribution and, hence, also in probability. We conclude that the limiting process $(\zeta^N_i(t))_{i=1}^N$, $t\ge 0$, is the object of Lemma \ref{Lemma_zeta_SDE}.
\end{proof}

\subsection{Asymptotic results for the corners processes}
\label{Section_corners_limits}

We presented the $N\to\infty$ asymptotic results about the Gaussian $\infty$--corners process of Section \ref{Section_GinftyE} and the $\beta=\infty$ Dyson Brownian Motion of Section \ref{Section_DBM_infty} in Theorems \ref{Theorem_Gcorners_limit_intro} and \ref{Theorem_DBM_limit_intro}, respectively. In this section we present several $N\to\infty$ asymptotic results dealing with the $\beta=\infty$--corners process $\{\xi^k_i\}$ of Section \ref{Section_infty_corners}.

The definition of the process $\xi^k_i$ relies on the (deterministic) configuration of points
$x^k_i$. Recall that we start from an
$N$--tuple $y_1\le y_2\le \dots\le y_N$, and define the monic polynomials:
\begin{equation}
 P_N(x)=\prod_{i=1}^N (x-y_i),\qquad P_k(x)=\frac{1}{N(N-1)\dots (N-k+1)} \left(\frac{\partial}{\partial
 x}\right)^{N-k} P_N(x). \label{eq_Appell_sequence}
\end{equation}
The points $x_1^k \le x_2^k \le \dots x_k^k$ are defined as $k$ (real) roots of $P_k(x)$. We study the points $x_i^k$ in three scaling regimes, which are schematically shown in Figure \ref{Fig_local_limits}

 \begin{figure}[t]
\begin{center}
\includegraphics[width=0.8\linewidth]{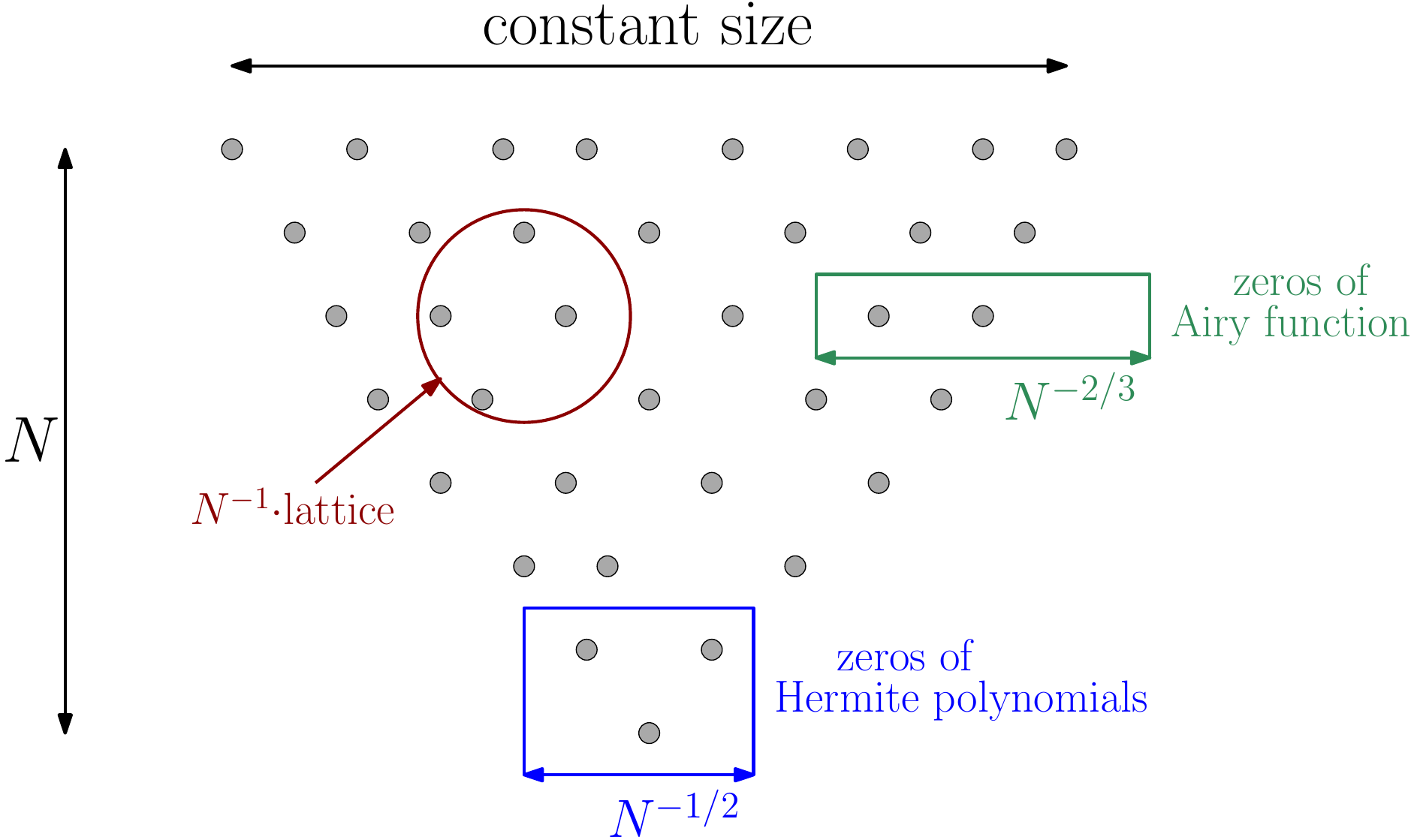}
 \caption{Three scaling regimes and limiting objects for the grid formed by zeros of derivatives of $P_N(x)$.\label{Fig_local_limits}}
\end{center}
\end{figure}

For $N$--tuples $y_1\le y_2\le \dots\le y_N$ (with each $y_i=y_i(N)$ depending on $N$, although we
omit this dependence from the notations) we introduce various quantities describing it:
\begin{itemize}
\item (Centered) moments:
$$
 \mu_N=\frac{1}{N}\sum_{i=1}^N y_i, \quad (\sigma_N)^2 =\frac{1}{N}\sum_{i=1}^{N} (y_i-\mu_N)^2,\quad
 (\kappa_N)^3=\frac{1}{N}\sum_{i=1}^N |y_i-\mu_N|^3.
$$
\item Empirical measures:
$$
 \rho_N=\frac{1}{N}\sum_{i=1}^N \delta_{y_i}.
$$
\end{itemize}

We would like to have asymptotic control on $y_i$ and for different applications we use different
topologies summarized in the following three assumptions:

\begin{assumption}\label{Assumption_Hermite}
 We have
\begin{equation}
 \lim_{N\to\infty} \frac{\kappa_N}{\sigma_N}N^{-1/6} =0.
\end{equation}
\end{assumption}
\begin{remark}
 A typical situation is that both $\sigma_N$ and $\kappa_N$ stay bounded away from $0$ and $\infty$, in which case the assumption holds automatically.
\end{remark}

\begin{assumption}\label{Assumption_weak}
 As $N\to\infty$ the measures $\rho_N$ weakly converge to a compactly supported probability measure $\rho$.
\end{assumption}

\begin{assumption}\label{Assumption_strong}
 As $N\to\infty$:
  \begin{enumerate}
 \item
  The measures $\rho_N$ weakly converge to a compactly supported probability measure
 $\rho$;
 \item The supremum of the support of $\rho$ is $B$ and
 $
  \lim_{N\to\infty} y_N=B;
 $
  \item For a constant $\vartheta>0$, which does not depend on $N$, we have $y_{N+1-i}-y_{N-i}> \vartheta/N$ for all $1\le i \le \vartheta N$;
  \item $\rho$ has a density $\rho(x)$ on $[B-\vartheta,B]$, which satisfies $\rho(x)\ge \vartheta (B-x)$ on this segment.
  \end{enumerate}
 \end{assumption}
 \begin{remark}
  The conditions in Assumption \ref{Assumption_strong} are tuned so that to guarantee the convergence in Theorem \ref{Theorem_edge_Airy} of the largest points $x_{k+1-i}^k$ to the roots of the Airy function for all the range of ratios $0<\tfrac{k}{N}<1$; these conditions will be used in Lemma \ref{Lemma_critical_point_edge}. If we only aim at small values of the ratio $\tfrac{k}{N}$, then the conditions can be significantly weakened: small $k$ has a smoothing role, which leads automatically to the necessary edge behavior.

  If we are interested in the smallest points  $x_i^k$ (rather than the largest), then we need to use similar conditions with $N+1-i$ indices replaced by $i$ and with supremum of the support $B$ replaced by the infinum $A$.
 \end{remark}

The first two results of this section explain the prominent role of the Gaussian $\infty$--corners process as a scaling limit.

\begin{theorem}\label{Theorem_Hermite_limit}
 Let $\{x_i^k\}_{1\le i \le k}$ be the roots of $P_k(x)$ as in \eqref{eq_Appell_sequence}. Under Assumption \ref{Assumption_Hermite} for each fixed $1\le i \le k$
 $$
  \lim_{N\to\infty} \frac{\sqrt{N}}{\sigma_N} \bigl(x_i^k-\mu_N\bigr)=h_i^k,
 $$
 where $h_1^k,h_2^k,\dots, h_k^k$ are $k$ roots of the degree $k$ Hermite polynomial $H_k(x)$.
\end{theorem}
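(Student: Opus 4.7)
The plan is to rephrase the conclusion as coefficient-wise convergence of a rescaled polynomial to $H_k$, and then invoke continuous dependence of simple roots on coefficients. By the affine invariance of the construction (if we replace $y_i$ with $(y_i-\mu_N)/\sigma_N$, then $x_i^k$ is replaced by $(x_i^k-\mu_N)/\sigma_N$), I first reduce to the normalized case $\mu_N = 0$, $\sigma_N = 1$, in which it suffices to show $\sqrt{N}\, x_i^k \to h_i^k$. The starting point is the identity obtained by iterating $P_{k-1}=P_k'/k$ (equivalently, by a product-rule computation of $\partial_x^{N-k}P_N$):
\[
P_k(x) = \frac{1}{\binom{N}{k}} \sum_{|S|=k} \prod_{i \in S} (x - y_i) = \sum_{m=0}^{k} (-1)^m \frac{\binom{N-m}{k-m}}{\binom{N}{k}}\, e_m(y_1, \dots, y_N)\, x^{k-m}.
\]
Setting $x = z/\sqrt{N}$, writing $r_m^N := e_m(y)/N^{m/2}$, and using $\binom{N-m}{k-m}/\binom{N}{k} = (k)_m/(N)_m$ with $(N)_m \sim N^m$, a short calculation gives
\[
N^{k/2} P_k(z/\sqrt{N}) = \sum_{m=0}^{k} (-1)^m \binom{k}{m}\, m!\, (1 + O(1/N))\, r_m^N \, z^{k-m}.
\]
It is therefore enough to show $r_m^N \to c_m$ for each fixed $m$, where the $c_m$ are the numbers that make the right-hand side equal $H_k(z)$; matching coefficients yields $c_0 = 1$, $c_{2j} = (-1)^j/(j!\, 2^j)$, and $c_m = 0$ for odd $m$.

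For the main technical step, I apply Newton's identities
\[
m\, e_m = \sum_{j=1}^{m} (-1)^{j-1}\, p_j\, e_{m-j}, \qquad p_j := \sum_{i=1}^N y_i^{\,j}.
\]
Under our normalization $p_1 = 0$ and $p_2 = N$; dividing through by $N^{m/2}$ yields the recursion
\[
m\, r_m^N = -\, r_{m-2}^N + \sum_{j=3}^{m} (-1)^{j-1}\, \frac{p_j}{N^{j/2}}\, r_{m-j}^N.
\]
Thus, by induction on $m$, the convergence $r_m^N \to c_m$ reduces to showing $p_j / N^{j/2} \to 0$ for every fixed $j \geq 3$; the limit of the recursion is $m\, c_m = -\, c_{m-2}$ with $c_0 = 1$, $c_1 = 0$, which exactly reproduces the Hermite coefficients listed above.

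To bound the higher power sums I use the elementary estimate $|p_j| \leq (\max_i |y_i|)^{j-2}\, p_2 = (\max_i |y_i|)^{j-2}\, N$ for $j \geq 2$. Since $(\max_i |y_i|)^3 \leq \sum_i |y_i|^3 = N \kappa_N^3$ (with $\sigma_N = 1$), we obtain $\max_i |y_i| \leq N^{1/3}\kappa_N$, so
\[
\frac{\max_i |y_i|}{\sqrt{N}} \leq \frac{\kappa_N}{N^{1/6}} \longrightarrow 0
\]
by Assumption \ref{Assumption_Hermite}. Consequently $|p_j|/N^{j/2} \leq (\max_i|y_i|/\sqrt N)^{j-2} \to 0$ for every $j \geq 3$, closing the induction. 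Since the rescaled polynomials $N^{k/2} P_k(z/\sqrt{N})$ converge coefficient-wise to $H_k(z)$ and $H_k$ has $k$ distinct real roots, standard root continuity (via Rouch\'e on small disks around each $h_i^k$) forces the rescaled roots $\sqrt{N}\, x_i^k$ (in increasing order) to converge to $h_1^k,\dots,h_k^k$. The main conceptual obstacle is that Assumption \ref{Assumption_Hermite} supplies only a third-moment hypothesis, whereas the Newton-identity recursion superficially requires control of every $p_j$; the max-bound shortcut above is what extracts all these estimates from the single bound on $\kappa_N$.
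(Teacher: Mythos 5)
Your proof is correct, and it takes a genuinely different route from the paper. The paper starts from the contour integral representation $P_k(y)=\binom{N}{k}^{-1}\frac{1}{2\pi\ii}\oint_0 P_N(z+y)z^{-(N-k+1)}\,dz$, substitutes $y=x/\sqrt N$, changes variables $z=\sqrt N/w$, Taylor-expands $\ln(1+q)$ in the integrand, and shows the integral converges to the standard contour-integral representation of $H_k$; Hurwitz's theorem then transfers this to roots. You instead write $P_k(x)=\binom{N}{k}^{-1}\sum_{|S|=k}\prod_{i\in S}(x-y_i)$, extract the coefficients as elementary symmetric functions $e_m(y)$, and apply Newton's identities to reduce the coefficient-wise convergence of $N^{k/2}P_k(z/\sqrt N)$ to the bound $p_j/N^{j/2}\to 0$ for $j\ge 3$. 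The key estimate $\max_i|y_i|\le N^{1/3}\kappa_N$, so $\max_i|y_i|/\sqrt N\le \kappa_N N^{-1/6}\to 0$, is actually the same place the paper invokes Assumption \ref{Assumption_Hermite} (the paper notes $|y_i|/\sqrt N\to 0$ uniformly), and both proofs close with Hurwitz/Rouch\'e. Your symmetric-function argument is more elementary — it replaces steepest-descent-style contour manipulations with finite combinatorial recursions — and it isolates cleanly why a single third-moment hypothesis controls all the power sums $p_j$. The paper's contour-integral approach, on the other hand, is the one that extends to the bulk and edge results (Theorems \ref{Theorem_bulk_lattice} and \ref{Theorem_edge_Airy}), where the integrand develops genuine critical points that must be handled by the saddle-point method; the symmetric-function route would not give those asymptotics.
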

\begin{remark}
 For a particular case when $x_i^N$, $i=1,2,\dots,N$, are i.i.d.\ random variables, a result similar to Theorem \ref{Theorem_Hermite_limit} can be found in \cite{HS}.
\end{remark}
\begin{example}
 Suppose that $N$ is even, $N=2M$, and
 $$
  P_N(x)=P_{2M}(x)=(x+1)^M (x-1)^M=x^{2M}-M x^{2M-2}+ \frac{M(M-1)}{2} x^{2M-4}+\dots
 $$
  In this situation $\mu_N=0$, $\sigma_N^2=1$, and $\kappa_N^3=1$. Hence, Theorem \ref{Theorem_Hermite_limit} applies. Let us check its conclusion directly for $k=3$. Indeed,
  $$
   P_3(x)=\frac{1}{2M(2M-1)\cdots 4} \frac{\partial^{2M-3}}{\partial x^{2M-3}} P_{2M}(x)= x^3- \frac{6 M}{2M (2M-1)} x=x^3-\frac{3}{2M-1} x.
  $$
  We see that as $M\to\infty$
  \begin{equation}
  \label{eq_x37}
   (2M)^{3/2}  P_3\left(\frac{x}{\sqrt{2M}}\right)\to x^3-3x.
  \end{equation}
  Because $x^3-3x$ is the degree three Hermite polynomial, \eqref{eq_x37} agrees with Theorem \ref{Theorem_Hermite_limit}.
\end{example}

\begin{theorem} \label{Theorem_Gaussian_limit} For each $N=1,2,\dots,$ take
$N$--tuple of reals $y_1\le y_2\le \dots\le y_N$ and let $\{\xi_i^k(N)\}_{1\le i \le k \le N}$ be a Gaussian vector distributed as the
$\infty$--corners process \eqref{eq:xi} with top level $x_i^N=y_i$, $i=1,\dots,N$.
Under Assumption \ref{Assumption_Hermite} for each fixed $K=1,2,\dots$, we have convergence in distribution
$$
 \lim_{N\to\infty} \frac{\sqrt{N}}{\sigma_N}\bigl\{ \xi_i^k(N) \bigr\}_{1\le i \le k \le K}=\bigl\{\zeta_{i}^k\bigr\}_{1\le i \le k \le
 K},
$$
where $\{\zeta_i^k\}$ is the Gaussian $\infty$--corners process of Section \ref{Section_GinftyE}.
\end{theorem}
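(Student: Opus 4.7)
My plan is to exploit the scale invariance of the density \eqref{eq:xi}. Setting $\tilde\xi_i^k=\tfrac{\sqrt N}{\sigma_N}\xi_i^k(N)$ and $\tilde x_i^k=\tfrac{\sqrt N}{\sigma_N}(x_i^k-\mu_N)$, each ratio $(\xi_i^k-\xi_j^k)^2/(x_i^k-x_j^k)^2$ is left unchanged, so the law of $\{\tilde\xi_i^k\}$ is again given by \eqref{eq:xi}, but with positions $\tilde x_i^k$. By Theorem \ref{Theorem_Hermite_limit}, for every fixed $k$, these positions converge to the roots $h_i^k$ of the degree $k$ Hermite polynomial, which are precisely the positions appearing in the target density \eqref{eq:zeta}. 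The problem thus reduces to showing that the centered Gaussian vector $\{\tilde\xi_i^k\}_{1\le i\le k\le K}$ converges in distribution to the centered Gaussian with density \eqref{eq:zeta} truncated at level $K$.

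Next, I would use the natural Markov structure of the joint density in the level index: the conditional density of $\{\tilde\xi_i^j\}_{1\le i\le j\le K-1}$ given $\{\tilde\xi_i^K\}_{i=1}^K$ depends only on the positions at levels $\le K$, and its functional form coincides with the analogous conditional density of the Gaussian $\infty$--corners process (with $\tilde x_i^k$ in place of $h_i^k$). Positional convergence $\tilde x_i^k\to h_i^k$ then yields convergence of this conditional density, so it suffices to prove the marginal statement: the distribution of $(\tilde\xi_1^K,\dots,\tilde\xi_K^K)$ converges to the Gaussian with density \eqref{eq:zeta_projection} (with $h_i^K$ in place of $x_i^K$).

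For the marginal at level $K$, my plan is to invoke the consistency of the Gaussian $\infty$--corners process (Proposition \ref{Proposition_GInftyE}): its level-$K$ law is the projection onto the first $K$ rows of its level-$N$ realization, for every $N\ge K$. The latter is produced by the density \eqref{eq:zeta} itself, which differs from the rescaled version of \eqref{eq:xi} only in the top-row prescription---deterministic $\tilde\xi_i^N=0$ in our case, versus a random $\tilde\zeta_i^N$ drawn from \eqref{eq:zeta_projection} in the Gaussian $\infty$--corners case. Unravelling the joint Gaussian density top-down as a sequence of conditional updates then gives an explicit linear representation of $\tilde\xi^K$ as a sum of independent Gaussian noises attached to the intermediate levels $K, K+1,\dots, N-1$---a finite-$N$ analogue of the polymer representation \eqref{eq_Z_intro_2}. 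The task then becomes to compute the $N\to\infty$ limit of the covariance of this sum and identify it with the covariance determined by \eqref{eq:zeta_projection}.

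The main obstacle is that Theorem \ref{Theorem_Hermite_limit} furnishes positional convergence $\tilde x_i^k\to h_i^k$ only for each \emph{fixed} $k$, whereas the covariance of $\tilde\xi^K$ accumulates contributions from all intermediate levels $K\le k\le N-1$, including those with $k$ proportional to $N$. One must therefore prove a quantitative estimate showing that the contribution of high levels asymptotically depends only on the first two moments $\mu_N,\sigma_N$ of the empirical measure $\rho_N$; this is precisely the role of Assumption \ref{Assumption_Hermite}, whose third-moment hypothesis $\kappa_N N^{-1/6}/\sigma_N\to 0$ acts as a Lyapunov-type bound killing the non-Gaussian contribution of the top row to level $K$. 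I expect this quantitative control to require a top-down diagonalization of the conditional transition kernels analogous to the associated Hermite polynomial diagonalization described in the ``Our methods'' section and used later in the proofs of Theorems \ref{Theorem_Gcorners_limit_intro} and \ref{Theorem_DBM_limit_intro}.
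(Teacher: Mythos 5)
Your overall strategy matches the paper's: rescale so that $\mu_N=0$, $\sigma_N=\sqrt N$, invoke Theorem \ref{Theorem_Hermite_limit} for positional convergence, pass to the polymer representation \eqref{eq:xi-K}, prove term-by-term convergence of the noise expansion, and close with a tail estimate controlling the contribution of levels $\ell$ far above $K$. Your intermediate ``conditional Markov chain'' reduction is redundant---the polymer representation already unravels the Markov structure level by level, so passing through conditional densities at level $K$ adds no content---but it is not wrong.

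The genuine gap is in the last paragraph: you speculate that the tail estimate requires the associated Hermite polynomial diagonalization of Section \ref{Section_orthogonal}. It does not, and this is exactly the point where the paper's proof is more elementary than you anticipate. The tail bound in the paper rests on three completely elementary facts: (i) Lemma \ref{Lemma_kernel_upper_bound}, which says $K^{k,\ell}(a\to b)\le k/\ell$, and which follows from nothing more than the stochasticity of $A_k$ together with the observation (the $m=0$ case of Proposition \ref{Proposition_polynomial_preservation}) that the uniform measure on $\X_k$ maps to the uniform measure on $\X_\ell$; (ii) Lemma \ref{Lemma_Variance_sum}, a contour-integral computation expressing $\sum_b \Var(\eta^\ell_b)$ through the empirical variance of the roots $x_i^{\ell+1}$; and (iii) Lemma \ref{Lemma_squares_transition}, showing that this empirical variance at level $\ell$ is $\frac{\ell-1}{N-1}\sigma_N^2$. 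Combining these gives $\Var\bigl(\sum_{\ell\ge L}\sum_b K^{k,\ell}(a\to b)\eta_b^\ell\bigr)\le 4k^2\sum_{\ell\ge L}\ell^{-2}$, which is uniformly small in $N$ once $L$ is large. No orthogonal polynomials enter. Relatedly, you mischaracterize the role of Assumption \ref{Assumption_Hermite}: it is not a ``Lyapunov-type bound killing the top row,'' but is used only once, inside Theorem \ref{Theorem_Hermite_limit}, to secure the deterministic positional convergence $\sqrt{N}\sigma_N^{-1}(x_i^k-\mu_N)\to h_i^k$; the tail bound itself holds for every $N$-tuple $(y_i)$ with no hypotheses beyond the Appell structure.
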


For the next results, we need to introduce an equation on an unknown variable $z$, with parameters
$1\le k \le N-1$ and $x\in \mathbb R$
\begin{equation}
\label{eq:critical_equation}
 \frac{1}{N}\cdot \frac{P_N'(z+x)}{P_N(z+x)}=\frac{N-k+1}{N} \cdot \frac{1}{z}, \quad z\in\mathbb C.
\end{equation}
In our approach this equation arises as a critical point condition $G'(z)=0$ with
\begin{equation}
\label{eq_G_definition}
  G(z):=\frac{1}{N}\ln\bigl( P_N(z+x)\bigr) - \frac{N-k+1}{N} \ln z.
\end{equation}
\begin{lemma}
\label{Lemma_complex_roots}
 Either all roots of \eqref{eq:critical_equation} are real, or it has a unique pair of complex
 conjugate roots.
\end{lemma}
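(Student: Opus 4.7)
My plan is to clear denominators, turn \eqref{eq:critical_equation} into the vanishing of a real polynomial of degree $N$, and count real roots via a sign-change argument on $\mathbb{R}$ using the interlacing of roots of $P_N$ and $P_N'$.

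Multiplying \eqref{eq:critical_equation} by $Nz\,P_N(z+x)$ and substituting $w=z+x$ turns the equation into $Q(w)=0$, where
$$
Q(w):=(w-x)\,P_N'(w)-(N-k+1)\,P_N(w)\in\mathbb{R}[w].
$$
The coefficient of $w^N$ in $Q$ is $k-1$, so $\deg Q=N$ when $k\geq 2$; the case $k=1$ has $\deg Q\leq N-1$ and would be handled analogously or by continuity from $k\geq 2$ configurations. Since $Q(x)=-(N-k+1)P_N(x)$, generically $w=x$ is not a root of $Q$, so the roots of \eqref{eq:critical_equation} correspond bijectively to the roots of $Q$. Because $Q$ has real coefficients, non-real roots appear in complex conjugate pairs, and the claim reduces to showing that $Q$ has at most two non-real roots.

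I would first work in the generic setting where the roots $y_1<y_2<\dots<y_N$ of $P_N$ are distinct and $x\neq y_i$ for all $i$. A direct interlacing computation gives $\operatorname{sign}(P_N'(y_i))=(-1)^{N-i}$, hence
$$
Q(y_i)=(y_i-x)\,P_N'(y_i),\qquad \operatorname{sign}\bigl(Q(y_i)\bigr)=\operatorname{sign}(y_i-x)\cdot(-1)^{N-i}.
$$
Letting $\ell\in\{0,1,\dots,N\}$ be the number of $y_i$'s strictly below $x$, the factor $(-1)^{N-i}$ flips between every consecutive pair, while $\operatorname{sign}(y_i-x)$ flips only at $i=\ell$ (and not at all if $\ell\in\{0,N\}$). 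Consequently $\operatorname{sign}(Q(y_i))$ and $\operatorname{sign}(Q(y_{i+1}))$ disagree in at least $N-2$ of the $N-1$ consecutive pairs. Appending $\operatorname{sign}(Q(\pm\infty))$ (both determined by the positive leading coefficient $k-1$) and splitting into the three cases $\ell=0$, $1\leq\ell\leq N-1$, $\ell=N$, a short check shows that the endpoint comparisons exactly compensate for the missing interior sign change when $\ell\in\{0,N\}$, so the total number of sign changes of $Q$ along $(-\infty,+\infty)$ is always at least $N-2$. By the intermediate value theorem $Q$ then has at least $N-2$ real roots, hence at most two non-real roots; the conjugate-pair structure forces this number to be $0$ or $2$.

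Degenerate configurations (coinciding $y_i$'s, or $x=y_j$ for some $j$, or $k=1$ with subleading coefficient also vanishing) would then be obtained as limits of generic ones. The coefficients of $Q$ depend continuously on $(y_1,\dots,y_N,x)$, and the roots of a polynomial depend continuously on its coefficients. A non-real root of the limiting $Q$ must be the limit of non-real roots of the perturbed $Q$, since a sequence of real numbers cannot converge to a non-real number; hence the bound ``at most two non-real roots, counted with multiplicity'' passes to the limit. The hard part is precisely this continuity step: conjugate pairs of the perturbation can merge into a real double root of the limit (lowering the non-real count), but the reverse cannot happen, which is exactly what makes the inequality go the right way.
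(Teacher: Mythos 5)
Your proof is correct and uses essentially the same idea as the paper: reduce to a degree-$N$ polynomial equation and force at least $N-2$ real roots via sign changes driven by the alternation $\operatorname{sign}(P_N'(y_i))=(-1)^{N-i}$, with the single possible non-alternation occurring at the index where $y_i-x$ changes sign (equivalently, where $z=0$ falls in the paper's partial-fraction formulation). The only differences are cosmetic — you work with the cleared-out polynomial $Q(w)$ where the paper works directly with the partial-fraction form, and you handle coinciding $y_i$'s by a limiting argument where the paper re-runs the count on distinct values; also note that ``continuity from $k\ge 2$'' cannot be used for the discrete parameter $k=1$, but your alternative ``handled analogously'' does work there.
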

\begin{proof}
  Let us first assume that all $y_i$ are distinct.
  After clearing the denominators, \eqref{eq:critical_equation} is a polynomial equation of degree
  $N$. Hence, it has at most $N$ roots. On the other hand, \eqref{eq:critical_equation} can be rewritten as
  \begin{equation}
  \label{eq_x6}
   \frac{1}{N} \sum_{i=1}^N \frac{1}{z-(y_i-x)}- \frac{N-k+1}{N}\cdot \frac{1}{z}=0.
  \end{equation}
 Let us look at $N-1$ segments $(y_i-x,y_{i+1}-x)$, $1\le i \le N-1$ on the real axis. The point $0$ belongs to at most one of them. For the remaining $N-2$ segments, the function in the left-hand side of \eqref{eq_x6} is continuous and changes its sign from positive at $z=y_i-x+0$ to negative at $z=y_{i+1}-x-0$. Therefore, each such segment has a root of \eqref{eq:critical_equation} and we found $N-2$ real roots. Hence, there are at most two complex
  roots.

  For the case when some $y_i$ are allowed to coincide, the argument remains the same with the only difference being that the polynomial equation now has degree ``number of distinct values of $y_i$'' rather than $N$.
\end{proof}

Whenever \eqref{eq:critical_equation} has two complex roots, we say that $(x, \frac{k}{N})$ belongs
to the \emph{liquid region} (sometimes also called the \emph{band}) and denote through $z_c$ the
corresponding root in the upper half-plane. Otherwise, we say that $(x, \frac{k}{N})$ belongs to
the void region.

 \begin{figure}[t]
\begin{center}
{\scalebox{0.8}{\includegraphics{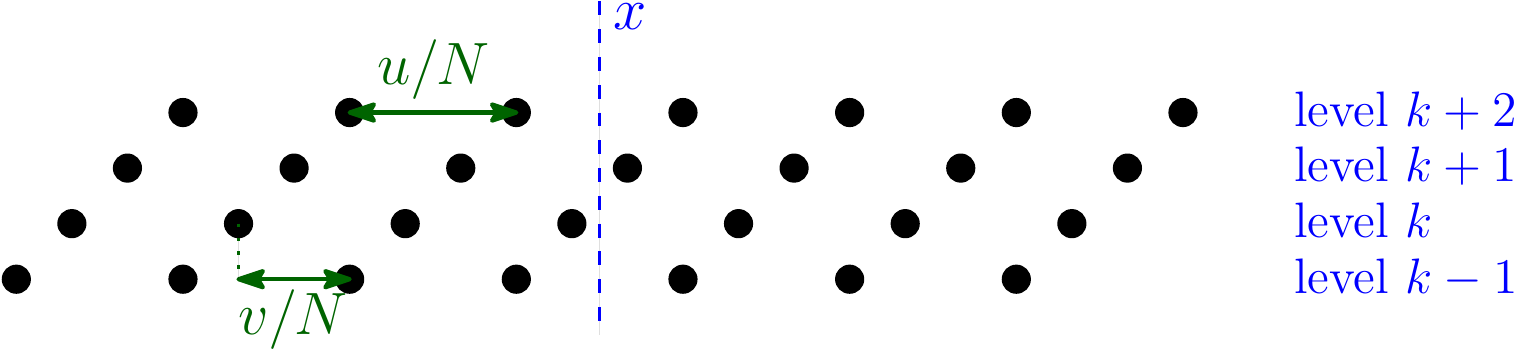}}}
 \caption{Particles near a point $x$ in the bulk resemble a lattice with spacings proportional to $\tfrac{1}{N}$. \label{Fig_lattice}}
\end{center}
\end{figure}

\begin{theorem} \label{Theorem_bulk_lattice}
 Under Assumption \ref{Assumption_weak} choose $(x,\frac{k}{N})$ in the liquid region in such a way that as
 $N\to\infty$, $\frac{k}{N}$ is bounded away from $0$ and $1$ and $z_c$ stays bounded away from the
 real axis and from $\infty$. Then, zooming in near $x$, the point configurations
 $$
  \{N (x_i^k -x) \}
 $$
 asymptotically form a lattice (cf.\ Figure \ref{Fig_lattice}) with fixed spacing $u=\lim_{N\to\infty} (x_{i+1}^k-x_i^k)$ and fixed spacing
 $v=\lim_{N\to\infty}(x_i^k-x_{i}^{k+1})$ (satisfying $0<v<u$), such that
$$
 u=\pi \left(\frac{N-k+1}{N} \Im \frac{1}{z_c} \right)^{-1}, \qquad v=u \cdot \frac{1}{\pi} \arg (z_c).
$$
\end{theorem}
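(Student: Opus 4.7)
The approach is to realize $P_k$ as a steepest-descent contour integral and read off the local oscillatory behavior of $P_k(x+t/N)$. Since $P_k=P_N^{(N-k)}/[N(N-1)\cdots(N-k+1)]$, Cauchy's formula and the substitution $w=x+z$ give
\begin{equation*}
P_k(x+a) = c_{N,k}\cdot\frac{1}{2\pi\ii}\oint \frac{P_N(x+z)}{(z-a)^{N-k+1}}\,\d z
\end{equation*}
with an explicit combinatorial prefactor $c_{N,k}$ and contour encircling $z=a$. At $a=0$ the integrand equals $e^{NG(z)}$ for the $G$ in \eqref{eq_G_definition}, and the critical-point equation $G'(z)=0$ is precisely \eqref{eq:critical_equation}, with conjugate non-real saddles $z_c,\bar z_c$ in the liquid regime. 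Because $P_N(x+z)$ is entire, one can deform the contour to pass through $z_c,\bar z_c$ along steepest-descent rays while still enclosing $z=a$. The hypotheses that $z_c$ stays bounded away from $\R$ and from $\infty$ and that $k/N$ is bounded away from $0$ and $1$, together with uniform convergence of the Stieltjes transform $\tfrac{1}{N}P_N'(x+z)/P_N(x+z)$ on $\{\Im z\ge\delta\}$ coming from $\rho_N\Rightarrow\rho$, keep $G''(z_c)$ bounded away from $0$ and enable the standard steepest-descent expansion
\begin{equation*}
P_k(x+a)\sim 2\,\mathrm{Re}\!\left[\mathcal{A}(a)\,\exp\bigl(N\widetilde G_a(w_c(a))\bigr)\right],\quad \widetilde G_a(w):=\tfrac{1}{N}\ln P_N(x+w)-\tfrac{N-k+1}{N}\ln(w-a),
\end{equation*}
with upper saddle $w_c(a)=z_c+O(a)$ and slowly varying prefactor $\mathcal{A}$ produced by the Gaussian integration and the simple pole at $z=a$.

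To extract the microscopic behavior, I set $a=t/N$. Since $w_c(a)$ is critical for $\widetilde G_a$, the envelope theorem gives $\tfrac{d}{da}[N\widetilde G_a(w_c(a))]=(N-k+1)/(w_c(a)-a)$, which equals $(N-k+1)/z_c$ at $a=0$. Integrating,
\begin{equation*}
N\widetilde G_{t/N}(w_c(t/N))=NG(z_c)+\tfrac{N-k+1}{N}\cdot\tfrac{t}{z_c}+O(1/N),
\end{equation*}
and taking real parts in the steepest-descent formula yields an oscillatory asymptotic
\begin{equation*}
P_k\bigl(x+\tfrac{t}{N}\bigr)\sim e^{\alpha_k t}\,B_k\,\cos(\omega_k t+\theta_k),\qquad \omega_k=\tfrac{N-k+1}{N}\,\Im\tfrac{1}{z_c},
\end{equation*}
whose zeros in $t$ form an arithmetic progression with gap $\pi/\omega_k=u$. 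A Rouch\'e/Hurwitz argument, exploiting uniformity of the saddle-point estimate over bounded $t$-windows, upgrades the approximate-zero locations to genuine zeros of $P_k(x+t/N)$ up to $o(1)$ error, giving the asserted spacing $u$.

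For the interlevel shift $v$, I apply the envelope theorem in the variable $s=(N-k+1)/N$: since $\partial_s G=-\ln z$, the transition $k\mapsto k+1$ (i.e.\ $s\mapsto s-1/N$) changes $NG(z_c)$ by $+\ln z_c+O(1/N)$, hence shifts the phase $\theta_k$ by $+\arg z_c$ while $\omega_k$ is unchanged to leading order. Matching zeros of the same index at consecutive levels then yields
\begin{equation*}
x_i^k-x_i^{k+1}=\tfrac{\arg z_c}{N\omega_k}+o(1/N)=\tfrac{u\,\arg z_c}{\pi N}+o(1/N),
\end{equation*}
so $v=u\arg(z_c)/\pi$; the range $\arg z_c\in(0,\pi)$ is compatible with $0<v<u$ and with the interlacing $x_i^{k+1}\le x_i^k\le x_{i+1}^{k+1}$.

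The main obstacles I foresee are: (i) selecting a concrete descent contour that both avoids the real zeros $y_j-x$ of $P_N(x+z)$ and makes $\mathrm{Re}\,G$ strictly decreasing off the saddles --- non-tangency of $z_c$ to $\R$ and control on $G''(z_c)$ enter here, while the weak convergence $\rho_N\to\rho$ is used only through uniform convergence of the Stieltjes transform on $\{\Im z\ge\delta\}$; (ii) obtaining the saddle-point expansion with an error that is uniform in $t$ over a bounded window, so that cosine zeros can be promoted to true zeros of $P_k$; (iii) tracking subleading corrections to $w_c(a)$, $G''(w_c)$ and $\mathcal{A}(a)$ carefully enough not to spoil the phase accounting at precision $O(1/N)$. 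Step (ii) is the most delicate technically, though it follows a standard uniform-steepest-descent template once the contour is fixed.
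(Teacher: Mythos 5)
Your proposal follows essentially the same route as the paper's proof: the contour-integral representation of $P_k$, steepest descent through the conjugate saddles $z_c,\bar z_c$ with $G''(z_c)$ bounded away from $0$, a Rouch\'e/Hurwitz argument transferring zeros of the resulting cosine-type profile to zeros of $P_k(x+\chi/N)$, and the phase shift $\arg z_c$ for $k\mapsto k+1$ coming from the extra factor $\ln z_c$ in $NG(z_c)$. The only cosmetic difference is that you organize the $\chi$-dependence of the exponent via an envelope-theorem computation for the perturbed phase $\widetilde G_a$, whereas the paper factors out $(1-\chi/(Nz))^{-(N-k+1)}$ from $e^{NG(z)}$ and evaluates it at $z=z_c$; both yield the same linear-in-$\chi$ exponent $\tfrac{N-k+1}{N}\tfrac{\chi}{z_c}$.
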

\begin{remark}
Results of this type are known in the literature, see, e.g., \cite{FR}.
\end{remark}
\begin{remark}
 When all the roots of \eqref{eq:critical_equation} are real, we expect  to observe no points from $\{x^k_i\}$ near $(x,\frac{k}{n})$, hence, the name ``void region''. We do not prove such a statement here, but it probably can be proven by the same methods which we use in the Appendix.
\end{remark}

Looking carefully into the argument of Lemma \ref{Lemma_complex_roots}, one can notice that for a very large (positive or negative) $x$ all roots of \eqref{eq:critical_equation} are real and such $x$ belongs to the void region. If we start decreasing $x$ from $+\infty$, then at some point we eventually reach the liquid region. This transition point is the right \emph{edge} of the liquid region. Note that at this point the complex conjugate roots $z_c$ and $\overline z_c$ merge together, forming a double root of \eqref{eq:critical_equation}.

Let $\a_1>\a_2>\a_3>\dots$ be the zeros of the Airy function $\Ai(x)$.
\begin{theorem} \label{Theorem_edge_Airy}
 Under Assumption \ref{Assumption_strong}, as $N\to\infty$ and with $k$ varying in such a way that for $k/N$ stays bounded away from $0$ and from $1$,   let $x=x(N,k)$ be the largest real number, such that \eqref{eq:critical_equation} has a double root, and let $z_c\in\mathbb R$ denote the location of this root.
 Then for each $i=1,2,\dots$,
 $$
  \lim_{N\to\infty} N^{2/3} \, \frac{x_{k+1-i}^k-x}{\sigma}= \a_i,
 $$
 where $\a_i$ is the $i$th largest zero of the Airy function and, using $G(z)$ given by \eqref{eq_G_definition}, we have
 $$
   \sigma=z_c^2 \left(\frac{G'''(z_c)}{2}\right)^{1/3} \frac{N}{N-k+1}.
 $$
\end{theorem}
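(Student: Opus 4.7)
The backbone of my argument is a contour-integral representation of $P_k(x)$ followed by a steepest-descent analysis at a coalescing pair of saddle points. By Cauchy's integral formula applied to \eqref{eq_Appell_sequence},
\begin{equation*}
 P_k(x) = \frac{((N-k)!)^2}{N!} \cdot \frac{1}{2\pi\ii} \oint_{\gamma} \frac{P_N(z+x)}{z^{N-k+1}}\, \d z
        = C_{N,k} \cdot \frac{1}{2\pi\ii} \oint_{\gamma} e^{N\, G(z;x)}\, \d z,
\end{equation*}
where $\gamma$ is any positively oriented loop around $0$ (and not enclosing any $y_i-x$), and $G$ is exactly the function \eqref{eq_G_definition}. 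The critical point equation $\partial_z G=0$ is precisely \eqref{eq:critical_equation}, and by Lemma \ref{Lemma_complex_roots} exactly one pair of complex-conjugate saddle points $z_c(x), \overline{z_c(x)}$ exists for $x$ in the liquid region; these merge to the real double root $z_c$ at $x=x_c$, where $G'(z_c)=G''(z_c)=0$. Hence the edge is a classical coalescence-of-saddles problem and should produce Airy asymptotics.

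The first main step is the local expansion around $(z_c,x_c)$. Using $G'(z_c;x_c)=G''(z_c;x_c)=0$, differentiating $G$ in $x$ gives $\partial_x G(z_c;x_c)= \frac{N-k+1}{Nz_c}$ (from \eqref{eq:critical_equation}), and the identity $\partial_z\partial_x G=\partial_z^2 G- \partial_z\bigl(-\tfrac{N-k+1}{N}\ln z\bigr)''$ combined with $G''(z_c;x_c)=0$ yields $\partial_z\partial_x G(z_c;x_c)=-\frac{N-k+1}{Nz_c^2}$. Therefore
\begin{equation*}
 N\bigl(G(z;x)-G(z_c;x_c)\bigr) = \tfrac{N-k+1}{z_c}(x-x_c) - \tfrac{N-k+1}{z_c^2}(z-z_c)(x-x_c) + \tfrac{NG'''(z_c)}{6}(z-z_c)^3 + \dots
\end{equation*}
I then rescale $z = z_c + N^{-1/3}\, a\,\tau$ with $a=(2/G'''(z_c))^{1/3}$ and $x=x_c+ \sigma s N^{-2/3}$ with $\sigma$ forcing the mixed term to equal $-s\tau$; a direct check gives exactly
$\sigma = z_c^2\bigl(G'''(z_c)/2\bigr)^{1/3} N/(N-k+1)$,
as in the statement. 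After pulling out the $\tau$-independent factor $\exp\bigl[\tfrac{N-k+1}{z_c}(x-x_c)\bigr]$, the local integrand becomes $\exp(\tau^3/3-s\tau)$ times a subleading correction.

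The second main step is globalizing this local picture. I want to deform $\gamma$ to a contour that passes through $z_c$ along the two steepest-descent directions emanating from it (the rays where $\arg(z-z_c)=\pm\pi/3$), so that the contribution outside an $N^{-1/3+\epsilon}$ neighborhood of $z_c$ is exponentially negligible. The existence of such a deformation, and the requisite control on $\mathrm{Re}\,G$ away from $z_c$, is precisely where Assumption~\ref{Assumption_strong} enters: parts (2)--(4) of that assumption guarantee a square-root edge for the limiting density $\rho$, which in turn ensures that $z_c$ is a genuine double saddle (with no other competing saddles of equal height) uniformly in $k/N$ bounded away from $0$ and~$1$, via Lemma \ref{Lemma_critical_point_edge}. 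The same assumption controls $P_N(z+x)/P_N(z_c+x_c)$ uniformly for $z$ on the chosen contour away from $z_c$. Inside the small neighborhood, the rescaling above yields convergence of the rescaled integral to the contour representation $\Ai(s)=\tfrac{1}{2\pi\ii}\int_{\mathcal C}\exp(\tau^3/3-s\tau)\,\d\tau$.

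Combining these steps, after pulling out the nonvanishing prefactor I obtain
\begin{equation*}
 P_k\bigl(x_c+\sigma s N^{-2/3}\bigr) = c_{N,k}\,\bigl(\Ai(s)+o(1)\bigr),
\end{equation*}
with $o(1)\to 0$ uniformly on compact sets of $s\in\mathbb R$. Hurwitz's theorem for analytic functions then transfers zeros to zeros: the $i$-th largest zero of the right-hand side converges to $\a_i$, giving $N^{2/3}(x^k_{k+1-i}-x_c)/\sigma\to \a_i$. The main obstacle, in my view, is not the local Airy reduction (which is a standard coalescing-saddle computation once the derivatives of $G$ are identified) but rather the global steepest-descent picture: one must show that the contour can actually be deformed through $z_c$ uniformly in $N$ and in $k/N$, with quantitative control on $\mathrm{Re}\,G$ along the deformed contour and no spurious cancellations from other critical points. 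It is here that the rather technical conditions of Assumption~\ref{Assumption_strong}, and the lemma they feed, do the heavy lifting.
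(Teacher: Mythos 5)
Your proposal is correct and takes essentially the same route as the paper: contour-integral representation of $P_k$, steepest descent at the coalescing real double saddle $z_c$ (whose existence, positivity, and nondegeneracy of $G'''(z_c)$ are supplied by Lemma \ref{Lemma_critical_point_edge} under Assumption \ref{Assumption_strong}), local cubic Airy reduction, and Hurwitz's theorem to transfer zeros. The only bookkeeping difference is in how the perturbation of $x$ is tracked: you expand $G(z;x)$ jointly around $(z_c,x_c)$ and read $\sigma$ from the mixed partial $\partial_z\partial_x G(z_c;x_c)=-\tfrac{N-k+1}{Nz_c^2}$, whereas the paper first substitutes $z\mapsto z-N^{-2/3}\chi$ so that $G$ stays fixed and the $\chi$-dependence is carried by the separate factor $\bigl(1-\chi/(N^{2/3}z)\bigr)^{-(N-k+1)}$; both bookkeeping schemes yield the same scaling constant $\sigma$.
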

\begin{remark}
  A very similar statement holds for the smallest points $x_i^k$, $i=1,2,\dots$, with the difference being that $x$ is replaced by the smallest real number for which \eqref{eq:critical_equation} has a double root. Note that $G'''(z_c)>0$ when we deal with the largest points $x_{k+1-i}^k$ and $G'''(z_c)<0$ when we deal with the smallest points $x_i^k$.
\end{remark}

\begin{remark}
 One can expect that in the setting of Theorem \ref{Theorem_edge_Airy}, the two-dimensional process $(i,t)\to c_2 N^{2/3}\xi^{k+c_1 N^{2/3} t}_{k+c_1 N^{2/3} t-i}$ converges to $\mathfrak Z(i,t)$ after proper choice of the deterministic constants $c_1,c_2>0$. This should be viewed as a (conjectural) extension of Theorem \ref{Theorem_Gcorners_limit_intro}.
\end{remark}

Proofs of Theorems \ref{Theorem_Hermite_limit}, \ref{Theorem_bulk_lattice}, \ref{Theorem_edge_Airy} are based on the steepest descent analysis of the contour integrals, they are given in Appendix (Section \ref{Section_steepest_descent}). Proof of Theorem \ref{Theorem_Gaussian_limit} is in Section \ref{Section_Gaussian_limit_proof}.


\section{Innovations and the jumping process}

\label{Section_innovations}

%
%

Our approach to the asymptotic theorems for $\{\xi_i^k\}$ and $\{\zeta_i^k\}$ is based on their representations as
partition functions of directed polymers (with heavy-tailed jumps) collecting \emph{additive}
independent Gaussian noises. In this section we introduce such representations.

\smallskip

As before, we start from a collection $\{x_i^k\}$ of roots of an Appell sequence of polynomials \eqref{eq_Appell_sequence}. We define a collection of numbers $\alpha_{a,b}^k$,  through
\begin{equation}
\label{eq:transition_coeffs_def} \alpha_{a,b}^k =
\dfrac{(x_a^k-x_b^{k+1})^{-2}}{\sum\limits_{b'=1}^{k+1} (x_a^k-x_{b'}^{k+1})^{-2}}, \qquad 1\le a \le k,\quad  1\le b \le k+1.
\end{equation}
The definition readily implies that $\alpha^k_{a,b}$ form a stochastic  matrix:
\begin{equation}\label{eq:stoch}
\forall a, b \quad \alpha_{a,b}^k>0, \quad \text{and} \quad \forall a \quad  \sum_{b=1}^{k+1}
\alpha_{a,b}^k =1.
\end{equation}
We also define a linear operator $A_k$ with matrix $(\alpha_{a,b}^k)_{a=1,\dots,k,\, b=1,\dots,k+1}$: it maps $(k+1)$--dimensional space to $k$--dimensional space.
\begin{remark} $A_k$ can be interpreted as the differential of the $k$--dimensional vector of  roots of the
derivative $P_{k+1}'$ as a function of $k+1$ roots of $P_{k+1}$. In this
interpretation, the identity $\sum_{b=1}^{k+1} \alpha_{a,b}^k =1$ becomes a corollary of an observation that shifting all the roots of a polynomial by a constant~$\eps$ we also shift
every root of its derivative by the same constant~$\eps$.
\end{remark}

\begin{definition}
The \emph{jumping process} is a Markov process with the set of allowed states
$\X_k:=\{x_a^k\}_{a=1,\dots,k}$ at the time $k$, and with the transition probabilities given by
\eqref{eq:transition_coeffs}
$$
\P(x_a^k \to x_b^{k+1}) = \alpha_{a,b}^k.
$$
\end{definition}
The product of matrices in~\eqref{eq:vxi} then becomes its diffusion kernel:
\begin{definition}
\label{Definition_dif_kernel}
The \emph{diffusion kernel} $K^{k,\ell}(a\to b)$ is defined as the (transition) probability that the
jumping process, starting at $x_a^k$ at time $k$, at time $\ell>k$ ends up at~$x_b^\ell$. Formally,
$$K^{k,\ell}(a\to b)=(A_k\dots A_{\ell-1})_{a,b}.$$
\end{definition}

\begin{theorem} \label{Theorem_inftyCorners_RW}
The process $\{\xi_i^k\}_{1\le i\le k\le N}$ of Section \ref{Section_infty_corners} can be represented as
\begin{equation}\label{eq:xi-K}
\xi_a^k = \sum_{\ell=k}^{N-1}  \sum_{b=1}^{\ell} K^{k,\ell}(a\to b)  \cdot \eta^\ell_b,
\end{equation}
where $\eta_b^\ell$ are independent Gaussian random variables with the variance
\begin{equation}\label{eq:var-eta}
\Var \eta_b^\ell = \frac{2}{\sum_{b=1}^{k+1} (x_b^\ell-x_{c}^{\ell+1})^{-2}} = -2\,
\frac{P_{\ell+1}(x_b^\ell)}{P_{\ell+1}''(x_b^\ell)}.
\end{equation}
We also have
\begin{equation}\label{eq:cov-sum}
\Cov (\xi_{a_1}^{k_1}, \xi_{a_2}^{k_2}) = \sum_{\ell=\max(k_1,k_2)}^{N-1} \left( \sum_{b=1}^{\ell}
K^{k_1,l}(a_1\to b) K^{k_2,\ell}(a_2\to b) \cdot \Var\eta^\ell_b \right).
\end{equation}
\end{theorem}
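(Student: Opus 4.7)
The plan is to establish an \emph{innovation representation} and iterate. Namely, define
\begin{equation*}
\eta^k := \xi^k - A_k \xi^{k+1}, \qquad k = N-1, N-2, \dots, 1,
\end{equation*}
with the boundary convention $\xi^N = 0$; the goal is to show that the $\eta^k = (\eta_a^k)_{a=1}^k$ are jointly independent centered Gaussians across all $(k,a)$, with $\Var \eta_a^k$ given by \eqref{eq:var-eta}. Once this is in hand, iterating the recursion $\xi^k = A_k\xi^{k+1} + \eta^k$ gives $\xi^k = \sum_{\ell=k}^{N-1}(A_k A_{k+1}\cdots A_{\ell-1})\eta^\ell$, and reading off the $(a,b)$-entry of the matrix product as $K^{k,\ell}(a\to b)$ (by Definition \ref{Definition_dif_kernel}) produces \eqref{eq:xi-K}. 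The covariance formula \eqref{eq:cov-sum} is then immediate from the joint independence of the $\eta_b^\ell$.

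Verifying the innovation representation is where the work lies. The linear change of variables $\{\xi_a^k\} \mapsto \{\eta_a^k\}$ is block-triangular in the ordering $k = N-1, N-2, \dots, 1$ with identity on diagonal blocks, hence has unit Jacobian; the proposed independent-Gaussian density for $\eta$ thus gives a candidate density for $\xi$, and the claim becomes that its exponent matches that of \eqref{eq:xi} up to a $\xi$-independent constant. Expanding the squares and identifying coefficients, the $\xi_a^k \xi_b^{k+1}$ cross-terms match automatically from $\alpha_{a,b}^k/\Var \eta_a^k = 1/[2(x_a^k - x_b^{k+1})^2]$, which is immediate from \eqref{eq:transition_coeffs_def} and \eqref{eq:var-eta}; the $\xi_p^k \xi_q^k$ cross-terms for $p \ne q$ (produced by squaring the level-$(k-1)$ innovation) then reduce the whole theorem to the algebraic identity
\begin{equation*}
\sum_{a'=1}^{k-1}\frac{\alpha_{a', q}^{k-1}}{(x_{a'}^{k-1} - x_p^k)^2} \;=\; \frac{2}{(x_p^k - x_q^k)^2}, \qquad p\ne q.
\end{equation*}
The diagonal $(\xi_p^k)^2$ coefficients then match as an algebraic consequence of this off-diagonal identity together with the stochasticity $\sum_q \alpha_{a',q}^{k-1} = 1$, so no further identity is needed.

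To prove the identity, I would use the Appell relation $P_k'(z) = k P_{k-1}(z)$ together with its standard consequence $\sum_c (x_{a'}^{k-1}-x_c^k)^{-2} = -P_k''(x_{a'}^{k-1})/P_k(x_{a'}^{k-1})$ (obtained by differentiating the logarithmic derivative $P_k'/P_k = \sum_c 1/(z-x_c^k)$ and using $P_k'(x_{a'}^{k-1}) = 0$) to rewrite each summand on the left as $-P_k(x_{a'}^{k-1})/[P_k''(x_{a'}^{k-1})(x_{a'}^{k-1}-x_p^k)^2(x_{a'}^{k-1}-x_q^k)^2]$, and then evaluate the sum by residue calculus on the rational function
\begin{equation*}
G(z) \;=\; \frac{P_k(z)}{P_k'(z)(z - x_p^k)^2(z - x_q^k)^2}.
\end{equation*}
The function $G$ has simple poles at each $x_{a'}^{k-1}$ (zero of $P_k'$), whose residues produce the negative of the left-hand side of the identity summand by summand; and simple poles at $x_p^k$ and $x_q^k$, since the double zero of $(z-x_r^k)^2$ is partially cancelled by the simple zero of $P_k$ at $x_r^k$ while $P_k'(x_r^k) \ne 0$. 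Using $P_k(z)/(z-x_p^k) \to P_k'(x_p^k)$ as $z \to x_p^k$, the residue at $x_p^k$ equals $1/(x_p^k - x_q^k)^2$, and symmetrically at $x_q^k$. Since $G(z) = O(z^{-3})$ as $z \to \infty$, all residues sum to zero, yielding the identity. The main obstacle is just recognizing the right contour-integral function; once $G$ is chosen, the residue computation is short and mechanical.
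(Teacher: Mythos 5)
Your proposal is correct, and it takes a genuinely different route from the paper. Both arguments produce the same innovation decomposition $\vxi_k = A_k\vxi_{k+1} + \veta_k$ and iterate it, but they justify the decomposition differently. The paper reads off the conditional density of $\vxi_k$ given $\vxi_{k+1}$ as being proportional to $\exp(-\text{cross}_k)$ (its display \eqref{eq:conditional}), then completes the square; but that conditional-density formula quietly encodes the consistency of \eqref{eq:xi} across levels, which the paper defers to external references (\cite{GS_DBM}, \cite{GM2018}) as spelled out in Proposition \ref{Proposition_GInftyE}. Your proof bypasses that: you posit the product-of-independent-Gaussians form in the $\eta$-variables, use the unit-Jacobian block-triangular change of variables, and verify directly that the resulting quadratic form in the $\xi$-variables equals the exponent of \eqref{eq:xi}. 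After the easy matching of the $\xi^k$-$\xi^{k+1}$ cross-terms, this boils down to the single residue identity you isolate, which you prove by summing residues of $G(z)=P_k(z)/[P_k'(z)(z-x_p^k)^2(z-x_q^k)^2]$; the diagonal coefficients then follow from stochasticity $\sum_q \alpha_{a',q}^{k-1}=1$ together with that off-diagonal identity, exactly as you say. The trade-off: the paper's argument is short given the cited consistency, while yours is fully self-contained at the cost of the residue computation, and in effect furnishes a direct algebraic proof of the consistency of the $\infty$-corners density \eqref{eq:xi}. One small stylistic point: it is worth stating explicitly that $P_k'(x_r^k)\neq 0$ (the roots of $P_k$ are simple, which follows from real-rootedness of the Appell sequence and interlacing), since this is what makes $G$'s poles at $x_p^k, x_q^k$ simple rather than double.
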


\begin{theorem} \label{Theorem_GinftyE_RW} The process $\{\zeta_i^k\}_{1\le i\le k}$ of Section \ref{Section_GinftyE} can be represented as
\begin{equation}\label{eq:zeta-K}
\zeta_a^k = \sum_{\ell=k}^{\infty}  \sum_{b=1}^{\ell} K^{k,\ell}(a\to b)  \cdot \eta^\ell_b,
\end{equation}
where $\eta_b^\ell$ are independent Gaussian random variables with the variance
\begin{equation}\label{eq:var-eta_G}
\Var \eta_b^\ell =\frac{2}{\ell+1}.
\end{equation}
We also have
\begin{equation}\label{eq:cov-sum_G}
\Cov (\zeta_{a_1}^{k_1}, \xi_{a_2}^{k_2}) = \sum_{\ell=\max(k_1,k_2)}^{\infty} \left( \sum_{b=1}^{\ell}
\frac{2}{\ell+1} K^{k_1,l}(a_1\to b) K^{k_2,\ell}(a_2\to b) \right).
\end{equation}
\end{theorem}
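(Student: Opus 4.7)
The plan is to extend the representation of Theorem \ref{Theorem_inftyCorners_RW} from the finite-level $\infty$-corners process to the Gaussian $\infty$-corners process by combining its Markov-chain structure with the consistency of Proposition \ref{Proposition_GInftyE} to accommodate the missing ``top'' of the array. By consistency I regard $\{\zeta_i^k\}_{1\le i\le k}$ as a single infinite array whose every $N$-level restriction has density \eqref{eq:zeta}. Since this density is quadratic and couples only neighboring levels, the same ``complete the square'' computation that underlies Theorem \ref{Theorem_inftyCorners_RW} produces a downward Gaussian Markov recursion
\[
\zeta^k = A_k\,\zeta^{k+1} + \eta^k,
\]
where $A_k$ is the stochastic matrix with entries $\alpha^k_{a,b}$ from \eqref{eq:transition_coeffs_def} and the components $\eta^k_b$ are independent centered Gaussians with variance $2\bigl(\sum_{c=1}^{k+1}(x^k_b-x^{k+1}_c)^{-2}\bigr)^{-1}$. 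By the Markov property of \eqref{eq:zeta} combined with Gaussianity, the noises $\{\eta^k\}_{k\ge 1}$ are mutually independent and $\eta^k$ is independent of every $\zeta^M$ with $M>k$. The variance simplifies for Hermite roots: $H'_{k+1}=(k+1)H_k$ gives $H'_{k+1}(x^k_b)=0$, and the Hermite ODE $H''_{k+1}(x)=xH'_{k+1}(x)-(k+1)H_{k+1}(x)$ together with logarithmic differentiation of $H_{k+1}$ reduces $\sum_c (x^k_b-x^{k+1}_c)^{-2}$ to $k+1$, so $\Var\eta^k_b=2/(k+1)$ as claimed.

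Iterating the recursion between levels $k$ and an arbitrary $M>k$ yields
\[
\zeta^k_a \;=\;(K^{k,M}\zeta^M)_a \;+\; \sum_{\ell=k}^{M-1}\sum_{b=1}^{\ell} K^{k,\ell}(a\to b)\,\eta^\ell_b,
\]
with all summands mutually independent. Taking variances gives the telescoping identity
\[
\Var\zeta^k_a \;=\; \Var(K^{k,M}\zeta^M)_a + \sum_{\ell=k}^{M-1}\frac{2}{\ell+1}\sum_b K^{k,\ell}(a\to b)^2.
\]
Since $\Var\zeta^k_a$ is a finite constant independent of $M$ (read off from \eqref{eq:zeta_projection}) and all summands are non-negative, the partial sums on the right are bounded and non-decreasing. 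Hence the series converges in $L^2$ and the remainder $\Var(K^{k,M}\zeta^M)_a$ decreases monotonically to some $\delta_a\ge 0$.

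The core of the argument is to show $\delta_a=0$, equivalently $K^{k,M}\zeta^M\to 0$ in $L^2$. The precision matrix in \eqref{eq:zeta_projection} is $\tfrac{1}{2}I+L$ with $L$ a positive semidefinite graph-type Laplacian, so $\Cov\zeta^M\preceq 2I$ uniformly in $M$ and
\[
\Var(K^{k,M}\zeta^M)_a \;\le\; 2\sum_{b=1}^M K^{k,M}(a\to b)^2.
\]
The right-hand side is twice the $\ell^2$-mass of the jumping-process distribution after $M-k$ steps from $x^k_a$; because the Hermite roots spread across a band of width $\sim\sqrt{M}$ (cf.\ Theorem \ref{Theorem_Hermite_limit}) and the chain diffuses over them, this mass should decay to $0$. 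Proving the quantitative decay is the main technical obstacle, and I would handle it via the diagonalization of $A_kA_{k+1}\cdots$ in the basis of associated Hermite polynomials $Q^{(k)}_m$ developed later in the paper, extracting an $O(1/M)$ bound on $\sum_b K^{k,M}(a\to b)^2$. Granted this, $\delta_a=0$ and \eqref{eq:zeta-K} holds as an $L^2$-convergent sum; the covariance identity \eqref{eq:cov-sum_G} then follows at once from mutual independence of the $\eta^\ell_b$ together with $\Var\eta^\ell_b=2/(\ell+1)$.
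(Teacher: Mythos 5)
Your overall structure matches the paper's: the Markov property of \eqref{eq:zeta}, the complete-the-square computation yielding $\zeta^k = A_k\zeta^{k+1}+\eta^k$, the Hermite ODE giving $\Var\eta^k_b = 2/(k+1)$, and the iteration to \eqref{eq:zeta-K_2}. The genuine divergence is in how the remainder $\sum_b K^{k,M}(a\to b)\zeta^M_b$ is shown to vanish. The paper defers this to Corollary~\ref{Corollary_two_forms_of_covariance}, which matches the variance of $\zeta^k_a$ to the infinite series by an explicit ${}_2F_1$ evaluation; your route instead establishes the uniform covariance bound $\Cov\zeta^M\preceq 2I$ directly from the precision matrix $\tfrac12 I + L$ with $L\succeq 0$, reducing the problem to showing $\sum_b K^{k,M}(a\to b)^2\to 0$. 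This is cleaner and more self-contained, since it avoids the hypergeometric computation entirely.

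However, you flag the decay of $\sum_b K^{k,M}(a\to b)^2$ as ``the main technical obstacle'' and propose to attack it via the $Q^{(k)}_m$--diagonalization. That is overkill: it follows immediately from Lemma~\ref{Lemma_kernel_upper_bound} alone, which gives $K^{k,M}(a\to b)\le k/M$, and hence
\begin{equation*}
\sum_{b=1}^M K^{k,M}(a\to b)^2 \;\le\; \max_{b} K^{k,M}(a\to b)\cdot\sum_{b=1}^M K^{k,M}(a\to b) \;\le\; \frac{k}{M}\cdot 1 \;\longrightarrow\; 0.
\end{equation*}
(Lemma~\ref{Lemma_kernel_upper_bound} needs only the row-stochasticity of $A_k$ and the fact that $D_k$ fixes constants, i.e.\ Proposition~\ref{Proposition_polynomial_preservation} at $m=0$; no orthogonal polynomials are required.) With this elementary bound in place, your $\delta_a=0$, \eqref{eq:zeta-K} holds in $L^2$, and \eqref{eq:cov-sum_G} follows by orthogonality of the $\eta^\ell_b$, completing the proof by a route that is if anything shorter than the paper's. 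The remaining small omission is that you should also confirm a.s.\ (not just $L^2$) convergence of the series \eqref{eq:zeta-K}; this is what the paper handles via Kolmogorov's three-series theorem together with the same kernel bound, and the same one-line argument works for you.
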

\begin{remark}
 Let us emphasize that $K^{k,\ell}(a\to b)$ depends on the array $\{x_i^j\}$. In particular, in Theorem \ref{Theorem_GinftyE_RW} the diffusion kernel is constructed using roots of the Hermite polynomials, while in Theorem \ref{Theorem_inftyCorners_RW} more general configurations are allowed.
\end{remark}
In words, Theorems \ref{Theorem_inftyCorners_RW} and \ref{Theorem_GinftyE_RW} say that $\{\xi_i^k\}$ and $\{\zeta_i^k\}$ are averages over the trajectories of the jumping process of the sums of independent Gaussian noises collected by this process. In the rest of the section we prove these theorems.

\smallskip

Consider the process $\{\xi_i^k\}_{1\le i\le k\le N}$ of Section \ref{Section_infty_corners}  as a vector-valued process $\{\vxi_k\}_{k=1,\dots,N}$, where $\vxi_k=(\xi_1^k,\dots,\xi_k^k)$.
It is immediate to see from~\eqref{eq:xi} that this process is Markovian: conditionally on any $\vxi_{k_0}$, the values of $\xi_k$ with $k<k_0$ are independent from those with $k>k_0$.

Now, let us compute the conditional distribution of $\vxi_k$ given $\vxi_{k+1}$. One way to do it
is by sending $\beta\to\infty$ in the similar finite $\beta$ conditional distribution, computed in
\cite[(1.6)]{GS_DBM} or \cite[(56)]{GM2018}. The computations result in the density of the
conditional distribution of $\vxi_{k}$ given $\vxi_{k+1}$ being  proportional to
\begin{equation}\label{eq:conditional}
 \exp\left( - \sum_{a=1}^k \sum_{b=1}^{k+1} \frac{ (\xi_a^k-\xi_b^{k+1})^2 }{4 (x_a^k-x_b^{k+1})^2}
\right) = \prod_{a=1}^k  \exp\left( - \sum_{b=1}^{k+1} \frac{ (\xi_a^k-\xi_b^{k+1})^2 }{4 (x_a^k-x_b^{k+1})^2}
\right).
\end{equation}
Completing the squares in the last formula, we rewrite it as
\begin{equation}\label{eq:conditional_complete_square}
  C\cdot \prod_{a=1}^k  \exp\left( - \frac{1}{4}\left[ \sum_{b=1}^{k+1} \frac{1}{ (x_a^k-x_b^{k+1})^2}\right] \left(\xi_a^k-\sum_{b=1}^{k+1} \xi_b^{k+1}\dfrac{(x_a^k-x_b^{k+1})^{-2}}{\sum\limits_{b'=1}^{k+1} (x_a^k-x_{b'}^{k+1})^{-2}} \right)^2 \right),
\end{equation}
where $C$ is a constant which does not depend on $\xi_1^k,\xi_2^k,\dots,\xi_k^k$.
The conditional expectation $\E(\vxi_k\mid \vxi_{k+1})$, thus, can be written as
\begin{equation}
\label{eq:transition_coeffs} \E(\xi_a^k\mid \vxi_{k+1}) = \sum_{b=1}^{k+1} \alpha_{a,b}^k
\xi^{k+1}_b, \, \text{ where } \, \alpha_{a,b}^k =
\dfrac{(x_a^k-x_b^{k+1})^{-2}}{\sum\limits_{b'=1}^{k+1} (x_a^k-x_{b'}^{k+1})^{-2}}.
\end{equation}

We write $\vxi_k$ as a sum of this conditional expectation and of the \emph{innovations vector}
$\veta_k=\vxi_k-\E (\vxi_k \mid \vxi_{k+1} )$. From~\eqref{eq:conditional_complete_square} we see that~$\veta_k$
has independent components with the variance
\begin{equation}\label{eq:var-eta_second}
\Var \eta_a^k = \frac{2}{\sum_{b=1}^{k+1} (x_a^k-x_{b}^{k+1})^{-2}} = -2\,
\frac{P_{k+1}(x_a^k)}{P_{k+1}''(x_a^k)},
\end{equation}
where the second equality comes from differentiating the relation $\frac{P_{k+1}'(y)}{P_{k+1}(y)}=
\sum\limits_{b=1}^{k+1} \frac{1}{y-x_b^{k+1}}$, substituting $y=x_a^k$, and using $P'_{k+1}(x^k_a)=0$.

Now, let us iterate the representation
$$
\vxi_k = A_k \vxi_{k+1} + \veta_k,
$$
going from an arbitrary level $k$ all the way to the top level $N$. Since $\eta^N_i=0$, $1\le i \le
N$, we get
\begin{equation}\label{eq:vxi}
\vxi_k= \veta_k + A_k \veta_{k+1} + A_k A_{k+1} \veta_{k+2} + \dots + A_k A_{k+1} \dots A_{N-2} \veta_{N-1},
\end{equation}
which is precisely \eqref{eq:xi-K}. The identity \eqref{eq:cov-sum} directly follows from \eqref{eq:xi-K} and independence of $\eta_a^k$, thus, finishing the proof of Theorem \ref{Theorem_inftyCorners_RW}.

\smallskip

Let us now develop a similar representation for the Gaussian $\infty$--corners process
$\zeta_i^k$ of \eqref{eq:zeta}. In this particular case $P_k(X)=H_k(x)$ are the Hermite polynomials and
they satisfy the differential equation
\begin{equation}
\label{eq:He}
H_k''(x) - x H_k'(x) + k H_k (x)=0.
\end{equation}
Thus, at every root $y$ of $H_k = \frac{1}{k+1} H_{k+1} '$ one has
$\frac{H_{k+1}(y)}{H''_{k+1}(y)}=-\frac{1}{k+1}$. Hence, $\Var \eta^k_b=\frac{2}{k+1}$ for all~$b$.

Another distinction is that $\zeta_i^N$ no longer vanishes and \eqref{eq:xi-K} gets modified to:
\begin{equation}\label{eq:zeta-K_2}
\zeta_a^k = \sum_{\ell=k}^{N-1} \sum_{b=1}^{\ell} K^{k,\ell}(a\to b)  \cdot \eta^\ell_b+ \sum_{b=1}^N
K^{k,N}(a\to b) \zeta^N_b
\end{equation}
Since $N>k$ is arbitrary in \eqref{eq:zeta-K_2}, we can also take $N=\infty$, getting
\begin{equation}\label{eq:zeta-K_infty}
\zeta_a^k = \sum_{\ell=k}^{\infty} \sum_{b=1}^{\ell} K^{k,\ell}(a\to b)  \cdot \eta^\ell_b,
\end{equation}
which is the same as \eqref{eq:zeta-K}.

\begin{remark}  The series \eqref{eq:zeta-K_infty} is almost surely convergent, as follows (by the Kolmogorov's three series theorem, see, e.g., \cite[Theorem 2.5.8]{Durrett}) from the independence of the terms $\eta^\ell_n$ and convergence of the series defining the variance of $\zeta_a^k$, i.e.\
$$
 \sum_{\ell=k}^{\infty} \sum_{b=1}^{\ell} \left(K^{k,\ell}(a\to b)\right)^2  \cdot \frac{2}{\ell+1} <\infty.
$$
The last inequality is implied by the upper bound $K^{k,\ell}(a\to b)\le \tfrac{k}{\ell}$ of Lemma \ref{Lemma_kernel_upper_bound}.
\end{remark}

\begin{remark}
 For the transition from \eqref{eq:zeta-K_2} to \eqref{eq:zeta-K_infty}, one should additionally check that
 \begin{equation}
 \label{eq_remainder_zero}
  \lim_{N\to\infty} \sum_{b=1}^N
K^{k,N}(a\to b) \zeta^N_b =0, \qquad \text{in probability.}
 \end{equation}
For that, let us note that, by construction,  vectors $( \zeta^N_b)_{b=1}^N$ and $(\eta^\ell_b)_{1\le b \le \ell <N}$ in \eqref{eq:zeta-K_2} are uncorrelated with each other. Hence,
\begin{equation}
 \Var(\zeta_a^k)= \Var\left(\sum_{\ell=k}^{N-1} \sum_{b=1}^{\ell} K^{k,\ell}(a\to b)  \cdot \eta^\ell_b\right)+ \Var\left( \sum_{b=1}^N
K^{k,N}(a\to b) \zeta^N_b\right).
\end{equation}
 Sending $N\to\infty$ in the last identity, \eqref{eq_remainder_zero} would follow if we manage to prove that
\begin{equation}
\label{eq_x38}
 \Var(\zeta_a^k)= \Var\left(\sum_{\ell=k}^{\infty} \sum_{b=1}^{\ell} K^{k,\ell}(a\to b)  \cdot \eta^\ell_b\right).
\end{equation}
 This identity will be established in Corollary \ref{Corollary_two_forms_of_covariance} by relying on the representation of  $K^{k,\ell}(a\to b)$ in terms of orthogonal polynomials.\footnote{Before reaching Corollary \ref{Corollary_two_forms_of_covariance}, the reader might assume that we deal with the process of \eqref{eq:zeta-K} whenever we mention $\zeta_a^k$.}
\end{remark}

\smallskip

Using independence of $\eta_a^k$, the representation $\eqref{eq:zeta-K}$ implies \eqref{eq:cov-sum_G}. The proof of Theorem \ref{Theorem_GinftyE_RW} is finished.


\section{Random walks through orthogonal polynomials}

\label{Section_orthogonal}

The aim of this section is to diagonalize the stochastic matrices $A_k$ from \eqref{eq:stoch} using a special class of orthogonal polynomials.

\subsection{Preservation of polynomials} Let us choose a sequence of polynomials $P_k(x)$, such that $P_k$ is a monic polynomial of degree $k$ and $P_{k-1}(x)=\frac{1}{k} P_{k}'(x)$ for each $k=1,2,\dots$. Each polynomial $P_k$ is further assumed to have $k$ distinct real roots, which constitute the set $\X_k$.

\begin{definition}
 $\mF_k$ is the $k$--dimensional space of functions on $\X_k$.
\end{definition}

We further define $D_k$ to be the dual operator to $A_k$:
\begin{definition}
 The operator $D_k$ maps $\mF_k$ to $\mF_{k+1}$ through:
 $$
  [D_k f](x)=\sum_{y\in \X_k}\left[ \frac{f(y)}{(x-y)^2} \Biggl(\sum_{x'\in \X_{k+1}} \frac{1}{(x'-y)^2}\Biggr)^{-1}\right], \qquad x\in \X_{k+1}.
 $$
\end{definition}

We are going to mostly concentrate on the action of $D_k$ on polynomial functions. It is important to note that since $\mF_k$ is finite-dimensional, monomials $x^n$, $n=0,1,2,\dots$ are linearly dependent. Hence, there can be several representations of $D_k$, whose equivalence is sometimes non-evident.
\begin{proposition}
\label{Proposition_polynomial_preservation}
 For each $m=0,1,\dots,k-1$, linear operator $D_k$ preserves the space of polynomials of degree at most $m$. In more details,
 $$
  D_k x^m = \left(1 -\tfrac{m+1}{k+1}\right) x^m + (\text{polynomial of degree at most } m-1).
 $$
\end{proposition}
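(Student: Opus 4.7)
The plan is to exploit Lagrange interpolation on the nodes $\X_k$ together with the Appell relation $P_{k+1}' = (k+1)P_k$ to convert the sum defining $D_k f$ into a differential expression that is manifestly a polynomial in $x$.

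\textbf{Step 1: Rewrite the weight.} For $y\in \X_k$ we have $P_{k+1}'(y)=0$, so differentiating the logarithmic derivative $\frac{P_{k+1}'(z)}{P_{k+1}(z)}=\sum_{x'\in \X_{k+1}}\frac{1}{z-x'}$ and evaluating at $z=y$ gives
$$\sum_{x'\in \X_{k+1}}(x'-y)^{-2}=-\frac{P_{k+1}''(y)}{P_{k+1}(y)}=-\frac{(k+1)P_k'(y)}{P_{k+1}(y)}.$$
Hence
$$[D_k f](x)=-\frac{1}{k+1}\sum_{y\in \X_k}\frac{f(y)\,P_{k+1}(y)}{(x-y)^2\,P_k'(y)},\qquad x\in \X_{k+1}.$$

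\textbf{Step 2: Polynomial division.} Write $f(z)P_{k+1}(z)=S(z)P_k(z)+\widetilde R(z)$ with $\deg \widetilde R\le k-1$. Since $\deg(fP_{k+1})=m+k+1$ and $P_k$ is monic of degree $k$, the quotient $S$ has degree $m+1$ with leading coefficient equal to the leading coefficient $f_m$ of $f$. Lagrange interpolation at the $k$ nodes of $\X_k$ then gives
$$\sum_{y\in \X_k}\frac{f(y)P_{k+1}(y)}{P_k'(y)(z-y)}=\frac{\widetilde R(z)}{P_k(z)}=\frac{f(z)P_{k+1}(z)}{P_k(z)}-S(z),$$
valid as an equality of rational functions in $z$.

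\textbf{Step 3: Differentiate and evaluate on $\X_{k+1}$.} Differentiating in $z$,
$$-\sum_{y\in \X_k}\frac{f(y)P_{k+1}(y)}{P_k'(y)(z-y)^2}=\frac{d}{dz}\!\left[\frac{f(z)P_{k+1}(z)}{P_k(z)}\right]-S'(z).$$
At $z=x\in \X_{k+1}$ we have $P_{k+1}(x)=0$ and $P_k(x)\neq 0$ (since $P_k=P_{k+1}'/(k+1)$ and all roots of $P_{k+1}$ are simple). Using $P_{k+1}'(x)=(k+1)P_k(x)$, the derivative on the right reduces to $(k+1)f(x)$. Combining with Step 1,
$$[D_k f](x)=f(x)-\frac{S'(x)}{k+1},\qquad x\in \X_{k+1}.$$

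\textbf{Step 4: Conclude.} The right-hand side is a polynomial in $x$ of degree $\le\max(m,\deg S')=m$, so $D_k$ sends the space of polynomials of degree $\le m$ (viewed as a subspace of $\mF_{k+1}$ via restriction to $\X_{k+1}$) into itself. For $f(x)=x^m$ we have $S(x)=x^{m+1}+(\text{lower})$, hence $S'(x)=(m+1)x^m+(\text{lower})$, yielding
$$[D_k x^m](x)=\left(1-\frac{m+1}{k+1}\right)x^m+(\text{polynomial of degree }\le m-1),$$
as claimed.

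The one nontrivial move is Step 2–3: recognizing that the denominators $(x-y)^2$ should be produced by differentiating a Lagrange interpolation identity and that the product rule collapses cleanly at $z=x\in\X_{k+1}$ thanks to $P_{k+1}(x)=0$ and the Appell relation. Everything else is bookkeeping of leading coefficients.
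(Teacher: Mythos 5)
Your proof is correct, and it follows a genuinely different route from the paper's. The paper proves the proposition by passing to two contour-integral representations of $D_k$ (its equations (4.2) and (4.3)): first writing $D_k f$ as $-\frac{1}{2\pi\ii}\oint_{\X_k} f(z)\frac{P_{k+1}(z)}{P'_{k+1}(z)}\frac{dz}{(z-x)^2}$, then deforming the contour through the simple pole at $z=x$ (where $P_{k+1}(x)=0$) to pick up the residue $f(x)$, and finally computing the residue at infinity of the remaining integral by a power-series expansion of $(z-x)^{-2}$. You achieve the same decomposition by purely algebraic means: rewriting the weight via the Appell relation $P_{k+1}''=(k+1)P_k'$, performing polynomial division $fP_{k+1}=SP_k+\widetilde R$, invoking Lagrange interpolation at the roots of $P_k$ to get a partial-fraction identity, differentiating it in $z$, and evaluating at $z=x\in\X_{k+1}$. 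Your term $f(x)$ corresponds exactly to the paper's residue picked up in the contour deformation, and your $S'(x)/(k+1)$ is the paper's residue at infinity (the quotient polynomial $S$ is precisely what the $\oint_\infty$ extracts). The trade-off: your argument is more elementary and self-contained, requiring no complex analysis; the paper's contour machinery, while heavier for this proposition alone, is then reused essentially verbatim in the proof of Theorem 4.6, where both integral representations appear again.
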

In the proof we rely on the following  identity.
\begin{lemma}\label{Lemma_sum_squares_as_second_derivative}
 For $y\in \X_{k}$ we have
 \begin{equation}
 \label{eq_sum_squares_as_second_derivative}
  \sum_{x\in \X_{k+1}} \frac{1}{(x-y)^2}= - \frac{P''_{k+1}(y)}{P_{k+1}(y)}.
 \end{equation}
\end{lemma}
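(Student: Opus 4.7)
The plan is to use the standard logarithmic derivative trick. Since $P_{k+1}$ is monic with $k+1$ distinct real roots, we can write
$$ \log P_{k+1}(x) = \sum_{x' \in \X_{k+1}} \log(x-x'), $$
and differentiate twice. The first derivative yields
$$ \frac{P'_{k+1}(x)}{P_{k+1}(x)} = \sum_{x' \in \X_{k+1}} \frac{1}{x-x'}, $$
and the second derivative yields
$$ \frac{P''_{k+1}(x)}{P_{k+1}(x)} - \left(\frac{P'_{k+1}(x)}{P_{k+1}(x)}\right)^2 = -\sum_{x' \in \X_{k+1}} \frac{1}{(x-x')^2}. $$

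Next, I would specialize to $x = y \in \X_k$. By the assumption $P_k = \tfrac{1}{k+1} P'_{k+1}$, the set $\X_k$ consists precisely of the critical points of $P_{k+1}$, so $P'_{k+1}(y) = 0$ and the squared term drops out. One also needs that $P_{k+1}(y) \ne 0$, which follows from $P_{k+1}$ having $k+1$ distinct roots: if $P_{k+1}(y)$ vanished, then $y$ would be a repeated root, contradicting simplicity of the roots of $P_{k+1}$. Rearranging the resulting identity produces \eqref{eq_sum_squares_as_second_derivative}.

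There is essentially no main obstacle here; the only point requiring a (trivial) argument is the non-vanishing $P_{k+1}(y)\ne 0$, which is immediate from the standing assumption of simple roots. The entire argument is one line of logarithmic differentiation followed by the cancellation $P'_{k+1}(y)=0$.
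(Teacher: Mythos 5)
Your proof is correct and is essentially the same argument the paper uses: the paper reduces the lemma to the second equality in its \eqref{eq:var-eta}, which is itself established by differentiating $\tfrac{P_{k+1}'}{P_{k+1}} = \sum_b (y - x_b^{k+1})^{-1}$, evaluating at a root $y$ of $P_k$, and using $P'_{k+1}(y)=0$. Your added remark that $P_{k+1}(y)\neq 0$ (since the roots of $P_{k+1}$ are simple) is a correct, if routine, observation the paper leaves implicit.
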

\begin{proof}  This is a reformulation of the second equality in \eqref{eq:var-eta}.\end{proof}

\begin{proof}[Proof of Proposition \ref{Proposition_polynomial_preservation}]
 We are going to use two integral representations for the action of the operator $D_k$ on polynomial functions. First,
 \begin{equation}
 \label{eq_integral_representation_1}
  [D_k f](x)=-\frac{1}{2\pi \ii} \oint_{\X_k} f(z)\cdot  \frac{P_{k+1}(z)}{P'_{k+1}(z)} \cdot \frac{dz}{(z-x)^2},
 \end{equation}
 where the integration contour is positively (i.e.\ counter clock-wise) oriented and includes all poles at points of $\X_k$, but does not include $x$. Indeed, taking into account \eqref{eq_sum_squares_as_second_derivative}, the sum of the residues of \eqref{eq_integral_representation_1} at points $y\in \X_k$ matches the sum in the definition of $D_k$. Second, for $x\in \X_{k+1}$, using $P_{k+1}(x)=0$, we can deform the integration contour in \eqref{eq_integral_representation_1} through the simple pole at $z=x$ picking up the residue $f(x)$ there and get
 \begin{equation}
 \label{eq_integral_representation_2}
  [D_k f](x)=f(x)-\frac{1}{2\pi \ii} \oint_{\infty} f(z)\cdot  \frac{P_{k+1}(z)}{P'_{k+1}(z)} \cdot \frac{dz}{(z-x)^2},
 \end{equation}
 where the integration now goes in positive direction over a very large contour enclosing all singularities of the integrand. Let us emphasize that equality between \eqref{eq_integral_representation_1} and \eqref{eq_integral_representation_2} holds only for $x\in \X_{k+1}$. We now specialize to $f(x)=x^m$ and compute the integral in \eqref{eq_integral_representation_2} as a residue at $\infty$. For that we expand for large $z$
 \begin{equation}
 \label{eq_x9}
  \frac{1}{(z-x)^2}=\frac{1}{z^2}+2 \frac{x}{z^3}+ 3\frac{x^2}{z^4} + 4 \frac{x^3}{z^5}+\dots.
 \end{equation}
 Note that $z^m\cdot  \frac{P_{k+1}(z)}{P'_{k+1}(z)}$ grows in the leading order as $\frac{z^{m+1}}{k+1}$. Hence, only the first $m+1$ terms in \eqref{eq_x9}, which are
 $$
  \frac{1}{z^2}+\dots+ (m+1) \frac{x^m}{z^{m+2}},
 $$
 contribute to the residue. We conclude that this residue is a degree $m$ polynomial of the form $\frac{m+1}{k+1} x^m + \dots$.
\end{proof}

\subsection{Lattices with $3$--term recurrence}

Our next task is to introduce a basis in $\mF_k$, such that the action of $D_k$ is diagonal with respect to this basis. We were unable to present a satisfactory definition for generic choices of $P_k$ and need to restrict ourselves to the following class\footnote{As of 2021, we do not know other classes of $P_k$ leading to explicit identification of a basis. A possible another good case for future investigations is $\beta=\infty$ version of the ergodic measures on eigenvalues of corners of general $\beta$--random matrices of infinite size, see, e.g.,  \cite{AN} and  \cite[Section 4.4]{BCG} for discussions about these measures}:

\begin{definition} \label{Definition_classical}
 We say that polynomials $P_k(z)$ are \emph{classical} if
 \begin{equation}
 \label{eq_3_term_relation}
  P''_{k}(z) \alpha_k(z) + P'_k(z) \beta_k(z) + P_k(z)=0,
 \end{equation}
 where $\alpha_k(z)$ is a polynomial of degree at most $2$ and $\beta_k(z)$ is a polynomial of degree at most $1$.
\end{definition}
Examples are given by classical orthogonal polynomials, see, e.g., \cite{Maroni} and  Section \ref{Section_Hermite_Laguerre_Jacobi}.

\begin{definition} \label{Definition_new_ortho}
 Fix $k$ and equip $\X_k$ with the weight
 \begin{equation}
 \label{eq_weight_def}
  w_k(y)= - \frac{1}{k(k+1)} \cdot \frac{P_{k+1}(y)}{P_{k-1}(y)}= - \frac{P_{k+1}(y)}{P''_{k+1}(y)}.
 \end{equation}
Consider a scalar product on $\mF_k$:
\begin{equation}
 \langle f, g\rangle_k = \sum_{y\in \X_k} f(y) g(y) w_k(y).
\end{equation}
Define $Q^{(k)}_m(x)$, $m=0,1,2,\dots,{k-1}$, to be the monic orthogonal polynomials with respect to this scalar product.
\end{definition}
\begin{remark}
 Due to interlacing between the roots of $P_{k+1}$ and its derivative, the weight $w_k(y)$, $y\in \X_k$ is positive.
\end{remark}
\begin{remark}
 For each $y\in \X_k$, due to \eqref{eq_3_term_relation} and vanishing of $P_k(y)$, we have $w_k(y)=\alpha_{k+1}(y)$.
\end{remark}


\begin{theorem} \label{Theorem_diagonalization_of_transition} Suppose that polynomials $P_k(z)$ are classical. Then for  $0\le m \le k-1$ we have
\begin{equation}
 D_k Q^{(k)}_m = \left(1-\tfrac{m+1}{k+1}\right) Q^{(k+1)}_m.
\end{equation}
\end{theorem}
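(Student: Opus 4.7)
The plan is to identify both sides as polynomials of degree exactly $m$ and then match leading coefficients together with orthogonality. By Proposition~\ref{Proposition_polynomial_preservation}, $D_k$ sends the monic polynomial $Q^{(k)}_m$ of degree $m$ to a polynomial of degree exactly $m$ with leading coefficient $c_m := 1-\tfrac{m+1}{k+1}$, which already matches that of $c_m\,Q^{(k+1)}_m$. Since the monic orthogonal polynomial $Q^{(k+1)}_m$ for $\langle\cdot,\cdot\rangle_{k+1}$ is unique, the task reduces to proving
$$\langle D_k Q^{(k)}_m,R\rangle_{k+1}=0 \qquad \text{for every polynomial $R$ with $\deg R\le m-1$.}$$

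To establish this, I would substitute the integral representation \eqref{eq_integral_representation_1} for $D_k Q^{(k)}_m$, swap integral and the sum defining $\langle\cdot,\cdot\rangle_{k+1}$, and obtain
\begin{equation*}
 \langle D_k Q^{(k)}_m, R\rangle_{k+1}=-\frac{1}{2\pi\ii}\oint_{\X_k} Q^{(k)}_m(z)\,\frac{P_{k+1}(z)}{P'_{k+1}(z)}\,G(z)\,\d z,\qquad G(z)=\sum_{x\in \X_{k+1}}\frac{R(x)\,\alpha_{k+2}(x)}{(z-x)^2},
\end{equation*}
using $w_{k+1}(x)=\alpha_{k+2}(x)$ on $\X_{k+1}$. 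I then express $G(z)$ itself as a contour integral around $\X_{k+1}$ of $\tfrac{R(w)\alpha_{k+2}(w)P'_{k+1}(w)}{(z-w)^2\,P_{k+1}(w)}$ and deform through the only other singularities, the double pole at $w=z$ and $w=\infty$, to arrive at $G(z)=-\phi'(z)-T(z)$, where $\phi(w):=R(w)\alpha_{k+2}(w)P'_{k+1}(w)/P_{k+1}(w)$ and $T(z)$ is the residue at infinity. A simple degree count on $\tfrac{R\,\alpha_{k+2}\,P'_{k+1}}{(z-w)^2\,P_{k+1}}$ (whose leading behavior at $w=\infty$ is $w^{m-2}$) shows that $T(z)$ is a polynomial in $z$ of degree $\le m-1$.

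Plugging this decomposition back, the $T$-piece evaluates by residues at $\X_k$ (using Lemma~\ref{Lemma_sum_squares_as_second_derivative}, which gives $P_{k+1}(y)/P''_{k+1}(y)=-w_k(y)$) to $-\langle Q^{(k)}_m, T\rangle_k$, vanishing by orthogonality of $Q^{(k)}_m$. The $\phi'$-piece calls for integration by parts on the closed $\X_k$-contour: since $P'_{k+1}(y)=0$ on $\X_k$, the simple zeros of $\phi$ there cancel the simple poles of $P_{k+1}/P'_{k+1}$, so the relevant product is holomorphic inside the contour and IBP is legitimate. A direct Laurent expansion at each $y\in\X_k$, combined with $\phi'(y)=-R(y)\alpha_{k+2}(y)/\alpha_{k+1}(y)$ (obtained by differentiating $\phi$ at $y$ and using $P'_{k+1}(y)=0$ together with Lemma~\ref{Lemma_sum_squares_as_second_derivative}), collapses the $\phi'$-piece to $\sum_{y\in\X_k}Q^{(k)}_m(y)\,R(y)\,\alpha_{k+2}(y)$. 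To finish, I would invoke the classical (Bochner) structure behind Definition~\ref{Definition_classical}: $\alpha_k(x)=\sigma(x)/\lambda_k$ for a fixed quadratic $\sigma$ and constants $\lambda_k$, so on every $y$ the ratio $\alpha_{k+2}(y)/\alpha_{k+1}(y)=\lambda_{k+1}/\lambda_{k+2}$ is independent of $y$, and the remaining sum becomes a constant multiple of $\langle Q^{(k)}_m, R\rangle_k=0$. The main obstacle is the careful bookkeeping of signs and pole orders in the residue calculus, and the conceptual observation that it is exactly this Bochner form of the classical hypothesis that makes $\alpha_{k+2}/\alpha_{k+1}$ constant on $\X_k$ and forces the last sum to vanish.
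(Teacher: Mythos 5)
Your proof is correct and, while it starts from the same contour representations \eqref{eq_integral_representation_1} and \eqref{eq_scalar_as_contour} as the paper's argument, it proceeds by a mirror-image deformation: rather than rewriting the whole pairing as the double contour integral \eqref{eq_x10} and deforming the outer $u$-contour (as the paper does, using the product rule on the $u=z$ residue together with the three-term relation \eqref{eq_3_term_relation}), you keep the $z$-integral and express the weighted inner sum $G(z)$ as a contour integral around $\X_{k+1}$, deforming it into a residue at $w=z$ (the $\phi'$-piece) and a residue at $\infty$ (the $T$-piece). The bookkeeping checks out: $T$ is a polynomial of degree $\le m-1$, so the $T$-piece vanishes by orthogonality, and the residue computation for the $\phi'$-piece (the mention of integration by parts is superfluous --- you simply evaluate residues) correctly yields $\sum_{y\in\X_k}Q^{(k)}_m(y)\,R(y)\,\alpha_{k+2}(y)$. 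The one place where your route uses more than the paper's is the very last step: you need $\alpha_{k+2}(y)/\alpha_{k+1}(y)$ to be independent of $y$, which you justify by appealing to the Bochner form $\alpha_k=\sigma/\lambda_k$. That is a fact about the classical families rather than a literal consequence of the text of Definition~\ref{Definition_classical}; it can, however, be derived from the definition together with the Appell property by differentiating the level-$(k+2)$ relation \eqref{eq_3_term_relation}, rewriting derivatives of $P_{k+2}$ via $P'_{k+2}=(k+2)P_{k+1}$ and its consequences, comparing with the level-$(k+1)$ relation, and using that $P'_{k+1}$ and $P''_{k+1}$ are coprime (their roots interlace): for $k\ge 3$ this forces $\alpha_{k+2}=(\beta'_{k+2}+1)\alpha_{k+1}$. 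The paper's version is slightly more economical, since by substituting the three-term relation inside the $u=z$ residue it never needs the constant-ratio fact in closed form; but once you supply that derivation, your argument is complete and self-contained.
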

\begin{proof}
 Proposition \ref{Proposition_polynomial_preservation} implies that $D_k Q^{(k)}_m$ is a degree $m$ polynomial with leading coefficient $\left(1-\tfrac{m+1}{k+1}\right)$. Hence, it remains to prove that:
 \begin{equation}
  \langle D_k Q^{(k)}_m, x^j \rangle_{k+1} \stackrel{?}{=} 0, \quad 0\le j \le m-1.
 \end{equation}
We are going to use the following contour integral representation of the scalar product $\langle f,g\rangle_k$ for polynomial functions $f$ and $g$:
\begin{equation}
\label{eq_scalar_as_contour}
  \langle f,g\rangle_k= - \frac{1}{k+1} \cdot \frac{1}{2\pi \ii} \oint_{\X_k} f(z) g(z) \frac{P_{k+1}(z)}{P_k(z)} dz,
\end{equation}
where the integration contour is counter-clockwise oriented and encloses all singularities of the integrand, which has $k$ poles at the points of $\X_k$ --- roots of $P_k(z)$. Sum of the residues at these poles matches the definition of scalar product. The formula \eqref{eq_scalar_as_contour} remains valid even for non-polynomial functions $f$ and $g$ as long as these functions have an analytic continuation to a small complex neighborhood of $\X_k$; in this situation the integration contour should be a union of small loops around points of $\X_k$.

Combining \eqref{eq_integral_representation_1} with \eqref{eq_scalar_as_contour}, we need to prove:
\begin{equation}
\label{eq_x10}
\oint_{\X_{k+1}}   \left[\oint_{\X_k} Q^{(k)}_m(z)\cdot  \frac{P_{k+1}(z)}{P_{k}(z)} \frac{dz}{(z-u)^2}\right] u^j \frac{P_{k+2}(u)}{P_{k+1}(u)} du \stackrel{?}{=}0.
\end{equation}
Note that the internal integral might fail to be a polynomial as a function of $u$. The $u$--integral in \eqref{eq_x10} is over a union of $k+1$ small loops around points of $\X_{k+1}$ and the $z$--integral is over a union of $k$ small loops around points of $\X_k$

We would like to deform the $u$--contour in \eqref{eq_x10} to make it a large circle. In this deformation we encounter singularities at the double pole $u=z$ resulting (up to $2\pi \ii$ factor, which we omitted) in an additional residue term given by the following integral
\begin{multline}
\label{eq_x11}
 \oint_{\X_k} Q^{(k)}_m(z)\cdot  \frac{P_{k+1}(z)}{P_{k}(z)} \frac{\partial}{\partial z}\left( z^j \frac{P_{k+2}(z)}{P_{k+1}(z)}\right) dz
 =
  \oint_{\X_k} Q^{(k)}_m(z)\cdot  \frac{P_{k+1}(z)}{P_{k}(z)}  j z^{j-1} \frac{P_{k+2}(z)}{P_{k+1}(z)} dz\\+
  \oint_{\X_k} Q^{(k)}_m(z)\cdot  \frac{P_{k+1}(z)}{P_{k}(z)}  z^j (k+2) dz-
 \oint_{\X_k} Q^{(k)}_m(z)\cdot  \frac{P_{k+1}(z)}{P_{k}(z)}  z^j \frac{ P_{k+2}(z) (k+1) P_k(z)}{ (P_{k+1}(z))^2}dz.
\end{multline}
Let us show that each of the integrals in the right-hand side of \eqref{eq_x11} vanishes. In the last one the factor $P_k(z)$ cancels out and there are no singularities inside the integration contour. The middle integral is a scalar product of $Q^{(k)}_m$ and $z^j (k+2)$ and, thus, vanishes. For the remaining first integral we use the three-term relation \eqref{eq_3_term_relation}:
\begin{multline}
  j\oint_{\X_k} Q^{(k)}_m(z)  z^{j-1}  \cdot  \frac{P_{k+2}(z)}{P_{k}(z)}  dz=
  j\oint_{\X_k} Q^{(k)}_m(z)  z^{j-1}  \cdot  \frac{(k+2)(k+1) P_k(z) \alpha_{k+2}(z)}{P_{k}(z)}  dz\\+
    j\oint_{\X_k} Q^{(k)}_m(z)  z^{j-1}  \cdot  \frac{(k+2)P_{k+1}(z) \beta_{k+2}(z)}{P_{k}(z)}  dz.
\end{multline}
For the last two integrals, the first one has integrand with no singularities, hence, it vanishes\footnote{Note that this is the only place where $\alpha_{k+2}(z)$ appears and we do not need it to be a polynomial in order for this argument to work. Yet, it is unclear, whether this observation can be used to add any generality to the theorem that we are proving.}. The second integral is a scalar product of $Q^{(k)}_m$ with polynomial $z^{j-1} (k+2) \beta_{k+2}(z)$ of degree at most $j$, hence, it also vanishes.

Now \eqref{eq_x10} got converted into
\begin{equation}
\label{eq_x12}
\oint_{\infty}   \left[\oint_{\X_k} Q^{(k)}_m(z)\cdot  \frac{P_{k+1}(z)}{P_{k}(z)} \frac{dz}{(z-u)^2}\right] u^j \frac{P_{k+2}(u)}{P_{k+1}(u)} du \stackrel{?}{=}0.
\end{equation}
Let us integrate in $u$ first by computing the $u$--residue at $\infty$. For that we expand $1/(z-u)^2$ in $1/u$ power series. Since $ u^j \frac{P_{k+2}(u)}{P_{k+1}(u)}$ grows as $(k+2) u^{j+1}$, we only need terms up to $1/u^{j+2}$ in the expansion, i.e.\ we need
$$
 \frac{1}{(z-u)^2} = \frac{1}{u^2}+ 2 \frac{z}{u^3} + \dots + (j+1)\frac{z^j}{u^{j+2}} + (\cdots),
$$
where the $(\cdots)$ terms can be ignored. We conclude that the $u$--integral is a polynomial in $z$ of degree at most $j$. Hence, the $z$--integral becomes a scalar product of $Q^{(k)}_m$ with this polynomial and vanishes.
\end{proof}

\subsection{Hermite, Laguerre, and Jacobi examples}

\label{Section_Hermite_Laguerre_Jacobi}

In this section we list the classical polynomials, for which \eqref{eq_3_term_relation} is satisfied. We take the formulas directly from \cite{KS}. 

First, the (monic) Hermite polynomials form an Appell sequence, $H_k'(z)=k H_{k-1}(z)$, and also satisfy a differential equation
$$
 H''_{k}(z)-z H'_{k}(z)+ k H_{k}(z)=0.
$$
Hence, they fit into Definition \ref{Definition_classical}. The weight is constant in this case:
\begin{equation}
\label{eq_Hermite_ortho_weight}
w_k(y)= \frac{1}{k+1}, \qquad y\in \X_k.
\end{equation}

The second example is given by the generalized Laguerre polynomials $L^{(\alpha)}_k(z)$, which solve the following second order differential equation:
\begin{equation}
\label{eq_x13}
 z f''(z)+(\alpha+1-z) f'(z)+ k f(z)=0, \qquad k=0,1,2,\dots.
\end{equation}
The leading coefficient of $L^{(\alpha)}_k(z)$ is usually chosen to be $\frac{(-1)^k}{k!}$ and in this normalization they satisfy the relation
$$
 \frac{\partial}{\partial z} L^{(\alpha)}_k(z)= - L^{(\alpha+1)}_{k-1}(z).
$$
Hence, the polynomials
$$
 P_k(z)= (-1)^k k!\cdot L^{(\alpha-k)}_k (z), \qquad k=0,1,2,\dots,
$$
are monic, form an Appell sequence, and fit into Definition \ref{Definition_classical}. The weight is linear in this case:
$$
 w_k(y)= \frac{y}{k+1},\qquad y\in \X_k.
$$

The third example is given by the Jacobi polynomials $J^{(\alpha,\beta)}_k(z)$, which solve the following second order differential equation:
\begin{equation}
\label{eq_x14}
 \bigl(1-z^2\bigr) f''(z)+\bigl(\beta-\alpha-(\alpha+\beta+2)z\bigr) f'(z)+ k(k+\alpha+\beta+1) f(z)=0.
\end{equation}
If we use the normalization of \cite{KS}, then the leading coefficient is
$$
 2^{-m}\frac{\Gamma(\alpha+\beta+2k+1)}{\Gamma(k+1) \Gamma(\alpha+\beta+k+1)}
$$
and the polynomials satisfy the relation:
$$
 \frac{\partial}{\partial z} J^{(\alpha,\beta)}_k(z) = \frac{k+\alpha+\beta+1}{2} J^{(\alpha+1,\beta+1)}_{k-1}(z).
$$
Hence, the polynomials
\begin{equation}
\label{eq_monic_Jacobi}
 P_k(z)= 2^m \frac{\Gamma(k+1) \Gamma(\alpha+\beta+k+1)}{\Gamma(\alpha+\beta+2k+1)} J^{(\alpha-k,\beta-k)}_{k}(z), \qquad k=0,1,2,\dots
\end{equation}
are monic, form an Appell sequence, and fit into Definition \ref{Definition_classical}. The weight is quadratic:
\begin{equation}
\label{eq_Jacobi_dual_weight}
 w_k(y)= \frac{1-y^2}{(k+1)(k+\alpha+\beta+2)},\qquad y\in \X_k.
\end{equation}

We remark that if $\alpha>-1$ and $\beta>-1$, then Jacobi and Laguerre polynomials are orthogonal (with respect to weights $(1-z)^{\alpha}(1+z)^{\beta}$ on $[-1,1]$ and $x^{\alpha} e^{-x}$ on $[0,+\infty)$, respectively), yet, this restriction on the parameters is not necessary for the polynomials to be well-defined and for the above identities to hold. Note, however, that we need the polynomials to be real--rooted, which is always true for $\alpha>-1$, $\beta>-1$, but fails for some values of $\alpha\le -1$, $\beta\le -1$, see, e.g., \cite{Law,Ber}.

\subsection{Consequences of orthogonality}

Our main motivation for the introduction of the orthogonal polynomials $Q^{(k)}_j$ is that they are helpful in analyzing the covariance \eqref{eq:cov-sum}.

\begin{theorem}  Suppose that polynomials $P_k(z)$ are classical and let $Q^{(k)}_m$ be as in Definition \ref{Definition_new_ortho}. Then the stochastic process $\{\xi^k_a\}_{1\le a \le k\le N}$ admits the following formula for the covariance:
\begin{multline}
\Cov (\xi_{a_1}^{k_1}, \xi_{a_2}^{k_2})= 2 w_{k_1}(x^{k_1}_{a_1}) w_{k_2}(x^{k_2}_{a_2}) \sum_{\ell=\max(k_1,k_2)}^{N-1} \sum_{m=0}^{\min(k_1,k_2)-1}  Q^{(k_1)}_{m}(x^{k_1}_{a_1}) \, Q^{(k_2)}_{m}(x^{k_2}_{a_2}) \\ \times  \frac{\langle Q^{(\ell)}_{m}, Q^{(\ell)}_{m} \rangle_\ell}{\langle Q^{(k_1)}_{m}, Q^{(k_1)}_{m} \rangle_{k_1} \langle Q^{(k_2)}_{m}, Q^{(k_2)}_{m} \rangle_{k_2}}
 \prod_{j=k_1}^{\ell-1} \left(1-\tfrac{m+1}{j+1}\right) \prod_{j=k_2}^{\ell-1} \left(1-\tfrac{m+1}{j+1}\right).
\end{multline}
Further, if $P_k(z)$ are the Hermite polynomials and we deal with $\{\zeta^k_a\}_{1\le a \le k}$, then
\begin{multline}
\label{eq_covariance_zeta}
\Cov (\zeta_{a_1}^{k_1}, \zeta_{a_2}^{k_2})= 2 w_{k_1}(x^{k_1}_{a_1}) w_{k_2}(x^{k_2}_{a_2}) \sum_{\ell=\max(k_1,k_2)}^{\infty} \sum_{m=0}^{\min(k_1,k_2)-1}  Q^{(k_1)}_{m}(x^{k_1}_{a_1}) \, Q^{(k_2)}_{m}(x^{k_2}_{a_2}) \\ \times  \frac{\langle Q^{(\ell)}_{m}, Q^{(\ell)}_{m} \rangle_\ell}{\langle Q^{(k_1)}_{m}, Q^{(k_1)}_{m} \rangle_{k_1} \langle Q^{(k_2)}_{m}, Q^{(k_2)}_{m} \rangle_{k_2}}
 \prod_{j=k_1}^{\ell-1} \left(1-\tfrac{m+1}{j+1}\right) \prod_{j=k_2}^{\ell-1} \left(1-\tfrac{m+1}{j+1}\right).
\end{multline}
\end{theorem}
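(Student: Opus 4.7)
\bigskip
\noindent\textbf{Proof proposal.} The plan is to diagonalize the diffusion kernel $K^{k,\ell}$ in terms of the polynomials $Q^{(k)}_m$, recognize that $\Var\eta^\ell_b$ equals twice the dual weight $w_\ell(x^\ell_b)$, and then collapse the $b$-sum in \eqref{eq:cov-sum} (resp.\ \eqref{eq:cov-sum_G}) using orthogonality at level $\ell$.

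First I would iterate Theorem \ref{Theorem_diagonalization_of_transition} to obtain
$$
\sum_{a=1}^{k} K^{k,\ell}(a\to b)\, Q^{(k)}_m(x^k_a) \;=\; \lambda_{m}^{k,\ell}\, Q^{(\ell)}_m(x^\ell_b), \qquad \lambda_{m}^{k,\ell}:=\prod_{j=k}^{\ell-1}\bigl(1-\tfrac{m+1}{j+1}\bigr),
$$
valid for all $0\le m\le k-1$ and $1\le b\le \ell$. Because $\{Q^{(k)}_m\}_{m=0}^{k-1}$ spans the $k$-dimensional space $\mF_k$, these $k$ identities determine the function $a\mapsto K^{k,\ell}(a\to b)$ uniquely, and a one-line check using the $w_k$-orthogonality of $\{Q^{(k)}_m\}$ verifies that this function equals
$$
K^{k,\ell}(a\to b) \;=\; \sum_{m=0}^{k-1}\lambda_m^{k,\ell}\,\frac{w_k(x^k_a)\,Q^{(k)}_m(x^k_a)\,Q^{(\ell)}_m(x^\ell_b)}{\langle Q^{(k)}_m,Q^{(k)}_m\rangle_k}.
$$

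Second, \eqref{eq:var-eta_second} combined with the definition \eqref{eq_weight_def} of the weight reads simply $\Var\eta^\ell_b = 2\,w_\ell(x^\ell_b)$. Substituting this identity and the closed form for $K^{k_i,\ell}$ into \eqref{eq:cov-sum}, the $b$-sum at each level $\ell$ becomes the $w_\ell$-inner product $2\langle Q^{(\ell)}_{m_1},Q^{(\ell)}_{m_2}\rangle_\ell$, which by orthogonality kills every term with $m_1\ne m_2$. The surviving single $m$-sum automatically ranges over $0\le m\le \min(k_1,k_2)-1$, and collecting the factors of $w_{k_i}(x^{k_i}_{a_i})$, $\lambda_m^{k_i,\ell}$, and the three squared norms yields the claimed formula for $\Cov(\xi^{k_1}_{a_1},\xi^{k_2}_{a_2})$. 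For the Gaussian Hermite case \eqref{eq_covariance_zeta}, the Hermite weight \eqref{eq_Hermite_ortho_weight} is the constant $1/(\ell+1)$, so \eqref{eq:var-eta_G} again reads $\Var\eta^\ell_b = 2w_\ell(x^\ell_b)$ and the identical level-$\ell$ identity applies termwise to \eqref{eq:cov-sum_G}; summing over $\ell=\max(k_1,k_2),\max(k_1,k_2)+1,\ldots$ produces the stated infinite-sum formula directly, with no further convergence argument needed because the termwise identity holds for each fixed $\ell$.

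The main obstacle I anticipate is the first step -- writing down and verifying the correct closed form for $K^{k,\ell}$. The crucial choice is to place the weight $w_k(x^k_a)$ inside the ansatz (rather than $w_\ell(x^\ell_b)$), so that the orthogonality invoked in the verification is the $k$-level one; this placement is forced by the shape of the eigenrelation, where the sum is over $a$ at level $k$. Once that is settled, the rest of the argument is one orthogonality collapse at level $\ell$, triggered precisely by the identification $\Var\eta^\ell_b = 2w_\ell(x^\ell_b)$.
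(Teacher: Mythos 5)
Your proposal is correct and follows essentially the same route as the paper: diagonalize the diffusion kernel via the $Q^{(k)}_m$ eigenbasis, identify $\Var\eta^\ell_b = 2w_\ell(x^\ell_b)$, substitute into \eqref{eq:cov-sum} (resp.\ \eqref{eq:cov-sum_G}), and collapse the $b$-sum by $w_\ell$-orthogonality to force $m_1=m_2$. The only cosmetic difference is that the paper obtains the spectral form of $K^{k,\ell}(a\to b)$ directly by expanding the delta function $\1_{x^k_a}$ in the $Q^{(k)}_m$ basis (which automatically produces the $w_k(x^k_a)$ factor), whereas you arrive at the same expression via a guess-and-verify inversion of the eigenrelation; these are the same computation.
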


\begin{proof} The diffusion kernel of Definition \ref{Definition_dif_kernel} admits a spectral representation. Using the notation $\1_{x^k_a}$ for the delta-function at $x^k_a$, we have:
\begin{multline}\label{eq_Diffusion_spectral}
K^{k,\ell}(a\to b)= [D_{\ell-1} \cdots D_{k+1} D_k\1_{x^k_a}] (x^\ell_b)=
\sum_{m=0}^{k-1} \frac{\langle \1_{x^k_a}, Q^{(k)}_m \rangle_k}
{\langle Q^{(k)}_m, Q^{(k)}_m \rangle_k} [D_k D_{k+1}\cdots D_{\ell-1} Q^{(k)}_m ](x^\ell_b)]
\\=
 w_k(x^k_a) \sum_{m=0}^{k-1} \frac{Q^{(k)}_m(x^k_a) Q^{(\ell)}_m(x^\ell_b)}{\langle Q^{(k)}_m, Q^{(k)}_m \rangle_k} \prod_{j=k}^{\ell-1} \left(1-\tfrac{m+1}{j+1}\right).
\end{multline}
Using \eqref{eq:cov-sum} and $\Var\eta^\ell_b=2 w_\ell(x^\ell_b)$, we further write
\begin{multline}
\Cov (\xi_{a_1}^{k_1}, \xi_{a_2}^{k_2}) = 2 \sum_{\ell=\max(k_1,k_2)}^{N-1}
\langle K^{k_1,\ell}(a_1\to \cdot ), K^{k_2,\ell}(a_2\to \cdot ) \rangle_\ell
\\= 2 w_{k_1}(x^{k_1}_{a_1}) w_{k_2}(x^{k_2}_{a_2})\sum_{\ell=\max(k_1,k_2)}^{N-1} \sum_{m_1=0}^{k_1-1} \sum_{m_2=0}^{k_2-1} Q^{(k_1)}_{m_1}(x^{k_1}_{a_1}) Q^{(k_2)}_{m_2}(x^{k_2}_{a_2}) \frac{\langle Q^{(\ell)}_{m_1}, Q^{(\ell)}_{m_2} \rangle_\ell}{\langle Q^{(k_1)}_{m_1}, Q^{(k_1)}_{m_1} \rangle_{k_1} \langle Q^{(k_2)}_{m_2}, Q^{(k_2)}_{m_2} \rangle_{k_2}}
\\ \times \prod_{j=k_1}^{\ell-1} \left(1-\tfrac{m_1+1}{j+1}\right) \prod_{j=k_2}^{\ell-1} \left(1-\tfrac{m_2+1}{j+1}\right).
\end{multline}
Orthogonality implies $m_1=m_2$ and the last expression simplifies to
\begin{multline}
2 w_{k_1}(x^{k_1}_{a_1}) w_{k_2}(x^{k_2}_{a_2})\sum_{\ell=\max(k_1,k_2)}^{N-1} \sum_{m=0}^{\min(k_1,k_2)-1}  Q^{(k_1)}_{m}(x^{k_1}_{a_1}) Q^{(k_2)}_{m}(x^{k_2}_{a_2}) \frac{\langle Q^{(\ell)}_{m}, Q^{(\ell)}_{m} \rangle_\ell}{\langle Q^{(k_1)}_{m}, Q^{(k_1)}_{m} \rangle_{k_1} \langle Q^{(k_2)}_{m}, Q^{(k_2)}_{m} \rangle_{k_2}}
\\ \times \prod_{j=k_1}^{\ell-1} \left(1-\tfrac{m+1}{j+1}\right) \prod_{j=k_2}^{\ell-1} \left(1-\tfrac{m+1}{j+1}\right).
\end{multline}
For $\{\zeta^k_a\}_{1\le a \le k}$ the argument is the same.
\end{proof}

\subsection{Duality property}

\label{Section_duality}

In previous subsection we explained how $\{\xi^k_a\}_{1\le a \le k\le N}$ can be analyzed using orthogonal polynomials $Q^{(k)}_m(z)$ of Definition \ref{Definition_new_ortho}. Our next aim is to collect the necessary tools for obtaining the asymptotic theorems about these polynomials.

Although polynomials $Q^{(k)}_m(z)$ are not well-known, but they have appeared in the literature previously. Some of their properties are explained in \cite{Vinet-Z} with certain elements of the constructions going back to \cite{Boor-Saff}, \cite{Borodin-OP} and others being rooted in classical orthogonal polynomial topics: associated polynomials (we rely on \cite{AW}), quadrature formulas, and Christoffel numbers. Let us present a general framework.

Suppose that we are given a sequence of monic orthogonal polynomials\footnote{We do NOT assume these polynomials to form an Appell sequence.} $\mathcal P_n(x)$, $n=0,1,2,\dots$ satisfying a three-term recurrence:
\begin{equation}
\label{eq_tridiag}
 \mathcal P_{n+1}(x)+ b_n \mathcal P_n(x) + u_n \mathcal P_{n-1}(x)=x \mathcal P_n(x)
\end{equation}
with an initial condition
$$
 \mathcal P_0(x)=1,\qquad \mathcal P_1(x)=x-b_0.
$$
One way to think about \eqref{eq_tridiag} is by considering a tridiagonal matrix of the form
\begin{equation}
\label{eq_tridiag_matrix}
 \begin{pmatrix}
  b_0 & u_1 & 0 &\dots \\
  1& b_1 & u_2 & 0 & \dots\\
  0 & 1 & b_2\\
  \vdots & & &\ddots
 \end{pmatrix}.
\end{equation}
Then \eqref{eq_tridiag} says that the operator of multiplication by $x$ is given by the matrix \eqref{eq_tridiag_matrix} in the basis of orthogonal polynomials $\mathcal P_0(x), \mathcal P_1(x),\dots$. Simultaneously, denoting through $\mathcal M_n$ the top-left $n\times n$ corner of \eqref{eq_tridiag_matrix}, we see that the recurrence \eqref{eq_tridiag} is solved by
\begin{equation}
\label{eq_characteristic_solution}
\mathcal P_n(x)=\det(x-\mathcal M_n).
\end{equation}
Fix $N>0$ and define \emph{dual polynomials} $\mathcal Q_n(x)$, $n=0,1,\dots,N-1$ through the dual recurrence:
\begin{equation}
\label{eq_tridiag_dual}
 \mathcal Q_{n+1}(x)+ b_{N-n-1} \mathcal Q_n(x) + u_{N-n} \mathcal Q_{n-1}(x)=x \mathcal Q_n(x)
\end{equation}
with the initial condition
$$
 \mathcal Q_0(x)=1,\qquad \mathcal Q_1(x)=x-b_{N-1}.
$$
In other words, the $N\times N$ tridiagonal matrices corresponding to \eqref{eq_tridiag} and \eqref{eq_tridiag_dual} differ by reflection with respect to the $\diagup$ diagonal.

It turns out that polynomials $\mathcal Q_n$ have an explicit orthogonality measure, which is supported on the $N$ roots of $P_N$ and has weight:
\begin{equation}
\label{eq_weight_Zhedanov}
 w^*(x)= \frac{\mathcal P_{N-1}(x)}{\mathcal P'_N(x)}, \qquad \text{for } x \text{ such that } P_N(x)=0.
\end{equation}
\cite[(1.20)]{Vinet-Z} explains that
\begin{equation}
 \sum_{x\mid \mathcal P_N(x)=0} w^*(x) \mathcal Q_m(x) \mathcal Q_n(x)= \1_{n=m} \cdot h_n, \quad 0\le n,m \le N-1.
\end{equation}
Let us compare the weight $w^*(x)$ of \eqref{eq_weight_Zhedanov} with $w_k(x)$ of Definition \ref{Definition_classical}. In general, the formulas are different, however, it is important to recall that we actually deal with classical polynomials.
Indeed, \cite{AC} suggested to \emph{define} classical orthogonal polynomials as those satisfying a relation
\begin{equation}
\label{eq_diff_relation_AC}
  \pi (x)\mathcal P'_n (x) = \left( {\alpha _n x + \beta _n } \right)\mathcal P_n (x) + \gamma_n \mathcal P_{n-1} (x),\quad n \geq 1,
\end{equation}
where $\pi(x)$ is a polynomial (which then has to be of degree at most $2$). The relation \eqref{eq_diff_relation_AC} readily implies that $w^*(x)$ is a polynomial of degree at most $2$ (and the latter fact can be used as yet another definition of classical orthogonal polynomials, see \cite{Vinet-Z}), matching the examples of Section \ref{Section_Hermite_Laguerre_Jacobi}. In particular, for the monic Jacobi polynomials \eqref{eq_monic_Jacobi} the relation \eqref{eq_diff_relation_AC} takes the form
\begin{multline*}
   (x^2-1)\mathcal P'_n (x) = \left( n x + n\frac{\beta^2-\alpha^2}{(\alpha+\beta)(2n+\alpha+\beta)}  \right)\mathcal P_n (x) \\ -\frac{4 n (n+\alpha+\beta)(n+\alpha)(n+\beta)}{(2n+\alpha+\beta-1)(2n+\alpha+\beta)^2} \mathcal P_{n-1} (x),
\end{multline*}
giving the match between $w^*(x)$ and $w_k(x)$ of \eqref{eq_Jacobi_dual_weight} up to a constant factor. Hence, monic orthogonal polynomials with respect to these weights coincide.
\bigskip

We also rely on a link between dual and \emph{associated} polynomials. Fix a parameter $c=0,1,2,\dots,$ and define the associated polynomials $\mathcal P_n^{(c)}(x)$ as a solution to the three-term recurrence:
\begin{equation}
\label{eq_tridiag_associated}
 \mathcal P_{n+1}^{(c)}(x)+ b_{n+c} \mathcal P_n^{(c)}(x) + u_{n+c} \mathcal P_{n-1}^{(c)}(x)=x \mathcal P_n^{(c)}(x)
\end{equation}
and the initial condition
$$
 \mathcal P_0^{(c)}(x)=1,\qquad \mathcal P_1^{(c)}(x)=x-b_c.
$$
In terms of the tridiagonal matrix \eqref{eq_tridiag_matrix} we deleted the first $c$ rows and the first $c$ columns.

Then, either using  \cite[Theorem 1]{Vinet-Z} or comparing \eqref{eq_characteristic_solution} for dual and associated polynomials, one identifies
\begin{equation}
 Q_n(x)=\mathcal P_n^{(N-n)}(x), \quad 0 \le n \le N.
\end{equation}
In particular, $Q_N=\mathcal P_N^{(0)}=\mathcal P_N$.

\bigskip

For us the most important case is when $\mathcal P_k(x)$ are the Hermite polynomials. In this situation, we saw in Section \ref{Section_Hermite_Laguerre_Jacobi} that $w_k(x)=\tfrac{1}{k+1}$. On the other hand, $\mathcal P'_n(x)= n \mathcal P_{n-1}(x)$ and, therefore, $w^*(x)$ is also a constant. Taking into account the three--term relation for the Hermite polynomials
$$
 H_{n+1}(x)+n H_{n-1}(x)= x H_n(x)
$$
and for the associated version
$$
 H_{n+1}^{(c)}(x)+(n+c) H_{n-1}^{(c)}(x)= x H_n^{(c)}(x),
$$
we record the conclusion:
\begin{proposition}
 Let $P_k(z)$, $k=0,1,2,\dots$ be the Hermite polynomials $H_k(z)$. Then orthogonal polynomials $Q^{(k)}_m(z)$ of Definition \ref{Definition_new_ortho} satisfy the three-term recurrence:
 \begin{equation}
\label{eq_tridiag_dual_Hermite}
 Q_{m+1}^{(k)}(z) + (k-m) Q_{m-1}^{(k)}(z)=z Q_m^{(k)}(z), \quad 0\le m \le k-1
\end{equation}
and the initial conditions
\begin{equation}
\label{eq_dual_Hermite_initial}
 Q_0^{(k)}(z)=1,\qquad Q_1^{(k)}(z)=z.
\end{equation}
We also have an identity with the associated Hermite polynomials:
\begin{equation}\label{eq_dual_as_associated}
 Q_m^{(k)}(z)= H^{(k-m)}_m(z), \quad 0\le m \le k.
\end{equation}
\end{proposition}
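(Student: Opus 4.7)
The plan is to specialize the duality framework of Section \ref{Section_duality} to $\mathcal{P}_n = H_n$ and identify $Q^{(k)}_m$ with the dual polynomial $\mathcal{Q}_m$ associated to the Hermite sequence at parameter $N = k$. Once this identification is in place, both the three-term recurrence \eqref{eq_tridiag_dual_Hermite}--\eqref{eq_dual_Hermite_initial} and the associated-Hermite identity \eqref{eq_dual_as_associated} drop out by substituting the Hermite recurrence data $b_n = 0$, $u_n = n$ into the general formulas \eqref{eq_tridiag_dual} and $\mathcal{Q}_n(x) = \mathcal{P}_n^{(N-n)}(x)$ already recorded in Section \ref{Section_duality}.

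The only check that requires any work is matching the two orthogonality structures on the common finite set $\X_k$. For $P_k = H_k$, the scalar product used in Definition \ref{Definition_new_ortho} was computed in Section \ref{Section_Hermite_Laguerre_Jacobi} to use the constant weight $w_k(y) = \frac{1}{k+1}$. The duality construction uses the weight $w^*(x) = \mathcal{P}_{N-1}(x)/\mathcal{P}'_N(x)$; for $\mathcal{P} = H$ and $N = k$, I would use $H'_k(x) = k H_{k-1}(x)$ to simplify this to $w^*(x) = \frac{1}{k}$ on $\X_k$. Since both weights are positive constants on the same support, they induce proportional scalar products on $\mathcal{F}_k$, hence the same sequence of monic orthogonal polynomials. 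This gives $Q^{(k)}_m = \mathcal{Q}_m$ for $0 \le m \le k-1$.

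With this identification, substituting $b_n = 0$, $u_n = n$, $N = k$ into the dual recurrence \eqref{eq_tridiag_dual} yields
\[
Q^{(k)}_{m+1}(z) + (k-m)\, Q^{(k)}_{m-1}(z) = z\, Q^{(k)}_m(z),
\]
and the general initial conditions collapse to $Q^{(k)}_0(z) = 1$ and $Q^{(k)}_1(z) = z - b_{k-1} = z$, proving \eqref{eq_tridiag_dual_Hermite}--\eqref{eq_dual_Hermite_initial}. Likewise, the general identity $\mathcal{Q}_n(x) = \mathcal{P}_n^{(N-n)}(x)$ specializes to $Q^{(k)}_m(z) = H_m^{(k-m)}(z)$, which is \eqref{eq_dual_as_associated} (and consistently returns $Q^{(k)}_k = H_k^{(0)} = H_k = P_k$ at $m = k$). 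The only potential obstacle is the weight-matching step, but because both weights are constant on $\X_k$ in the Hermite case, this reduces to the one-line computation above; the proposition is thus a direct corollary of the duality framework rather than requiring a new argument.
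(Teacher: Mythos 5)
Your proposal is correct and follows essentially the same route as the paper: it matches the two orthogonality weights (noting that $w_k = \tfrac{1}{k+1}$ and $w^* = \tfrac{1}{k}$ are both constant on $\X_k$, hence give the same monic orthogonal polynomials), then specializes the dual-recurrence and dual-vs-associated identities of Section \ref{Section_duality} to $b_n=0$, $u_n=n$. The paper's version is just slightly more compressed; there is no substantive difference.
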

\begin{corollary} \label{Corollary_Q_norm}We have
 \begin{equation}
 \label{eq_Q_norm}
  \langle Q_m^{(k)},Q_m^{(k)}\rangle_k=\frac{k (k-1)(k-2)\cdots(k-m)}{k+1}.
 \end{equation}
\end{corollary}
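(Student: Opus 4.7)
The plan is to derive Corollary \ref{Corollary_Q_norm} as a straightforward consequence of the three--term recurrence \eqref{eq_tridiag_dual_Hermite} together with the standard norm--ratio identity for monic orthogonal polynomials. The only ingredients I will use are: the recurrence \eqref{eq_tridiag_dual_Hermite}, the initial condition \eqref{eq_dual_Hermite_initial}, the explicit form of the weight $w_k(y)=\tfrac{1}{k+1}$ from \eqref{eq_Hermite_ortho_weight}, and the fact that $|\mathcal X_k|=k$.

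First I would recall the general identity for monic orthogonal polynomials $\{P_m\}$ satisfying a recurrence $zP_m=P_{m+1}+b_mP_m+c_mP_{m-1}$: pairing both sides with $P_{m-1}$ and using self--adjointness of multiplication by $z$ together with monicity of the $P_j$ yields
\begin{equation*}
\langle P_m,P_m\rangle = c_m\,\langle P_{m-1},P_{m-1}\rangle.
\end{equation*}
Applied to \eqref{eq_tridiag_dual_Hermite}, which has $b_m=0$ and $c_m=k-m$, this gives the one--step relation $\langle Q_m^{(k)},Q_m^{(k)}\rangle_k=(k-m)\langle Q_{m-1}^{(k)},Q_{m-1}^{(k)}\rangle_k$ for $1\le m\le k-1$.

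Iterating this relation down to $m=0$ produces $\langle Q_m^{(k)},Q_m^{(k)}\rangle_k=(k-m)(k-m+1)\cdots(k-1)\,\langle Q_0^{(k)},Q_0^{(k)}\rangle_k$, so it remains to compute the base case. Since $Q_0^{(k)}\equiv 1$ and the weight is constant $w_k(y)=\tfrac{1}{k+1}$ over the $k$ points of $\mathcal X_k$, I get $\langle Q_0^{(k)},Q_0^{(k)}\rangle_k=\sum_{y\in\mathcal X_k}\tfrac{1}{k+1}=\tfrac{k}{k+1}$. Combining yields exactly \eqref{eq_Q_norm}.

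There is no real obstacle here; the only thing to double--check is that the norm--ratio identity is applied with the correct index (the coefficient $c_m=k-m$ comes from the $Q_{m-1}^{(k)}$ term on the left of \eqref{eq_tridiag_dual_Hermite} after moving it over). If desired, one could instead invoke the identity $Q_m^{(k)}=H_m^{(k-m)}$ from \eqref{eq_dual_as_associated} and read off the norm from known facts about associated Hermite polynomials, but the direct recurrence argument above is shorter and avoids any appeal to external material.
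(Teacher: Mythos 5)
Your proposal is correct and follows essentially the same route as the paper: both invoke the standard norm--ratio identity $\langle P_m,P_m\rangle/\langle P_0,P_0\rangle = u_1\cdots u_m$ for monic orthogonal polynomials with a three--term recurrence, applied to \eqref{eq_tridiag_dual_Hermite} with $u_m=k-m$, together with the base--case evaluation $\langle Q_0^{(k)},Q_0^{(k)}\rangle_k=k/(k+1)$ coming from the constant weight $1/(k+1)$ on the $k$ points of $\X_k$.
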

\begin{proof}
 For any sequence orthogonal polynomials satisfying a three-term recurrence of the form \eqref{eq_tridiag}, the ratio of the norm of the $m$--th polynomial and the norm of the $0$th polynomial is $u_1 u_2 \cdots u_m$.
\end{proof}

Here is one more ingredient that we need.

\begin{proposition} The associated Hermite polynomials have an explicit generating function:
\begin{equation}
\label{eq_Hermite_gen}
 \sum_{n=0}^{\infty} v^n \frac{H_n^{(c)}(x)}{(c+1)_n}= c v^{-c}\exp\left(-\tfrac{v^2}{2}+xv\right) \int_0^{v} u^{c-1} \exp\left(\tfrac{u^2}{2}-xu\right) du,
\end{equation}
which can be rewritten using \eqref{eq_dual_as_associated} as a contour integral
\begin{equation}
\label{eq_Q_contour_statement}
 Q^{(k)}_m(x)=\frac{(k-m)_{m+1}}{2\pi \ii} \oint_0 v^{-(k-m)}\exp\left(-\tfrac{v^2}{2}+x v\right) \left[\int_0^v u^{k-m-1}\exp\left(\tfrac{u^2}{2}-xu\right) \d u\right] \frac{\d v}{v^{m+1}}.
\end{equation}
\end{proposition}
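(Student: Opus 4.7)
The plan is to derive the generating function by turning the three-term recurrence for the associated Hermite polynomials into a first order linear ODE, solve it with an integrating factor, and then read off the contour integral by Cauchy's coefficient extraction.

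First I would set $f_n:=H_n^{(c)}(x)/(c+1)_n$ and rewrite the recurrence
$H_{n+1}^{(c)}(x)+(n+c)H_{n-1}^{(c)}(x)=x H_n^{(c)}(x)$
(with the convention $H_{-1}^{(c)}=0$) in terms of $f_n$. Dividing through by $(c+1)_{n+1}=(c+1)_n(c+n+1)$ and using $(n+c)/((c+1)_n(c+n+1))=1/((c+1)_{n-1}(c+n+1))$, the recurrence collapses to the clean form
\begin{equation*}
 (c+n+1)f_{n+1}+f_{n-1}=x f_n,\qquad n\ge 0,
\end{equation*}
which one checks at $n=0$ using $f_0=1$, $f_1=x/(c+1)$.

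Next I would introduce $F(v)=\sum_{n\ge 0}v^n f_n$, multiply the recurrence by $v^n$, and sum. The terms $\sum_{n\ge 0}v^n(c+n+1)f_{n+1}$ split as $c\,(F(v)-1)/v+F'(v)$, while $\sum_{n\ge 0}v^n f_{n-1}=vF(v)$ and the right-hand side is $xF(v)$. After multiplying by $v$ this yields the first-order linear ODE
\begin{equation*}
 v F'(v)+\bigl(c+v^2-xv\bigr)F(v)=c.
\end{equation*}
The integrating factor is $\mu(v)=v^c\exp(v^2/2-xv)$, so $\tfrac{d}{dv}[\mu(v)F(v)]=cv^{c-1}\exp(v^2/2-xv)$. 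Integrating from $0$ to $v$ and noting that for $c>0$ the boundary term $\mu(u)F(u)$ vanishes as $u\to 0^+$ (since $F(0)=f_0=1$), I obtain exactly \eqref{eq_Hermite_gen}. The case $c=0$ reduces to the classical Hermite generating function $\exp(xv-v^2/2)$ by taking the $c\to 0$ limit, with $c\int_0^v u^{c-1}g(u)\,du\to g(0)=1$.

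For the second formula, I would apply Cauchy's coefficient extraction $[v^m]F(v)=\tfrac{1}{2\pi\ii}\oint_0 F(v)\,v^{-m-1}\,\d v$ to $F$ with parameter $c=k-m$, using $Q_m^{(k)}(x)=H_m^{(k-m)}(x)=(k-m+1)_m\cdot[v^m]F(v)$ from \eqref{eq_dual_as_associated}. Substituting the explicit $F$ and simplifying $(k-m+1)_m\cdot(k-m)=(k-m)(k-m+1)\cdots k=(k-m)_{m+1}$ produces \eqref{eq_Q_contour_statement}. The only mild subtlety is ensuring the contour in $v$ is chosen small enough that the inner antiderivative $\int_0^v u^{k-m-1}\exp(u^2/2-xu)\,\d u$ is unambiguous; since $k-m\ge 1$ the integrand is holomorphic at $0$, so this poses no problem. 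There is no real obstacle here — the whole argument is a one-paragraph ODE computation followed by a residue; I would expect most of the space in the actual proof to be spent verifying the $n=0$ edge of the recurrence and the vanishing of the boundary term at $v=0$.
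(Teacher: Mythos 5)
Your derivation is correct and, notably, genuinely self-contained, whereas the paper's own proof of this Proposition is essentially a citation to \cite[(4.14)]{AW} (together with a warning about the different Hermite normalization, $e^{-x^2}$ there vs.\ $e^{-x^2/2}$ here), and the accompanying remark only offers a verification by substitution — checking that the right-hand side of \eqref{eq_Q_contour_statement} satisfies the recurrence \eqref{eq_tridiag_dual_Hermite} and initial conditions \eqref{eq_dual_Hermite_initial}. What you do instead is derive the formula: the normalization $f_n=H_n^{(c)}(x)/(c+1)_n$ cleans the three-term recurrence to $(c+n+1)f_{n+1}+f_{n-1}=xf_n$; multiplying by $v^n$, summing, and multiplying by $v$ gives $vF'(v)+(c+v^2-xv)F(v)=c$; the integrating factor $v^ce^{v^2/2-xv}$ together with the vanishing of the boundary term at $0^+$ (valid since $c>0$ and $F(0)=1$) yields \eqref{eq_Hermite_gen}; and Cauchy coefficient extraction with $c=k-m$ together with $(k-m+1)_m\cdot(k-m)=(k-m)_{m+1}$ and \eqref{eq_dual_as_associated} produces \eqref{eq_Q_contour_statement}. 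This is the standard recurrence-to-ODE method and it goes through without incident. One minor bookkeeping slip in the exposition: you say you divide the recurrence by $(c+1)_{n+1}$, but the cleaner route (which is what actually produces $(c+n+1)f_{n+1}+f_{n-1}=xf_n$ directly) is to divide by $(c+1)_n$, giving $H_{n+1}^{(c)}/(c+1)_n=(c+n+1)f_{n+1}$ and $(n+c)H_{n-1}^{(c)}/(c+1)_n=f_{n-1}$; both routes reach the same equation, so this is purely cosmetic. The trade-off between the two approaches is the usual one: the paper's citation is shortest, your derivation is self-contained and also records the ODE $vF'+(c+v^2-xv)F=c$, which some readers may find more illuminating than an opaque reference.
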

\begin{proof}
 See \cite[(4.14)]{AW}, but note a different definition of the Hermite polynomials used there --- they are orthogonal with respect to $\exp(-x^2)$ in \cite{AW} rather than $\exp(-x^2/2)$ used here.
\end{proof}
\begin{remark}
 One can directly check the that the right-hand side of \eqref{eq_Q_contour_statement} satisfies the relations \eqref{eq_tridiag_dual_Hermite} and \eqref{eq_dual_Hermite_initial}.
\end{remark}

\section{G$\infty$E limit: Proof of Theorem \ref{Theorem_Gaussian_limit}}

\label{Section_Gaussian_limit_proof}

The proof relies on several lemmas. We use the notations of Section \ref{Section_innovations}. As before, for $1\le k\le N$, $x_i^k$ are the roots of $P_k(x)\sim \left(\frac{\partial}{\partial x}\right)^{N-k} P_N(x)$ and $K^{k,\ell}(a\to b)$ are diffusion kernels of Definition \ref{Definition_dif_kernel}.

\begin{lemma}\label{Lemma_kernel_upper_bound} The matrix elements of the diffusion kernel of Definition \ref{Definition_dif_kernel} satisfy
\begin{equation}
 K^{k,\ell}(a\to b)\le \frac{k}{\ell},\qquad \ell>k.
\end{equation}
\end{lemma}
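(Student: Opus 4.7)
The plan is to obtain the bound by controlling the column sums of the diffusion kernel via the action of the operators $D_j$ on the constant function, and then using non-negativity to pass from the sum to individual entries.

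First I would observe that the operator $D_k$ is the transpose (in the standard $\delta$-basis) of the stochastic matrix $A_k$: indeed $[D_k \1_y](x) = \alpha^k_{y,x}$, so the diffusion kernel is represented dually as
\[
 K^{k,\ell}(a\to b) = (A_k A_{k+1}\cdots A_{\ell-1})_{a,b} = [D_{\ell-1} D_{\ell-2}\cdots D_k \1_a](b), \quad a\in\X_k,\ b\in\X_\ell.
\]
Summing over $a\in \X_k$ replaces $\1_a$ by the constant function $1$ on $\X_k$:
\[
 \sum_{a=1}^k K^{k,\ell}(a\to b) = [D_{\ell-1} D_{\ell-2}\cdots D_k \cdot 1](b).
\]

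Next I would invoke Proposition \ref{Proposition_polynomial_preservation} at $m=0$, which gives $D_j \cdot 1 = \frac{j}{j+1}\cdot 1$ as a function on $\X_{j+1}$. Iterating this telescoping identity,
\[
 [D_{\ell-1}\cdots D_k \cdot 1](b) = \prod_{j=k}^{\ell-1} \frac{j}{j+1} = \frac{k}{\ell}, \qquad b\in\X_\ell.
\]
Therefore $\sum_{a=1}^k K^{k,\ell}(a\to b) = k/\ell$ for every $b$. Since every matrix element $\alpha^k_{a,b}$ is strictly positive by \eqref{eq:stoch}, the products defining $K^{k,\ell}(a\to b)$ are non-negative, so each individual term in this sum is bounded by the sum itself:
\[
 K^{k,\ell}(a\to b) \le \sum_{a'=1}^k K^{k,\ell}(a'\to b) = \frac{k}{\ell}.
\]

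There is no real obstacle here: the only ingredient beyond Proposition \ref{Proposition_polynomial_preservation} is the non-negativity of the entries, which is built into the definition. The one bookkeeping point worth being careful about is the direction of duality between $A_k$ and $D_k$, to make sure that summing over the starting state (not the ending state) corresponds to applying the composed $D$'s to the constant function $1$; everything else is a direct telescoping.
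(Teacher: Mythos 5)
Your proof is correct and uses essentially the same argument as the paper: apply Proposition \ref{Proposition_polynomial_preservation} with $m=0$ to compute the column sum $\sum_{a} K^{k,\ell}(a\to b)=k/\ell$, then conclude by non-negativity of the entries. You spell out the $A_k$/$D_k$ duality more explicitly than the paper does, but the underlying mechanism is identical.
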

\begin{proof} Applying Proposition \ref{Proposition_polynomial_preservation} with $m=0$, we get for each $b\in \{1,2,\dots,\ell\}$:
$$
 \sum_{a=1}^k K^{k,\ell}(a\to b)= \left(1-\frac{1}{k+1}\right)\cdot \left(1-\frac{1}{k+2}\right) \cdots \left(1-\frac{1}{\ell}\right)=\frac{k}{\ell}.
$$
In words, the above formula says that the uniform measure on $\X_k$ is mapped to the uniform measure on $\X_l$ by our diffusion.
It remains to use the non-negativity of $K^{k,\ell}(a\to b)$.
\end{proof}

\begin{lemma}\label{Lemma_Variance_sum}
 For each $1\le k < N$ we have
 \begin{equation}
   \sum_{i=1}^k \Var \left(\eta_i^k\right) = \frac{2}{k+1} \left(\frac{1}{k+1}\sum\limits_{i=1}^{k+1} (x_i^{k+1})^2-\left(\frac{1}{k+1} \sum\limits_{i=1}^{k+1} x_i^{k+1}\right)^2\right).
 \end{equation}
\end{lemma}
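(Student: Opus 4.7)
The plan is to express each $\Var \eta_i^k$ through the formula $\Var \eta_i^k = -2 \frac{P_{k+1}(x_i^k)}{P_{k+1}''(x_i^k)}$ from \eqref{eq:var-eta_second}, use the Appell relation $P_{k+1}'' = k(k+1) P_{k-1}$ to simplify the denominator, and then evaluate the resulting sum over roots of $P_k$ via a contour integral. Concretely, since $P_k'(x_a^k) = k P_{k-1}(x_a^k)$, the value $P_{k+1}(x_a^k)/[k P_{k-1}(x_a^k)]$ is exactly the residue at $x_a^k$ of the rational function $P_{k+1}(x)/P_k(x)$. Summing the residues gives
$$
 \sum_{a=1}^k \Var \eta_a^k \;=\; -\frac{2}{k+1}\cdot\frac{1}{2\pi \ii}\oint_{\X_k} \frac{P_{k+1}(x)}{P_k(x)}\, \d x.
$$

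The next step is to deform the contour to infinity: since $P_{k+1}/P_k$ has poles only at $\X_k$, the integral equals $-2\pi \ii \cdot \mathrm{Res}_{\infty}(P_{k+1}/P_k)$. A long division $P_{k+1}(x)/P_k(x) = x + a_0 + a_1/x + O(x^{-2})$ shows that this residue at infinity is $-a_1$, so
$$
 \sum_{a=1}^k \Var \eta_a^k \;=\; -\frac{2\,a_1}{k+1}.
$$
The coefficients $a_0, a_1$ are determined by matching the top two subleading powers of $x$ on the two sides of $P_{k+1}(x) = (x+a_0+a_1/x+\dots)P_k(x)$, which expresses them in the elementary symmetric functions $\sigma_1^{(m)} = \sum_i x_i^m$, $\sigma_2^{(m)} = \sum_{i<j} x_i^m x_j^m$ of the two root sets: $a_0 = \sigma_1^{(k)}-\sigma_1^{(k+1)}$ and $a_1 = \sigma_2^{(k+1)} - \sigma_2^{(k)} + a_0 \sigma_1^{(k)}$.

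Finally, the Appell identity $P_k = \tfrac{1}{k+1}P_{k+1}'$ yields $\sigma_1^{(k)} = \tfrac{k}{k+1}\sigma_1^{(k+1)}$ and $\sigma_2^{(k)} = \tfrac{k-1}{k+1}\sigma_2^{(k+1)}$, so after substitution
$$
 a_1 \;=\; \frac{2}{k+1}\sigma_2^{(k+1)} - \frac{k}{(k+1)^2}\bigl(\sigma_1^{(k+1)}\bigr)^2,
$$
and plugging this into $-2 a_1/(k+1)$ produces precisely the claimed right-hand side, once one rewrites the empirical variance as $\tfrac{1}{k+1}\sum (x_i^{k+1})^2 - \bigl(\tfrac{1}{k+1}\sum x_i^{k+1}\bigr)^2 = \tfrac{k}{(k+1)^2}(\sigma_1^{(k+1)})^2 - \tfrac{2}{k+1}\sigma_2^{(k+1)}$. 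No step is really an obstacle; the only thing to be careful about is the sign bookkeeping in the residue at infinity and in the Appell rescaling of the symmetric functions, which is where a slip would make the identity appear to fail.
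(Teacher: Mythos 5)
Your proof is correct and follows essentially the same strategy as the paper: both express $\sum_i \Var \eta_i^k$ as a contour integral (yours of $P_{k+1}/P_k$ over $\X_k$, the paper's of $P_{k+1}/P_{k+1}'$ over $\X_k$, which is the same integrand up to the constant $k+1$) and then extract the residue at infinity. The only difference is bookkeeping: you compute the residue via long division and the Appell relations $\sigma_j^{(k)}$ vs.\ $\sigma_j^{(k+1)}$, while the paper expands $\bigl(\sum_i (z-x_i^{k+1})^{-1}\bigr)^{-1}$ directly in powers of $1/z$.
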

\begin{proof}
 We write using \eqref{eq:var-eta}:
 \begin{equation}
     \sum_{i=1}^k \Var \left(\eta_i^k\right)=-2\sum_{i=1}^k \frac{P_{k+1}(x^k_i)}{P''_{k+1}(x^k_i)}=-2\sum_{x:\, P'_{k+1}(x)=0} \frac{P_{k+1}(x)}{P''_{k+1}(x)}=-\frac{1}{\pi \ii} \oint_{\infty}  \frac{P_{k+1}(z)}{P'_{k+1}(z)}dz,
 \end{equation}
 where the integration goes over a large positively-oriented contour enclosing all singularities of the integrand. We further compute the last integral as the coefficient of $1/z$ in the following power series expansion at $z=\infty$:
 \begin{multline*}
  \frac{P_{k+1}(z)}{P'_{k+1}(z)}=\left( \sum_{i=1}^{k+1} \frac{1}{z-x^{k+1}_i}\right)^{-1}=z
   \left( \sum_{i=1}^{k+1} \frac{1}{1-x^{k+1}_i/z}\right)^{-1}\\= \frac{z}{k+1} \left(1+ \frac1{k+1}\sum_{i=1}^{k+1} \frac{x^{k+1}_i}{z}+\frac1{k+1}  \sum_{i=1}^{k+1} \left(\frac{x^{k+1}_i}{z}\right)^2 +O(z^{-3}) \right)^{-1}\\=
   \frac{z}{k+1}- \frac1{(k+1)^2}\sum_{i=1}^{k+1} x^{k+1}_i +\frac1{z(k+1)} \left( \left(\frac{1}{k+1}\sum_{i=1}^{k+1} x^{k+1}_i \right)^2-\frac{1}{k+1}\sum_{i=1}^{k+1} (x^{k+1}_i)^2 \right) +O(z^{-2}).
 \end{multline*}
 The coefficient of $\frac{1}{z}$ in the last expression matches the desired formula.
\end{proof}

\begin{lemma} \label{Lemma_squares_transition}
 If $\sum\limits_{i=1}^N x_i^N=0$  and $\frac{1}{N}\sum\limits_{i=1}^N (x_i^N)^2=\sigma^2$,
 then for all $1\le k \le N$ we have
 $$
   \sum_{i=1}^k x_i^k=0\quad \text{ and }\quad \frac{1}{k}\sum_{i=1}^k (x_i^k)^2=\frac{k-1}{N-1} \sigma^2.
 $$
\end{lemma}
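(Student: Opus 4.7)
The plan is to prove this directly from the explicit formula $P_k(x) = \frac{1}{N(N-1)\cdots(N-k+1)}(\partial/\partial x)^{N-k} P_N(x)$, reading off the top two coefficients and applying Vieta's formulas. No induction or random walk representation is needed.

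First I would write
$$
P_N(x) = x^N - e_1 x^{N-1} + e_2 x^{N-2} - \cdots,
$$
where $e_1 = \sum_i x_i^N$ and $e_2$ is the second elementary symmetric polynomial. The hypothesis $\sum_i x_i^N = 0$ gives $e_1 = 0$, and the Newton identity $\sum_i (x_i^N)^2 = e_1^2 - 2 e_2$ together with $\tfrac{1}{N}\sum_i (x_i^N)^2 = \sigma^2$ gives $e_2 = -\tfrac{N \sigma^2}{2}$.

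Next I would apply $(\partial/\partial x)^{N-k}$ monomial by monomial. The term $x^N$ becomes $\tfrac{N!}{k!} x^k$; the term $-e_1 x^{N-1}$ becomes $-e_1 \tfrac{(N-1)!}{(k-1)!} x^{k-1}$; and the term $e_2 x^{N-2}$ becomes $e_2 \tfrac{(N-2)!}{(k-2)!} x^{k-2}$. Dividing by $N(N-1)\cdots(N-k+1) = N!/k!$ normalizes $P_k$ to be monic and yields
$$
P_k(x) = x^k - e_1 \cdot \frac{k}{N}\, x^{k-1} + e_2 \cdot \frac{k(k-1)}{N(N-1)}\, x^{k-2} - \cdots.
$$
Therefore the first two elementary symmetric functions of the roots $x_1^k,\dots,x_k^k$ are $e_1^k = \tfrac{k}{N} e_1 = 0$ and $e_2^k = \tfrac{k(k-1)}{N(N-1)} e_2$.

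Finally, Vieta gives $\sum_{i=1}^k x_i^k = e_1^k = 0$, which is the first claim. For the second claim, Newton's identity yields
$$
\sum_{i=1}^k (x_i^k)^2 = (e_1^k)^2 - 2 e_2^k = -2 \cdot \frac{k(k-1)}{N(N-1)} \cdot \left(-\frac{N\sigma^2}{2}\right) = \frac{k(k-1)\sigma^2}{N-1},
$$
and dividing by $k$ gives $\tfrac{1}{k}\sum_{i=1}^k (x_i^k)^2 = \tfrac{k-1}{N-1}\sigma^2$, as desired. There is no real obstacle here — the statement is essentially an exercise in tracking the top two coefficients through repeated differentiation, and the only thing one must be careful about is the normalization factor $N!/k!$ that makes $P_k$ monic.
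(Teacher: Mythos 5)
Your proof is correct. The paper proves the lemma by induction on $N-k$: it writes out the top three coefficients of $P_k(z)$ via Vieta, differentiates once to get $P_{k-1}(z)$, compares coefficients to deduce $\sum_i x_i^{k-1}=0$ and $\frac{1}{k-1}\sum_i (x_i^{k-1})^2 = \frac{k-2}{k-1}\cdot\frac{1}{k}\sum_i(x_i^k)^2$, and then invokes the inductive hypothesis. You bypass the induction by computing the $(N-k)$-fold derivative of $P_N$ directly and tracking the $x^{k-1}$ and $x^{k-2}$ coefficients through the normalization factor $N!/k!$. The underlying mechanism --- Vieta/Newton on the top two non-leading coefficients of a monic polynomial, plus the observation that differentiation scales these coefficients by explicit ratios of factorials --- is identical in both arguments. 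Your direct version is a bit shorter and makes the product-telescoping $\frac{k(k-1)}{N(N-1)}$ manifest, while the paper's one-step induction keeps each algebraic manipulation minimal; the two are interchangeable here.
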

\begin{proof}
 We proceed by induction in $(N-k)$ with the base case $N-k=0$ being obvious. Suppose that the statement is true for some $k$. Then
 \begin{multline*}
  P_{k}(z)=\prod_{i=1}^{k} (z-x^{k}_i)=z^{k}-\left(\sum_{i=1}^{k} x^{k}_i \right)\cdot z^{k-1}+ \left(\sum_{i<j} x^{k}_i x^{k}_j \right)\cdot z^{k-2}-\dots
  \\= z^{k}-0 \cdot z^{k-1}+ \left( \frac{1}{2}\left(\sum_{i=1}^{k} x^{k}_i\right)^2-\frac{1}{2}\sum_{i=1}^{k} (x^{k}_i)^2  \right)\cdot z^{k-2}-\dots
  \\=z^{k}-0 \cdot z^{k-1}-\frac{1}{2}\left(\sum_{i=1}^{k} (x^{k}_i)^2  \right)\cdot z^{k-2}-\dots.
 \end{multline*}
Differentiating, we get
$$
 P_{k-1}(z)=\frac{1}{k} \frac{\partial}{\partial z} P_{k}(z)=z^{k-1}-0 \cdot z^{k-2}-\frac{k-2}{2 k} \left(\sum_{i=1}^{k} (x^{k}_i)^2  \right)\cdot z^{k-3}-\dots.
$$
Comparing the coefficient of $z^{k-2}$ with the expansion of $P_{k-1}(z)=\prod_{i=1}^{k-1}(z-x^{k-1}_i)$, we conclude that $\sum_{i=1}^{k-1} x^{k-1}_i=0$. Then comparing the coefficient of $z^{k-3}$  and dividing by $(k-1)$ we deduce
$$
 \frac{1}{k-1} \sum_{i=1}^{k-1} (x^{k-1}_i)^2=\frac{k-2}{k-1} \cdot  \frac{1}{k} \sum_{i=1}^{k} (x^{k}_i)^2.\qedhere
$$
\end{proof}

\begin{proof}[Proof of Theorem \ref{Theorem_Gaussian_limit}] We are going to assume that $\mu_N=0$ and $\sigma_N=\sqrt{N}$. All other cases can be obtained by shifting and rescaling the relevant variables. Theorem \ref{Theorem_Hermite_limit} then implies the convergence of $x_i^k$, $i=1,\dots,k$, towards the roots $h_i^k$ of the Hermite polynomial $H_k$.

We further use the expansions \eqref{eq:xi-K} and \eqref{eq:zeta-K_infty}. We have
\begin{equation}\label{eq:xi-K_2}
\xi_a^k = \sum_{\ell=k}^{N-1}  \sum_{b=1}^{\ell} K^{k,\ell}(a\to b)  \cdot \eta^\ell_b,
\end{equation}
where $\eta^{\ell}_b$ are independent centered Gaussians with variances \eqref{eq:var-eta}. Also
\begin{equation}\label{eq:zeta-K_infty_2}
\zeta_a^k = \sum_{\ell=k}^{\infty} \sum_{b=1}^{\ell} \tilde K^{k,\ell}(a\to b)  \cdot \tilde \eta^\ell_b,
\end{equation}
where the variances of the noises $\tilde \eta^\ell_n$ and kernels $\tilde K^{k,\ell}(a\to b)$ are now constructed using the roots $h_i^k$ of the Hermite polynomials instead of $x_i^k$.

Convergence of $x_i^k$ towards $h_i^k$ readily implies that the expansion \eqref{eq:xi-K_2} converges to \eqref{eq:zeta-K_infty_2} term by term. It remains to produce a tail bound showing that the terms with large $\ell$ do not contribute to \eqref{eq:xi-K_2} (and similar argument would work for \eqref{eq:zeta-K_infty_2}).

For that we write using Lemmas \ref{Lemma_kernel_upper_bound}, \ref{Lemma_Variance_sum}, \ref{Lemma_squares_transition}:
\begin{multline}
 \Var \left( \sum_{\ell=L}^{N-1}  \sum_{b=1}^{\ell} K^{k,\ell}(a\to b)  \cdot \eta^\ell_b\right)=
  \sum_{\ell=L}^{N-1}  \sum_{b=1}^{\ell} \bigl(K^{k,\ell}(a\to b)\bigr)^2  \cdot \Var(\eta^\ell_b)\\ \le
  \sum_{\ell=L}^{N-1}  \left(\max_{1\le b \le \ell} K^{k,\ell}(a\to b)\right)^2  \cdot  \sum_{b=1}^{\ell} \Var(\eta^\ell_b)\le   \sum_{\ell=L}^{N-1}  \frac{k^2}{\ell^2}  \cdot \frac{2}{\ell+1} \cdot \frac{\ell}{N-1}\cdot  N \le 4 k^2 \sum_{\ell=L}^{N} \frac{1}{\ell^2},
\end{multline}
which converges (uniformly in $N$) to zero as $L\to\infty$.
\end{proof}

\section{Edge limit: proof of Theorem \ref{Theorem_Gcorners_limit_intro} and properties of $\mathfrak Z(i,t)$.}

This section has four parts. First, we analyze orthogonal polynomials $Q^{(k)}_{m}(z)$ in the \mbox{asymptotic} regime relevant to Theorems \ref{Theorem_Gcorners_limit_intro} and \ref{Theorem_DBM_limit_intro}. Then we prove Theorem \ref{Theorem_Gcorners_limit_intro}. In the third subsection we explain how the limiting object (Airy$_{\infty}$ line ensemble) can be identified with a partition function of a polymer whose trajectories travel over the roots of the Airy function. Finally, in the last subsection we apply Kolmogorov continuity theorem to deduce the regularity of the trajectories of $\mathfrak Z(i,t)$.

\subsection{Asymptotic theorem for polynomials $Q^{(k)}_{m}(z)$}

Recall that the Airy function $\Ai(z)$ is defined as a solution to the  differential equation
\begin{equation}
 \label{eq_Airy_DE}
 \Ai''(z)=z \Ai(z),
\end{equation}
given explicitly by the contour integral
\begin{equation}\label{eq_Ai_function}
\Ai(z) := \frac{1}{2\pi \ii}\int{\exp\left( \frac{v^3}{3} - zv \right) dv},
\end{equation}
where the contour in the integral is the upwards-directed contour which is the union of the lines $\{ e^{-i\pi/3}t : t \geq 0\}$ and $\{ e^{i\pi/3}t : t \geq 0\}$.

\begin{theorem} \label{Theorem_Q_to_Airy} Let polynomials $Q^{(k)}_m$ be as in Definition \ref{Definition_classical} for $P_k$ being the Hermite polynomials $H_k$. Let $x^{k+1-i}_k$ be the $i$th largest root of $H_k$.  Then for each fixed $i=1,2,\dots$, as $k\to\infty$ we have
\begin{equation}
\label{eq_Q_to_Airy}
 k^{-1/3} \frac{Q^{(k)}_m(x_{k+1-i}^k)}{\sqrt{\langle Q^{(k)}_m, Q^{(k)}_m\rangle_k}}= \frac{\Ai\left(\a_i+ \frac{m}{k^{1/3}}\right)}{\Ai'(\a_i)} (1+o(1)),
\end{equation}
where $\a_i$ is the $i$th largest real zero of the Airy function and convergence is uniform over $m$ such that the ratio $\tfrac{m}{k^{1/3}}$ belongs to compact subsets of $[0,+\infty)$. In addition, there exists $C>0$, such that we have a uniform bound
\begin{equation}
\label{eq_Q_uniform_bound}
 \left| k^{-1/3} \frac{Q^{(k)}_m(x_{k+1-i}^k)}{\sqrt{\langle Q^{(k)}_m, Q^{(k)}_m\rangle_k}} \right| < C \left(1+ \frac{m}{k^{1/3}}\right)^{-1}, \quad 0\le m \le k-1, \quad k=1,2,\dots.
\end{equation}
\end{theorem}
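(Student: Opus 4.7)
The plan is to apply steepest descent to the contour-integral formula \eqref{eq_Q_contour_statement}. After substituting $x = x^k_{k+1-i}$ and using Plancherel--Rotach for the largest roots of $H_k$, namely $x^k_{k+1-i} = 2\sqrt k + \alpha_i k^{-1/6} + o(k^{-1/6})$, the natural edge scaling is $v = \sqrt k\,(1+k^{-1/3}\hat v)$, $u = \sqrt k\,(1+k^{-1/3}\hat u)$, with $\mu := m/k^{1/3}$.

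A Taylor expansion of the two exponents around the degenerate cubic saddles at $v = u = \sqrt k$ shows that the quadratic terms in $\hat v$ and $\hat u$ vanish to leading order, the cubic terms collapse to $-\hat v^3/3 + \hat u^3/3$, and the linear pieces produce the Airy shifts $\alpha_i \hat v - (\alpha_i+\mu)\hat u$. The leftover constants, combined via Stirling with the prefactor $(k-m)_{m+1}$ and the norm $\langle Q^{(k)}_m, Q^{(k)}_m\rangle_k = (k-m)_{m+1}/(k+1)$ from Corollary~\ref{Corollary_Q_norm}, are such that all $\log k$--orders cancel; together with the combined Jacobian $du\,dv = k^{1/3} d\hat u\,d\hat v$ this produces exactly the $k^{1/3}$ factor stripped off on the left-hand side of \eqref{eq_Q_to_Airy}.

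Once these reductions are carried out, $k^{-1/3} Q^{(k)}_m(x^k_{k+1-i})/\sqrt{\langle Q^{(k)}_m, Q^{(k)}_m\rangle_k}$ converges as $k\to\infty$ to the limiting double integral
\begin{equation*}
\mathcal I(\alpha_i,\mu) = \frac{1}{2\pi \ii}\int_{\Gamma_v} e^{\alpha_i \hat v - \hat v^3/3}\Bigl[\int_{\Gamma_u(\hat v)} e^{\hat u^3/3 - (\alpha_i+\mu)\hat u} \,d\hat u\Bigr]\,d\hat v,
\end{equation*}
where $\Gamma_v$ is the deformation of the original $\oint_0$ into sectors with $\mathrm{Re}(-\hat v^3)<0$ and $\Gamma_u(\hat v)$ is the image of the path from $0$ to $v$. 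The core step is the identification $\mathcal I(\alpha_i,\mu) = \Ai(\alpha_i+\mu)/\Ai'(\alpha_i)$. Under $\hat v \mapsto -w$ the outer factor becomes the standard Airy-type contour integrand, and the quotient structure appears because $\Ai(\alpha_i)=0$ forces the naive outer saddle-point contribution to vanish, while $\Ai'(\alpha_i)$ emerges as the correction after a single integration by parts that exploits the cancellation $e^{\alpha_i \hat v - \hat v^3/3}\cdot e^{\hat v^3/3-(\alpha_i+\mu)\hat v}=e^{-\mu \hat v}$.

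The main obstacle will be the careful joint handling of the two contours: both integrands fail to decay jointly along some rays at infinity, so one must fix the order of deformations (placing $\Gamma_v$ along its steepest-descent path through the imaginary saddles $\pm \ii\sqrt{-\alpha_i}$, then choosing $\Gamma_u(\hat v)$ accordingly) and account for the boundary terms in the integration by parts. The uniform bound \eqref{eq_Q_uniform_bound} then follows from the same setup: for large $\mu$ the inner $\hat u$--integral has well-separated real saddles at $\pm\sqrt{\alpha_i+\mu}$, and a Laplace estimate produces an explicit $\mu^{-1}$ decay, yielding the bound $C(1+\mu)^{-1}$ uniformly in $k$ and $0\le m \le k-1$.
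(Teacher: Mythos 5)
Your proposal follows the same overall strategy as the paper's second proof (steepest descent on the Askey--Wimp contour formula \eqref{eq_Q_contour_statement}, scaling $u,v$ near the degenerate cubic saddle at $\sqrt k$ with $\hat u, \hat v$ of order $k^{1/6}$, and matching $k$-powers against $(k-m)_{m+1}$ and the norm), and your power-of-$k$ bookkeeping is correct. However, there are three genuine gaps. First, you work directly with the original double integral, where the $u$-integral runs from $0$ to $v$ and the $v$-integrand is $\sim e^{-F(v)}$; the paper instead makes two crucial prelimit manipulations — it uses $H_k(x^k_{k+1-i})=0$ (i.e.\ \eqref{eq_zero_integral}) to shift the lower endpoint of the inner integral, then integrates by parts to obtain \eqref{eq_Q_contour_final} where the roles are reversed ($v$-integrand $\sim e^{F(v)}$, $u$-integrand $\sim e^{-F(u)}$, inner integral from $-\infty$). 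This reversal is what makes the contour estimates away from the saddle tractable; without it you have to control the inner integral along a path from $0$ all the way to $\sqrt k$ where the integrand is large, and the tail bound analogous to \eqref{eq_x23} does not follow cleanly. (A side effect is that your claimed limit puts the $\mu$-dependence in the inner integral while the paper's \eqref{eq_Airy_funny_form_alt_final} has $y$ in the outer one; both can be correct, but the paper's form is the one for which the Airy-ODE check works cleanly.)

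Second, the identification $\mathcal I(\alpha_i,\mu)=\Ai(\alpha_i+\mu)/\Ai'(\alpha_i)$ is not proved. The paper applies the Airy operator $\partial_y^2-(\alpha_i+y)$ to the limiting double integral, integrates by parts in $\tilde v$ (using that $\int e^{-\tilde u^3/3+\alpha_i\tilde u}d\tilde u\cdot e^{\tilde v^3/3-\alpha_i\tilde v-y\tilde v}$ vanishes at the contour endpoints), and obtains zero — hence the limit is $c\cdot\Ai(\alpha_i+y)$; the constant $c=1/\Ai'(\alpha_i)$ is then fixed by the Parseval computation of Remark \ref{Remark_Airy_integral}, which itself relies on the uniform bound. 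Your phrase that ``$\Ai(\alpha_i)=0$ forces the naive outer saddle-point contribution to vanish, while $\Ai'(\alpha_i)$ emerges as the correction after a single integration by parts'' is suggestive but does not amount to a derivation of the factor $1/\Ai'(\alpha_i)$. Third, the uniform bound \eqref{eq_Q_uniform_bound} has two difficult ends: your Laplace estimate at $\pm\sqrt{\alpha_i+\mu}$ addresses large $\mu$, but the regime $0\le m\lesssim k^{1/6-\gamma}$ is not covered by steepest descent (the error terms there are not uniform), and the paper treats it separately by an inductive estimate on the three-term recurrence, culminating in \eqref{eq_x24}--\eqref{eq_x25}. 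You would need to add an argument of that type to close the proof.
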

We present two proofs of Theorem \ref{Theorem_Q_to_Airy}. The first one shows that the relation \eqref{eq_tridiag_dual_Hermite} after proper rescaling of variables converges to the Airy differential equation \eqref{eq_Airy_DE}. This is how we first arrived at the asymptotic statement \eqref{eq_Q_to_Airy}. In principle, the convergence of the equations should imply the desired convergence of their solutions, yet, additional technical efforts are needed (the Airy differential equation has a second solution, which is explosive at $+\infty$ and may potentially lead to large errors in approximations). Simultaneously with our work (and independently) Theorem \ref{Theorem_Q_to_Airy} was obtained in \cite{AHV}: they also rely on \eqref{eq_tridiag_dual_Hermite} and use several clever analytic tricks to show convergence of its solution to the Airy function.

In our second proof we provide a very different argument and arrive at an integral representation for the right-hand side of \eqref{eq_Q_to_Airy} (different from \eqref{eq_Ai_function}) by applying the steepest descent analysis to the generating function of \eqref{eq_Hermite_gen}.

 \begin{figure}[t]
\begin{center}
{\scalebox{0.8}{\includegraphics{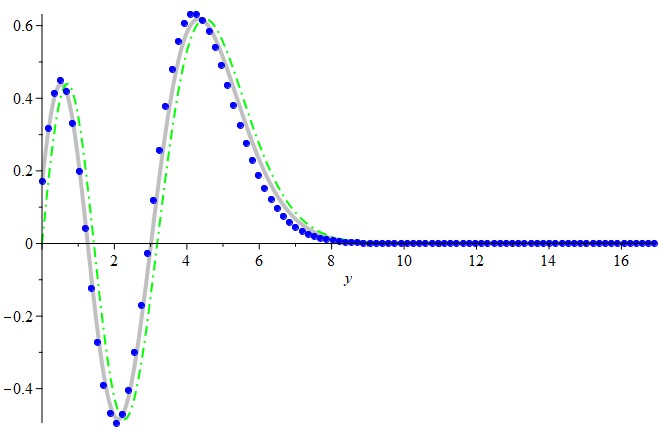}}}
 \caption{Blue points are $\left(\frac{m}{k^{1/3}}, k^{-1/3} \frac{Q^{(k)}_m(x_{k}^k)}{\sqrt{\langle Q^{(k)}_m, Q^{(k)}_m\rangle_k}}\right)$ for $k=200$ and $m=0,1,\dots,100$. Gray thick line is the graph of $\frac{\Ai\left(\a_1+ y +k^{-1/3}\right)}{\Ai'(\a_1)}$, green dash-dotted line is the graph of $\frac{\Ai\left(\a_1+ y \right)}{\Ai'(\a_1)}$.    \label{Fig_Airy_conv}}
\end{center}
\end{figure}

\begin{remark}
 While it does not matter for the validity of the statement, but from the numeric point of view, we found that if we replace the right-hand side of \eqref{eq_Q_to_Airy} with
$$
 \frac{\Ai\left(\a_i+ \frac{m+1}{k^{1/3}}\right)}{\Ai'(\a_i)},
$$
then we get a  better agreement for the finite values of $k$, see Figure \ref{Fig_Airy_conv}.
\end{remark}

\begin{remark} \label{Remark_Airy_integral}
 Here is a way to check normalizations in \eqref{eq_Q_to_Airy}. Note that the matrix
 $$
  \left[\frac{1}{\sqrt{k+1}} \frac{Q_m^{(k)}(x_{k+1-i}^k)}{\sqrt{\langle Q_{m}^{(k)},Q_{m}^{(k)}\rangle_k}} \right]_{1\le i \le k,\, 0\le m \le k-1}
 $$
 is orthogonal. Hence,
 \begin{equation}
 \label{eq_Q_dual_norm}
  \sum_{m=0}^{k-1} \frac{1}{k+1} \cdot  \frac{ \left(Q_m^{(k)}(x_{k+1-i}^k)\right)^2}{\langle Q_{m}^{(k)},Q_{m}^{(k)}\rangle_k}=1.
 \end{equation}
 As $k\to\infty$ the sum becomes integral. Hence, if the normalization in \eqref{eq_Q_to_Airy} is correct, then we should have
 \begin{equation*}
 \label{eq_Airy_norm}
  \int_0^{\infty}\left(\frac{\Ai\left(\a_i+ y\right)}{\Ai'(\a_i)}\right)^2 dy=1.
 \end{equation*}
 But indeed, integrating by parts, using $\Ai(\a_i)=0$ and the Airy differential equation, we have
 \begin{multline} \label{eq_squared_integral}
  \int_{\a_i}^{\infty} \Ai^2(y) \d y = -2 \int_{\a_i}^{\infty} \Ai'(y) \Ai(y) y \d y=-2\int_{\a_i}^{\infty} \Ai'(y) \Ai''(y) \d y\\= (\Ai'(y))^2\Bigr|^{\a_i}_{+\infty}= (\Ai'(\a_i))^2.
 \end{multline}
 The same orthogonality implies that we should also have
 \begin{equation*}
 \label{eq_Airy_orthogonality}
  \int_0^{\infty}\left(\frac{\Ai\left(\a_i+ y\right)}{\Ai'(\a_i)}\right)\left(\frac{\Ai\left(\a_j+ y\right)}{\Ai'(\a_j)}\right) dy=0, \qquad i\ne j.
 \end{equation*}
 And indeed,
 \begin{multline*}
  \frac{\partial}{\partial y} \left[\Ai(\a_i+y) \Ai'(\a_j+y)-\Ai'(\a_i+y) \Ai(\a_j+y)\right]\\ =(\a_j+y)\Ai(\a_i+y) \Ai(\a_j+y)- (\a_i+y)\Ai(\a_i+y) \Ai(\a_j+y)= (\a_j-\a_i) \Ai(\a_i+y) \Ai(\a_j+y).
 \end{multline*}
 Hence, $\frac{1}{\a_j-\a_i}   \left[\Ai(\a_i+y) \Ai'(\a_j+y)-\Ai'(\a_i+y) \Ai(\a_j+y)\right]$ is an antiderivative of $\Ai\left(\a_i+ y\right)\Ai\left(\a_j+ y\right)$, which implies \eqref{eq_Airy_orthogonality}.
\end{remark}

\begin{proof}[Sketch of the first proof of Theorem \ref{Theorem_Q_to_Airy}]
 We start by noting that as $k\to\infty$
 \begin{equation}
 \label{eq_x_edge}
  x_{k+1-i}^k= 2\sqrt{k}+ k^{-1/6} \a_i (1+o(1)),
 \end{equation}
 as follows from the Plancherel-Rotach asymptotics (going back to \cite{PR}) for the Hermite polynomials $H_k(x)$ for $x$ close to $2\sqrt{k}$.  Using \eqref{eq_Q_norm} we transform \eqref{eq_tridiag_dual_Hermite} into
 \begin{multline}
 \label{eq_Q_recurrence_normalized}
 \sqrt{k-m-1}\frac{Q_{m+1}^{(k)}(x_{k+1-i}^k)}{\sqrt{\langle Q_{m+1}^{(k)},Q_{m+1}^{(k)}\rangle_k}} + \sqrt{k-m} \frac{Q_{m-1}^{(k)}(x_{k+1-i}^k)}{\sqrt{\langle Q_{m-1}^{(k)},Q_{m-1}^{(k)}\rangle_k}}\\=\bigl( 2\sqrt{k}+ k^{-1/6} \a_i (1+o(1))\bigr) \frac{Q_m^{(k)}(x_{k+1-i}^k)}{\sqrt{\langle Q_{m}^{(k)},Q_{m}^{(k)}\rangle_k}}.
 \end{multline}
 Dividing \eqref{eq_Q_recurrence_normalized} by $\sqrt{k}$ and Taylor-expanding square roots using $\sqrt{1-q}= 1-\frac{q}{2}+o(q)$, we get
 \begin{multline}
 \label{eq_Q_recurrence_normalized_scaled}
 \frac{Q_{m+1}^{(k)}(x_{k+1-i}^k)}{\sqrt{\langle Q_{m+1}^{(k)},Q_{m+1}^{(k)}\rangle_k}} -2 \frac{Q_m^{(k)}(x_{k+1-i}^k)}{\sqrt{\langle Q_{m}^{(k)},Q_{m}^{(k)}\rangle_k}} + \frac{Q_{m-1}^{(k)}(x_{k+1-i}^k)}{\sqrt{\langle Q_{m-1}^{(k)},Q_{m-1}^{(k)}\rangle_k}}\\= k^{-2/3} \a_i (1+o(1)) \frac{Q_m^{(k)}(x_{k+1-i}^k)}{\sqrt{\langle Q_{m}^{(k)},Q_{m}^{(k)}\rangle_k}} + \frac{m}{2 k}(1+o(1))\left(\frac{Q_{m+1}^{(k)}(x_{k+1-i}^k)}{\sqrt{\langle Q_{m+1}^{(k)},Q_{m+1}^{(k)}\rangle_k}} +\frac{Q_{m-1}^{(k)}(x_{k+1-i}^k)}{\sqrt{\langle Q_{m-1}^{(k)},Q_{m-1}^{(k)}\rangle_k}}\right).
 \end{multline}
 Next, let $y=\frac{m}{k^{1/3}}$ be finite.  Then in the leading order \eqref{eq_Q_recurrence_normalized_scaled} becomes
\begin{multline}
 \label{eq_Q_recurrence_approximating}
  k^{2/3}\left(\frac{Q_{m+1}^{(k)}(x_{k+1-i}^k)}{\sqrt{\langle Q_{m+1}^{(k)},Q_{m+1}^{(k)}\rangle_k}} -2 \frac{Q_m^{(k)}(x_{k+1-i}^k)}{\sqrt{\langle Q_{m}^{(k)},Q_{m}^{(k)}\rangle_k}} + \frac{Q_{m-1}^{(k)}(x_{k+1-i}^k)}{\sqrt{\langle Q_{m-1}^{(k)},Q_{m-1}^{(k)}\rangle_k}}\right)\\\approx  \a_i  \frac{Q_m^{(k)}(x_{k+1-i}^k)}{\sqrt{\langle Q_{m}^{(k)},Q_{m}^{(k)}\rangle_k}}+  \frac{y}{2} \left(\frac{Q_{m+1}^{(k)}(x_{k+1-i}^k)}{\sqrt{\langle Q_{m+1}^{(k)},Q_{m+1}^{(k)}\rangle_k}} +\frac{Q_{m-1}^{(k)}(x_{k+1-i}^k)}{\sqrt{\langle Q_{m-1}^{(k)},Q_{m-1}^{(k)}\rangle_k}}\right).
\end{multline}
If we now treat $\frac{Q_m^{(k)}(x_{k+1-i}^k)}{\sqrt{\langle Q_{m}^{(k)},Q_{m}^{(k)}\rangle_k}}$ as a function of $y$, then \eqref{eq_Q_recurrence_approximating} is precisely a finite-difference approximation of the differential equation \eqref{eq_Airy_DE} upon identification $z=y+\a_i$.

It remains to match the boundary conditions and normalization. Note that the right-hand side of \eqref{eq_Q_to_Airy} as a function of $y$ has value $0$ and derivative $1$ at $y=0$. For the left-hand side, $Q_0^{(k)}(x^k_{k+1-i})=1$, and, therefore, as $k\to\infty$
\begin{equation}
\label{eq_Q_boundary}
  k^{-1/3} \frac{Q^{(k)}_0(x_{k+1-i}^k)}{\sqrt{\langle Q^{(k)}_0, Q^{(k)}_0\rangle_k}}= k^{-1/3} \sqrt{\frac{k+1}{k}} \to 0.
\end{equation}
On the other hand, $Q_1^{(k)}(z)=z$ and its norm is $\frac{k(k-1)}{k+1}$ according to \eqref{eq_Q_norm}. Hence,
\begin{multline} \label{eq_Q_first_diff}
 k^{-1/3}\left( \frac{Q^{(k)}_1(x_{k+1-i}^k)}{\sqrt{\langle Q^{(k)}_0, Q^{(k)}_0\rangle_k}}-\frac{Q^{(k)}_0(x_{k+1-i}^k)}{\sqrt{\langle Q^{(k)}_0, Q^{(k)}_0\rangle_k}}\right)\\=
 k^{-1/3}\left( 2\sqrt{k}(1+o(1)) \sqrt{\frac{k+1}{k(k-1)}}-\sqrt{\frac{k+1}{k}}\right)
  = k^{-1/3} (1+o(1)).
\end{multline}
This means that $k^{-1/3} \frac{Q_m^{(k)}(x^{k+1-i}_k)}{\sqrt{\langle Q_{m}^{(k)},Q_{m}^{(k)}\rangle_k}}$ as a function of $y$ grows by $k^{-1/3}$ when $y$ is increased by $k^{-1/3}$ (near $y=0$). Thus, we have a match with unit derivative at $y=0$.
\end{proof}

\begin{proof}[Second proof of Theorem \ref{Theorem_Q_to_Airy}] The proof splits into two parts. First, we explain how to find the leading asymptotics giving the answer for a fixed $y=\frac{m}{k^{1/3}}\in (0,+\infty)$ and then we explain how to achieve the desired uniformity over all $y\in [0,+\infty)$.

\noindent {\bf Part 1.} We use the  contour integral representation \eqref{eq_Q_contour_statement} written as:
\begin{equation}
\label{eq_Q_contour}
 Q^{(k)}_m(x)=\frac{(k-m)_{m+1}}{2\pi \ii} \oint_0 v^{-k-1}\exp\left(-\tfrac{v^2}{2}+x v\right) \left[\int_0^v u^{k-m-1}\exp\left(\tfrac{u^2}{2}-xu\right) \d u\right] \d v.
\end{equation}
Throughout the proof we always assume that $x=x^{k}_{k+1-i}$ for some $i=1,2,\dots$.
 Note that
\begin{equation}
\label{eq_zero_integral}
 \frac{k!}{2\pi \ii} \oint_0 v^{-k-1}\exp\left(-\tfrac{v^2}{2}+x v\right) \d v= H_k(x)=0, \text{ at } x=x^{k}_{k+1-i}.
\end{equation}
Thus, the lower limit of the $u$--integral can be changed from $0$ to any other point without changing the value of the double integral. Let us change this limit to $1$ and then integrate by parts in \eqref{eq_Q_contour}. We get:
\begin{equation}
\label{eq_Q_contour_parts}
 Q^{(k)}_m(x)=-\frac{(k-m)_{m+1}}{2\pi \ii} \oint_0 \left[\int_{1}^v u^{-k-1}\exp\left(-\tfrac{u^2}{2}+x u\right) \d u \right] v^{k-m-1}\exp\left(\tfrac{v^2}{2}-xv\right)\, \d v.
\end{equation}
The transition from \eqref{eq_Q_contour} to \eqref{eq_Q_contour_parts} uses the fact that the internal $u$-integral is a meromorphic single-valued function of $v$, which follows from the independence of the value of the integral from the choice of integration path implied by \eqref{eq_zero_integral} (otherwise, integration by parts would have led to the appearance of an additional term).

The lower limit $1$ of the $u$--integral in \eqref{eq_Q_contour_parts} again can be changed to any other point (this time, because of $v^{k-m-1}\exp\left(\tfrac{v^2}{2}-xv\right)$ having no singularities in the complex plane leading to vanishing of its contour integrals). It is convenient for us to change this point to $-\infty$, leading to the final expression:
\begin{equation}
\label{eq_Q_contour_final}
 Q^{(k)}_m(x)=-\frac{(k-m)_{m+1}}{2\pi \ii} \oint_0 \left[\int_{-\infty}^v u^{-k-1}\exp\left(-\tfrac{u^2}{2}+x u\right) \d u \right] v^{k-m-1}\exp\left(\tfrac{v^2}{2}-xv\right)\, \d v.
\end{equation}
Next, we apply a version of the steepest descent method to the integral \eqref{eq_Q_contour_final}. This method guides us to deform the integration contour to pass through the critical points of the integrand and to localize the integration to neighborhoods of these points.

\begin{figure}[t]
\begin{center}
\includegraphics[width=0.47\linewidth]{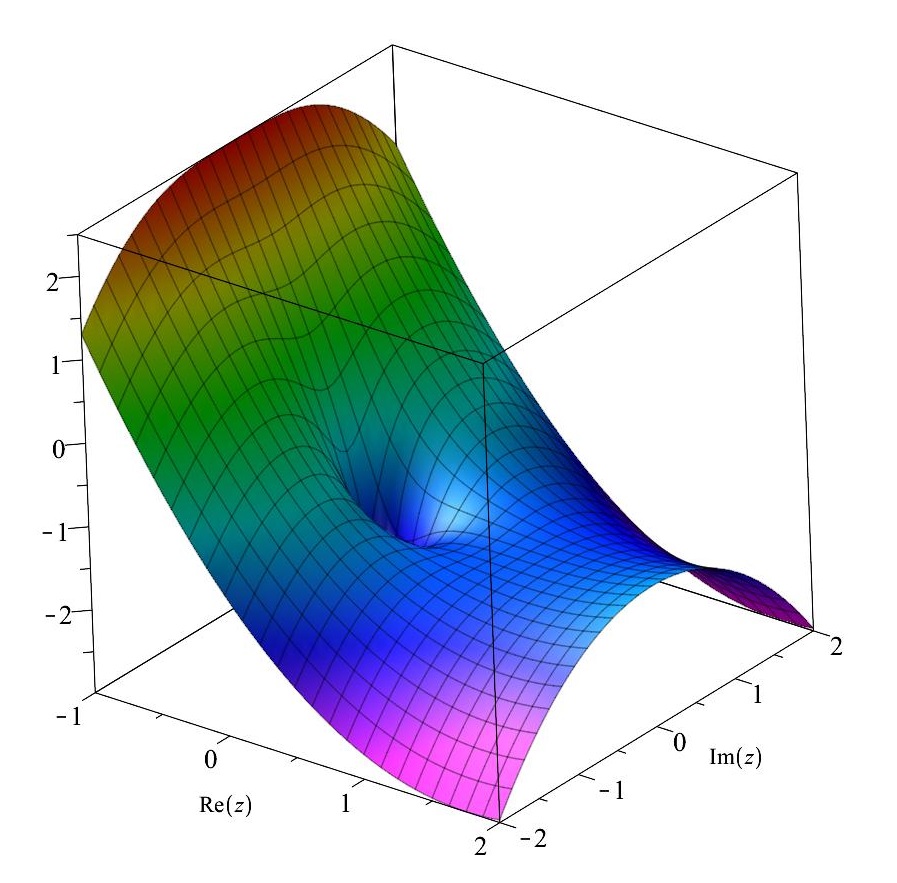}\quad
\includegraphics[width=0.47\linewidth]{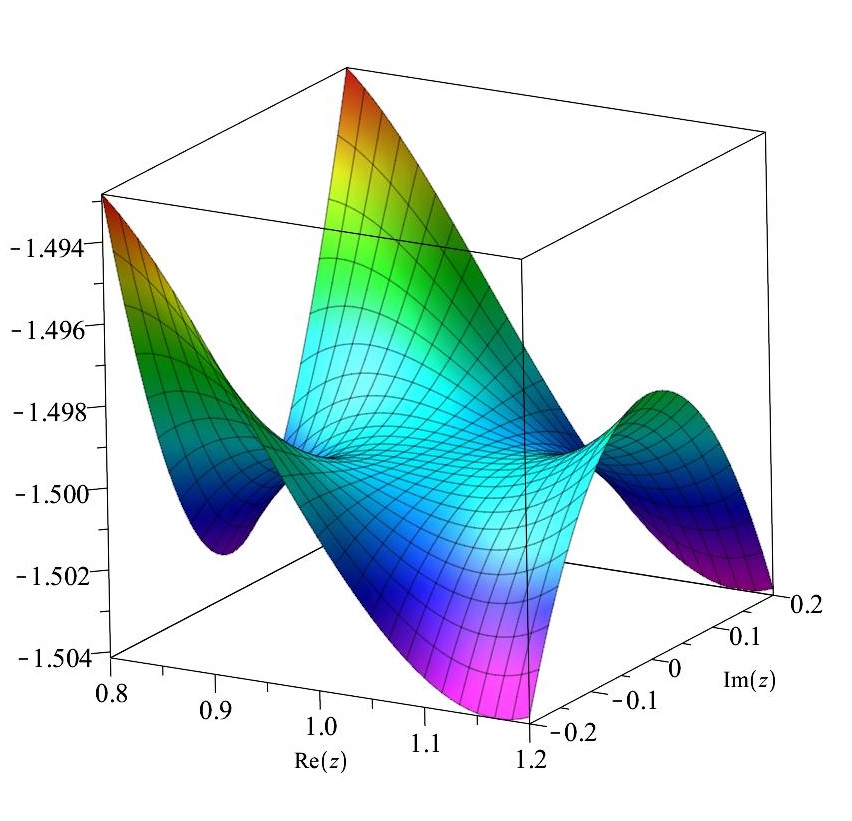}
 \caption{The graph of $\Re \hat F(z)$ of \eqref{eq_G_transformed} globally on the left panel and locally near the double critical point at $z=1$ on the right panel. \label{Fig_real_graph}}
\end{center}
\end{figure}

Denote
$$
 F(v):=-\ln\left(v^{-k} \exp\left(-\tfrac{v^2}{2}+2\sqrt{k} v\right)\right)=k \ln(v)+\tfrac{v^2}{2}-2\sqrt{k} v.
$$
Then using the asymptotic expansion \eqref{eq_x_edge} for $x$, the $u$--dependent part of the integrand in \eqref{eq_Q_contour_final} becomes
$$
 \frac{1}{u} \exp(-F(u))   \cdot \exp(k^{-1/6}(\a_i+o(1))u),
$$
and the remaining explicitly depending on $v$ factors in \eqref{eq_Q_contour_final} admit a similar representation in terms of $F(v)$.
While it might seem that $F$ changes with $k$, but, in fact, the dependence on $k$ is very simple:
\begin{equation}
\label{eq_G_transformed}
 F(v)=k \hat F (\hat v) + k\ln(\sqrt{k}), \quad  \hat F(\hat v)=\ln(\hat v)+ \frac{\hat v^2}{2} - 2 \hat v,\quad \hat v=\frac{v}{\sqrt k}.
\end{equation}
Thus, all the properties of $F(v)$ can be read from analyzing a single explicit function $\hat F(\hat v)$. Further, notice
$$
 F'(v)=\frac{k}{v}+v - 2\sqrt{k},\qquad F''(v)=-\frac{k}{v^2}+1, \qquad F'''(v)=2\frac{k}{v^3}.
$$
Hence, $v=\sqrt{k}$ is a double critical point of the function $F(v)$.
We are going to deform the $v$--integration contour to pass near this point, so that the asymptotic of the integral is given by the contribution of a small neighborhood of the critical point. It is helpful to take a look at the graph of $\Re F(v)$ before explaining the geometry of the contours and we refer to Figure \ref{Fig_real_graph}.

 \begin{figure}[t]
\begin{center}
{\scalebox{0.8}{\includegraphics{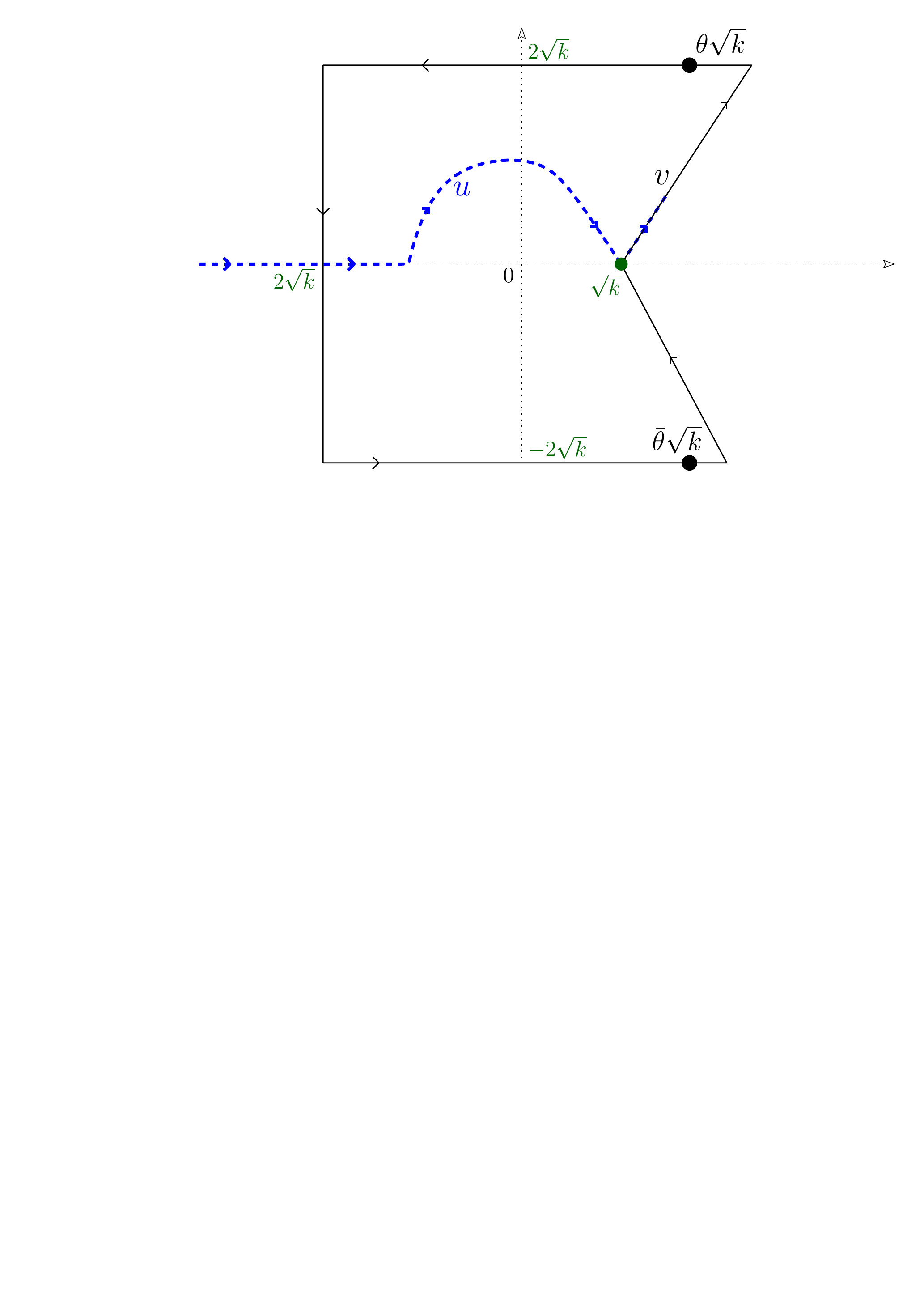}}}
 \caption{The $v$--contour is shown in solid black. The $u$--contour for points $v$ close to $\sqrt{k}$ is shown in dashed blue. The points $\theta \sqrt{k}$ and $\bar \theta\sqrt{k}$ give the minima of $\Re F(v)$ on the $v$--contour. \label{Fig_contours}}
\end{center}
\end{figure}

 The desired integration contours are shown in Figure \ref{Fig_contours}. The $v$--contour in the upper half-plane is chosen so that it starts from $\sqrt{k}$ under the angle $\tfrac{\pi}{3}$ and has growing $|v|$ as we move away from $\sqrt{k}$ until we reach the line $\Im(v)=2\sqrt{k}$, at which point the contour follows this line to the left until the point $v=-2+2\ii$ and then proceeds vertically till the real axis. In the lower-half plane the $v$--contour is given by the mirror image. Figure \ref{Fig_real_on_contours} shows the graph of $\Re F(v)$ (in the changed coordinates of \eqref{eq_G_transformed}) along the $v$--contour: the real part is minimized at points $\theta\sqrt{k}$, $\bar \theta\sqrt{k}$ and maximized at the intersections of the contour with the real axis.

\begin{figure}[t]
\begin{center}
\includegraphics[width=0.32\linewidth]{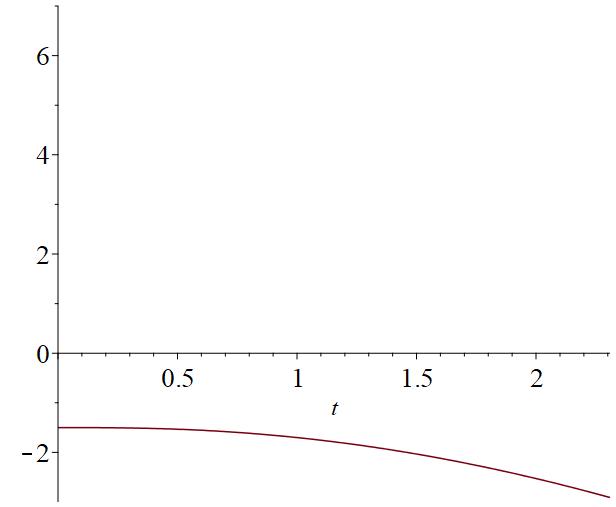}
\includegraphics[width=0.32\linewidth]{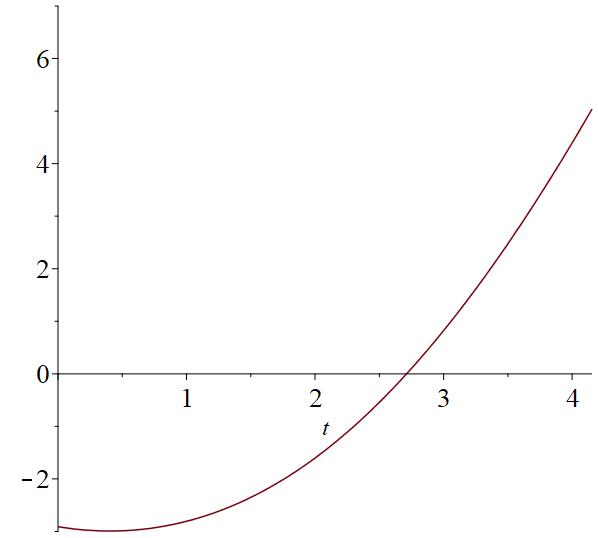}
\includegraphics[width=0.32\linewidth]{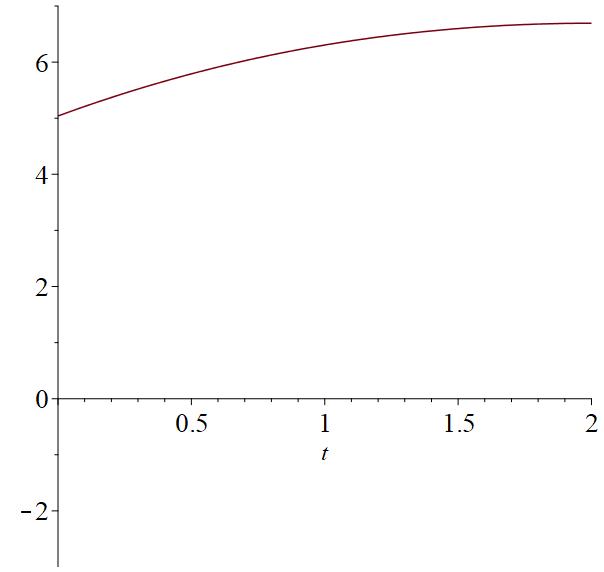}
 \caption{Three panels show the graph of $\Re\hat F(\hat v)$ with $\hat v = \frac{v}{\sqrt{k}}$ and $v$ as in Figure \ref{Fig_contours}. Left panel: $\Re \hat F(1+t\exp(\ii \pi/3))$. Middle panel: $\Re \hat F(1+\frac{2}{\sqrt{3}}-t+ 2\ii)$. Right panel: $\Re \hat F(-2+2\ii -t \ii)$. The minimum in the middle graph is attained at the point $\hat v= \theta$. \label{Fig_real_on_contours}}
\end{center}
\end{figure}

Further, when $v$ is on the part of the right part of the contour between $\bar \theta$ and $\theta$ (in particular, when it is close to $\sqrt{k}$), the $u$--contour (which we explain here in the reverse direction from $v$ to $-\infty$) starts from $v$ and first follows the $v$--contour to the point $\sqrt{k}$, then it continues from $\sqrt{k}$ under the angle $\tfrac{2\pi} 3$ by another level line $\Im(F(z))=0$ until it gets back to the real axis far left from the origin, at which point it proceeds to $-\infty$ via a horizontal line. When $v$ is on the left part of the contour, we instead follow the $v$--contour to the point $-2\sqrt{k}$ and then continue to $-\infty$.

The choice of the contours achieves the following goal: the absolute value of the $u$--integrand, which is
$$\left|\tfrac{1}{u}\cdot \exp(-F(u)) \cdot \exp(k^{-1/6}(\a_i+o(1))u)\right|,$$
 starts from being very close to $0$ when $u=\infty$, and then grows as we approach $v$ and has a sharp extremum near $v$. Hence, the absolute value of the $v$--integral can be upper bounded by  $\left|\tfrac{1}{v}\exp(-F(v))\exp(k^{-1/6}(\a_i+o(1))v)\right|$. This implies that the $v$--integrand is upper-bounded by $|v|^{-m}$ and, therefore, since $|v|$ is minimized near $\sqrt{k}$, the integrand is sharply decaying as $v$ moves away from $\sqrt{k}$. In more details, the part of the $v$--integral outside $\eps\sqrt{k}$--neighborhood of $\sqrt{k}$ is upper bounded by
 \begin{equation}
 \label{eq_x23}
  {\rm const}\cdot \sqrt{k} \cdot k^{-m/2} \left(1+\frac{\eps}{2}\right)^{-m},
 \end{equation}
 where $\sqrt{k}$ factor arises from the length of the integration contour. Since we are interested in the regime when $m$ is proportional to $k^{1/3}$, \eqref{eq_x23} is exponentially small compared to the leading contribution which comes next.

 The overall conclusion is that the integral is dominated by the contribution of a small neighborhood of the point $\sqrt{k}$. We can Taylor expand $F(v)$ in such a neighborhood:
$$
 F(v)=F(\sqrt{k})+\frac{1}{3 \sqrt{k}}(v-\sqrt{k})^3+ O\left(\frac{(v-\sqrt{k})^4}{k} \right).
$$
We further introduce the new variables:
$$
 \tilde v=k^{-1/6} (v-\sqrt{k}), \qquad \tilde u=k^{-1/6}(u-\sqrt{k}).
$$
The contour integral \eqref{eq_Q_contour_final} then asymptotically behaves as
\begin{multline}
\label{eq_Q_contour_local}
\frac{(k-m)_{m+1}}{2\pi \ii} \int_{e^{-\frac{\pi\ii}3}\infty}^{e^{\frac{\pi\ii}3}\infty} \Biggl[\int_{e^{\frac{2\pi \ii }{3}} \infty}^{\tilde v} \exp\left(-\tfrac{1}{3} \tilde u^3+ \a_i \tilde u + \a_i \sqrt{k}\right) \frac{k^{1/6}  \d \tilde u}{\sqrt{k}+k^{1/6}\tilde u} \Biggr] \\ \times
\exp\left(\tfrac{1}{3} \tilde v^3- \a_i \tilde v -\a_i \sqrt{k} \right) (\sqrt{k}+k^{1/6} \tilde v)^{-y k^{1/3}-1}  \, k^{1/6}  \d \tilde v.
\end{multline}
Equivalently, this is
\begin{equation}
\label{eq_Q_contour_local_final}
\frac{(k-m)_{m+1}}{k^{1/6} \cdot k^{\frac{m+1}{2}}} \frac{1}{2\pi \ii} \int_{e^{-\frac{\pi\ii}3}\infty}^{e^{\frac{\pi\ii}3}\infty}\Biggl[ \int_{e^{\frac{2\pi \ii }{3}} \infty}^{\tilde v} \exp\left(-\tfrac{1}{3} \tilde u^3+ \a_i \tilde u \right) \d \tilde u \Biggr]
\exp\left(\tfrac{1}{3} \tilde v^3- \a_i \tilde v - y\tilde v \right)   \d \tilde v.
\end{equation}
In the last integral the $\tilde v$--contour is an upwards-directed contour which is the union of the lines $\{ e^{-\ii\pi/3}t : t \geq 0\}$ and $\{ e^{\ii\pi/3}t : t \geq 0\}$. The internal $\hat u$--integral has quickly growing integrand and, therefore, it is dominated by the end-point $\tilde v$ giving the value $\approx \exp\left(-\tfrac{1}{3} \tilde v^3+ \a_i \tilde v\right)$, which cancels. As a result, the integrand is exponentially decaying in $\tilde v$ for each $y>0$.
 Combining with an explicit expression for $\langle Q^{(k)}_m, Q^{(k)}_m\rangle_k$ of \eqref{eq_Q_norm} we conclude that the left-hand side of \eqref{eq_Q_to_Airy} converges as $k\to\infty$ to
 \begin{equation}
\label{eq_Airy_funny_form_alt_final}
 -\frac{1}{2\pi \ii} \int_{e^{-\ii\pi/3}\infty}^{e^{\ii\pi/3} \infty} \Biggl[ \int_{e^{2\ii \pi/3}\infty}^{\tilde v}\exp\left(-\tfrac{1}{3} \tilde u^3+ \a_i \tilde u \right) \d \tilde u \Biggr]
\exp\left(\tfrac{1}{3} \tilde v^3- \a_i \tilde v - y\tilde v \right)    \d \tilde v.
\end{equation}
It remains to identify the last double integral with $\frac{\Ai(\a_i+ y)}{\Ai'(\a_i)}$. For that we analyze the double contour integral as a function of $y$. Let us denote this function through $\mathcal A(y+\a_i)$.

Let us apply the Airy operator to \eqref{eq_Airy_funny_form_alt_final}, i.e.\ we compute $\frac{\partial^2}{\partial y^2}\mathcal A(y+\a_i)-(\a_i+y)\mathcal A(y+\a_i)$, getting
 \begin{equation}
\label{eq_Airy_operator_applied}
 -\frac{1}{2\pi \ii} \int_{e^{-\ii\pi/3}\infty}^{e^{\ii\pi/3} \infty} \Biggl[ \int_{e^{2\ii \pi/3}\infty}^{\tilde v}\exp\left(-\tfrac{1}{3} \tilde u^3+ \a_i \tilde u \right) \d \tilde u \Biggr]
(\tilde v^2 - \a_i - y)\exp\left(\tfrac{1}{3} \tilde v^3- \a_i \tilde v - y\tilde v \right)    \d \tilde v.
\end{equation}
We now recognize $\frac{\partial}{\partial \tilde v} \exp\left(\tfrac{1}{3} \tilde v^3- \a_i \tilde v - y\tilde v \right)$ and can integrate by parts, noticing that
$$
 \Biggl[ \int_{e^{2\ii \pi/3}\infty}^{\tilde v}\exp\left(-\tfrac{1}{3} \tilde u^3+ \a_i \tilde u \right) \d \tilde u \Biggr]
\exp\left(\tfrac{1}{3} \tilde v^3- \a_i \tilde v - y\tilde v \right)
$$
vanishes at both infinities by the previous arguments. We get
 \begin{equation}
\label{eq_Airy_operator_applied_by_parts}
 \frac{1}{2\pi \ii} \int_{e^{-\ii\pi/3}\infty}^{e^{\ii\pi/3} \infty} \exp\left(-\tfrac{1}{3} \tilde v^3+ \a_i \tilde v \right)
\exp\left(\tfrac{1}{3} \tilde v^3- \a_i \tilde v - y\tilde v \right)    \d \tilde v,
\end{equation}
which is $0$. In addition it is clear from \eqref{eq_Airy_funny_form_alt_final} that $\lim_{y\to+\infty}\mathcal A(y)=0$, since the integrand is fast converging to zero.

We conclude that $\mathcal A(y)$ is a solution to the Airy equation, which vanishes at $+\infty$. This implies (see, e.g., \cite{VS})
$$
 \mathcal A(y+\a_i)= c\cdot \Ai(y+\a_i)
$$
for some constant $c\in\mathbb R$. This constant is fixed by the argument of Remark \ref{Remark_Airy_integral}, as soon as we have uniformity of convergence in $m$ and the tail bound \eqref{eq_Q_uniform_bound} justifying the convergence of the sum \eqref{eq_Q_dual_norm} to the integral \eqref{eq_Airy_norm}. This finishes part 1 of the proof.

\bigskip

\noindent {\bf Part 2.} We now explain an extension of the argument of the first part giving the uniform convergence over $y\in[0,+\infty)$ and the tail bound \eqref{eq_Q_uniform_bound}.
Notice that in the previous arguments uniformity of the asymptotics for $y$ in compact subsets of $(0,+\infty)$ is obtained for free. Thus, we only need to investigate $y\to 0$ and $y\to \infty$ boundary points. We start from the latter.

For large $y=\frac{m}{k^{1/3}}$ we need to establish the uniform bound \eqref{eq_Q_uniform_bound}. For that the first step is to figure out a similar bound for the asymptotic expression \eqref{eq_Airy_funny_form_alt_final}\footnote{A much faster decay is known for the Airy function as its argument tends to $+\infty$, but it is harder to see from our formulas.}. Take a radius $1$ neighborhood around $0$. The part of the $\tilde v$ integral in \eqref{eq_Airy_funny_form_alt_final} outside this neighborhood decays exponentially fast as $y$ grows. Inside the neighborhood we can upper-bound the magnitude of the integral by
$$
     {\rm const} \cdot \left| \int_0^{1} \exp(-y \exp(\ii \pi/3) t)dt  \right|=O\left(\frac{1}{y}\right), \quad y\to \infty.
$$
Switching to the prelimit asymptotic expression given by \eqref{eq_Q_contour_local} and \eqref{eq_Q_contour_local_final}, notice that the prefactor (after dividing by $k^{1/3} \langle Q^{(k)}_m, Q^{(k)}_m\rangle_k^{1/2}$) is decreasing monotonously in $m$ and, therefore, we can ignore it for the large $m$ asymptotic upper bound. After getting rid of the prefactor, the only $m$--dependent factor in the integrand is
$$
      (1+k^{-1/3}\tilde v)^{-m}=
 (1+k^{-1/3}\tilde v)^{-y k^{1/3}}.
 $$
Hence, the prelimit expression is upper-bounded for large $y$ exactly in the same way as the limiting expression \eqref{eq_Airy_funny_form_alt_final}.

Proceeding to $y$ close to $0$ we need to explain that the expression \eqref{eq_Airy_funny_form_alt_final} is a convergent integral, i.e., that the $\tilde v$--integrand decays fast enough as $\tilde v$ goes to infinity along the integration contour. For that we use the following transformation (obtained integrating by parts) of the integral over a part of the real axis:
     \begin{multline}\label{eq_x22}
      \int_{\gamma}^{q} \exp(-\alpha u^3 - \beta u) du =  \int_{\gamma}^q \frac{1}{-3 \alpha u^2 -\beta} \cdot \frac{\partial}{\partial u} \left[\exp(-\alpha u^3 - \beta u)\right] dy\\ = \frac{\exp(-\alpha q^3 - \beta q)}{-3 \alpha q^2 - \beta}-\frac{\exp(-\alpha \gamma^3 - \beta \gamma)}{-3 \alpha q^2 - \beta}
      -\int_{\gamma}^q \frac{6 \alpha u}{ (3 \alpha u^2 +\beta)^2}  \exp(-\alpha u^3 - \beta u) dy.
     \end{multline}
In the part of $\tilde u$ integral in \eqref{eq_Airy_funny_form_alt_final} from $0$ to $\tilde v$, the direction of the integration is $\exp(\pm \ii \pi/3)$, and, therefore, the parameter $\alpha$ in the last formula is $\alpha=\frac{1}{3} \exp(\pm \ii \pi)=-\frac{1}{3}$. Hence, the integrands in \eqref{eq_x22} are fast growing in $u$ and the formula implies an upper bound on the integral of the form $O\left( \frac{1}{1+q^2} \exp(-\alpha q^3 - \beta q) \right)$.
The large positive real number $q$ corresponds to $|\tilde v|$ in \eqref{eq_Airy_funny_form_alt_final} and we conclude that the integrand in $\tilde v$--integral decays as $O(\frac{1}{1+|\tilde v|^2})$ or faster for any value of $y\ge 0$. Therefore, the integral is uniformly convergent in $y\ge 0$.

The next problem is that for small $y$ (or small $m$), we can no longer guarantee the exponential decay of \eqref{eq_x23}. Note that if $m>k^{\delta}$ for some small $\delta>0$, then $(1+\frac{\eps}{2})^{-m}$ is fast decaying and our arguments go through. Thus, it remains to study the case $m<k^{\delta}$, corresponding to very small positive values of $y$. Note that according to \eqref{eq_Q_to_Airy} we expect to see the Airy function at a point close to its zero $\a_i$ in the limit. Hence, we need to show that for $m<k^{\delta}$ the left-hand side of \eqref{eq_Q_to_Airy} converges to zero. Let us denote this left-hand side through $\mathfrak Q^{(k)}_m$. We now reexamine the equations which we developed in the first proof of Theorem \ref{Theorem_Q_to_Airy}. In particular, \eqref{eq_Q_boundary} and \eqref{eq_Q_first_diff} yield that
\begin{equation}
 \mathfrak Q^{(k)}_0=k^{-1/3}(1+o(1)), \qquad \mathfrak Q^{(k)}_1-\mathfrak Q^{(k)}_0=k^{-1/3}(1+o(1)),\qquad k\to\infty.
\end{equation}
The recurrence \eqref{eq_Q_recurrence_normalized} in the asymptotic form \eqref{eq_Q_recurrence_approximating} then implies the following bound valid for all $0< m < k^{1/3}$, in which $C>0$ is a constant that can be made explicit:
\begin{equation}
\label{eq_x25}
 k^{2/3} \left| \left(\mathfrak Q^{(k)}_{m+1}-\mathfrak Q^{(k)}_m\right)-\left(\mathfrak Q^{(k)}_{m}-\mathfrak Q^{(k)}_{m-1}\right) \right|\le  C\cdot \left(|\mathfrak Q^{(k)}_{m+1}|+ |\mathfrak Q^{(k)}_{m}| +|\mathfrak Q^{(k)}_{m-1}|\right).
\end{equation}
We now show that the following two inequalities hold for all large enough $k$ and all $0<m<k^{1/6}$.
\begin{equation}
\label{eq_x24}
 |\mathfrak Q^{(k)}_m|< 2\cdot (m^2+1) \cdot k^{-1/3}, \quad |\mathfrak Q^{(k)}_m-\mathfrak Q^{(k)}_{m-1}|< (m+1) \cdot k^{-1/3}.
\end{equation}
We prove \eqref{eq_x24} by induction in $m$. For $m=1$ this is implied by \eqref{eq_Q_boundary} and \eqref{eq_Q_first_diff}. Suppose that the statement holds up to some value of $m$ and let us prove it for $m+1$.  Using \eqref{eq_x25} we write
\begin{multline*}
 |\mathfrak Q^{(k)}_{m+1}-\mathfrak Q^{(k)}_m|\le |\mathfrak Q^{(k)}_{m}-\mathfrak Q^{(k)}_{m-1}|+ C k^{-2/3} \left(|\mathfrak Q^{(k)}_{m+1}|+ |\mathfrak Q^{(k)}_{m}| +|\mathfrak Q^{(k)}_{m-1}|\right)
 \\ \le  |\mathfrak Q^{(k)}_{m}-\mathfrak Q^{(k)}_{m-1}|+ C k^{-2/3} \left(|\mathfrak Q^{(k)}_{m-1}|+ 2 |\mathfrak Q^{(k)}_{m}|\right) + C k^{-2/3} |\mathfrak Q^{(k)}_{m+1}-\mathfrak Q^{(k)}_m|.
\end{multline*}
Hence, for large $k$
\begin{multline*}
 |\mathfrak Q^{(k)}_{m+1}-\mathfrak Q^{(k)}_m|\le (1-C k^{-2/3})^{-1} \left[|\mathfrak Q^{(k)}_{m}-\mathfrak Q^{(k)}_{m-1}|+ C k^{-2/3} \left(|\mathfrak Q^{(k)}_{m-1}|+ 2 |\mathfrak Q^{(k)}_{m}|\right) \right]
 \\ \le (1-C k^{-2/3})^{-1} \left[(m+1)k^{-1/3}+ \frac{1}{2} k^{-1/3} \right]\le (m+2) k^{-1/3}.
\end{multline*}
Simultaneously,
$$
 |\mathfrak Q^{(k)}_{m+1}|\le |\mathfrak Q^{(k)}_{m+1}|+  |\mathfrak Q^{(k)}_{m+1}-\mathfrak Q^{(k)}_m| \le 2\cdot (m^2+1) \cdot k^{-1/3}+ (m+2)\cdot  k^{-1/3} \le 2\cdot ( (m+1)^2+1)\cdot  k^{-1/3},
$$
which finishes the proof of \eqref{eq_x24}. Since \eqref{eq_x24} implies that $\lim_{k\to\infty} |\mathfrak Q^{(k)}_m|=0$ uniformly over $0 \le m \le k^{1/6-\gamma}$ for any $\gamma>0$, we are done.
\end{proof}

\subsection{Proof of Theorem \ref{Theorem_Gcorners_limit_intro}}
We deal with the consecutive $N\to\infty$, $\beta\to\infty$ limit and compute the latter first, as in Section \ref{Section_GinftyE}. The $\beta\to\infty$ limit is already a Gaussian process, hence, it remains to study the behavior of its covariance as $N\to\infty$. For that we are going to pass to the limit in the formula for the covariance of \eqref{eq_covariance_zeta}. Let us first simplify it by plugging in the expressions for the weight and norm from Sections \ref{Section_Hermite_Laguerre_Jacobi} and \ref{Section_duality}. We have:
\begin{multline}
\label{eq_covariance_zeta_simp}
\Cov (\zeta_{a_1}^{k_1}, \zeta_{a_2}^{k_2})=  \frac{2}{\sqrt{k_1+1} \sqrt{k_2+1}} \sum_{\ell=\max(k_1,k_2)}^{\infty} \sum_{m=0}^{\min(k_1,k_2)-1}  \frac{Q^{(k_1)}_{m}(x^{k_1}_{a_1})}{\sqrt{\langle Q^{(k_1)}_{m}, Q^{(k_1)}_{m}\rangle_{k_1}}} \frac {Q^{(k_2)}_{m}(x^{k_2}_{a_2})}{\sqrt{\langle Q^{(k_2)}_{m}, Q^{(k_2)}_{m}\rangle_{k_2}}}
 \\ \times  \frac{(\ell-m)_{m+1}}{ (\ell+1)  \cdot  \sqrt{(k_1-m)_{m+1}} \sqrt{(k_2-m)_{m+1}}}
  \prod_{j=k_1}^{\ell-1} \left(1-\tfrac{m+1}{j+1}\right) \prod_{j=k_2}^{\ell-1} \left(1-\tfrac{m+1}{j+1}\right).
\end{multline}
Next we would like to study the asymptotics of the last line in \eqref{eq_covariance_zeta_simp} in the regime\footnote{We omit integer parts in order to shorten the notations.}:
\begin{equation}
\label{eq_edge_scaling_proof}
 k_1 = N+ 2 N^{2/3} t_1, \quad k_2=N+ 2 N^{2/3} t_2, \quad \ell= N+ 2 N^{2/3} \lambda, \quad m= y N^{1/3}, \quad N\to \infty.
\end{equation}
We write using $\ln(1+u)=u+O(u^2)$ and the notation $f\approx g$ whenever the ratio $f/g$ tends to $1$:
\begin{multline*}
 (\ell-m)_{m+1}=\ell^{m+1} \prod_{i=1}^{m} \left(1-\frac{i}{\ell}\right)=\ell^{m+1} \exp\left(-\sum_{i=1}^m  \frac{i}{\ell} + O\left(\frac{m^3}{\ell^2}\right) \right)\\=\ell^{m+1}\exp\left( O\left(\frac{m^2}{\ell}\right)+ O\left(\frac{m^3}{\ell^2}\right)\right)\approx \ell^{m+1}
 =N^{m+1}\left(1+\frac{2 \lambda}{N^{1/3}}\right)^{ y N^{1/3}+1}\\  \approx N^{m+1} \exp(2 y \lambda).
\end{multline*}
Similarly, we have
$$
 \sqrt{(k_1-m)_{m+1}} \approx N^{\frac{m+1}{2}} \exp\left(y t_1\right), \qquad   \sqrt{(k_2-m)_{m+1}} \approx N^{\frac{m+1}{2}} \exp\left(y t_2\right).
$$
Further,
$$
  \prod_{j=k_1}^{\ell-1} \left(1-\tfrac{m+1}{j+1}\right)=\exp\left(-\sum_{j=k_1}^{\ell-1} \frac{m+1}{j+1}  +O\left(\frac{(\ell-k_1) m^2}{k_1^2} \right)\right)\approx \exp\bigl( 2 y(t_1-\lambda) \bigr).
$$
And similarly
$$
  \prod_{j=k_2}^{\ell-1} \left(1-\tfrac{m+1}{j+1}\right) \approx \exp\bigl( 2 y(t_2-\lambda) \bigr).
$$
Altogether, the second line in \eqref{eq_covariance_zeta_simp} behaves as $N\to\infty$ as
$$
 \frac{1}{N} \exp\bigl( y(2 \lambda-t_1-t_2+2t_1-2\lambda+2t_2-2 \lambda) \bigr)=
 \frac{1}{N} \exp\bigl( y(t_1+t_2-2\lambda) \bigr).
$$
Summing over $\ell$ the second line in \eqref{eq_covariance_zeta_simp}, we see an approximation of a computable integral:
\begin{multline*}
\sum_{\ell=\max(k_1,k_2)}^{\infty}  \frac{(\ell-m)_{m+1}}{ (\ell+1)  \cdot  \sqrt{(k_1-m)_{m+1}} \sqrt{(k_2-m)_{m+1}}}
  \prod_{j=k_1}^{\ell-1} \left(1-\tfrac{m+1}{j+1}\right) \prod_{j=k_2}^{\ell-1} \left(1-\tfrac{m+1}{j+1}\right)
  \\ \approx 2 N^{-1/3} \int_{\max(t_1,t_2)}^{\infty}  \exp\bigl( y(t_1+t_2-2 \lambda) \bigr) d\lambda=
  N^{-1/3} \frac{1}{y}  \exp\bigl( -y|t_1-t_2| \bigr),
\end{multline*}
where the prefactor $2$ appears because of $2$ in \eqref{eq_edge_scaling_proof}.
 Further, the $m$--sum in \eqref{eq_covariance_zeta_simp} becomes as $N\to\infty$:
\begin{multline}
\label{eq_covariance_zeta_simp_final}
N^{1/3} \Cov (\zeta_{a_1}^{k_1}, \zeta_{a_2}^{k_2})\\ =  2 N^{-1/3} \sum_{m=0}^{\min(k_1,k_2)-1}  k_1^{-1/3} \frac{Q^{(k_1)}_{m}(x^{k_1}_{a_1})}{\sqrt{\langle Q^{(k_1)}_{m}, Q^{(k_1)}_{m}\rangle_{k_1}}} k_2^{-1/3} \frac {Q^{(k_2)}_{m}(x^{k_2}_{a_2})}{\sqrt{\langle Q^{(k_2)}_{m}, Q^{(k_2)}_{m}\rangle_{k_2}}} \frac{1}{y}  \exp\bigl( -y|t_1-t_2| \bigr).
 \end{multline}
 Plugging in $k_1=\kappa(t_1)$, $k_2=\kappa(t_2)$, $a_1=i$, $a_2=j$ and using Theorem \ref{Theorem_Q_to_Airy} we recognize a Riemann sum approximating as $N\to\infty$ the integral in the right-hand side of \eqref{eq_Edge_limit_covariance}. (The tail part corresponding to the large values of $m$ is being controlled by the uniform bound \eqref{eq_Q_uniform_bound}.) This finishes the proof of Theorem \ref{Theorem_Gcorners_limit_intro}.

\subsection{Random walk representation}
\label{Section_random_walk_representation}

Consider the matrix
$$
 P_t(i\to j)=\int_0^{\infty} \frac{\Ai(\a_i+y) \Ai(\a_j+y)}{\Ai'(\a_i) \Ai'(\a_j)} \exp(-ty)\, \d y, \qquad \a_i,\a_j \text{ are zeros of } \Ai(z).
$$

\begin{theorem} \label{Theorem_edge_semigroup}
 The matrices $P_t(i \to j)$, $t\ge 0$, $i,j\in\mathbb Z_{>0}$ form a stochastic semigroup, which means that:
 \begin{enumerate}
  \item $P_t(i\to j)\ge 0$ for each $t>0$ and $P_0(i,j)=\1_{i=j}$;
  \item For each $t\ge 0$
   \begin{equation}
   \label{eq_x26}
    \sum_{j=1}^\infty P_t(i\to j) =1;
   \end{equation}
   \item For each $t,s\ge 0$ and each $i,j\in\mathbb Z_{>0}$
   \begin{equation}
   \label{eq_semigroup_Pt}
    \sum_{q=1}^{\infty} P_t(i\to q) P_s(q\to j)=P_{t+s}(i\to j).
   \end{equation}
 \end{enumerate}
\end{theorem}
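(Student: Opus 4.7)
The backbone of all three properties is the orthonormal basis property of the family $\phi_i(y) := \Ai(\a_i+y)/\Ai'(\a_i)$, $i \ge 1$, in $L^2([0,\infty))$. Orthonormality is already derived in Remark~\ref{Remark_Airy_integral}; completeness follows from the spectral theorem applied to the self-adjoint Airy Schr\"odinger operator $-\partial_y^2 + y$ on $L^2([0,\infty))$ with Dirichlet condition at $y=0$, whose confining potential yields a compact resolvent and whose eigenvalues and eigenfunctions are exactly $-\a_i$ and $\phi_i$. Writing $M_t$ for the bounded self-adjoint multiplication operator $f(y) \mapsto e^{-ty}f(y)$, the definition becomes $P_t(i\to j) = \langle \phi_i, M_t\phi_j\rangle_{L^2}$, and $M_t$ satisfies $M_tM_s = M_{t+s}$ with $\|M_t\|\le 1$ for $t\ge 0$.

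From this representation, the identity $P_0(i,j)=\1_{i=j}$ in property~(1) and the semigroup property~(3) are immediate. Indeed, $P_0(i,j)=\langle \phi_i,\phi_j\rangle=\1_{i=j}$ by orthonormality, and Parseval's identity in the ONB $\{\phi_q\}$ combined with $M_tM_s=M_{t+s}$ gives
\[
\sum_q P_t(i\to q)\,P_s(q\to j) = \sum_q \langle M_t\phi_i,\phi_q\rangle\langle \phi_q, M_s\phi_j\rangle = \langle M_t\phi_i, M_s\phi_j\rangle = \langle \phi_i, M_{t+s}\phi_j\rangle = P_{t+s}(i\to j).
\]

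For non-negativity and row sums equal to $1$, the plan is to recognize $\{P_t\}_{t\ge 0}$ as the transition semigroup of the Markov chain described just before the theorem. Computing the infinitesimal generator term-by-term, $\tfrac{d}{dt}|_{t=0}P_t(i\to j)=-\int_0^\infty y\,\phi_i(y)\phi_j(y)\,\d y$, one evaluates these integrals using only the Airy ODE $\Ai''(x)=x\Ai(x)$ and integration by parts (the same Wronskian trick as in Remark~\ref{Remark_Airy_integral}, differentiated once in the parameter): for $i\ne j$ the result is $2/(\a_i-\a_j)^2$, matching the prescribed jump rate $Q(i\to j)$, while for $i=j$ it equals $2\a_i/3$, computable from the antiderivative $u\Ai^2(u)-(\Ai'(u))^2$ of $\Ai^2(u)$. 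Checking that this matrix is a conservative Markov generator then reduces to the identity
\[
\sum_{j\ne i}\frac{1}{(\a_i-\a_j)^2} = -\frac{\a_i}{3},
\]
which follows by combining the Mittag--Leffler expansion $(\ln\Ai)''(x)=-\sum_j(x-\a_j)^{-2}$ (from the Hadamard factorization of the order-$3/2$ entire function $\Ai$) with the Laurent expansion at $\a_i$ of $u(x):=\Ai'(x)/\Ai(x)$ produced by the Riccati equation $u'+u^2=x$; matching coefficients shows that expansion begins $\tfrac{1}{x-\a_i}+\tfrac{\a_i}{3}(x-\a_i)+O((x-\a_i)^2)$.

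The hardest step is rigorously identifying $P_t$ with the transition semigroup generated by $Q$, which produces $P_t(i\to j)\ge 0$ and $\sum_j P_t(i\to j)=1$ simultaneously. Because the diagonal rates $-Q(i,i)=-2\a_i/3\sim i^{2/3}$ are unbounded, non-explosion of the chain is non-trivial; I would verify it through a Lyapunov argument (for instance, using $f(i)=|\a_i|$, whose drift $(Qf)(i)$ is controlled by the rapid decay of $Q(i\to j)$ in $|\a_i-\a_j|$), and then deduce $P_t=e^{tQ}$ via uniqueness of bounded solutions to the Kolmogorov backward equation. An alternative route is to recognize $P_{s-t}(i\to j)$ as the $N\to\infty$ edge-scaling limit of the stochastic kernels $K^{k,\ell}(k+1-i\to \ell+1-j)$ of Section~\ref{Section_innovations} with $k=N+2tN^{2/3}$, $\ell=N+2sN^{2/3}$, via the spectral formula~\eqref{eq_Diffusion_spectral} and Theorem~\ref{Theorem_Q_to_Airy}; non-negativity transfers in the limit, while mass conservation again reduces to the same Airy zero identity through careful accounting of the mass that escapes the edge window.
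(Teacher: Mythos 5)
Your Parseval derivation of $P_0 = I$ and the semigroup property is correct and cleaner than the paper's: the paper instead passes to the limit in the Chapman–Kolmogorov identity for the prelimit kernels $K^{k,\ell}$, which requires a tail estimate that itself needs the row sums $\sum_j P_t(i\to j) = 1$ as an input. Your approach sidesteps that dependency entirely, which is a genuine improvement in the logical structure.

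However, your treatment of the genuinely hard part — showing that $P_t$ is substochastic and, in fact, stochastic — has a gap. Your primary route is to identify $P_t = e^{tQ}$ via the Kolmogorov backward equation plus uniqueness. But you only compute the generator at $t=0$; you never verify that $P_t$ satisfies the backward equation for $t>0$ (the exchange of $\lim_{h\to 0}$ and $\sum_q$ is not automatic when you do not yet know the terms are nonnegative and summable to at most $1$ — that is exactly what is to be proved). Moreover the proposed Lyapunov function $f(i) = |\a_i|$ does not work: since $|\a_j| - |\a_i| = \a_i - \a_j$ (all $\a_j<0$), the drift is $(Qf)(i) = 2\sum_{j\ne i}(\a_i-\a_j)^{-1}$, and since $|\a_j| \sim (3\pi j/2)^{2/3}$ this series diverges, so $f$ is not even in the domain of $Q$. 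Your alternative route imports non-negativity and $\sum_j P_t(i\to j)\le 1$ from the prelimit stochastic kernels $K^{k,\ell}$, which is indeed what the paper does; but you then assert that mass conservation "reduces to the same Airy zero identity through careful accounting of the mass that escapes," which is not substantiated and is not what the paper does. Conservativity of the generator $Q$ (the identity $\sum_{j\ne i}(\a_i-\a_j)^{-2} = -\a_i/3$) does not by itself rule out mass loss at positive times — that is precisely what explosion is. The paper's actual key idea for the lower bound $\sum_j P_t(i\to j)\ge 1$ is absent from your proposal: for each fixed $y>0$, the vector $\bigl(\mathcal A_i(y)\bigr)_{i\ge 1}$, with $\mathcal A_i(y) = \Ai(\a_i+y)/\Ai'(\a_i)$, is an $\ell^\infty$ eigenvector of the (substochastic) matrix $P_t$ with eigenvalue $e^{-ty}$, and since $\mathcal A_i(y)/y\to 1$ uniformly in $i$ as $y\to 0^+$, sending $y\to 0$ in $e^{-ty}\mathcal A_i(y)/y \le \sup_j|\mathcal A_j(y)/y|\sum_j P_t(i\to j)$ forces $\sum_j P_t(i\to j)\ge 1$. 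Without this (or a correct substitute), your proposal establishes parts (1, nonnegativity) and (3) but not (2).
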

\begin{proof}
The proof is based on the combination of two ideas. First, $P_t(i\to j)$ is a limit of the diffusion kernels $K^{k,\ell}(a\to b)$ of Section \ref{Section_innovations}, which shows that it is non-negative. In principle, stochasticity and semigroup property might have been lost in the limit transition: the equalities \eqref{eq_x26} and \eqref{eq_semigroup_Pt} might have turned into inequalities. In order to rule out this possibility we find explicit eigenfunctions of $P_t(i\to j)$ with eigenvalues arbitrarily close to $1$.

{\bf Step 1.} Consider the Gaussian $\infty$--corners process with $x_i^k$ being the roots of the Hermite polynomials. Then \eqref{eq_Diffusion_spectral} yields an expression for the corresponding diffusion kernels:
$$
 K^{k,\ell}(a\to b)=
  \frac{1}{k+1} \sum_{m=0}^{k-1} \frac{Q^{(k)}_m(x^k_a) Q^{(\ell)}_m(x^\ell_b)}{\langle Q^{(k)}_m, Q^{(k)}_m \rangle_k} \prod_{j=k}^{\ell-1} \left(1-\tfrac{m+1}{j+1}\right).
$$
Set $\ell=k+\lfloor 2 t k^{2/3}\rfloor$, $a=k+1-i$, $b=\ell+1-j$ and send $k\to\infty$ in the last formula using Theorem \ref{Theorem_Q_to_Airy}, formula \eqref{eq_Q_norm} and
computation for $m\approx y k^{1/3}$
\begin{multline*}
 \sqrt{\frac{\langle Q^{(k)}_m, Q^{(k)}_m\rangle_\ell}{\langle Q^{(k)}_m, Q^{(k)}_m\rangle_k}}  \prod_{j=k}^{\ell-1} \left(1-\tfrac{m+1}{j+1}\right)=
 \sqrt{\frac{k+1}{m+1} \prod_{j=0}^m \left(1+\frac{\ell-k}{k-j}\right)} \prod_{j=k}^{\ell-1} \left(1-\tfrac{m+1}{j+1}\right)\\ \approx \exp(ty)\exp(-2ty)=\exp(-ty).
\end{multline*}
We get
\begin{equation}
\label{eq_semigroup_convergence}
 \lim_{k\to\infty} K^{k,k+\lfloor 2 t k^{2/3}\rfloor}(k+1-i\to \ell+1-j)= P_t(i\to j).
\end{equation}
Since the matrices $K^{k,\ell}(a\to b)$  are stochastic, we conclude that $P_t(i\to j)\ge 0$ and ${\sum_{j=1}^{\infty} P_t(i\to j)\le 1}$.

{\bf Step 2.} For $y\ge 0$ denote
$$
 \mathcal A_i(y)= \frac{\Ai(\a_i+y)}{\Ai'(\a_i)}.
$$
As functions of $y$ these are  eigenfunctions of the Sturm--Liouville operator corresponding to the Airy differential operator on $[0,+\infty)$ with Dirichlet boundary condition at $y=0$:
$$
\frac{\partial^2}{\partial y^2} \mathcal A_i(y)+ y \mathcal A_i(y)= \a_i \mathcal A_i(y), \quad y\ge 0; \qquad \mathcal A_i(0)=0.
$$
We also know that they are orthonormal (see Remark \ref{Remark_Airy_integral}):
 $$
 \int_0^{\infty} \mathcal A_i(y) \mathcal A_j(y) dy=\delta_{i=j}.
$$
The general theory of Sturm--Liouville expansions (see \cite[Section 2.7 and Section 4.12]{Tit} or \cite[Section 4.4]{VS}) yields that the functions $\mathcal A_i(y)$, $i=1,2,\dots,$ form a \emph{complete} orthonormal basis. In particular, we can expand function $\mathcal A_i\exp(-ty)$ in this basis, yielding
\begin{equation}
\label{eq_x27}
 \mathcal A_i(y) \exp(-ty)= \sum_{j=1}^{\infty} \mathcal A_j(y) \int_0^{\infty} \mathcal A_j (y) (\mathcal A_i(y) \exp(-ty)) \d y.
\end{equation}
Let us now change the point of view, fix some $y>0$ and treat $\mathcal A_i(y)$ as a function of $i$. Then \eqref{eq_x27} means that this is an eigenvector of the matrix $P_t(i\to j)$ with eigenvalue $\exp(-ty)$. Note that we can not take $y=0$ here, since $A_i(0)$ vanishes.

The definition of $\mathcal A_i$ implies that for each $i=1,2,\dots$
\begin{equation}
\label{eq_x29}
\lim_{y\to 0} \frac{1}{y}\mathcal A_i(y)=1.
\end{equation}
 In addition, there is a uniform bound:
\begin{equation}
\label{eq_x30}
  \lim_{y\to 0} \sup_{i\ge 1} \left| \frac{1}{y} \mathcal A_i(y)\right|=1,
\end{equation}
which follows from the known asymptotic expansions for $\Ai(x)$, $x\to-\infty$, and for $\Ai'(\a_i)$, $i\to\infty$, see, e.g., \cite[(2.48) and (2.58)]{VS}.

We can now apply \eqref{eq_x27} to get an inequality
\begin{equation}
\label{eq_x28}
 \left|\frac{1}{y}\mathcal A_i(y) \exp(-ty)\right|=\left|\frac{1}{y}\sum_{j=1}^\infty \mathcal A_j(y) P_t(i\to j)\right| \le \sup_{j\ge 1} \left|\frac{1}{y}\mathcal A_j(y)\right| \sum_{j=1}^{\infty} P_t(i\to j).
\end{equation}
Sending $y\to 0$ using \eqref{eq_x29} and \eqref{eq_x30} we conclude that
$ \sum_{j=1}^{\infty} P_t(i\to j)\ge 1$. Combining with the opposite inequality established on the first step we conclude that $\sum_{j=1}^{\infty} P_t(i\to j)=1$.

{\bf Step 3.} It remains to prove the semigroup property. By definition, it is satisfied by the matrices $K^{k,\ell}(a\to b)$ and we have
\begin{equation}
\label{eq_x31}
 \sum_{c=1}^{\ell} K^{k,\ell}(a\to c) K^{\ell, r}(c\to b)= K^{k,r}(a\to b).
\end{equation}
We now set  $\ell=k+\lfloor 2 t k^{2/3}\rfloor$, $r=\ell+\lfloor 2 s k^{2/3}\rfloor$, $a=k+1-i$, $c=\ell+1-q$, $b=r+1-j$ and send $k\to\infty$. Using \eqref{eq_semigroup_convergence} we see that the terms of the series \eqref{eq_x31} converge towards those of \eqref{eq_semigroup_Pt}. It remains to notice an asymptotic tail-bound: for any fixed $M$ we have
\begin{equation}
\label{eq_x32}
 \sum_{c=1}^{\ell-M} K^{k,\ell}(a\to c) K^{\ell, r}(c\to b)\le  \sum_{c=1}^{\ell-M} K^{k,\ell}(a\to c)=1-\sum_{c=\ell-M+1}^{\ell} K^{k,\ell}(a\to c)\to 1- \sum_{q=1}^{M} P_t(i\to q).
\end{equation}
Since $\sum_{q=1}^{\infty} P_t(i\to q)=1$, by choosing large enough $M$ we can make \eqref{eq_x32} arbitrarily small. Hence, $k\to\infty$ limit of \eqref{eq_x31} gives \eqref{eq_semigroup_Pt}.
\end{proof}

Let us consider a continuous time homogeneous Markov chain $\mathcal X_{(x_0)}(t)$, $t\ge 0$, taking values in the state space $\mathbb Z_{>0}$. The initial value is $x_0$, i.e.\ $\mathcal X_{(x_0)}(0)=x_0$. The transitional probabilities are given by $P_t$:
$$
 {\rm Prob }(X_{(x_0)}(t)=a)= P_t(x_0\to a).
$$

 Next, we take a countable collection of standard Brownian motions $W^{(i)}(t)$, $i\in\mathbb Z_{>0}$. For each $x\in\mathbb Z_{>0}$ and $t\in\mathbb R$ define a random variable
 $$
   \mathfrak Z(i,t)=2 \sum_{j=1}^{\infty} \int_t^{\infty} P_{r-t}(i\to j) \d W^{(j)}(r).
 $$
  An alternative expression for $\mathfrak Z(i,t)$ was given in \eqref{eq_Z_intro}. In words, we start the Markov chain $\mathcal X$ from $i$ at time $t$ and add the white noises $\dot{W}^{(i)}$ along its trajectory. $\mathfrak Z(i,t)$ is the expectation of the sum over the randomness coming from $\mathcal X$; it is still random variable with randomness coming from the Brownian motions. We can also view $\mathfrak Z(i,t)$ as the partition function of a directed polymer in additive Gaussian noise.

\begin{theorem} \label{Theorem_Z_as_polymer}
 The finite-dimensional distributions of $\mathfrak Z(i,t)$ are the same as ones of the limit in Theorems \ref{Theorem_Gcorners_limit_intro}, \ref{Theorem_DBM_limit_intro}, i.e.\ the covariance
 $\E  \mathfrak Z(i,t)  \mathfrak Z(j,s)$ matches the right-hand side of \eqref{eq_Edge_limit_covariance}.
\end{theorem}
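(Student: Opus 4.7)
My proof proposal is to compute $\E\,\mathfrak{Z}(i,t)\,\mathfrak{Z}(j,s)$ directly from the stochastic-integral representation \eqref{eq_Z_intro_2} using, in sequence, the It\^o isometry, Parseval's identity for the Airy eigenbasis, and one elementary $r$-integration. By the manifest symmetry of the target expression in $(i,t)\leftrightarrow (j,s)$, it suffices to treat $t\le s$. Throughout I will write $\mathcal{A}_k(y):=\Ai(\a_k+y)/\Ai'(\a_k)$, so that the defining formula for the transition kernel takes the suggestive inner-product form
$$
P_{\tau}(a\to k)\;=\;\int_0^{\infty} e^{-\tau y}\,\mathcal{A}_a(y)\mathcal{A}_k(y)\,\d y\;=\;\langle e^{-\tau\cdot}\mathcal{A}_a,\mathcal{A}_k\rangle_{L^2(0,\infty)}.
$$

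First, since the Brownian motions $W^{(k)}$ are independent and the integrands are deterministic, the It\^o isometry gives
$$
\E\,\mathfrak{Z}(i,t)\,\mathfrak{Z}(j,s)\;=\;4\sum_{k=1}^{\infty}\int_{s}^{\infty} P_{r-t}(i\to k)\,P_{r-s}(j\to k)\,\d r,
$$
with the lower limit $s$ coming from $[t,\infty)\cap[s,\infty)=[s,\infty)$. Second, applying Parseval's identity to the pair $e^{-(r-t)\cdot}\mathcal{A}_i$ and $e^{-(r-s)\cdot}\mathcal{A}_j$ (both in $L^2(0,\infty)$ because $r\ge s\ge t$), and invoking the completeness of $\{\mathcal{A}_k\}_{k\ge 1}$ already established in Step~2 of the proof of Theorem \ref{Theorem_edge_semigroup}, the $k$-sum collapses to a single integral:
$$
\sum_{k=1}^{\infty} P_{r-t}(i\to k)\,P_{r-s}(j\to k)\;=\;\int_{0}^{\infty} e^{-(2r-t-s)y}\,\mathcal{A}_i(y)\mathcal{A}_j(y)\,\d y.
$$

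Third, swap the $r$-integral with the $y$-integral (licit by Tonelli, as every factor is non-negative) and evaluate $\int_{s}^{\infty}e^{-(2r-t-s)y}\,\d r=e^{-(s-t)y}/(2y)$. Combining the prefactor $4$ from the It\^o isometry with the $1/2$ produced by the $r$-integral and rewriting $\mathcal{A}_k$ in terms of $\Ai$ yields exactly the right-hand side of \eqref{eq_Edge_limit_covariance} with $|t-s|=s-t$; the general case then follows by symmetry.

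The one genuine analytic check is that the $1/y$ factor in the final formula, which is conspicuously absent from every intermediate expression, does not spoil integrability at the origin. This is saved by the boundary condition $\Ai(\a_k)=0$: expanding via the Airy equation $\Ai''(z)=z\Ai(z)$ at $z=\a_k$ yields $\mathcal{A}_k(y)=y+O(y^3)$ as $y\downarrow 0$, so $\mathcal{A}_i(y)\mathcal{A}_j(y)/y=O(y)$ at the origin; decay as $y\to\infty$ is immediate from the Airy asymptotics. The remaining technicalities, namely $L^2$ convergence of the stochastic series defining $\mathfrak{Z}(i,t)$ and the Fubini/Tonelli interchanges, are either direct consequences of non-negativity or are covered by material assembled in the proof of Theorem \ref{Theorem_edge_semigroup}.
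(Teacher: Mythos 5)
Your proof is correct and follows essentially the same path as the paper's: It\^o isometry, collapse of the spectral sum, and one elementary $r$-integral. The only routing difference is in the middle step. The paper first invokes the symmetry $P_\tau(a\to b)=P_\tau(b\to a)$ and then the semigroup identity \eqref{eq_semigroup_Pt} to turn $\sum_\ell P_{r-t}(i\to\ell)P_{r-s}(j\to\ell)$ into $P_{2r-t-s}(i\to j)$, and only afterwards unfolds the integral formula for $P_{2r-t-s}$. You instead apply Parseval for the complete orthonormal system $\{\mathcal A_k\}$ to the pair $e^{-(r-t)\cdot}\mathcal A_i$, $e^{-(r-s)\cdot}\mathcal A_j$, producing the same integral in one stroke. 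Since the semigroup identity is itself an avatar of this completeness, the two arguments are mathematically equivalent; yours skips the ``fold into $P_{2r-t-s}$, then unfold again'' detour and is, if anything, slightly more economical. It also nicely isolates exactly which piece of Theorem \ref{Theorem_edge_semigroup} is needed (completeness of the eigenbasis), whereas the paper's phrasing makes it look as if the full semigroup property is required.

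One small correction: the interchange of the $r$- and $y$-integrals is not ``Tonelli, as every factor is non-negative.'' The product $\mathcal A_i(y)\mathcal A_j(y)$ changes sign when $i\neq j$, since the Airy function oscillates for negative arguments. Non-negativity does hold for the terms of the $k$-sum $\sum_k P_{r-t}(i\to k)P_{r-s}(j\to k)$, but not for the integrand $e^{-(2r-t-s)y}\mathcal A_i(y)\mathcal A_j(y)$. The swap is nevertheless legitimate by Fubini: the absolute integrability
\[
\int_s^\infty \int_0^\infty e^{-(2r-t-s)y}\,|\mathcal A_i(y)|\,|\mathcal A_j(y)|\,\d y\,\d r
= \int_0^\infty \frac{e^{-(s-t)y}}{2y}\,|\mathcal A_i(y)|\,|\mathcal A_j(y)|\,\d y < \infty
\]
follows from exactly the estimates you give at the end of your proof, $\mathcal A_k(y)=y+O(y^3)$ near $0$ and superexponential decay at infinity. (The paper performs the same interchange with no justification at all, so this is a matter of cleaning up your stated reason, not of filling a gap in the argument.)
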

\begin{proof} Since Ito integral is a $L_2$--isometry, we have
  \begin{multline}
   \E  \mathfrak Z(i,t)  \mathfrak Z(j,s)=
   4 \E \sum_{a=1}^{\infty} \int_t^{\infty} P_{r-t}(i\to a) \d W^{(a)}(r) \sum_{b=1}^{\infty} \int_s^{\infty} P_{r'-s}(i\to b) \d W^{(b)}(r')
   \\= 4 \int_{\max(t,s)}^{\infty}  \sum_{\ell=1}^{\infty} P_{r-t}(i\to  \ell) P_{r-s}(j\to \ell) \d r.
\end{multline}
Using the symmetry $P_t(x,y)=P_t(y,x)$ and the semigroup property  \eqref{eq_semigroup_Pt}, we compute the sum over $\ell$ and get
$$
 4 \int_{\max(t,s)}^{\infty} P_{2r-t-s}(i\to j) \d r  = 4 \int_{\max(t,s)}^{\infty} \d r \int_{0}^{\infty} \frac{\Ai(\a_i+y) \Ai(\a_j+y)}{\Ai'(a_i) \Ai'(a_j)} \exp(-(2r-t-s)y)\, \d y.
$$
Changing the order of integration and computing the $\d r$ integral we finally get
$$
2 \int_{0}^{\infty} \frac{\Ai(\a_i+y) \Ai(\a_j+y)}{\Ai'(\a_i) \Ai'(\a_j)} \exp\bigl(-(2\max(t,s)-t-s)y\bigr)\, \frac{\d y}{y}.\qedhere
$$
\end{proof}

Our next aim is to compute the intensities of the Markov chain $\mathcal X^{(x)}(t)$, matching its description at the end of Section \ref{Section_intro_as_results}.

\begin{proposition} \label{Proposition_semigroup_intensity} We have
\begin{equation}
 \frac{\partial}{\partial t} P_t(i\to j)\Bigr|_{t=0}=\begin{cases} \frac{2}{(\a_i-\a_j)^2}, & i\ne j,\\ \frac{2}{3}\a_i, & i=j. \end{cases}
\end{equation}
\end{proposition}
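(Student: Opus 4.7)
The plan is to differentiate $P_t(i\to j)$ under the integral sign at $t=0$. Since $\frac{\partial}{\partial t} e^{-ty}|_{t=0}=-y$, and dominated convergence is trivially justified (multiplication by $y$ preserves the exponential decay of the Airy function at $+\infty$), the proposition reduces to computing
$$
I_{ij}:=\int_0^\infty y\,\Ai(\a_i+y)\Ai(\a_j+y)\,dy
$$
and showing that $-I_{ij}/(\Ai'(\a_i)\Ai'(\a_j))$ equals $2/(\a_i-\a_j)^2$ when $i\ne j$ and equals $\tfrac{2}{3}\a_i$ when $i=j$. Both cases are handled by repeated integration by parts using the Airy equation $\Ai''(z)=z\Ai(z)$, the vanishing $\Ai(\a_i)=\Ai(\a_j)=0$, and the exponential decay of $\Ai$ and $\Ai'$ at $+\infty$ (so that every boundary term at infinity vanishes automatically).

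For $i\ne j$, I would take the Christoffel--Darboux-type identity from Remark \ref{Remark_Airy_integral},
$$
(\a_j-\a_i)\Ai(\a_i+y)\Ai(\a_j+y)=\frac{d}{dy}\Bigl[\Ai(\a_i+y)\Ai'(\a_j+y)-\Ai'(\a_i+y)\Ai(\a_j+y)\Bigr],
$$
multiply by $y$, and integrate from $0$ to $\infty$. Integration by parts on the right kills the boundary (the bracket vanishes at $y=0$ and decays at $\infty$) and yields $(\a_j-\a_i)I_{ij}=-(g_1-g_2)$, where $g_1=\int_0^\infty \Ai(\a_i+y)\Ai'(\a_j+y)\,dy$ and $g_2=\int_0^\infty \Ai'(\a_i+y)\Ai(\a_j+y)\,dy$. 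Another integration by parts shows $g_1+g_2=[\Ai(\a_i+y)\Ai(\a_j+y)]_0^\infty=0$, so $g_1-g_2=2g_1$. To find $g_1$ itself, I would integrate $\frac{d}{dy}[\Ai'(\a_i+y)\Ai'(\a_j+y)]$ and use the Airy equation on both factors; after invoking $g_1+g_2=0$ and $\int_0^\infty \Ai(\a_i+y)\Ai(\a_j+y)\,dy=0$ (Remark \ref{Remark_Airy_integral}) to eliminate the remaining pieces, this yields $g_1=\Ai'(\a_i)\Ai'(\a_j)/(\a_j-\a_i)$. Combining gives $I_{ij}=-2\Ai'(\a_i)\Ai'(\a_j)/(\a_j-\a_i)^2$, and dividing by $-\Ai'(\a_i)\Ai'(\a_j)$ produces $2/(\a_i-\a_j)^2$.

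For $i=j$, after the substitution $z=\a_i+y$ and using \eqref{eq_squared_integral} in the form $\int_{\a_i}^\infty \Ai(z)^2\,dz=(\Ai'(\a_i))^2$, I reduce to $I_{ii}=\int_{\a_i}^\infty z\Ai(z)^2\,dz-\a_i(\Ai'(\a_i))^2$. Integration by parts plus the Airy equation gives the first auxiliary identity $\int_{\a_i}^\infty z\Ai(z)^2\,dz=\int_{\a_i}^\infty \Ai(z)\Ai''(z)\,dz=-\int_{\a_i}^\infty (\Ai'(z))^2\,dz$. For a second independent relation, I would differentiate $z(\Ai'(z))^2$ to get $\frac{d}{dz}[z(\Ai'(z))^2]=(\Ai'(z))^2+z^2\frac{d}{dz}[\Ai(z)^2]$; integrating from $\a_i$ to $\infty$, using $z(\Ai'(z))^2\to 0$ at both endpoints, and applying integration by parts to $\int_{\a_i}^\infty z^2\frac{d}{dz}[\Ai(z)^2]\,dz=-2\int_{\a_i}^\infty z\Ai(z)^2\,dz$, one gets a linear system whose solution is $\int_{\a_i}^\infty z\Ai(z)^2\,dz=\tfrac{\a_i}{3}(\Ai'(\a_i))^2$. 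Hence $I_{ii}=-\tfrac{2\a_i}{3}(\Ai'(\a_i))^2$, and the value $\tfrac{2}{3}\a_i$ follows.

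The only real obstacle is bookkeeping: because all these integrals are linked by integration by parts, many natural manipulations produce tautologies of the form $X=X$. The key trick in each case is to generate two \emph{independent} expressions for the same integral and solve a small linear system, as done above via the antisymmetric Wronskian-type identity for $i\ne j$ and via the pair $(\int z\Ai^2,\int (\Ai')^2)$ for $i=j$.
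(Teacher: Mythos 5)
Your proposal is correct, and it reaches the stated answer by a genuinely different route from the paper. The paper handles both cases by writing down explicit closed-form antiderivatives of $y\,\Ai_a\Ai_a$ and of $(a-b)^2\,y\,\Ai_a\Ai_b$ (Eqs.\ \eqref{eq_Airy_antider_1}, \eqref{eq_Airy_antider_2}, following Albright's method), verifying them by direct differentiation using $\Ai''=z\Ai$, and then reading off the integral as a boundary term at $y=0$. You instead avoid producing the antiderivative altogether: for $i\ne j$ you start from the Wronskian-type identity already established in Remark \ref{Remark_Airy_integral}, multiply by $y$, and build a small linear system out of the auxiliary integrals $g_1=\int\Ai_i\Ai_j'$, $g_2=\int\Ai_i'\Ai_j$ (using $g_1+g_2=0$, the diagonal version of $\frac{d}{dy}[\Ai_i'\Ai_j']$, and orthogonality of $\Ai_i,\Ai_j$); for $i=j$ you pair $\int z\Ai(z)^2\,dz$ with $\int(\Ai'(z))^2\,dz$ and extract two independent relations via integration by parts. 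Both arguments are correct and of comparable length; your version is arguably more self-contained, since the reader never has to take an antiderivative on faith, while the paper's is faster if one is willing to grant the Albright-type identity.

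One small slip in your write-up: in the $i=j$ case you say you use $z(\Ai'(z))^2\to 0$ ``at both endpoints.'' At $z\to+\infty$ this is fine, but at $z=\a_i$ the value is $\a_i(\Ai'(\a_i))^2\ne 0$; it is precisely this nonzero boundary contribution that supplies the inhomogeneity $-\a_i(\Ai'(\a_i))^2$ on the left side and makes your linear system nondegenerate. (If both endpoints actually vanished, the system would force $\int_{\a_i}^\infty z\Ai(z)^2\,dz=0$, contradicting the value you correctly report.) Since you arrive at the right answer, you evidently kept this term in the computation; just fix the wording so it doesn't mislead the reader.
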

For the proof we need two computations of indefinite integrals.

\begin{lemma} Fix any $a\in\mathbb R$ and introduce the notation
$$
 \Ai_a=\Ai(y+a).
$$
Then we have
\begin{equation}
\label{eq_Airy_antider_1}
 \frac{\partial}{\partial y} \left( \frac{2a-y}{3} \Ai'_a\Ai'_a+\frac{1}{3} \Ai'_a \Ai_a  +\frac{(y+a)(y-2a)}{3}\Ai_a\Ai_a \right)= y \Ai_a \Ai_a,
\end{equation}
 Also for any $a,b\in \mathbb R$
\begin{multline}
\label{eq_Airy_antider_2}
 \frac{\partial}{\partial y} \Biggl( 2\Ai'_a \Ai'_b+ (a-b) (y \Ai'_a \Ai_b-y \Ai_a \Ai'_b)-2y\Ai_a\,\Ai_b-(a+b)\Ai_a\Ai_b + 2\frac{\Ai_a\Ai'_b-\Ai'_a\Ai_b}{b-a}\Biggr)\\=(a-b)^2 y \Ai_a \Ai_b.
\end{multline}
\end{lemma}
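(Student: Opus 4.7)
The plan is to verify both identities by direct differentiation, invoking only the Airy ODE in the form $(\Ai'_a)' = (y+a)\Ai_a$ and $(\Ai'_b)' = (y+b)\Ai_b$ together with the product rule. The structural reason such antiderivatives exist is that the space of functions which are polynomials in $y$ with coefficients being products of two Airy factors in $y+a$ and/or $y+b$ is closed under differentiation, so given a target like $y\Ai_a^2$ or $(a-b)^2 y \Ai_a\Ai_b$ one can find a preimage of the same shape by solving a small linear system. The statement of the lemma simply records such preimages explicitly; all that remains is to check them.

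For \eqref{eq_Airy_antider_1}, I would differentiate each of the three summands and expand the result in the basis $\{(\Ai'_a)^2,\ \Ai'_a\Ai_a,\ \Ai_a^2\}$ with polynomial-in-$y$ coefficients. The $(\Ai'_a)^2$ contributions come from differentiating the prefactor $\tfrac{2a-y}{3}$ in the first summand and from applying $(\Ai_a)' = \Ai'_a$ to the second summand; they are $-\tfrac{1}{3}$ and $+\tfrac{1}{3}$ and hence cancel. The $\Ai'_a\Ai_a$ contributions cancel after using $2a-y = -(y-2a)$. The $\Ai_a^2$ coefficient collects $\tfrac{y+a}{3}$ from the second summand and $\tfrac{2y-a}{3}$ from differentiating $(y+a)(y-2a) = y^2 - ay - 2a^2$, which sums to $y$.

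For \eqref{eq_Airy_antider_2} the single most useful observation is the Wronskian-type identity
\begin{equation*}
\frac{d}{dy}\bigl(\Ai_a\Ai'_b - \Ai'_a\Ai_b\bigr) = (b-a)\,\Ai_a\Ai_b,
\end{equation*}
which converts the awkward $\tfrac{2}{b-a}$ summand into the clean contribution $2\Ai_a\Ai_b$. Differentiating the remaining summands and expanding in the basis $\{\Ai'_a\Ai'_b,\ \Ai'_a\Ai_b,\ \Ai_a\Ai'_b,\ \Ai_a\Ai_b\}$, the $\Ai'_a\Ai'_b$ contributions from $(a-b)y\Ai'_a\Ai_b$ and $-(a-b)y\Ai_a\Ai'_b$ cancel; the $\Ai'_a\Ai_b$ coefficient reduces to $2(y+b) + (a-b) - 2y - (a+b) = 0$, and symmetrically for $\Ai_a\Ai'_b$; and the $\Ai_a\Ai_b$ coefficient simplifies to $(a-b)y\bigl[(y+a)-(y+b)\bigr] - 2 + 2 = (a-b)^2 y$, matching the right-hand side.

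There is no real obstacle beyond bookkeeping on a finite-dimensional space of quadratic Airy products with polynomial coefficients. The one place to be careful is the sign in the Wronskian term (note $b-a = -(a-b)$), and to track every polynomial coefficient so that no contribution is inadvertently dropped when collecting the four products in the proof of \eqref{eq_Airy_antider_2}.
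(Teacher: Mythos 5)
Your proof is correct and takes essentially the same approach as the paper: direct differentiation using the Airy ODE, organized by collecting coefficients of the quadratic Airy products. The Wronskian observation you isolate for \eqref{eq_Airy_antider_2} is implicit in the paper's computation (where the last summand contributes $\tfrac{2}{b-a}\bigl((y+b)\Ai_a\Ai_b-(y+a)\Ai_a\Ai_b\bigr)=2\Ai_a\Ai_b$); stating it explicitly is a clean way to streamline the bookkeeping but does not change the argument.
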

\begin{proof}
 The method for finding such identities is suggested in \cite{Albright}. The identities themselves are checked by direct differentiation using \eqref{eq_Airy_DE}.
 The left-hand side of \eqref{eq_Airy_antider_1} is transformed as follows:
\begin{multline*}
 \left(\frac{-1}{3} \Ai'_a\Ai'_a + 2(a+y) \frac{2a-y}{3} \Ai'_a\Ai_a\right) +\left(\frac{1}{3} (a+y) \Ai_a \Ai_a +\frac{1}{3} \Ai'_a \Ai'_a\right) \\ +\left(2 \frac{(y+a)(y-2a)}{3}\Ai'_a\Ai_a +\frac{y+a+y-2a}{3}\Ai_a\Ai_a \right)=y\Ai_a\Ai_a.
\end{multline*}
The left-hand side of \eqref{eq_Airy_antider_2} is transformed as follows:
\begin{multline*}
  \bigl(2(y+a)\Ai_a\Ai'_b+2(y+b)\Ai'_a\Ai_b \bigr)+ (a-b)\bigl(  (\Ai'_a \Ai_b-\Ai_a \Ai'_b) +y ( (y+a)\Ai_a \Ai_b- (y+b)\Ai_a \Ai_b)   \bigr)\\ -\bigl(2\Ai_a\Ai_b+2y\Ai'_a\Ai_b+2y\Ai_a\Ai'_b \bigr) -(a+b)\bigl(\Ai'_a\Ai_b+\Ai_a \Ai'_b\bigr) \\+\left( \frac{2}{b-a} \bigl( (y+b)\Ai_a\Ai_b-(y+a)\Ai_a\Ai_b\bigr) \right) =(a-b)^2 y \Ai_a \Ai_b.\qedhere
\end{multline*}
\end{proof}

\begin{proof}[Proof of Proposition \ref{Proposition_semigroup_intensity}]
 Differentiating under the integral sign,  we get
 \begin{equation}
  \frac{\partial}{\partial t} P_t(i\to j)\Bigr|_{t=0}= - \int_0^{\infty}y  \frac{\Ai(\a_i+y) \Ai(\a_j+y)}{\Ai'(\a_i) \Ai'(\a_j)} \, \d y.
 \end{equation}
 For the case $i=j$ we apply \eqref{eq_Airy_antider_1} converting the last expression into
 \begin{equation}
 \frac{1}{{\Ai'(\a_i) \Ai'(\a_i)}} \left( \frac{2\a_i-y}{3} \Ai'_{\a_i}\Ai'_{\a_i}+\frac{1}{3} \Ai'_{\a_i} \Ai_{\a_i}  +\frac{(y+\a_i)(y-2\a_i)}{3}\Ai_{\a_i}\Ai_{\a_i} \right)\Biggr|_{y=\infty}^{y=0}= \frac{2}{3}\a_i.
\end{equation}
When $i\ne j$, we apply \eqref{eq_Airy_antider_2} instead.
\end{proof}

We end this section by noting conservativity of the semigroup $P_t(i\to j)$, i.e., that the sum of its intensities over $j$ vanishes.

\begin{lemma} We have
$$
 \sum_{j\ge 1\, \mid\,  j\ne i} \frac{1}{(\a_i-\a_j)^2}= -\frac{1}{3} \a_i.
$$
\end{lemma}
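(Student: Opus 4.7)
The identity is precisely the conservativity condition for the Markov chain $\mathcal{X}$: summing the intensities out of state $i$ (including the diagonal) should give zero. Since Proposition \ref{Proposition_semigroup_intensity} has already computed all the intensities, the plan is to simply note that
$$\sum_{j \ge 1} \frac{\partial}{\partial t} P_t(i\to j)\Big|_{t=0} = \frac{\partial}{\partial t} \sum_{j \ge 1} P_t(i \to j)\Big|_{t=0} = \frac{\partial}{\partial t}(1) = 0,$$
where the second equality uses the stochasticity $\sum_j P_t(i\to j) = 1$ established in Theorem \ref{Theorem_edge_semigroup}. Substituting the intensities from Proposition \ref{Proposition_semigroup_intensity} yields
$$\frac{2}{3}\a_i + \sum_{j \ne i} \frac{2}{(\a_i-\a_j)^2} = 0,$$
which is equivalent to the claim.

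The only step requiring work is the interchange of the sum and the derivative. The cleanest way is to justify it by a dominated convergence / uniform boundedness argument on $[0,1]$: I would bound $|P_t(i\to j) - P_0(i\to j)|/t$ uniformly in small $t$ by a summable-in-$j$ quantity. One approach is to use the integral representation
$$P_t(i\to j) = \int_0^\infty \frac{\Ai(\a_i+y)\Ai(\a_j+y)}{\Ai'(\a_i)\Ai'(\a_j)} e^{-ty}\, dy$$
together with standard decay estimates for the Airy function (exponential decay at $+\infty$) and the asymptotics $\a_j \sim -(3\pi j/2)^{2/3}$, $|\Ai'(\a_j)| \sim (3\pi j/2)^{1/6}\pi^{-1/2}$ (cf. \cite{VS}). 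These give a bound of the form $|P_t(i\to j)| = O(j^{-4/3})$ uniformly on $t\in[0,1]$ for fixed $i$ and $j$ large, with the same bound for the $t$-derivatives, which is summable.

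Alternatively, if one prefers a purely analytic derivation not appealing to the probabilistic picture at all, one can use the Hadamard factorization of the Airy function (which is entire of order $3/2$):
$$\frac{\Ai'(z)}{\Ai(z)} = \frac{\Ai'(0)}{\Ai(0)} + \sum_j \left[\frac{1}{z-\a_j} + \frac{1}{\a_j}\right],$$
with the sum converging absolutely on compacts avoiding the zeros (summand is $O(z/\a_j^2)$). Differentiating once gives $\frac{d}{dz}(\Ai'/\Ai) = -\sum_j (z-\a_j)^{-2}$. On the other hand, a direct local Taylor expansion near $z = \a_i$, using $\Ai''(\a_i) = 0$ and $\Ai'''(\a_i) = \a_i \Ai'(\a_i)$, gives
$$\frac{\Ai'(z)}{\Ai(z)} = \frac{1}{z-\a_i} + \frac{\a_i}{3}(z-\a_i) + O((z-\a_i)^2),$$
so $\frac{d}{dz}(\Ai'/\Ai) = -(z-\a_i)^{-2} + \a_i/3 + O(z-\a_i)$. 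Matching the finite parts at $z = \a_i$ reads off the identity. The main obstacle in this route is justifying the Hadamard product formula and the term-by-term differentiation; both are standard but need to be cited carefully.

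I would present the first (conservativity) argument as the proof, since it reuses the work already done in Theorem \ref{Theorem_edge_semigroup} and Proposition \ref{Proposition_semigroup_intensity}, with the dominated convergence bound sketched above as the technical ingredient.
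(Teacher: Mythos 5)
Your conservativity argument is a genuinely different route from the paper's.  The paper simply cites the collection of Airy-zero sum rules in [BR] and, as a one-line alternative, observes that the identity is a $k\to\infty$ limit of the Hermite-polynomial relation $\sum_{j=1}^k (x^k_{k+1-j}-x^{k-1}_{k-i})^{-2}=k$ from Lemma \ref{Lemma_sum_squares_as_second_derivative}; neither is spelled out in detail.  You instead close the loop using the paper's own machinery: $\sum_j P_t(i\to j)=1$ from Theorem \ref{Theorem_edge_semigroup}, differentiated at $t=0$ and combined with the intensities from Proposition \ref{Proposition_semigroup_intensity}.  This is natural --- the sentence in the paper immediately preceding the Lemma already frames it as ``conservativity of the semigroup'' --- and it has the virtue of being self-contained.

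The one place your sketch is genuinely incomplete is the interchange $\partial_t\sum_j=\sum_j\partial_t$, and you are right to flag it.  It is worth emphasizing that this step is not a formality: for continuous-time Markov chains on a countably infinite state space, honesty of the transition function ($\sum_j P_t(i\to j)=1$) together with finiteness of the diagonal intensity $-q_{ii}<\infty$ only yields $\sum_{j\ne i} q_{ij}\le -q_{ii}$ in general (by Fatou applied to $\sum_{j\ne i}P_t(i\to j)/t$); equality --- conservativity of the generator --- is precisely the extra content of the lemma and is not automatic.  So the dominated-convergence bound you sketch is doing real work, and ``with the same bound for the $t$-derivatives'' is the part that needs care.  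The derivative $\partial_t P_t(i\to j)=-\int_0^\infty y\,\mathcal A_i(y)\mathcal A_j(y)e^{-ty}\,dy$ is an oscillatory integral, and a uniform-in-$t$ bound of order $j^{-4/3}$ is most easily extracted by integrating by parts against the explicit antiderivative \eqref{eq_Airy_antider_2} (the same device used in the proof of Proposition \ref{Proposition_semigroup_intensity}) rather than from pointwise Airy decay alone.  If you carry that out, the argument is complete.

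Your second route, via the Mittag--Leffler expansion $\Ai'(z)/\Ai(z)=\Ai'(0)/\Ai(0)+\sum_j[(z-\a_j)^{-1}+\a_j^{-1}]$ and a local Laurent expansion at $z=\a_i$ using $\Ai''(\a_i)=0$ and $\Ai'''(\a_i)=\a_i\Ai'(\a_i)$, is also correct and is probably closest to the spirit of the reference [BR].  Since $\Ai$ is entire of order $3/2$ (genus $1$) and $\sum_j|\a_j|^{-2}<\infty$ because $|\a_j|\sim(3\pi j/2)^{2/3}$, both the genus-$1$ Hadamard product and the term-by-term differentiation are standard and easy to justify.  This route avoids the semigroup altogether and is arguably the cleanest fully rigorous proof available.
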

\begin{proof}
 This is just one of many similar identities found in \cite{BR}. Alternatively, it can be proven as $k\to\infty$ limit of the identity of Lemma \ref{Lemma_sum_squares_as_second_derivative} specialized by \eqref{eq_weight_def} and \eqref{eq_Hermite_ortho_weight}
 $$
  \sum_{j=1}^k \frac{1}{(x^k_{k+1-j}-x^{k-1}_{k-i})^2}=k,
 $$
 where $x^k_a$ are the roots of the Hermite polynomials.
\end{proof}

\subsection{Holder-continuity of $\mathfrak Z(i,t)$}
\label{Section_continuity}

\begin{theorem}
 The process $\mathfrak Z(i,t)$ has a continuous modification, such that for each $i=1,2,\dots$ the process $\mathfrak Z(i,t)$ is almost surely a locally $\gamma$--Holder continuous function of $t$ for all $0<\gamma<\tfrac{1}{2}$.
\end{theorem}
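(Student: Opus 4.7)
The plan is a direct application of Kolmogorov's continuity theorem, fed by a one-line variance bound that uses the explicit covariance formula \eqref{eq_Edge_limit_covariance} together with the identity $\int_{\a_i}^{\infty}\Ai(y)^2\,\d y=\Ai'(\a_i)^2$ already established in \eqref{eq_squared_integral}.

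First, I would compute the $L^2$ increment. Fix $i$ and $t,s\in\mathbb R$; set $\delta=|t-s|$. Since $\mathfrak Z(i,\cdot)$ is a centered Gaussian process,
\[
 \E\bigl(\mathfrak Z(i,t)-\mathfrak Z(i,s)\bigr)^2
 =2\E\mathfrak Z(i,t)^2-2\E\mathfrak Z(i,t)\mathfrak Z(i,s)
 =\frac{4}{\Ai'(\a_i)^2}\int_0^\infty \Ai(\a_i+y)^2\,\frac{1-e^{-\delta y}}{y}\,\d y.
\]
Using the elementary inequality $1-e^{-\delta y}\le \delta y$ for $y\ge 0$ turns the integrand into an integrable function independent of $\delta$:
\[
 \E\bigl(\mathfrak Z(i,t)-\mathfrak Z(i,s)\bigr)^2
 \le \frac{4\delta}{\Ai'(\a_i)^2}\int_0^\infty \Ai(\a_i+y)^2\,\d y
 =\frac{4\delta}{\Ai'(\a_i)^2}\cdot \Ai'(\a_i)^2
 =4|t-s|,
\]
where the last equality is \eqref{eq_squared_integral} after the substitution $y\mapsto y+\a_i$. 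Note the integrand is automatically harmless at $y=0$ because $\Ai(\a_i+y)^2\sim \Ai'(\a_i)^2 y^2$ there, so nothing delicate is hidden.

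Second, I would promote this to higher moments by Gaussianity. Since $\mathfrak Z(i,t)-\mathfrak Z(i,s)$ is a centered Gaussian, for every positive integer $n$,
\[
 \E\bigl|\mathfrak Z(i,t)-\mathfrak Z(i,s)\bigr|^{2n}
 =(2n-1)!!\,\Bigl(\E\bigl(\mathfrak Z(i,t)-\mathfrak Z(i,s)\bigr)^2\Bigr)^{n}
 \le (2n-1)!!\,\bigl(4|t-s|\bigr)^n.
\]

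Third, I would apply Kolmogorov's continuity criterion (see, e.g., \cite[Theorem 3.23]{Kal}): the bound above with $p=2n$ and exponent $1+\alpha=n$ (so $\alpha=n-1$) yields a continuous modification of $\{\mathfrak Z(i,t)\}_{t\in\mathbb R}$ that is, on every compact interval, almost surely $\gamma$--Holder continuous for all $\gamma<\alpha/p=\tfrac12-\tfrac1{2n}$. Letting $n\to\infty$ and intersecting over a countable cofinal set of $n$ produces a single continuous modification which is locally $\gamma$--Holder for every $\gamma<\tfrac12$. Carrying this out once for each $i\in\mathbb Z_{>0}$ and patching along a joint probability space completes the proof.

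The only conceptual point requiring care is genuinely trivial here: the same Gaussian structure that makes the process tractable also reduces Kolmogorov's criterion to a single variance bound, so there is no real obstacle. The ``hardest'' step is simply recognizing that the bound $1-e^{-\delta y}\le \delta y$ couples cleanly with the already--computed $L^2$ norm of $\Ai(\a_i+\cdot)$ to give the sharp $|t-s|$ rate, which is in turn the source of the Brownian--style Holder exponent $\tfrac12$.
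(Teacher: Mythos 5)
Your proposal is correct and follows essentially the same route as the paper: both establish the $L^2$ increment bound $\E(\mathfrak Z(i,t)-\mathfrak Z(i,s))^2\le 4|t-s|$ via the covariance formula \eqref{eq_Edge_limit_covariance}, the elementary inequality $1-e^{-a}\le a$, and the Airy-norm identity \eqref{eq_squared_integral}, then invoke Gaussianity to lift to all even moments and apply Kolmogorov's continuity criterion. The only cosmetic difference is that you wrote the variance bound directly as a single integral, whereas the paper bounds the covariance from below and subtracts; the underlying estimate is identical.
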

\begin{proof}
 By the Kolmogorov continuity theorem (see e.g.\ \cite[Theorem 3.23]{Kal}) it suffices to check that for each $i=1,2,\dots$ and each $d=1,2,\dots$ there exists a constanct $C(i,d)$, such that
 \begin{equation}
 \label{eq_Kolm_crit}
  \E (\mathfrak Z(i,t)- \mathfrak Z(i,s))^{2d} \le C(i,d) |t-s|^{d}, \qquad t,s\in\mathbb R.
 \end{equation}
Because  $\bigl(\mathfrak Z(i,t)- \mathfrak Z(i,s)\bigr)$ is a mean $0$ Gaussian random variable, \eqref{eq_Kolm_crit} for $d=1$ implies it for all $d=2,3,\dots$. For $d=1$,
we recall the formula for the covariance obtained by substituting $i=j$ in \eqref{eq_Edge_limit_covariance}:
$$
\E \mathfrak Z(i,t) \mathfrak Z(i,s)= \frac{2}{[\Ai'(\a_i)]^2} \int_0^{\infty} [\Ai(\a_i+y)]^2 \exp\left(-|t-s| y\right) \frac{\d y}{y}.
$$
Using the inequality $\exp(-a)\ge 1-a$, valid for $a\ge 0$,
we get
$$
 \E \mathfrak Z(i,t) \mathfrak Z(i,s)\ge  \frac{2}{[\Ai'(\a_i)]^2} \int_0^{\infty} [\Ai(\a_i+y)]^2  \frac{\d y}{y} -  \frac{2 |t-s|}{[\Ai'(\a_i)]^2} \int_0^{\infty} [\Ai(\a_i+y)]^2  \d y.
$$
The first integral in the last formula is $\E  \mathfrak Z^2(i,t)=\E  \mathfrak Z^2(i,s)$ and the second integral is computed by \eqref{eq_squared_integral}. We conclude that
$$
  \E \mathfrak Z(i,t) \mathfrak Z(i,s)\ge  \E  \mathfrak Z^2(i,t)- 2 |t-s|.
$$
Hence,
$$
 \E (\mathfrak Z(i,t)-\mathfrak Z(i,s))^2\le 4 |t-s|,
$$
which implies \eqref{eq_Kolm_crit} for $d=1$. 
\end{proof}

\section{The $\beta=\infty$ Dyson Brownian Motion: proof of Theorem \ref{Theorem_DBM_limit_intro}}

The proof is split into two parts. First, we express the covariance of the $\beta\to\infty$ limit of the Dyson Brownian Motion (as in Theorem \ref{Theorem_DBM_beta_limit}) through the orthogonal polynomials $Q^k_i(x)$. Then we use the asymptotics of these polynomials established in Theorem \ref{Theorem_Q_to_Airy} to finish the proof. This section also contains the proofs of Lemma \ref{Lemma_zeta_SDE} and identity \eqref{eq_x38} (see Remark \ref{Remark_match_proof}).

\subsection{Covariance of the $\beta=\infty$ Dyson Brownian Motion} \label{Section_Inf_DBM_sol}

The aim of this section is to solve inhomogeneous linear equations \eqref{eq_DBM_infinity}. By the well-known algorithm for finding the solutions to inhomogeneous differential equations, we need to start by identifying $N$ linearly independent solutions to the homogeneous version of \eqref{eq_DBM_infinity}.

\begin{theorem} \label{Theorem_digonalization_DBM} Consider a linear $N$--dimensional system of differential equations
\begin{equation}
\label{eq_DBM_homo}
  \d z_i(t) = - \sum_{j \ne i} \frac{ z_i(t)- z_j(t)}{t(x_i^N-x_j^N)^2} \d t, \qquad t\ge 0,\quad i=1,2,\dots,N,
\end{equation}
where $x_i^N$ is the $i$th zero of the Hermite polynomial $H_N(x)$. Let $Q^{(m)}_N$ be the $m$--th orthogonal polynomial with respect to the uniform measure on $\{x_1^N,\dots,x_N^N\}$, as in Definition \ref{Definition_new_ortho}.
Then for each $m=0,1,2\dots,N-1$, the $N$--dimensional vector
\begin{equation}
 z_i(t)= t^{-m/2} \, Q^{(m)}_N\left( x^i_N\right), \quad i=1,2,\dots,N,
\end{equation}
is a solution to \eqref{eq_DBM_homo}.
\end{theorem}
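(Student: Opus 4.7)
The plan is to rewrite \eqref{eq_DBM_homo} as the separable linear system $\dot z = -t^{-1} L z$, where $L$ is the linear operator on $\mF_N$ (the $N$-dimensional space of functions on $\X_N = \{x_1^N, \ldots, x_N^N\}$) defined by
\begin{equation*}
(Lf)(x_i^N) = \sum_{j\ne i} \frac{f(x_i^N) - f(x_j^N)}{(x_i^N - x_j^N)^2}.
\end{equation*}
By separation of variables, $z_i(t) = t^{-\lambda} f(x_i^N)$ solves \eqref{eq_DBM_homo} if and only if $Lf = \lambda f$, so the theorem reduces to exhibiting $Q^{(N)}_m$ as an eigenfunction of $L$ with eigenvalue $m/2$ for every $m = 0, 1, \ldots, N-1$.

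The central computation is to show that, after identifying elements of $\mF_N$ with polynomials of degree $< N$ via Lagrange interpolation on $\X_N$,
\begin{equation*}
L(x^m) = \tfrac{m}{2}\, x^m + (\text{polynomial of degree at most } m-2), \qquad 0 \le m \le N-1.
\end{equation*}
Writing $x_i^m - x_j^m = (x_i - x_j)\sum_{k=0}^{m-1} x_i^{m-1-k} x_j^k$ reduces the problem to evaluating $\sum_{j\ne i} x_j^k/(x_i^N - x_j^N)$ for $k = 0, 1, \ldots, m-1$; each such sum can be rewritten by adding and subtracting $x_i^k$ in the numerator as a combination of the power sums $p_l = \sum_{r=1}^N (x_r^N)^l$ together with $\sum_{j\ne i} 1/(x_i^N - x_j^N)$, and the last quantity equals $x_i^N/2$ by the Stieltjes identity \eqref{eq_Hermite_var_eq}. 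The leading $m/2$ arises precisely from combining the Stieltjes contributions, while all remaining terms have degree at most $m-2$ in $x_i^N$.

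From this it follows that $L$ is upper-triangular in the monomial basis $\{1, x, \ldots, x^{N-1}\}$ with distinct diagonal entries $0, \tfrac12, 1, \ldots, \tfrac{N-1}{2}$, so its spectrum is simple with the $m$-th eigenfunction being a polynomial of exact degree $m$. On the other hand, $L$ is symmetric with respect to the uniform measure on $\X_N$, which up to a constant factor is the scalar product $\langle\cdot,\cdot\rangle_N$ of Definition \ref{Definition_new_ortho} (by \eqref{eq_Hermite_ortho_weight}), since
\begin{equation*}
\sum_{i=1}^N (Lf)(x_i^N)\, g(x_i^N) = \sum_{1\le i<j\le N} \frac{(f(x_i^N) - f(x_j^N))(g(x_i^N) - g(x_j^N))}{(x_i^N - x_j^N)^2}
\end{equation*}
is manifestly symmetric in $f$ and $g$. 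Eigenfunctions of $L$ for distinct eigenvalues are therefore orthogonal, forcing the unique monic degree-$m$ eigenfunction to be orthogonal to every polynomial of degree $< m$; by the defining property of $Q^{(N)}_m$ it must then equal $Q^{(N)}_m$. Plugging $\lambda = m/2$ into the separated ansatz yields the announced family of solutions. The only nontrivial step is the explicit computation of $L(x^m)$, which is elementary but requires careful bookkeeping of the lower-order contributions; no deeper input beyond Stieltjes' identity is needed.
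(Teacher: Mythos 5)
Your proof is correct and follows essentially the same route as the paper: separate variables, show that the operator $L$ sends $x^m$ to $\tfrac{m}{2}x^m$ plus lower-order terms via the Stieltjes identity \eqref{eq_Hermite_var_eq}, and combine triangularity with symmetry of $L$ in the counting-measure inner product to identify the eigenfunctions with the orthogonal polynomials $Q_m^{(N)}$. The only cosmetic difference is in packaging: the paper phrases the conclusion by exhibiting the matrix $\mathbf{L}_Q$ of $L$ in the normalized $Q$-basis as symmetric and triangular with diagonal $0,\tfrac12,\ldots,\tfrac{N-1}{2}$, whereas you work in the monomial basis and appeal to simplicity of the spectrum to pin down the eigenfunctions.
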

\begin{remark}
 The statement of Theorem \ref{Theorem_digonalization_DBM} is closely related to that of Theorem \ref{Theorem_diagonalization_of_transition}.  In random matrix terminology, Theorem \ref{Theorem_diagonalization_of_transition} corresponds to the changing matrix size, while Theorem \ref{Theorem_digonalization_DBM} is about time evolution of a matrix of a fixed size. Our proofs of these theorems follow similar schemes: essentially we are showing that the dynamics \eqref{eq_DBM_homo} preserves both polynomiality and orthogonality with respect to the counting measure on the set $\{\sqrt{t} x_1^N, \sqrt{t} x_2^N, \dots, \sqrt{t} x_N^N\}$.
\end{remark}
\begin{proof}[Proof of Theorem \ref{Theorem_digonalization_DBM}] The statement would follow as soon as we show that
\begin{equation}
\label{eq_Q_eigen}
\frac{m}{2} Q^{(m)}_N\left(x_i^N\right)=\sum_{j \ne i} \frac{ Q^{(m)}_N\bigl(x_i^N\bigr)- Q^{(m)}_N\bigl( x_j^N\bigr)}{(x_i^N-x_j^N)^2}, \qquad  0 \le m \le N-1.
\end{equation}
In order to prove \eqref{eq_Q_eigen} we set $\mathbf L$ to be the linear operator in $N$--dimensional Euclidean (with respect to counting measure) space $\ell_2(x_1^N, x_2^N, \dots, x_N^N)$ with matrix $\frac{1}{(x_i^N-x_j^N)^2}$, $i,j=1,\dots,N$, in the standard coordinate basis. Let $\mathbf L_Q$ be the matrix of the same operator $\mathbf L$ in the orthonormal basis of functions $$
\frac{Q^{(0)}_N(x)}{\sqrt{(N+1)\langle Q^{(0)}_N,Q^{(0)}_N\rangle_N}},\, \frac{Q^{(1)}_N(x)}{\sqrt{(N+1)\langle Q^{(1)}_N,Q^{(1)}_N\rangle_N}},\,\dots, \frac{Q^{(N-1)}_N(x)}{{\sqrt{(N+1)\langle Q^{(N-1)}_N,Q^{(N-1)}_N\rangle}}}.
$$
 The relation \eqref{eq_Q_eigen} is readily implied by the following three properties that we will prove:
\begin{enumerate}
 \item The matrix $\mathbf L_Q$ is symmetric.
 \item The matrix $\mathbf L_Q$ is triangular.
 \item The diagonal elements of the matrix $\mathbf L_Q$ are $0,\frac{1}{2},\frac{2}{2},\frac{3}{2},\dots,\frac{N-1}{2}$.
\end{enumerate}
For the first property note that $\mathbf L$ is symmetric in standard coordinate basis. Hence, its matrix in any orthonormal basis is also symmetric and so is $\mathbf L_Q$. For the remaining two properties we fix $0\le m \le N-1$ and consider a function $R^{(m)}: \{x_1^N, x_2^N, \dots, x_N^N\}\to \mathbb R$ given by
 \begin{equation}
  R^{(m)}\left(x_i^N\right):= -\frac{m}{2} Q^{(m)}_N\left(x_i^N\right)+ \sum_{j \ne i} \frac{ Q^{(m)}_N\bigl(x_i^N\bigr)- Q^{(m)}_N\bigl( x_j^N\bigr)}{(x_i^N-x_j^N)^2}.
 \end{equation}
 The desired two properties of $\mathbf L_Q$ would follow immediately, if we manage to prove that $R^{(m)}$ is a polynomial of degree at most $m-1$ of real argument $x_i^N$, $i=1,2,\dots,N$. In fact, the exact nature of the polynomial $Q^{(m)}_N$ is irrelevant here. Expanding $Q^{(m)}_N$ into monomials, it suffices to check that the function
 \begin{equation}
 \label{eq_x17}
  x_i^N \mapsto -\frac{m}{2} \left( x_i^N\right)^m+ \sum_{j \ne i} \frac{ \bigl(x_i^N\bigr)^m - \bigl( x_j^N\bigr)^m}{(x_i^N-x_j^N)^2}, \quad i=1,2,\dots,N,
 \end{equation}
 is a polynomial of degree at most $m-1$. The last expression transforms into
 \begin{equation}
 \label{eq_x15}
   -\frac{m}{2} (x_i^N)^{m}+ \sum_{j\ne i} \frac{ (x_i^N)^{m-1}+(x_i^N)^{m-2} (x_j^N)+\dots+ (x_j^N)^{m-1}} {x_i^N-x_j^N}, \quad i=1,2,\dots,N.
 \end{equation}
 Let us use an identity which is implied by \eqref{eq_Hermite_var_eq}:
\begin{equation}
\label{eq_x33}
  \frac{m}{2} (x^N_i)^m-\sum_{j\ne i} \frac{m (x^N_i)^{m-1}}{x^N_i-x^N_j}=0, \quad i=1,2,\dots,N.
\end{equation}
Subtracting \eqref{eq_x33} from \eqref{eq_x15}, we convert the latter into
\begin{multline}
 \sum_{j\ne i} \frac{ [(x_i^N)^{m-1}-(x_i^N)^{m-1}]+[(x_i^N)^{m-2} (x_j^N)-(x_i^N)^{m-1}]+\dots+ [(x_j^N)^{m-1}-(x_i^N)^{m-1}]} {x_i^N-x_j^N}
 \\= -\sum_{j\ne i} \left(0+(x_i^N)^{m-2} + (x_i^N)^{m-3}  \left[ x_i^N + x_j^N\right]  + \dots + \left[(x_j^N)^{m-2}+ (x_j^N)^{m-3} (x_i^N)+\dots (x_i^N)^{m-2}\right]  \right),
\end{multline}
which is a (minus) sum of the expressions of the form
\begin{equation}
\label{eq_x16}
 (x_i^N)^{\ell} \sum_{j\ne i} (x_j^N)^{m-2-\ell}= (x_i^N)^{\ell} (p_{m-2-\ell}- x_i^{m-2-\ell}) ,
\end{equation}
where $0\le \ell \le m-2$ and $p_k=\sum_{j=1}^N (x_j^N)^k$. The expression \eqref{eq_x16} is a polynomial in $x_i^N$ of degree $m-2$, whose coefficients do not depend on $i$. Hence, \eqref{eq_x17} is a polynomial in $x_i^N$ of degree at most $m-2$ (which is even better than the degree at most $m-1$ that we wanted to have).
\end{proof}

We can now write down an explicit formula for the solution to \eqref{eq_DBM_infinity}.

\begin{theorem}
\label{Theorem_DBM_infty_solution}
 The system of SDEs \eqref{eq_DBM_infinity} is solved by
 \begin{equation}
 \label{eq_DBM_infinity_solution}
   \zeta_i^N(t)= \sqrt{2} \sum_{m=0}^{N-1} Q^{(m)}_N(x_i^N)  \sum_{j=1}^N \frac{Q^{(m)}_N (x_j^N)}{{(N+1)\bigl\langle Q^{(m)}_N, Q^{(m)}_N \bigr\rangle_N}}  \int_0^t \left(\frac{s}{t}\right)^{m/2} \d W_j(s),
 \end{equation}
 where the scalar product $\bigl\langle Q^{(m)}_N, Q^{(m)}_N \bigr\rangle_N$ is as in Definition \ref{Definition_new_ortho} and Corollary \ref{Corollary_Q_norm}, so that
 $$
  (N+1)\bigl\langle f,g\bigr\rangle_N = \sum_{a=1}^N f(x_a^N) g(x_a^N).
 $$
\end{theorem}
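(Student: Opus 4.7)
The SDE \eqref{eq_DBM_infinity} is linear and inhomogeneous, with a drift that acts on the vector $(\zeta^N_i(t))$ as $-\mathbf{L}/t$, where $\mathbf{L}$ is the symmetric operator on $\ell_2(x_1^N,\dots,x_N^N)$ (with counting measure) defined by $(\mathbf{L} f)(x_i^N) = \sum_{j\neq i} [f(x_i^N)-f(x_j^N)]/(x_i^N - x_j^N)^2$. Theorem \ref{Theorem_digonalization_DBM} together with its proof show that $\mathbf{L}$ is diagonalized by the orthonormal basis
$$\phi^{(m)}_i := \frac{Q^{(m)}_N(x_i^N)}{\sqrt{(N+1)\,\langle Q^{(m)}_N, Q^{(m)}_N\rangle_N}}, \qquad 0\le m\le N-1,$$
with eigenvalues $m/2$. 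The plan is to pass to this eigenbasis, decoupling \eqref{eq_DBM_infinity} into $N$ independent one-dimensional linear SDEs, solve them explicitly by variation of parameters, and translate back.

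\textbf{Main steps.} First, define the coordinates $c_m(t) := \sum_{i=1}^N \phi^{(m)}_i \zeta^N_i(t)$ and the noises $B_m(t) := \sum_{i=1}^N \phi^{(m)}_i W_i(t)$. Orthogonality of the matrix $(\phi^{(m)}_i)$ makes $(B_m)_{m=0}^{N-1}$ independent standard Brownian motions. Second, project \eqref{eq_DBM_infinity} onto $\phi^{(m)}$: using the eigenrelation \eqref{eq_Q_eigen} from Theorem \ref{Theorem_digonalization_DBM}, the drift contribution becomes $-\tfrac{m}{2t} c_m(t)\,\d t$, so the system reduces to
$$\d c_m(t) = -\frac{m}{2t} c_m(t)\,\d t + \sqrt{2}\,\d B_m(t), \qquad 0\le m \le N-1.$$
Third, multiply by the integrating factor $t^{m/2}$ to get $\d(t^{m/2} c_m) = \sqrt{2}\, t^{m/2}\,\d B_m$, yielding
$$c_m(t) = \sqrt{2}\int_0^t (s/t)^{m/2}\,\d B_m(s).$$
Finally, invert the orthogonal change of basis, $\zeta^N_i(t) = \sum_m \phi^{(m)}_i c_m(t)$, and substitute $\d B_m(s) = \sum_j \phi^{(m)}_j \d W_j(s)$; the double sum $\phi^{(m)}_i \phi^{(m)}_j$ reproduces the prefactor $Q^{(m)}_N(x_i^N) Q^{(m)}_N(x_j^N)/((N+1)\langle Q^{(m)}_N, Q^{(m)}_N\rangle_N)$ in \eqref{eq_DBM_infinity_solution}.

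\textbf{Initial condition and uniqueness.} For $t>0$ the coefficients of \eqref{eq_DBM_infinity} are smooth and Lipschitz, so on each interval $[\varepsilon,\infty)$ one has a unique strong solution; it remains to check $\zeta^N_i(t)\to 0$ in probability as $t\to 0^+$. Independence of the $B_m$ together with the explicit formula above gives
$$\Var c_m(t) = 2\int_0^t (s/t)^m\,\d s = \frac{2t}{m+1},$$
and since $\sum_m (\phi^{(m)}_i)^2 = 1$ (rows of an orthogonal matrix are orthonormal), one obtains $\E[\zeta^N_i(t)^2] = \sum_m (\phi^{(m)}_i)^2 \Var c_m(t) \le 2t \to 0$. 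Thus \eqref{eq_DBM_infinity_solution} is the process singled out by Lemma \ref{Lemma_zeta_SDE}.

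\textbf{Main obstacle.} The only nontrivial point is the spectral claim used in step two: not merely that $\mathbf L$ is triangular in the basis $\{\phi^{(m)}\}$ with the right diagonal entries, but that it is actually diagonal, so that the projected equations genuinely decouple. This is the content of the symmetry-plus-triangularity argument in the proof of Theorem \ref{Theorem_digonalization_DBM}: $\mathbf L$ is symmetric in the standard basis, the change of basis to $\phi^{(m)}$ is orthogonal, and any symmetric triangular matrix is diagonal. Once this is in hand, every remaining step is a direct computation.
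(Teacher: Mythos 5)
Your proof is correct and rests on the same two ingredients as the paper's: the eigenvector relation from Theorem \ref{Theorem_digonalization_DBM} and the orthogonality of the matrix $\bigl(\phi^{(m)}_i\bigr)$. The only difference is presentational — the paper plugs the candidate formula \eqref{eq_DBM_infinity_solution} directly into the SDE and verifies it by Ito differentiation, whereas you derive the same formula by explicitly projecting onto the eigenbasis, decoupling into $N$ scalar linear SDEs, and solving each by an integrating factor; the computations are equivalent.
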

\begin{proof} Using the result of Theorem \ref{Theorem_digonalization_DBM} we have:
\begin{multline}
 \d \zeta_i^N(t)= \sqrt{2}\, \d \left[ \sum_{m=0}^{N-1} t^{-m/2} Q^{(m)}_N(x^i_N) \sum_{j=1}^N \frac{Q^{(m)}_N (x_j^N)}{{(N+1)\bigl\langle Q^{(m)}_N, Q^{(m)}_N \bigr\rangle_N}}  \int_0^t s^{m/2} \d W_j(s)\right]\\=
   \sqrt{2}\sum_{m=0}^{N-1} \d\left[t^{-m/2} Q^{(m)}_N(x_i^N)\right] \sum_{j=1}^N \frac{Q^{(m)}_N (x_j^N)}{{(N+1)\bigl\langle Q^{(m)}_N, Q^{(m)}_N \bigr\rangle_N}}  \int_0^t s^{m/2} \d W_j(s)\\+    \sqrt{2} \sum_{m=0}^{N-1} t^{-m/2} Q^{(m)}_N(x_i^N) \sum_{j=1}^N \frac{Q^{(m)}_N (x^j_N)}{{(N+1)\bigl\langle Q^{(m)}_N, Q^{(m)}_N \bigr\rangle_N}} \, \d \left[ \int_0^t s^{m/2} \d W_j(s)\right]
   \\= -\sum_{j\ne i} \frac{\zeta_i^N(t)-\zeta_j^N(t)}{t(x_i^N-x_j^N)^2} +\sqrt{2} \sum_{m=0}^{N-1} \sum_{j=1}^N   \frac{Q^{(m)}_N(x_i^N) Q^{(m)}_N (x_j^N)}{{(N+1)\bigl\langle Q^{(m)}_N, Q^{(m)}_N \bigr\rangle_N}}  W_j(t)
   \\= -\sum_{j\ne i} \frac{\zeta_i^N(t)-\zeta_j^N(t)}{t(x_i^N-x_j^N)^2} +\sqrt{2} \, W_j(t),
\end{multline}
where the last identity is obtained by changing the order of summation and using the fact that the matrix $(i,m)\mapsto \frac{Q^{(m)}_N(x_i^N)}{\sqrt{(N+1)\bigl\langle Q^{(m)}_N, Q^{(m)}_N \bigr\rangle_N}}$ is orthogonal.
\end{proof}

We further show that \eqref{eq_DBM_infinity_solution} is the unique solution of Lemma \ref{Lemma_zeta_SDE}.

\begin{proof}[Proof of Lemma \ref{Lemma_zeta_SDE}]
 In Theorem \ref{Theorem_DBM_infty_solution} we checked that \eqref{eq_DBM_infinity_solution} solves the SDE \eqref{eq_DBM_infinity}. It is also clear that this solution satisfies the initial condition: $\lim_{t\to 0} \zeta_i^N(t)=0$. Thus, it remains to check the uniqueness. Let $(\zeta_i^N(t))_{i=1}^N$ and $(\tilde \zeta_i^N(t))_{i=1}^N$ be two stochastic processes satisfying conditions of Lemma \ref{Lemma_zeta_SDE} with the same Brownian motions $(W_i(t))_{i=1}^N$. Then their difference solves a deterministic homogeneous linear differential equation:
 $$
 \d [\zeta_i^N-\tilde \zeta_i^N](t) = - \sum_{j \ne i} \frac{ [\zeta_i^N-\tilde \zeta_i^N](t)- [\zeta_j^N-\zeta_j^N](t)}{t(x_i^N-x_j^N)^2} \d t, \qquad t>0,\quad i=1,2,\dots,N.
 $$
 A complete basis of solutions of this equation was found in Theorem \ref{Theorem_digonalization_DBM}. None of the non-zero solutions tends to $(0,\dots,0)$ at $t\to 0$. Hence, $\zeta_i^N(t)-\tilde \zeta_i^N(t)$ must be almost surely equal to zero for all $i=1,\dots,N$ and all $t\ge 0$.
\end{proof}

\begin{lemma} \label{Lemma_DBM_infty_covariance} $\bigl(\zeta_i^N(t)\bigr)_{i=1}^N$, $t\ge 0$, of Theorem \ref{Theorem_DBM_infty_solution} is a mean $0$ Gaussian process with covariance
\begin{equation}
\label{eq_DBM_infty_covariance}
  \Cov( \zeta_i^N(t), \zeta_j^N(s))=2    \sum_{m=0}^{N-1} \frac{Q^{(m)}_N(x_i^N)  Q^{(m)}_N(x_j^N) }{(N+1) \bigl\langle Q^{(m)}_N, Q^{(m)}_N \bigr\rangle_N}\cdot  \frac{(\min(t,s))^{m+1}}{(m+1) (ts)^{m/2}}
\end{equation}
\end{lemma}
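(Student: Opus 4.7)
The plan is straightforward: starting from the explicit formula \eqref{eq_DBM_infinity_solution}, the process $\zeta_i^N(t)$ is a finite linear combination of Itô integrals of deterministic integrands against the independent Brownian motions $W_j$, so it is automatically a centered Gaussian process. The whole content of the lemma is therefore the covariance computation, which reduces to the Itô isometry plus the orthogonality of the polynomials $Q^{(m)}_N$ with respect to the counting measure on $\{x_1^N,\dots,x_N^N\}$.

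Concretely, I would first rewrite \eqref{eq_DBM_infinity_solution} by interchanging the sums over $m$ and $j$, so that
$$
 \zeta_i^N(t)=\sqrt{2}\sum_{j=1}^N \int_0^t \Phi_{i,j}(r,t)\,\d W_j(r),\qquad \Phi_{i,j}(r,t)=\sum_{m=0}^{N-1}\frac{Q^{(m)}_N(x_i^N)\,Q^{(m)}_N(x_j^N)}{(N+1)\bigl\langle Q^{(m)}_N,Q^{(m)}_N\bigr\rangle_N}\left(\frac{r}{t}\right)^{m/2}.
$$
Since the $W_j$ are independent standard Brownian motions, the Itô isometry gives
$$
 \Cov(\zeta_i^N(t),\zeta_j^N(s))=2\sum_{k=1}^N\int_0^{\min(t,s)}\Phi_{i,k}(r,t)\,\Phi_{j,k}(r,s)\,\d r.
$$

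The next step is to carry out the sum over $k$ using orthogonality. Recall from \eqref{eq_Hermite_ortho_weight} that for the Hermite case $w_N(y)=\tfrac{1}{N+1}$, so the scalar product in Definition \ref{Definition_new_ortho} satisfies
$$
 \sum_{k=1}^N Q^{(m)}_N(x_k^N)\,Q^{(m')}_N(x_k^N)=(N+1)\,\delta_{m,m'}\bigl\langle Q^{(m)}_N,Q^{(m)}_N\bigr\rangle_N.
$$
Plugging this into the double sum over $m,m'$ arising when one expands $\Phi_{i,k}(r,t)\,\Phi_{j,k}(r,s)$ collapses it to a single diagonal sum over $m$, producing
$$
 2\sum_{m=0}^{N-1}\frac{Q^{(m)}_N(x_i^N)\,Q^{(m)}_N(x_j^N)}{(N+1)\bigl\langle Q^{(m)}_N,Q^{(m)}_N\bigr\rangle_N}\cdot\frac{1}{(ts)^{m/2}}\int_0^{\min(t,s)}r^m\,\d r,
$$
and the remaining elementary integral yields exactly the factor $(\min(t,s))^{m+1}/(m+1)$ claimed in \eqref{eq_DBM_infty_covariance}.

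There is no real obstacle here; the only point requiring minor care is the bookkeeping of the two factors of $(N+1)$ against the normalization of $\langle\cdot,\cdot\rangle_N$, which is fixed by noting $w_N(y)=\tfrac{1}{N+1}$ so that $\sum_k f(x_k^N)g(x_k^N)=(N+1)\langle f,g\rangle_N$. The Gaussianity and mean-zero assertions are immediate from the representation of $\zeta_i^N(t)$ as a deterministic linear combination of Itô integrals.
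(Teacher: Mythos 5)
Your proposal is correct and follows essentially the same route as the paper: both proofs apply the Itô isometry to the representation \eqref{eq_DBM_infinity_solution} and then collapse the resulting double sum over polynomial indices by the orthogonality $\sum_{a=1}^N Q^{(m)}_N(x_a^N)Q^{(\ell)}_N(x_a^N)=(N+1)\delta_{m,\ell}\langle Q^{(m)}_N,Q^{(m)}_N\rangle_N$. The only difference is cosmetic bookkeeping (you interchange the sums over $j$ and $m$ before applying the isometry, the paper afterward).
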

\begin{proof} Using the isometry property of stochastic integrals
$$
 \E  \int_0^t f(\tau) \d W_a(\tau) \int_0^s g(\sigma) \d W_b(\sigma)= \delta_{a=b} \int_0^{\min(t,s)} f(\tau) g(\tau) d\tau
$$
and \eqref{eq_DBM_infinity_solution}, we have
\begin{multline}
  \E \zeta_i^N(t) \zeta_j^N(s)=2 \E \Biggl[ \sum_{m=0}^{N-1} Q^{(m)}_N(x_i^N)  \sum_{a=1}^N \frac{Q^{(m)}_N (x_a^N)}{{(N+1)\bigl\langle Q^{(m)}_N, Q^{(m)}_N \bigr\rangle_N}}  \int_0^t \left(\frac{\tau }{t}\right)^{m/2} \d W_a(\tau) \\ \times \sum_{\ell=0}^{N-1} Q^{(\ell)}_N(x_j^N)  \sum_{b=1}^N \frac{Q^{(\ell)}_N (x_b^N)}{{(N+1)\bigl\langle Q^{(\ell)}_N, Q^{(\ell)}_N \bigr\rangle_N}}  \int_0^s \left(\frac{\sigma }{s}\right)^{\ell/2} \d W_b(\tau)  \Biggr]= \frac{2}{(N+1)^2}\\ \times \E \Biggl[ \sum_{m=0}^{N-1} \sum_{\ell=0}^{N-1} Q^{(m)}_N(x_i^N)  Q^{(\ell)}_N(x_j^N)    \sum_{a=1}^N \frac{Q^{(m)}_N (x_a^N)}{{\bigl\langle Q^{(m)}_N, Q^{(m)}_N \bigr\rangle_N}}  \frac{Q^{(\ell)}_N (x_a^N)}{{\bigl\langle Q^{(\ell)}_N, Q^{(\ell)}_N \bigr\rangle_N}}  \int_0^{\min(t,s)} \left(\frac{\tau}{t}\right)^{m/2} \left(\frac{\tau}{s}\right)^{\ell/2} \d \tau  \Biggr].
\end{multline}
It remains to compute the $\tau$--integral and to use the orthogonality relation
$$
  \frac{1}{N+1}\sum_{a=1}^N Q^{(m)}_N (x_a^N) Q^{(\ell)}_N (x_a^N) = \delta_{m=\ell} \cdot \bigl\langle Q^{(m)}_N, Q^{(m)}_N \bigr\rangle_N. \qedhere
$$
\end{proof}

\begin{corollary} \label{Corollary_two_forms_of_covariance} The fixed $t$ covariance of the process $\bigl(\zeta_i^N(t)\bigr)_{i=1}^N$ of Theorem \ref{Theorem_DBM_beta_limit} (equivalently, of the Gaussian vector of \eqref{eq_DBM_one_point_limit}) is given by
\begin{equation}
\label{eq_GinftyE_covariance_final}
  \Cov( \zeta_i^N(t), \zeta_j^N(t))=\frac{2  t}{N+1}   \sum_{m=0}^{N-1} \frac{Q^{(m)}_N(x_i^N)  Q^{(m)}_N(x_j^N) }{(m+1) \bigl\langle Q^{(m)}_N, Q^{(m)}_N \bigr\rangle_N}.
\end{equation}
At $t=1$ the same formula also computes the covariance $ \Cov( \zeta_i^N, \zeta_j^N)$ for the double infinite sum \eqref{eq:zeta-K} of Theorem \ref{Theorem_GinftyE_RW}.
\end{corollary}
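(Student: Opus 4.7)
The proof splits naturally into two parts corresponding to the two assertions.

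For the SDE solution at $s=t$, the formula is an immediate specialization of Lemma~\ref{Lemma_DBM_infty_covariance}: setting $s=t$ gives $\min(t,s)=t$ and $(ts)^{m/2}=t^m$, so the kernel simplifies to $\frac{(\min(t,s))^{m+1}}{(m+1)(ts)^{m/2}}=\frac{t}{m+1}$, producing exactly the right-hand side of \eqref{eq_GinftyE_covariance_final}. Since by \eqref{eq_DBM_one_point_limit} the vector $(\zeta_i^N(t))_{i=1}^N$ of Theorem~\ref{Theorem_DBM_beta_limit} agrees in law with $(\sqrt t\, u_i)$, this also yields the second characterization claimed in the first sentence of the corollary.

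For the infinite-sum formulation at $t=1$, the plan is to compute the covariance coming from Theorem~\ref{Theorem_GinftyE_RW} directly and verify the match. Plugging the spectral representation \eqref{eq_Diffusion_spectral} of $K^{N,\ell}$ into the formula $\Cov(\zeta_i^N,\zeta_j^N)=\sum_{\ell\ge N}\sum_{b=1}^\ell \tfrac{2}{\ell+1} K^{N,\ell}(i\to b)K^{N,\ell}(j\to b)$ and using $w_N=\tfrac{1}{N+1}$ from \eqref{eq_Hermite_ortho_weight}, the inner sum over $b$ collapses by the orthogonality relation $\sum_{b=1}^\ell Q_m^{(\ell)}(x_b^\ell)Q_{m'}^{(\ell)}(x_b^\ell)=(\ell+1)\,\delta_{m=m'}\langle Q_m^{(\ell)},Q_m^{(\ell)}\rangle_\ell$ (this kills all cross terms with $m\neq m'$). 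The remaining expression is
\[
  \Cov(\zeta_i^N,\zeta_j^N)=\frac{2}{(N+1)^2}\sum_{m=0}^{N-1}\frac{Q_m^{(N)}(x_i^N)Q_m^{(N)}(x_j^N)}{\langle Q_m^{(N)},Q_m^{(N)}\rangle_N^{2}}\,\sum_{\ell=N}^{\infty} T_\ell^{(m)},
\]
where $T_\ell^{(m)}=\prod_{j=N}^{\ell-1}\bigl(\frac{j-m}{j+1}\bigr)^{\!2}\,\langle Q_m^{(\ell)},Q_m^{(\ell)}\rangle_\ell$. The main task is thus to evaluate the scalar series $\sum_{\ell\ge N}T_\ell^{(m)}$ in closed form.

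The key observation is that, using $\langle Q_m^{(\ell)},Q_m^{(\ell)}\rangle_\ell=\frac{\ell(\ell-1)\cdots(\ell-m)}{\ell+1}$ from Corollary~\ref{Corollary_Q_norm}, the ratio $T_{\ell+1}^{(m)}/T_\ell^{(m)}=\frac{\ell-m}{\ell+2}$ is particularly simple, which allows a summation-by-parts identity $T_\ell^{(m)}=\frac{\ell+1}{m+1}T_\ell^{(m)}-\frac{\ell+2}{m+1}T_{\ell+1}^{(m)}$ (one checks the coefficient $\alpha(\ell)=\frac{\ell+1}{m+1}$ by solving the one-step recursion $\alpha(\ell)-\frac{\ell-m}{\ell+2}\alpha(\ell+1)=1$). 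Telescoping, together with the easy decay bound $T_\ell^{(m)}=O(\ell^{-m-2})$ that kills the boundary term at infinity, gives $\sum_{\ell\ge N}T_\ell^{(m)}=\frac{N+1}{m+1}\langle Q_m^{(N)},Q_m^{(N)}\rangle_N$; substituting this back yields \eqref{eq_GinftyE_covariance_final} at $t=1$. The main obstacle is finding the right telescoping coefficient $\alpha(\ell)$; once the ratio $\frac{\ell-m}{\ell+2}$ is noticed, the rest is routine. As a byproduct, matching this value with the SDE answer from part~(a) establishes the identity \eqref{eq_x38}, completing the circle of equivalences left open in Section~\ref{Section_innovations}.
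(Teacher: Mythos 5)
Your proposal is correct. The first part (substituting $s=t$ into Lemma~\ref{Lemma_DBM_infty_covariance}) is identical to the paper's. For the second part, you arrive at exactly the same intermediate expression as the paper — namely
\[
  \Cov(\zeta_i^N,\zeta_j^N)=\frac{2}{(N+1)^2}\sum_{m=0}^{N-1}\frac{Q_m^{(N)}(x_i^N)Q_m^{(N)}(x_j^N)}{\langle Q_m^{(N)},Q_m^{(N)}\rangle_N^{2}}\sum_{\ell\ge N}\langle Q_m^{(\ell)},Q_m^{(\ell)}\rangle_\ell\prod_{j=N}^{\ell-1}\Bigl(1-\tfrac{m+1}{j+1}\Bigr)^{2},
\]
but evaluate the $\ell$-sum differently. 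The paper rewrites the summand using Pochhammer symbols, recognizes the series as ${}_2F_1(1,N-m;N+2;1)$, and invokes the Gauss summation theorem. You instead notice the one-step ratio $T_{\ell+1}^{(m)}/T_\ell^{(m)}=\tfrac{\ell-m}{\ell+2}$ (correct: the norm ratio $(\ell+1)^2/\bigl((\ell+2)(\ell-m)\bigr)$ cancels with the squared product factor $\bigl(\tfrac{\ell-m}{\ell+1}\bigr)^2$), solve for the telescoping coefficient $\alpha(\ell)=\tfrac{\ell+1}{m+1}$ from $\alpha(\ell)-\tfrac{\ell-m}{\ell+2}\alpha(\ell+1)=1$, and telescope, with the boundary term at infinity killed by $T_\ell^{(m)}=O(\ell^{-m-2})$ (and hence $S_{L+1}=\tfrac{L+2}{m+1}T_{L+1}^{(m)}=O(L^{-m-1})\to 0$). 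Since the empty product gives $T_N^{(m)}=\langle Q_m^{(N)},Q_m^{(N)}\rangle_N$, the sum evaluates to $\tfrac{N+1}{m+1}\langle Q_m^{(N)},Q_m^{(N)}\rangle_N$, matching the paper's hypergeometric computation. Your route is more elementary — it needs only the explicit norm formula and a first-order recursion — whereas the paper's is a slick invocation of a classical summation theorem. Both buy the same closed form; yours avoids special-function machinery, the paper's is shorter to write once one sees the ${}_2F_1$.
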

\begin{remark}
\label{Remark_match_proof}
Comparing \eqref{eq:zeta_projection} with \eqref{eq_DBM_one_point_limit}, we conclude that the left-hand side of \eqref{eq_x38} coincides with the variance of $\zeta_i^N(1)$. Hence, the last statement of Corollary \ref{Corollary_two_forms_of_covariance} implies \eqref{eq_x38}.
\end{remark}
\begin{remark}
  The formula \eqref{eq_GinftyE_covariance_final} was also proven in \cite[Theorem 3.1]{AV}: the proof there is based on an explicit diagonalization of the quadratic form in the exponent of \eqref{eq_x38}.
\end{remark}

\begin{proof}[Proof of Corollary \ref{Corollary_two_forms_of_covariance}]
The formula \eqref{eq_GinftyE_covariance_final} is obtained by substituting $t=s$ into \eqref{eq_DBM_infty_covariance}.

On the other hand, the covariance of the infinite sum \eqref{eq:zeta-K} is computed by setting $k_1=k_1=N$ in \eqref{eq_covariance_zeta}. Using \eqref{eq_Hermite_ortho_weight}, it becomes:
  \begin{multline}
\label{eq_covariance_zeta_repeat}
\Cov (\zeta_{a_1}^{N}, \zeta_{a_2}^{N})\\= \frac{2}{(N+1)^2} \sum_{\ell=N}^{\infty} \sum_{m=0}^{N-1}  Q^{(N)}_{m}(x^{N}_{a_1}) \, Q^{(N)}_{m}(x^{N}_{a_2})   \frac{\langle Q^{(\ell)}_{m}, Q^{(\ell)}_{m} \rangle_\ell}{\langle Q^{(N)}_{m}, Q^{(N)}_{m} \rangle_{N} \langle Q^{(N)}_{m}, Q^{(N)}_{m} \rangle_{N}}
 \prod_{j=N}^{\ell-1} \left(1-\tfrac{m+1}{j+1}\right)^2.
\end{multline}
 In order to match  \eqref{eq_covariance_zeta_repeat} with \eqref{eq_GinftyE_covariance_final} at $t=1$, we interchange the order of the summations in the former and compute the sum  $\sum_{\ell=N}^{\infty}$ for each $0\le m \le N-1$, using the explicit formula for $\langle Q^{(\ell)}_{m}, Q^{(\ell)}_{m} \rangle_\ell$ from Corollary \ref{Corollary_Q_norm} and the Pochammer symbol notation ${(x)_n=x(x+1)\cdots (x+n-1)}$:
 \begin{multline}
 \label{eq_x36}
   \sum_{\ell=N}^{\infty}  \langle Q^{(\ell)}_{m}, Q^{(\ell)}_{m} \rangle_\ell
 \prod_{j=N}^{\ell-1} \left(1-\tfrac{m+1}{j+1}\right)^2\\ =  \sum_{\ell=N}^{\infty}  \frac{\ell(\ell-1)\cdots (\ell-m)}{\ell+1} \cdot \left(\frac{(N-m)(N+1-m)\cdots (\ell-m-1)}{(N+1)(N+2)\cdots \ell} \right)^2\\
=  \sum_{\ell=N}^{\infty}  \frac{(N-m)(N-m+1)\cdots (\ell-1)\ell}{(N+1)(N+2)\cdots(\ell+1)}
 \cdot   \frac{(N-m)(N+1-m)\cdots (\ell-m-1)}{(N+1)(N+2)\cdots \ell}
\\ = \frac{(N-m)\cdots N}{N+1} \sum_{\ell=N}^{\infty}   \frac{(N-m)_{\ell-N}}{(N+2)_{\ell-N}}= \frac{(N-m)(N-m+1)\cdots N}{N+1}\,_2F_1(1, N-m; N+2; 1),
 \end{multline}
 where $_2F_1$ is the Gauss hypergeometric function. Its value can be computed using the Gauss's summation theorem:
 $$
  _2F_1(a,b;c;1)=\frac{\Gamma(c)\Gamma(c-a-b)}{\Gamma(c-a)\Gamma(c-b)}.
 $$
Hence, we further transform \eqref{eq_x36} into
$$
 \frac{(N-m)(N-m+1)\cdots N}{N+1} \cdot \frac{\Gamma(N+2)\Gamma(m+1)}{\Gamma(N+1)\Gamma(m+2)}=  \frac{(N-m)(N-m+1)\cdots N}{m+1}.
$$
Plugging the result back into \eqref{eq_covariance_zeta_repeat} and using Corollary \ref{Corollary_Q_norm} again, we arrive at \eqref{eq_GinftyE_covariance_final} with $t=1$, as desired.
\end{proof}

\subsection{Proof of Theorem \ref{Theorem_DBM_limit_intro}}

The theorem deals with the iterative limit $N\to\infty$, $\beta\to\infty$. The latter limit is  computed in Theorem \ref{Theorem_DBM_beta_limit}, it is a Gaussian process and we use the result of Lemma \ref{Lemma_DBM_infty_covariance}  for its covariance. It remains to send $N\to\infty$ in \eqref{eq_DBM_infty_covariance}, i.e.\ to compute the limit
\begin{multline}
\label{eq_x20}
 \lim_{N\to\infty} \E\bigl[ N^{1/3}  \zeta_{N+1-i}^N(1+  2t N^{-1/3} ) \zeta_{N+1-j}^N(1+   2s N^{-1/3})\bigr]=
 \\ \lim_{N\to\infty} 2 N^{1/3}   \sum_{m=0}^{N-1} \frac{Q^{(m)}_N(x^{N+1-i}_N)  Q^{(m)}_N(x^{N+1-j}_N) }{(N+1) \bigl\langle Q^{(m)}_N, Q^{(m)}_N \bigr\rangle_N}\cdot  \frac{(1+2N^{-1/3}\min(t,s))^{m+1}}{(m+1) (1+2N^{-1/3}t)^{m/2}(1+2N^{-1/3}s)^{m/2}}.
\end{multline}
We use Theorem \ref{Theorem_Q_to_Airy} to compute the asymptotic behavior of $Q^{(m)}_N(x^{N+1-i}_N)$ and  $Q^{(m)}_N(x^{N+1-j}_N)$, transforming \eqref{eq_x20} into
\begin{equation}
\label{eq_x21}
\lim_{N\to\infty} 2 \sum_{m=0}^{N-1} \frac{\Ai\bigl(\a_i+\frac{m}{N^{1/3}}\bigr) \Ai\bigl(\a_j+\frac{m}{N^{1/3}}\bigr)}{\Ai'(\a_i)\Ai'(\a_j)}\cdot  \frac{(1+2 N^{-1/3}\min(t,s))^{m+1}}{(m+1) (1+2 N^{-1/3}t)^{m/2}(1+2 N^{-1/3}s)^{m/2}}.
\end{equation}
The terms in the last sum rapidly decay as $\frac{m}{N^{1/3}}\to+\infty$. Hence, denoting $y=\frac{m}{N^{1/3}}$ and using the $N\to\infty$ asymptotic approximation
\begin{multline*}
\frac{(1+2 N^{-1/3}\min(t,s))^{m+1}}{(m+1) (1+2 N^{-1/3}t)^{m/2}(1+2 N^{-1/3}s)^{m/2}}\approx N^{-1/3} \frac{1}{y} \exp\left( 2y\min(t,s)-yt-ys\right)\\=
N^{-1/3} \frac{1}{y} \exp( -y|t-s|),
\end{multline*}
\eqref{eq_x21} becomes a Riemann sum approximating as $N\to\infty$ the integral
$$
 2 \int_0^{\infty} \frac{\Ai\bigl(\a_i+y\bigr) \Ai\bigl(\a_j+y\bigr)}{\Ai'(\a_i)\Ai'(\a_j)} \exp( -y|t-s|) \frac{\d y}{y},
$$
thus, matching \eqref{eq_Edge_limit_covariance} and finishing the proof.

\section{Appendix: steepest descent analysis}

\label{Section_steepest_descent}

\begin{proof}[Proof of Theorem \ref{Theorem_Hermite_limit}] Rescaling and shifing the variables $y_i$, we can (and will) assume without loss of generality that $\mu_N=0$ and $\sigma_N=1$.

We use the contour integral representation of the derivative to write
\begin{equation}
\label{eq_x8}
   P_k(y
)= {N\choose k}^{-1} \cdot \frac{1}{2\pi \ii }  \oint_0 \frac{P_N(z+y)}{z^{N-k+1}} dz,
 \end{equation}
 where the integral goes over a positively oriented loop enclosing $0$. We further set
 $$
  y=\frac{x}{\sqrt{N}} .
 $$
Our aim is to show that up to certain factors, which have no zeros, $P_k(\frac{x}{\sqrt{N}} )$ becomes the degree $k$ Hermite polynomial as $N\to\infty$. By the Hurwitz theorem, this would imply the desired convergence of zeros.

 Using \eqref{eq_x8} and adopting the notation $\sim$ to indicate an equality up to factors independent of $x$, we have
$$
  P_k\left( \frac{x}{\sqrt{N}} \right) \sim \oint_{0}   \prod_{i=1}^N \left(1+\frac{-y_i+  \frac{x}{\sqrt{N}} }{z} \right)  \frac{dz}{z^{1-k}}.
$$
Note that $|y_i|/\sqrt{N}\to 0$ uniformly in $i$ as $N\to\infty$ due to Assumption \ref{Assumption_Hermite}. Hence, we can
 change the variable $z=\frac{\sqrt{N}}{ w}$ and use the Taylor series expansion $\ln(1+q)=q-\frac{q^2}{2}+O(q^3)$ to get
\begin{multline*}
  P_k\left(\frac{x}{\sqrt{N}} \right) \sim \oint_{0}  \exp\left[   \sum_{i=1}^N \ln\left(1+\frac{w}{\sqrt N} \cdot \left(-y_i+  \frac{x}{\sqrt{N}} \right) \right)\right]  \frac{dw}{w^{k+1}}
  \\= \oint_{0}  \exp\Biggl[   \sum_{i=1}^N \left(\frac{w}{\sqrt N} \cdot \left(-y_i+  \frac{x}{\sqrt{N}} \right) \right)-\frac{1}{2}\sum_{i=1}^N \frac{w^2}{N} (-y_i)^2-\frac{1}{2} \sum_{i=1}^N \frac{w^2 x^2}{N^2} \\+\frac{1}{2}\frac{w^2}{N\sqrt{N}} x \sum_{i=1}^N y_i + \frac{1}{N\sqrt N}\sum_{i=1}^N O\left((-y_i)^3\right) +o(1)   \Biggr]  \frac{dw}{w^{k+1}}.
\end{multline*}
By Assumption \ref{Assumption_Hermite} and our choices of $\mu_N$ and $\sigma_N$
$$
 \sum_{i=1}^N y_i=0,\quad  \frac{1}{N}\sum_{i=1}^N (y_i)^2=\sigma_N=1,\quad \frac1{N \sqrt{N}} \sum_{i=1}^N  |y_i|^3= \frac{\kappa_N^3}{\sqrt N}=o(1).
$$
Hence, we conclude that after factoring out the $x$--independent constants $P_k(\frac{x}{\sqrt{N}})$ converges (uniformly over $x$ belonging to a compact subsets of the complex plane) to
$$
\frac{k!}{2\pi \ii} \oint_{0}  \exp\Biggl[   wx -\frac{w^2}{2}  \Biggr]  \frac{dw}{w^{k+1}},
$$
which is a known contour integral representation for the Hermite polynomial $H_k(x)$, see \cite[(9.15.10)]{KS}.
\end{proof}

\begin{proof}[Proof of Theorem \ref{Theorem_bulk_lattice}]
 Since we deal only with roots of the polynomials, but not with their values, we can and will omit
 various multiplicative constants. We would like to investigate the zeros of the function $P_k(x+
 \tfrac{1}{N}\chi)$ of a complex variable $\chi$ as $N\to\infty$. Using the contour integral representation of the derivative, we have
 $$
   P_k(x+
 \tfrac{1}{N}\chi)\sim \oint \frac{P_N(z+x)}{(z-\tfrac1N \chi)^{N-k+1}} dz,
 $$
 where the integration contour encloses the unique pole of the integrand at $z=\tfrac1N \chi$. We
 would like to apply the steepest descent method to the last integral. For that we write the
 integrand as
 \begin{equation}
\label{eq_x1}
 \exp( N G(z)) \cdot \left(1-\frac\chi{N z}\right)^{-N+k-1},
 \end{equation} where
 $$
  G(z)=\frac{1}{N}\ln\bigl( P_N(z+x)\bigr) - \frac{N-k+1}{N} \ln z.
 $$
 The second factor in \eqref{eq_x1} converges as $N\to\infty$, and we are led to study the first
 oscillating factor. The steepest descent method suggests to deform the contours of integration so that they pass through the critical
 points of $G(z)$. Thus, we arrive at the equation $G'(z)=0$, which is \eqref{eq:critical_equation}. We deform the
contours to pass through its complex critical points $z_c$ and $\overline{z_c}$. The contour itself is then the union of curves $\Im G(z)={\rm const}$ along which $\Re G(z)$ has maxima at $z=z_c$ and $z=\overline{z_c}$. The result is that the dominating contribution to the integral is given by small neighborhoods of these critical points. Near the critical point
$z_c$ we have
$$
 G(z)=G(z_c)+\frac{G''(z_c)}2 (z-z_c)^2+ o( (z-z_c)^2).
$$
Note that $G''(z_c)$ is non-zero, since its vanishing would mean a double critical point for
$G(z)$, which is impossible, as the argument of Lemma \ref{Lemma_complex_roots} explains\footnote{We also would like $G''(z_c)$ to remain bounded away from $0$ as $N\to\infty$, which follows from its convergence to a limiting value under Assumption \ref{Assumption_weak}.}. Hence,
making a change of variable $z=z_c+\frac{1}{\sqrt N \sqrt{G''(z_c)}} w$, the integral near $z_c$ becomes a Gaussian
integral and evaluates explicitly as $N\to\infty$ to
\begin{equation}
\label{eq_x34}
\frac{1}{\sqrt{N}} \sqrt{\frac{2\pi}{G''(z_c)}} \cdot \exp\bigl(N G(z_c)\bigr) \cdot \exp\left(\frac{N-k+1}{N} \cdot
\frac{\chi}{z_c}\right),
\end{equation}
where the last factor arose from the limit of the second factor in \eqref{eq_x1}. In principle, one should be careful in choosing the branch of $\sqrt{G''(z_c)}$ in  \eqref{eq_x34}, but the final asymptotic theorem is not sensitive to this aspect and we will not detail it. Similarly, the
contribution of the neighborhood of $\overline{z_c}$ is
\begin{equation}
\sqrt{\frac{2\pi}{G''(\overline{z_c})}} \cdot \exp\bigl(N G(\overline{z_c})\bigr) \cdot
\exp\left(\frac{N-k+1}{N} \cdot \frac{\chi}{\overline{z_c}}\right),
\end{equation}
Note that $G(\overline{z})=\overline{G(z)}$. Hence, we conclude that
\begin{multline} \label{eq_x2}
   P_k(x+
 \tfrac{1}{N}\chi)\sim \frac{1}{\sqrt{G''(z_c)}} \exp\bigl( \ii N \Im G(z_c)\bigr) \cdot \exp\left(\frac{N-k+1}{N} \cdot
\frac{\chi}{z_c}\right) (1+r_1(\chi))\\+\frac{1}{\sqrt{G''(z_c)}}  \exp\bigl(- \ii N \Im G(z_c)\bigr) \cdot
\exp\left(\frac{N-k+1}{N} \cdot \frac{\chi}{\overline{z_c}}\right) (1+r_2(\chi)),
\end{multline}
where $\sim$ hides $\chi$--independent factors and $r_1(\chi)$, $r_2(\chi)$ are complex remainders,
which tend to $0$ as $N\to\infty$ (uniformly over $\chi$ belonging to compact sets). By the Hurwitz's
theorem (or by the Rouche's theorem) zeros of a uniformly convergent sequence of holomorphic functions
converge to those of the limiting function. Applying this statement to $P_k(x+
 \tfrac{1}{N}\chi)$ as a function of $\chi$ (after multiplication by a proper constant to get the
 right-hand side of \eqref{eq_x2}, and noting that the exponent $\ii N \Im
 G(z_c)$ in $\exp\bigl(\ii N \Im
 G(z_c)\bigr)$ can be made bounded by using $2\pi \ii$ periodicity of $\exp(\cdot)$), we conclude
 that the zeros of $P_k(x+
 \tfrac{1}{N}\chi)$ as $N\to\infty$ are the same as those of
 \begin{equation}
 \label{eq_x3}
  \exp\bigl( \ii N \Im G(z_c)\bigr) \cdot \exp\left(\frac{N-k+1}{N} \cdot
\frac{\chi}{z_c}\right)+\exp\bigl(- \ii N \Im G(z_c)\bigr) \cdot \exp\left(\frac{N-k+1}{N} \cdot
\frac{\chi}{\overline{z_c}}\right).
 \end{equation}
For fixed ratio $\frac{N-k+1}{N}$ the latter zeros form a lattice on the real line with step
$$
 u=\pi \left(\frac{N-k+1}{N} \Im \frac{1}{z_c} \right)^{-1}.
$$
On the other hand, if we increase $k$ by $1$, then the change in $\frac{N-k+1}{N}$ is negligible,
however, the definition of $G$ changes: $\exp(N G(z))$ is multiplied by $z$. We can still use the
same critical point $z_c$ in the asymptotic computation and only change $N G(z_c)$ in \eqref{eq_x3}
by adding a new term $\ln(z_c)$. We conclude that $k\to k+1$ results in the shift of the lattice of
zeros to the left by
$$
 v=u\cdot \frac{1}{\pi} \Im \ln(z_c)=u \cdot \frac{1}{\pi} \arg (z_c).\qedhere
$$

\end{proof}

\begin{proof}[Proof of Theorem \ref{Theorem_edge_Airy}] 
We follow the same approach as in Theorem \ref{Theorem_bulk_lattice}. The only difference is that now we have a double critical point $z_c$ on the real line, instead of a pair of complex conjugate critical points. Our first task is to identify the location of this point. Here we rely on lemma, which we prove a bit later.

\begin{lemma}
\label{Lemma_critical_point_edge}
 Under assumptions of Theorem \ref{Theorem_edge_Airy}, the (unique) double critical point $z_c$ of \eqref{eq:critical_equation} satisfies $z_c>y_N-x>0$, and moreover the difference $z_c-(y_N-x)$ stays bounded away from $0$ as $N\to\infty$. The third derivative $G'''(z_c)$ is positive and stays bounded away from $0$ and $\infty$ as $N\to\infty$.
\end{lemma}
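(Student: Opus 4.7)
The plan is to reformulate the double critical point condition via the Stieltjes transform, analyze the $N\to\infty$ limit under Assumption \ref{Assumption_strong}, and transfer the resulting bounds to finite $N$. Write $S(z)=\frac{1}{N}\sum_{i=1}^{N}\frac{1}{z-(y_i-x)}$ and $c=(N-k+1)/N\in[\eps,1-\eps]$. Since $G'(z)=S(z)-c/z$ and $G''(z)=S'(z)+c/z^2$, a direct manipulation shows $G'(z_c)=0=G''(z_c)$ is equivalent to $z_c$ being a critical point of $h(z):=zS(z)$ with critical value $c$. Setting $w=z+x$ and denoting by $s_N$ the Stieltjes transform of $\rho_N$, this becomes the edge system
\begin{equation*}
C_N(w_c):=\frac{s_N(w_c)^2}{-s_N'(w_c)}=c, \qquad x=w_c+\frac{s_N(w_c)}{s_N'(w_c)},
\end{equation*}
and a short computation also yields the identity $B-x=(-s_N'(w_c))^{-1}\int (B-y)(w_c-y)^{-2}\,d\rho_N(y)$.

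Next I would analyze the limiting function $C(w):=s(w)^2/(-s'(w))$ on $(B,\infty)$, where $s$ is the Stieltjes transform of $\rho$. The expansion $s(w)\sim 1/w$ yields $C(\infty)=1$. In the other direction, the pointwise bound $1/(w-y)\le 1/(w-B)$ on the support of $\rho$ gives $(-s'(w))(w-B)\le s(w)$, hence
\begin{equation*}
C(w)\le (w-B)\,s(w)=\int\frac{w-B}{w-y}\,d\rho(y)\xrightarrow[w\to B^+]{}0
\end{equation*}
by dominated convergence, using $\rho(\{B\})=0$ (guaranteed by the spacing in Assumption \ref{Assumption_strong}(3), which bounds the local density near $B$ by $1/\vartheta$). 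By the intermediate value theorem, for each $c\in[\eps,1-\eps]$ there is $w_*$ in a compact subset of $(B,\infty)$ with $C(w_*)=c$. The convergence $s_N\to s$ together with its derivatives, uniform on compact subsets of $(B,\infty)$, is valid for large $N$ by Assumption \ref{Assumption_strong}(1)-(2) combined with $y_N\to B$; it gives $w_c(N)\to w_*$. Thus $z_c-(y_N-x)=w_c-y_N\to w_*-B>0$, establishing both $z_c>y_N-x$ and the uniform lower bound on the gap. The identity for $B-x$ shows its right-hand side is positive and bounded below on any compact subset of $(B,\infty)$, so $y_N-x>0$ with a uniform positive lower bound for large $N$.

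Finally, for $G'''(z_c)=\frac{2}{N}\sum_i(z_c-(y_i-x))^{-3}-2c/z_c^3$, the upper bound is immediate from the gap bound. Positivity with a uniform lower bound follows from the right-edge Taylor analysis: near $(x_c,z_c)$ one has $G'(z;x)\approx \frac{G'''(z_c)}{2}(z-z_c)^2-\frac{c}{z_c^2}(x-x_c)$, and the existence of real roots for $x>x_c$ (the void region) forces $G'''(z_c)>0$; the limit value $2\int(w_*-y)^{-3}d\rho(y)-2c/z_*^3$ is continuous in $w_*$ over a compact range and therefore stays bounded away from zero. The main technical obstacle is precisely the vanishing $C(w)\to 0$ as $w\to B^+$: crude uniform density bounds from Assumption \ref{Assumption_strong}(3)-(4) alone would only yield $C(w)=O(\log(1/(w-B)))$, so the clean monotonicity $C(w)\le (w-B)s(w)$ and subsequent dominated convergence are essential to the argument.
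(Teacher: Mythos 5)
Your reformulation via the Stieltjes transform is a genuinely different route from the paper's. The paper pins down $z_c$ by a deformation argument (decreasing $x$ from $+\infty$ and tracking where the two real roots on $(y_N-x,\infty)$ first merge), and then argues by contradiction that $\delta(N)=z_c-(y_N-x)\to 0$ would force $G''(z_c)\to-\infty$ via Assumption \ref{Assumption_strong}(4), which makes $\int(B-y)^{-2}\,\rho(\d y)$ divergent. Your rewriting of the double-critical-point system as $C_N(w_c)=c$ with $C_N=s_N^2/(-s_N')$ is correct, the identity for $B-x$ checks out, and the boundary values $C(B^+)=0$, $C(\infty)=1$ are established cleanly; this is an attractive packaging.

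However, there is a real gap at the step ``it gives $w_c(N)\to w_*$.'' Local-uniform convergence of $s_N$ to $s$ on compact subsets of $(B,\infty)$ is indeed valid (Montel plus weak convergence), but it says nothing if the sequence $w_c(N)$ escapes toward the boundary $y_N\to B$. That is precisely the scenario the lemma must rule out, and you cannot invoke convergence on compacts before you know $w_c(N)$ stays in a compact. Your limiting bound $C(w)\le(w-B)s(w)\to 0$ only controls the limit function; to forbid $w_c(N)\to y_N$ you need a \emph{prelimit} estimate, e.g.\ that $(w-y_N)s_N(w)$ is uniformly small when $w-y_N$ is small, which requires the minimal-spacing condition (Assumption \ref{Assumption_strong}(3)): without it, a clump of $y_i$'s within $O(1/N)$ of $y_N$ could keep $C_N$ of order one arbitrarily close to $y_N$. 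The paper's proof supplies exactly this prelimit control, via the lower bound $\rho(y)\ge\vartheta(B-y)$ forcing $\frac{1}{N}\sum_i(\delta(N)+y_N-y_i)^{-2}\to\infty$.

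A second, smaller issue: for $G'''(z_c)$ bounded away from $0$, you write that the limit value ``is continuous in $w_*$ over a compact range and therefore stays bounded away from zero.'' Continuity on a compact set yields an upper bound, not a positive lower bound; you still need to show the limiting quantity $2\int(w_*-y)^{-3}\d\rho(y)-2c/z_*^3$ is strictly positive. (The paper argues via Hurwitz/Rouch\'e that a triple root in the limit is impossible; alternatively one can combine your void-region Taylor argument with the fact that the limiting $G'$ has an isolated double zero.) The rest of your proposal (the sign of $G'''(z_c)$ from the void region, the upper bound on $G'''(z_c)$ once the gap is controlled, and $y_N-x$ bounded below) is sound.
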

Next, we write:
 \begin{equation}
 \label{eq_x4}
   P_k(x+
 \tfrac{1}{N^{2/3}}\chi)\sim \oint \exp\bigl( N G(z)\bigr) \left(1-\frac{\chi}{ N^{2/3} z}\right)^{-N+k-1} dz,
 \end{equation}
 where
 $$
  G(z)=\frac{1}{N}\ln\bigl( P_N(z+x)\bigr) - \frac{N-k+1}{N} \ln z.
 $$
We  deform the integration contour to pass through $z_c$ and the integral becomes dominated by a small neighborhood of this point\footnote{We omit a standard justification of this fact.}. In this neighborhood we have:
$$
 G(z)=G(z_c)+ \frac{G'''(z_c)}{6} (z-z_c)^3+o(z-z_c)^3.
$$
 We make a change of variable
 $$
  z=z_c+ N^{-1/3} w.
 $$
 We need to find the asymptotic expansion of the second factor in the integrand of \eqref{eq_x4}:
\begin{multline*}
 \left(1-\frac{\chi}{ N^{2/3} z}\right)^{-N+k-1} =  \left(1-\frac{\chi}{ N^{2/3} z_c}\right)^{-N+k-1}  \cdot
 \left(\frac{ (N^{2/3} z-\chi)(N^{2/3} z_c)}{ (N^{2/3} z_c-\chi)(N^{2/3} z)}\right)^{-N+k-1}
 \\= \left(1-\frac{\chi}{ N^{2/3} z_c}\right)^{-N+k-1}  \cdot
 \left(\frac{ (z_c+ N^{-1/3} w-N^{-2/3}\chi)(z_c)}{ (z_c -N^{-2/3}\chi)(z_c+ N^{-1/3} w)}\right)^{-N+k-1}
\\= \left(1-\frac{\chi}{ N^{2/3} z_c}\right)^{-N+k-1}  \cdot
 \left(1+\frac{N^{-1}\chi w}{ z_c^2+ N^{-1/3} w z_c-N^{-2/3} z_c  -N^{-1}\chi w}\right)^{-N+k-1}.
\end{multline*}
As $N\to\infty$, the first factor is a function of (finite) $\chi$, which has no zeros and therefore can be ignored for our computations. The second factor asymptotically becomes
$$
 \exp\left(-\frac{N-k+1}{N} \cdot \frac{\chi w}{z_c^2}\right).
$$
We conclude that up to factors, which have no zeros (as functions of $\chi$),
 \begin{equation}
 \label{eq_x5}
   P_k(x+
 \tfrac{1}{N^{2/3}}\chi)\sim \int \exp\left(  \frac{G'''(z_c)}{6} w^3 -\frac{N-k+1}{N} \cdot \frac{\chi w}{z_c^2}  \right)  dw.
 \end{equation}
 We have some freedom in choosing the contour of integration, as long as it extends to infinity in both directions in such a way that the integrand decays. Our choice is to integrate over the unions of two rays $\arg(w)=\pm \frac{\pi}{3}$, which gives real negative values for $w^3$ (recall that $G'''(z_c)>0$).

 We now recall the contour integral representation of the Airy function:
 \begin{equation}
  \label{eq:Airy_contour_integral}
  \Ai(\xi)=\frac{1}{2\pi\ii} \int \exp\left(\frac{\tilde w^3}{3} -\xi \tilde w \right) d \tilde w,
 \end{equation}
 where the integration contour is the same as in \eqref{eq_x5}. Changing the integration variable in \eqref{eq_x5} by
 $$
  w=\left(\frac{2}{G'''(z_c)}\right)^{1/3} \tilde w,
 $$
 we conclude that
  \begin{multline*}
   P_k(x+
 \tfrac{1}{N^{2/3}}\chi)\sim \int \exp\left(  \frac{\tilde w^3}{3} -\left(\frac{2}{G'''(z_c)}\right)^{1/3} \frac{N-k+1}{N} \cdot \frac{\chi \tilde w}{z_c^2}  \right)  d\tilde w\\ \sim \Ai\left( \chi \cdot \left(\frac{2}{G'''(z_c)}\right)^{1/3} \frac{N-k+1}{N} \cdot \frac{1}{z_c^2} \right).\qedhere
 \end{multline*}
\end{proof}

\begin{proof}[Proof of Lemma \ref{Lemma_critical_point_edge}]
 Note that roots of a polynomial smoothly depend on the coefficients of this polynomial as long as roots do not merge together. We use this observation to deform from the $x=+\infty$ case down to the first $x$ when a double root of \eqref{eq:critical_equation} arises. Recall from Lemma \ref{Lemma_complex_roots} that \eqref{eq:critical_equation} has $N$ roots (with multiplicity). When $x$ is large positive, we can pin down all these roots on the real line: following the sign changes of  \eqref{eq_x6}, we locate $N-1$ roots inside segments $(y_{i+1}-x, y_{i}-x)$, $1\le i <N$, and another root inside the ray $(0,+\infty)$. This remains true as long as $x>y_N$. Let us investigate what happens when $x$ becomes slightly smaller, i.e.\ for $x=y_N-\eps$. We claim that we now have two distinct roots on the segment $(y_N-x,+\infty)$. Indeed, the function in the left-hand side of \eqref{eq_x6} is positive at $z=y_N-x+0$, becomes negative for slightly larger $z$ (because of the contribution of $-\frac{N-k+1}{N} \cdot \frac{1}{z}$; in this part the lower bound on the spacings $y_{N+1-i}-y_i$ in Assumption \ref{Assumption_strong} becomes important), and then it is again positive for very large $z\to+\infty$. When we further decrease $x$, all other roots continue to belong to the segments $(y_{i+1}-x, y_i-x)$ and, therefore, the first appearance of a double root is when the above two roots on $(y_{N}-x,+\infty)$ merge. Hence, $z_c>y_N-x>0$.

 Now set $\delta(N)=z_c-(y_N-x)$. Our aim is to show that $\delta(N)$ is bounded away from $0$ as $N\to\infty$. We argue by contradiction and assume that $\delta(N)$ can become arbitrary small, i.e.\ there is a growing sequence $N_m$ such that $\lim_{m\to\infty} \delta(N_m)=0$. Then one can find a constant $D>0$ such that $y_{N_m}-x>D$ for all $m$. (Indeed, otherwise, passing to a further subsequence, if necessary, we would get $\lim_{m\to\infty} (y_{N_m}-x)=0$, and consequently the left-hand side of \eqref{eq_x6} would be negative at $z_c$ due to dominating contribution of $-\frac{N-k+1}{N} \cdot \frac{1}{z}$.) But then we can upper bound $G''(z_c)$ as:
 \begin{multline}
 \label{eq_x7}
   G''(z_c)=-\frac{1}{N} \sum_{i=1}^N \frac{1}{(z_c-(y_i-x))^2}+ \frac{N-k+1}{N}\cdot \frac{1}{z_c^2}\\< -\frac{1}{N} \sum_{i=1}^N \frac{1}{(\delta(N)+y_N-y_i)^2}+  \frac{N-k+1}{N} \frac{1}{D^2}.
 \end{multline}
 Since by Assumption \ref{Assumption_strong}, the empirical measure of $\{y_i\}$ converges to a measure $\rho$ supported on $[A,B]$, $y_N$ converges to $B$, and
 $$
  \int_A^B \frac{1}{(B-x)^2} \rho(dx)=+\infty,
 $$
 the inequality \eqref{eq_x7} implies that $G''(z_c)$ goes to $-\infty$ as $N\to\infty$, which contradicts $G''(z_c)=0$. Hence, our assumption was wrong and $\delta(N)$ is indeed bounded away from $0$.

 \smallskip

 Next, $G'''(z_c)$ is non-negative, since $G'(z)$ is a non-negative function on $z\in(y_N-x,+\infty)$ with a minimum $G'(z_c)=0$. $G'''(z_c)$ is bounded away from $\infty$ immediately from the formula
 $$
  G'''(z_c)= 2\frac{1}{N} \sum_{i=1}^N \frac{1}{(\delta(N)+y_N-y_i)^3}- 2\frac{N-k+1}{N}\cdot \frac{1}{z_c^3}
 $$
 and the facts that $\delta(N)$ is bounded away from $0$ and $z_c>\delta(N)$.

 It remains to show that $G'''(z_c)$ is bounded away from $0$. Indeed, otherwise, passing to a subsequence, if necessary, we would see a triple root at $z_c$ for the function $G(z)$. But (by the Hurwitz or by the Rouchet's theorem) this is impossible, since for finite $N$ we have shown that $G(z)$ has only a double root at $z_c$ and no other roots in a neighborhood.
\end{proof}

\end{document}